\newcommand{\todo}[1]{{\color{blue} \sf TODO: [#1]}}  \newcommand{\pn}[1]{{\color{ForestGreen} \sf PN: [#1]}} 
\let\orgdescriptionlabel\descriptionlabel
\renewcommand*{\descriptionlabel}[1]{%
  \let\orglabel\label
  \let\label\@gobble
  \phantomsection
  \protected@edef\@currentlabel{#1}%
  \let\label\orglabel
  \orgdescriptionlabel{#1}%
}
\def\paragraph{
	\@startsection{paragraph}{4}
	\z@{.5\linespacing\@plus.7\linespacing}{-.5em}%
	{\normalfont\itshape}}
\theoremstyle{plain} %--default
\newtheorem{theorem} {Theorem}
\theoremstyle{definition}
\theoremstyle{plain} %--default
\newtheorem{proposition}[theorem] {Proposition}
\theoremstyle{remark}
\newtheorem{definition} {Definition}
\newtheorem*{definition*} {Definition}
\newtheorem{example}[theorem] {Example} \newtheorem*{example*} {Example}
\newtheorem{remark}[theorem] {Remark} \newtheorem*{remark*} {Remark}       
\newtheoremstyle{itplain} % name
{6pt} % Space above
{5pt\topsep} % Space below
{\itshape} % Body font
{} % Indent amount
{\itshape} % Theorem head font
{.}  % Punctuation after theorem head
{5pt plus 1pt minus 1pt} % Space after theorem head
\theoremstyle{itplain} %--default
\newtheorem{lemma}[theorem]{Lemma}
\newtheorem*{lemma*}{Lemma}
\newtheorem{corollary}[theorem] {Corollary} \newtheorem*{corollary*} {Corollary}
\theoremstyle{remark} %--default
\newtheorem*{lemmatest*}{Lemma}
\patchcmd{\section}{\scshape}{\bfseries}{}{} \makeatletter \renewcommand{\@secnumfont}{\bfseries} \makeatother
\newcommand{\dif}{\,\mathrm{d}}
\renewcommand{\Re}{\mathrm{Re}} \renewcommand{\Im}{\mathrm{Im}}
\renewcommand{\geq}{\geqslant} \renewcommand{\leq}{\leqslant} \renewcommand{\mod}{\mathrm{mod}\,}
\newlength{\faktorheight}
\newcommand*{\dfaktor}[3]{% \dfaktor{#1}{#2}{#3} -> #1\#2/#3
	\mathchoice{
		\settototalheight{\faktorheight}{\ensuremath{#1}}%
		\raisebox{-0.5\faktorheight}{\ensuremath{#1}}% Denominator left
		\backslash
		\settototalheight{\faktorheight}{\ensuremath{#2}}%
		\raisebox{0.5\faktorheight}{\ensuremath{#2}}% Numerator
		\slash
		\settototalheight{\faktorheight}{\ensuremath{#3}}%
		\raisebox{-0.5\faktorheight}{\ensuremath{#3}}% Denominator right
	}{
		\ensuremath{#1}
		\backslash
		\ensuremath{#2}
		\slash
		\ensuremath{#3}
	}
	{
		\ensuremath{#1}
		\backslash
		\ensuremath{#2}
		\slash
		\ensuremath{#3}
	}
	{
		\ensuremath{#1}
		\backslash
		\ensuremath{#2}
		\slash
		\ensuremath{#3}
	}
}
\newcommand*{\lfaktor}[2]{% lfaktor{#1}{#2} -> #1\#2
	\mathchoice{
		\settototalheight{\faktorheight}{\ensuremath{#1}}%
		\raisebox{-0.5\faktorheight}{\ensuremath{#1}}% Denominator
		\backslash
		\settototalheight{\faktorheight}{\ensuremath{#2}}%
		\raisebox{0.5\faktorheight}{\ensuremath{#2}}%Numerator
	}
	{
		\ensuremath{#1}
		\backslash
		\ensuremath{#2}
	}
	{
		\ensuremath{#1}
		\backslash
		\ensuremath{#2}
	}
	{
		\ensuremath{#1}
		\backslash
		\ensuremath{#2}
	}
}
\newcommand*{\faktor}[2]{% faktor{#1}{#2} -> #1/#2
	\mathchoice{
		\settototalheight{\faktorheight}{\ensuremath{#1}}%
		\raisebox{0.5\faktorheight}{\ensuremath{#1}}%Numerator
		\slash
		\settototalheight{\faktorheight}{\ensuremath{#2}}%
		\raisebox{-0.5\faktorheight}{\ensuremath{#2}}% Denominator
	}
	{
		\ensuremath{#1}
		\slash
		\ensuremath{#2}
	}
	{
		\ensuremath{#1}
		\slash
		\ensuremath{#2}
	}
	{
		\ensuremath{#1}
		\slash
		\ensuremath{#2}
	}
}
\numberwithin{theorem}{section}
\numberwithin{equation}{section}  \DeclareMathOperator{\sgn}{sgn} \DeclareMathOperator{\SL}{SL}           \DeclareMathOperator{\SO}{SO}               \DeclareMathOperator{\JL}{JL}    \def\eps{\varepsilon}   \def\lcm{\operatorname{lcm}} \def\PGL{\operatorname{PGL}} \def\hol{\operatorname{hol}}              \DeclareMathOperator{\coker}{coker}     
    \DeclareMathOperator{\trace}{trace}                \DeclareMathOperator{\SU}{SU}                 
    \DeclareMathOperator{\diag}{diag}             \def\O{\operatorname{O}}                              \DeclareMathOperator{\Res}{Res}         \DeclareMathOperator{\Mat}{Mat}            \DeclareMathOperator{\Tr}{Tr} \DeclareMathOperator{\vol}{vol} \DeclareMathOperator{\nr}{nr}  \DeclareMathOperator{\tr}{tr}  \DeclareMathOperator{\Img}{Im}
\newcommand{\sm}{\left(\begin{smallmatrix}} \newcommand{\esm}{\end{smallmatrix}\right)} \newcommand{\bpm}{\begin{pmatrix}} \newcommand{\ebpm}{\end{pmatrix}}
\newcommand{\lquotient}[2]{ \mathchoice {% \displaystyle
    \text{\lower1ex\hbox{$#1$}\Big \backslash \raise01ex\hbox{$#2$}}%
  } {% \textstyle
    #1\,\backslash\,#2 } {% \scriptstyle
    #1\,\backslash\,#2 } {% \scriptscriptstyle
    #1\,\backslash\,#2 } }
\newcommand{\rquotient}[2]{ \mathchoice {% \displaystyle
    \text{\raise01ex\hbox{$#1$}\Big/\lower1ex\hbox{$#2$}}%
  } {% \textstyle
    #1\,/\,#2 } {% \scriptstyle
    #1\,/\,#2 } {% \scriptscriptstyle
    #1\,/\,#2 } }
\newcommand{\lrquotient}[3]{ \mathchoice {% \displaystyle
    \text{\lower1ex\hbox{$#1$}\Big \backslash \raise01ex\hbox{$#2$}\Big/\lower1ex\hbox{$#3$}}%
  } {% \textstyle
    #1\,\backslash\,#2\,/\,#3 } {% \scriptstyle
    #1\,\backslash\,#2\,/\,#3 } {% \scriptscriptstyle
    #1\,\backslash\,#2\,/\,#3 } }
\author{Ilya Khayutin, Paul D. Nelson, Raphael S. Steiner}
\subjclass[2020]{11F12 (11F27, 11F70, 11F72, 11D45, 11N75, 14G35)}
\date{\today} \title[Theta functions, fourth moments, and the sup-norm problem II]{Theta functions, fourth moments of eigenforms, and the sup-norm problem II}
\begin{document}

\begin{abstract}
  For an $L^2$-normalized holomorphic newform $f$ of weight $k$ on a hyperbolic surface of volume $V$ attached to an Eichler order of squarefree level in an indefinite quaternion algebra over $\mathbb{Q}$, we prove the sup-norm estimate
  \[
    \| \Im(\cdot)^{\frac{k}{2}} f \|_{\infty} \ll_{\eps} (k V)^{\frac{1}{4}+\eps}
  \]
  with absolute implied constant.  For a cuspidal Maa{\ss} newform $\varphi$ of eigenvalue $\lambda$ on such a surface, we prove that
  \[
    \|\varphi \|_{\infty} \ll_{\lambda,\eps} V^{\frac{1}{4}+\eps}.
  \]
  We establish analogous estimates in the setting of definite quaternion algebras.
\end{abstract}

\maketitle

\setcounter{tocdepth}{1} \tableofcontents

\maketitle

\section{Introduction}\label{sec:introduction}

Let $\Gamma \backslash \mathbb{H}$ be a finite volume hyperbolic surface.  A basic problem in quantum chaos is to understand the limiting behavior of $L^2$-normalized Laplace eigenfunctions $\varphi$ on $\Gamma \backslash \mathbb{H}$.  This behavior can be quantified through weak limits of $L^2$-masses (``quantum ergodicity''), bounds for $L^p$-norms, and so forth.  We consider in this paper the \emph{sup-norm problem}, which consists of bounding the supremum or $L^\infty$-norm of an $L^2$-normalized eigenfunction $\varphi$ with respect to the eigenvalue $\lambda_\varphi$ and/or the geometry of the underlying manifold $\Gamma \backslash \mathbb{H}$.  A general bound in this direction, due to B\'erard \cite{Berard}, asserts that
\begin{equation}\label{eq:varphi-_infty-leq}
  \|\varphi \|_{\infty} \ll_{\Gamma} (1+|\lambda_{\varphi}|)^{\frac{1}{4}} / \log(2 + |\lambda_\varphi|).
\end{equation}
Here and henceforth $A \ll B$ means that there is a constant $C$ such that $|A| \le CB$; we allow $C$ to depend on any subscripts of $\ll$, and write $\eps$ for an arbitrary, but sufficiently small, positive constant, which may change from line to line.
%\rs{Extended the $\epsilon$ notion as surely we use it somewhere like this.} 

Stronger bounds have been established in the \emph{arithmetic} case that
\begin{itemize}
\item $\Gamma \backslash \mathbb{H}$ is an arithmetic manifold, such as the modular surface $\SL_2(\mathbb{Z}) \backslash \mathbb{H}$ or a congruence cover, and
\item  $\varphi$ is a \emph{Hecke--Maa{\ss} form}, i.e., an eigenfunction not only of the Laplacian, but also of the Hecke operators.  
\end{itemize}
The pioneering result in that case is due to Iwaniec--Sarnak \cite{IS95}, who showed for congruence lattices $\Gamma$ that
\begin{equation}\label{eq:sup_z-in-gamma}
%\sup_{z \in \Gamma \backslash \mathbb{H}}
\|\varphi\|_{\infty} \ll_{\Gamma,\eps} (1+|\lambda_{\varphi}|)^{\frac{5}{24}+\epsilon}.
\end{equation}
% This result improves substantially over the upper bound \eqref{eq:varphi-_infty-leq} and remains unimproved despite a couple decades of intensive research.

The above estimates depend in an unspecified manner upon the underlying manifold.  Consider for instance the case that $\Gamma$ is the Hecke congruence subgroup $\Gamma_0(N)
 = \SL_2(\mathbb{Z}) \cap \left(
  \begin{smallmatrix}
    \mathbb{Z} &\mathbb{Z} \\
    N \mathbb{Z} & \mathbb{Z} 
  \end{smallmatrix}
\right)$, so that $\Gamma \backslash \mathbb{H}$ is an arithmetic manifold of volume $N^{1+o(1)}$.  We suppose that $N$ is squarefree.  A direct quantification of the Iwaniec--Sarnak argument (see \cite[\S 10]{MR2587342}) gives the estimate
\begin{equation}\label{eq:sup_z-in-gamma-2}
%\sup_{z \in \Gamma \backslash \mathbb{H}}
\|\varphi\|_{\infty} \ll_{\eps} N ^{\frac{1}{2} + \eps} (1+|\lambda_{\varphi}|)^{\frac{5}{24}+\epsilon},
\end{equation}
where we normalize $\varphi$ to have $L^2$-norm one with respect to the \emph{hyperbolic probability measure}, i.e., the multiple of the hyperbolic measure having total volume one.  The \emph{level aspect} case of the sup norm problem is to improve the dependence of the bound \eqref{eq:sup_z-in-gamma-2} upon $N$.  The first improvement in the exponent was a major breakthrough of Blomer--Holowinsky \cite{MR2587342}, achieved thirteen years after the work of Iwaniec--Sarnak.  For a Hecke--Maa{\ss} newform $\varphi$ of eigenvalue $\lambda_{\varphi}$, they managed to show
\begin{equation}\label{eq:varphi_infty-ll-n}
\|\varphi\|_{\infty} \ll_{\lambda_\varphi} N^{ \frac{1}{2}-\frac{1}{37}}
\end{equation}
(with explicit polynomial dependence upon $\lambda_\varphi$).
% To achieve this, Blomer and Holowinsky expressed the form $\varphi$ through its Fourier expansion and showed by means of Vorono\"i summation that $\varphi(z)$ is small if the real part of $z$ is well-approximable by rationals of small denominator. For the complementary case, they showed by a tour-de-force argument that an amplified second moment gives a saving.
Subsequently, Templier \cite{MR2734340} and Harcos--Templier \cite{HT2,MR3038127} established several improved bounds, culminating in
\begin{equation}\label{eq:varphi-_infty-ll_eps}
 \|\varphi \|_{\infty} \ll_{\lambda_\varphi,\epsilon} N^{\frac{1}{3}+\epsilon}.
\end{equation}
For squarefree level, the estimate \eqref{eq:varphi-_infty-ll_eps} is comparable in strength to the Weyl bound for the Riemann zeta function, and has long been regarded as a natural limit for the sup-norm problem in the squarefree level aspect \cite[Remarks (i)]{MR3038127}.  It has been extended in various ways -- to number fields \cite{MR3474814, MR4046009, AssingSup}, to levels $N$ that are not necessarily squarefree \cite{MR3713048, saha2019sup, Comtatramified, MR4190047}, and to more general vectors than newforms \cite{HNSminimal, AssingBeyondNew}. 

The motivating result of this paper is the following improvement of \eqref{eq:varphi-_infty-ll_eps}.

\begin{theorem}\label{thm:thetasup-paper:let-n-be}
  Let $N$ be a squarefree natural number.  Let $\varphi$ be a cuspidal Hecke--Maa{\ss} newform for $\Gamma_0(N)$ with trivial character.  Suppose that $\varphi$ is $L^2$-normalized with respect to the hyperbolic probability measure on $\Gamma_0(N) \backslash \mathbb{H}$.  Then
$$
%\sup_{z \in \Gamma_0(N) \backslash \mathbb{H}} 
\|\varphi\|_{\infty} \ll_{\lambda_{\varphi}, \epsilon} N^{\frac{1}{4}+\epsilon}.%,
$$
%where the implied constant depends on the Laplace eigenvalue $\lambda_{\varphi}$ in a continuous manner.
\label{thm:individual-sup-Gamma_0(N)}
\end{theorem}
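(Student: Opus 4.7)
The theorem is a direct specialization of the general sup-norm estimate for Maa{\ss} newforms stated in the abstract. Take $B = M_2(\mathbb{Q})$, which is indefinite over $\mathbb{Q}$, and equip it with the Eichler order $\mathcal{O} = \bigl(\begin{smallmatrix}\mathbb{Z}&\mathbb{Z}\\ N\mathbb{Z}&\mathbb{Z}\end{smallmatrix}\bigr)$, of squarefree level $N$. The attached hyperbolic surface is precisely $\Gamma_0(N)\backslash\mathbb{H}$, of volume $V = \tfrac{\pi}{3} N \prod_{p \mid N}(1 + p^{-1}) = N^{1+o(1)}$. Once one verifies that the $L^2$-normalization conventions agree (both using the hyperbolic probability measure), substituting $V \asymp N$ into the bound $\|\varphi\|_\infty \ll_{\lambda,\epsilon} V^{1/4+\epsilon}$ immediately yields the claimed estimate.

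The substantive content is therefore the abstract's general bound. My plan for proving it is to replace the amplified second-moment pre-trace approach of Iwaniec--Sarnak and Harcos--Templier by a fourth-moment estimate of the shape
\[
  \sum_{\varphi} w(\varphi)\, |\varphi(z)|^4 \ll_\epsilon V^{1+\epsilon},
\]
averaged over an appropriate spectral family. Combined with a modest amplifier or a spectral multiplicity consideration, such an inequality delivers the exponent $\tfrac{1}{4}$ for an individual form and thus improves on the Weyl-type exponent $\tfrac{1}{3}$ of Harcos--Templier. The natural mechanism in the quaternionic setting is the Shimizu theta correspondence: one views $|\varphi(z)|^2$ as a matrix coefficient of $\pi \otimes \tilde{\pi}$, realized as the theta lift of a form on $\SL_2$ through a Schwartz function on the quaternion algebra $B$, so that pairing this theta lift with itself converts the desired fourth moment into an arithmetic counting problem for pairs $(\alpha,\beta) \in \mathcal{O} \times \mathcal{O}$ with prescribed reduced norms and local invariants.

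Carrying this out, the steps I would follow are: (i) select a test function concentrated at the base point $z$ at the natural diffraction scale $V^{-1/2}$; (ii) spectrally expand on the $\SL_2$ side, isolating a diagonal piece from an off-diagonal theta-lattice sum; (iii) bound the off-diagonal contribution via short-vector counts in the Eichler order $\mathcal{O}$, exploiting the squarefree hypothesis to handle the ramified local conditions cleanly; and (iv) reassemble. The main obstacle, in my view, is step (iii): the diagonal alone recovers only the Harcos--Templier barrier, and improving to $V^{1/4+\epsilon}$ demands a genuinely sharp fourth-moment bound, uniform in both the level and the spectral parameter $\lambda_\varphi$. This is where arithmetic cancellation among short quaternionic vectors must be extracted carefully, and it is the technical heart of the abstract's main theorem.
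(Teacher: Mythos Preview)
Your high-level strategy---Shimizu theta correspondence, fourth moment, then lattice counting in the order---matches the paper's route. But several key technical points are missing or misstated.

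First, no amplifier is needed. A bound $\sum_{\varphi}|\varphi(z)|^4 \ll V^{1+\epsilon}$ already gives $|\varphi(z)|\ll V^{1/4+\epsilon}$ for each individual $\varphi$ by positivity. The phrase ``combined with a modest amplifier or a spectral multiplicity consideration'' is a red herring.

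Second, and more seriously, the pointwise fourth-moment bound you write down does \emph{not} hold uniformly on $\Gamma_0(N)\backslash\mathbb{H}$. The theta kernel $\theta_g^-$ (with $k=0$) fails to lie in $L^2$ because of the $\gamma=0$ term. The paper therefore works with \emph{differences} $\Theta(g_1,\cdot)-\Theta(g_2,\cdot)$, which cancels that term, and bounds $\sum_\varphi (|\varphi(g_1)|^2-|\varphi(g_2)|^2)^2$ instead. Even this squared-difference estimate carries an extra factor $1+V[H(g_1)^2+H(g_2)^2]$ involving the Atkin--Lehner height (Theorem~\ref{thm:intro-thm}). To conclude, one takes $g_2$ with $H(g_2)=N^{-1/2}$, applies the difference bound when $H(g_1)\le N^{-1/2}$, and covers the region $H(g_1)\ge N^{-1/2}$ by the classical Fourier-expansion bound $|\varphi(g)|\ll_\lambda N^{1/4}$ (equations \eqref{eq:Maasslargeheight}--\eqref{eq:hololargeheight}). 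The cusp treatment is essential, not cosmetic.

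Third, the counting is more delicate than ``short-vector counts.'' After reducing to pairs $(\gamma_1,\gamma_2)$ in a partially dualized lattice $R(\ell)^0$ with $\det\gamma_1=\det\gamma_2$, the paper splits according to whether the traces agree. The unequal-trace (Type~I) contribution reduces via the divisor bound to a single-vector count, but only becomes tractable after applying an Atkin--Lehner involution to rebalance the Siegel domains attached to the cusps of $\Gamma_0(d_BN)$; the naive tiling gives a counting problem the authors cannot solve for large $\ell$. The equal-trace (Type~II) contribution is handled by showing that the fibre $\{\gamma\in R(\ell)^0:\det\gamma=n,\ P(\gamma)\le T^2\}$ has essentially bounded cardinality in the critical range; this is not ``cancellation'' but a structural argument using commutators $[\gamma_1,\gamma_2]$ and representation-number bounds for binary quadratic forms (Propositions~\ref{prop:typeII-omega}--\ref{prop:typeII-split}).
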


Our main results apply not only to $\Gamma_0(N) \backslash \mathbb{H}$, but also to compact arithmetic quotients.  In general, such a manifold is of the shape $\Gamma \backslash \mathbb{H}$, where $\Gamma$ is commensurable with a lattice attached to a maximal order in a quaternion algebra $B$ over a totally real field $F$, with $B$ split at exactly one archimedean place (see \cite{Tits-Classification}).  We are content here to consider the case $F = \mathbb{Q}$, so that $B$ is an indefinite quaternion algebra, characterized up to isomorphism by its reduced discriminant $d_B$.  For each natural number $N$ coprime to $d_B$, we denote by $\Gamma_0^B(N)$ the group of proper (i.e., norm one) units arising from an Eichler order of level $N$ in $B$ (see Section \ref{sec:statements-setup} for details).  For example, if $B = \Mat_{2 \times 2}(\mathbb{Q})$, then we could take for $\Gamma_0^B(N) = \Gamma_0(N)$.
% (e.g., arising as the inverse image of  $\sm \ast  & \ast \\ 0 & \ast \esm$ under some surjective map $R_{\max} \rightarrow \Mat_{2 \times 2}(\mathbb{Z} / N \mathbb{Z})$ from some maximal order $R_{\max} \subset B$).  As usual, we embed $\Gamma_0^R(N)$ in $\SL_2(\mathbb{R})$ using some  map $B \hookrightarrow \Mat_{2 \times 2}(\mathbb{R})$.
We prove the following theorem.

\begin{theorem}
  \label{thm:thetasup-paper:let-gamm-be}
  Let $\Gamma=\Gamma^B_0(N)$ be as above with the level $N$ being squarefree.  Let $\varphi$ be a cuspidal Hecke--Maa{\ss} newform for $\Gamma$ with trivial character, $L^2$-normalized with respect to the hyperbolic probability measure on $\Gamma \backslash \mathbb{H}$.  Then
  \begin{equation}\label{eq:varph-ll_l-epsil}
    \|\varphi\|_{\infty} \ll_{\lambda_{\varphi}, \epsilon} V^{\frac{1}{4}+\epsilon},
  \end{equation}
  where $V = (d_BN)^{1+o(1)}$ denotes the covolume of $\Gamma$.% and the implied constant depends the Laplace eigenvalue $\lambda_{\varphi}$ in a continuous manner.
  \label{thm:individual-sup-compact}
\end{theorem}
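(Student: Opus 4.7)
The plan is to combine an adelic amplified pre-trace formula with a pointwise fourth-moment bound for eigenforms whose proof passes through a theta-kernel identity --- the strategy advertised by the paper's title. First, by the Jacquet--Langlands correspondence I would regard $\varphi$ as a vector in a cuspidal automorphic representation of $B^{\times}(\mathbb{A})/Z(\mathbb{A})$, taken locally to be the newvector at each finite place and of fixed archimedean type. The point $z\in\Gamma\backslash\mathbb{H}$ corresponds to a representative $g\in B^{\times}(\mathbb{A})$. Choose a Hecke amplifier $A=\sum_{n\asymp L}x_{n}T_{n}$ supported on prime powers of size $\asymp L$ coprime to $d_{B}N$, tailored so that its eigenvalue on $\varphi$ has absolute value $|a|\gg L^{1-\eps}$. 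A positive approximate spectral projector onto eigenvalue $\lambda_{\varphi}$ then yields the amplified pre-trace inequality
\[
|a|^{2}\,|\varphi(z)|^{2}\;\ll_{\lambda_{\varphi}}\;\sum_{j}|c_{j}|^{2}\,|\varphi_{j}(z)|^{2},
\]
where $\{\varphi_{j}\}$ ranges over an orthonormal basis of cusp forms with spectral parameter close to $\lambda_{\varphi}$ and $c_{j}$ is the eigenvalue of $\varphi_{j}$ under $A$.

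Next I would split off the $|\varphi_{j}(z)|^{2}$ factor by Cauchy--Schwarz,
\[
\sum_{j}|c_{j}|^{2}|\varphi_{j}(z)|^{2}\;\leq\;\Bigl(\sum_{j}|c_{j}|^{4}\Bigr)^{1/2}\Bigl(\sum_{j}|\varphi_{j}(z)|^{4}\Bigr)^{1/2},
\]
reducing the sup-norm problem to a pointwise fourth-moment estimate together with a large-sieve-type bound on the amplifier. The crucial input is an inequality of the form
\[
\sum_{j}|\varphi_{j}(z)|^{4}\;\ll_{\lambda_{\varphi},\eps}\;V^{1+\eps}
\]
over a spectrally bounded window, a Lindel\"of-strength target. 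This is the main obstacle, and where the theta functions enter. Writing $|\varphi_{j}(z)|^{2}=\varphi_{j}(z)\overline{\varphi_{j}(z)}$ as a diagonal matrix coefficient on $B^{\times}\times B^{\times}$, the sum over $j$ becomes a reproducing-kernel-type expression on the product group; via Shimizu's theta correspondence between $\PGL_{2}\times\PGL_{2}$ and the orthogonal group of the reduced-norm form on $B$ (or its trace-zero subspace), this kernel is identified with a theta lift evaluated along the diagonal at $(z,z)$.

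Unfolding the theta series then expresses the fourth moment as a weighted count of quaternions $\gamma$ in the Eichler order $\mathcal{O}\subset B$ of level $N$, with weights depending on $\gamma$'s reduced norm and on how $\gamma$ interacts with the local stabilizer of $z$ and with the level conditions at primes dividing $d_{B}N$. The bound $V^{1+\eps}$ should be achieved by decomposing this count into a bulk part, controlled by Siegel-mass-type averages over the genus (the $\bulk$ operator in the paper's notation), and a tail part, controlled by diophantine estimates (the $\tail$ operator). Combining the amplified pre-trace inequality with the fourth-moment bound and the amplifier large-sieve, and optimizing the amplifier length $L$, then yields $|\varphi(z)|^{2}\ll V^{1/2+\eps}$, and taking square roots gives the claim. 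The bulk of the work by far is the pointwise fourth-moment estimate: essentially all of the theta-function machinery and the delicate lattice counting reside there.
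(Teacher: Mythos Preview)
Your identification of the pointwise fourth-moment bound as the heart of the matter, and of the Shimizu theta correspondence as the route to it, is correct. But the amplification layer you wrap around it is a detour the paper does not take and does not need: once you have $\sum_j |\varphi_j(z)|^4 \ll_{\lambda,\eps} V^{1+\eps}$, positivity alone (drop every $j\neq\varphi$) already gives $|\varphi(z)|^4 \ll V^{1+\eps}$, hence $\|\varphi\|_\infty \ll V^{1/4+\eps}$. Your Cauchy--Schwarz step actually \emph{loses} information (with trivial amplifier $L=1$ it yields only $|\varphi(z)|^2\ll V^{1+\eps}$), so you would need a nontrivial fourth-moment large sieve for $\sum_j|c_j|^4$ just to recover what positivity gives for free; the ``optimizing $L$'' step has no content once the fourth moment is at Lindel\"of strength.

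The paper's route differs in one further technical respect. For Maa{\ss} forms the theta kernel $\Theta(g,\cdot)$ is \emph{not} in $L^2([\SL_2])$ (the $\gamma=0$ summand does not decay), so one cannot apply Bessel to bound $\sum_j|\varphi_j(g)|^4$ directly. Instead one takes a \emph{difference} $\Theta(g_1,\cdot)-\Theta(g_2,\cdot)$, which kills the $\gamma=0$ term, applies Bessel to obtain
\[
\sum_j\bigl(|\varphi_j(g_1)|^2-|\varphi_j(g_2)|^2\bigr)^2 \ll_{\lambda,\eps} V^{1+\eps}\bigl(1+V[H(g_1)^2+H(g_2)^2]\bigr),
\]
and then chooses a reference point $g_2$ where $|\varphi(g_2)|$ is already small: any point in the compact case, or a point at height $N^{-1/2}$ combined with Fourier-expansion bounds near the cusps in the split case.

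Your sketch of the lattice-counting phase is also off. The $L^2$-norm of the (differenced) theta kernel unfolds into a sum over \emph{pairs} $\gamma_1,\gamma_2$ in partial duals $R(\ell)$ of the order with $\det\gamma_1=\det\gamma_2$; after reducing to traceless parts, the delicate step (``Type II'') is bounding the fibers of $\det$ on $R(\ell)^0$ essentially by $O(1)$ uniformly in $d_B N$, which the paper achieves via a commutator argument and representation numbers of binary quadratic forms, not via any genus or Siegel-mass averaging. (The $\bulk$ and $\tail$ operators you cite are unused macro declarations in the preamble; they never appear in the text.)
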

Theorem \ref{thm:thetasup-paper:let-gamm-be} specializes to Theorem \ref{thm:thetasup-paper:let-n-be} upon taking $B = \Mat_{2 \times 2}(\mathbb{Q})$.  It improves upon (the $F = \mathbb{Q}$ case of) Templier's result \cite{MR2734340}, which gave the nontrivial bound $V^{\frac{1}{2} - \frac{1}{24} + \eps}$.   We emphasize that the estimate \eqref{eq:varph-ll_l-epsil} is uniform in the quaternion algebra $B$, hence gives a strong saving in the ``discriminant aspect''; the first nontrivial results in that aspect (for $B$ indefinite, as we have assumed) were established only very recently by Toma \cite{TomaDivSup}, updating an earlier preprint, giving (among other things) the bound $V^{\frac{1}{2}-\frac{1}{30}+\eps}$.   Our method applies equally in the setting of \emph{definite} quaternion algebras, where we improve the exponent $\frac{1}{3}$ of Blomer--Michel \cite{MR2852302,MR3103131} down to $\frac{1}{4}$ in analogy with Theorem \ref{thm:individual-sup-compact} (see Section \S\ref{sec:results-forms} for details).

\begin{remark}
  The dependence on the eigenvalue in \eqref{eq:varph-ll_l-epsil} is of exponential nature. With a more suitable choice of archimedean component of the theta kernel, it seems likely that one could show $\|\varphi \|_{\infty} \ll_{\eps} \lambda_\varphi^{\frac{1}{4} +\eps} V ^{\frac{1}{4} + \eps }$ (we obtain such an estimate for the definite analogue of \eqref{eq:varph-ll_l-epsil}, see Corollary \ref{cor:main-corollary-fourth-moment-adelic-formulation}).  Such a refinement of \eqref{eq:varph-ll_l-epsil} seems to require lengthy archimedean calculations that we feel would distract from the primary novelties of this paper concerning the level aspect.
\end{remark}

\begin{remark}
  In the depth aspect, where $N=p^n$ is a prime power, Hu and Saha \cite{MR4190047} establish for an indefinite quaternion algebra the strong bound $\|\varphi\|_\infty\ll_{\lambda_\varphi,p,d_B, \varepsilon} N^{5/24+\varepsilon}$. The local depth aspect and the global squarefree aspect, that we address in this paper, are arguably disparate.
\end{remark}

\begin{remark}
In a function field setting analogous to that of Theorem \ref{thm:thetasup-paper:let-n-be}, Sawin \cite{MR4307129} has used geometric techniques to establish (among other things) the sup-norm bound $\ll N^{\frac{1}{4} + \alpha_q}$, where $\alpha_q > 0$ tends to zero as the cardinality $q$ of the underlying finite field tends to $\infty$.  We do not see any obstruction to adapting the techniques of this paper to
the function field setting, where we expect they would give the improved bound $\ll_{\eps} N^{\frac{1}{4} + \eps}$.
\end{remark}

By combining the arguments of this paper with those of the prequel \cite{Theta-supnorm-Is} concerning the weight aspect for holomorphic forms, we obtain the following uniform hybrid bound in the weight and level aspects.  Here we encourage the reader to focus first on the case of fixed eigenvalue/weight,
which contains  the primary novelties of this paper.

\begin{theorem} Let $\Gamma = \Gamma^B_0(N)$ be as in Theorem \ref{thm:individual-sup-compact}.  Let $f$ be a  cuspidal holomorphic newform for $\Gamma$ with trivial character and weight $k\ge 2$. Suppose $f$ is $L^2$-normalized with respect to the hyperbolic probability measure on $\Gamma \backslash \mathbb{H}$.  Then
$$
\|\Im(\cdot)^{\frac{k}{2}}f\|_{\infty} \ll_{\epsilon} (kV)^{\frac{1}{4}+\epsilon},
$$
where $V = (d_BN)^{1+o(1)}$ denotes the covolume of $\Gamma$.
\label{thm:individual-sup-holo-compact}
\end{theorem}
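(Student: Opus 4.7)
The proof proceeds by combining the theta-kernel / fourth moment machinery established in the body of this paper (which handled the level aspect for Maa\ss\ forms) with the archimedean analysis of the prequel \cite{Theta-supnorm-Is} (which handled the weight aspect for holomorphic forms). The guiding principle is that both aspects fit into the same framework: write $|\Im(z)^{k/2} f(z)|^2$, evaluated at a given point $z \in \mathbb{H}$, via a theta kernel for the dual pair $(\GL_2, \GO(B))$, so that the problem of bounding $\|\Im(\cdot)^{k/2}f\|_\infty$ is reduced to controlling a fourth-moment-type average over the Eichler order $\mathcal{O} \subset B$. Choosing the test data as a pure tensor, the archimedean component is taken as in \cite{Theta-supnorm-Is} to localize around $z$ on a scale $\sim k^{-1/2}$, producing the required $k$-dependence, while the nonarchimedean components are the ones constructed in the level-aspect portion of this paper, adapted to pick out the newvector. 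With these choices, the local matrix coefficients of the theta lift at each place decouple, and the global count reduces to a product of local volume and geometric inputs.

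The heart of the argument is then a uniform fourth-moment estimate of the shape
\[
\|\Im(\cdot)^{k/2}f\|_\infty^4 \ll_\eps (kV)^{\eps} \cdot \bigl(\text{amplified lattice count in } \mathcal{O}\bigr),
\]
where the lattice count combines archimedean smallness with nonarchimedean congruence conditions at primes dividing $d_B N$. The archimedean half of the count is exactly the one analyzed in \cite{Theta-supnorm-Is} and contributes a factor polynomial in $k$; the nonarchimedean half is exactly the count treated in this paper and contributes a factor polynomial in $V$; the key point is that the two factorize cleanly because the relevant local archimedean and nonarchimedean Bruhat--Schwartz functions on $B$ are supported on disjoint coordinates. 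Cauchy--Schwarz then converts the fourth moment into the desired $(kV)^{1/4+\eps}$ sup-norm bound, with all implicit constants absolute.

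The main obstacle is tracking the joint uniformity carefully: one must verify that the $\Im(z)^{k/2}$ weight does not introduce spurious dependencies when $z$ ranges over a fundamental domain of large diameter (in particular near the cusps, when $B$ is split), and that the amplifier used for the level-aspect argument of this paper continues to produce positive contribution from $f$ when the archimedean test vector is adapted to weight $k$ rather than to the Maa\ss\ setting. Once the local factorizations are in place and the archimedean estimates of \cite{Theta-supnorm-Is} are substituted for the archimedean inputs used in the Maa\ss\ case, the global bookkeeping is essentially formal, giving the stated hybrid bound.
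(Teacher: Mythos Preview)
Your high-level picture is right: the proof does proceed by choosing a factorizable test function in the theta kernel, with the archimedean component as in \cite{Theta-supnorm-Is} and the nonarchimedean component equal to $\mathds{1}_R$, and then bounding a fourth moment over the family via the $L^2$-norm of the theta kernel. But two features of your write-up are genuinely wrong about how the argument works.

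First, there is no amplifier. You twice refer to ``the amplifier used for the level-aspect argument of this paper'' and to an ``amplified lattice count''. The whole point of the theta/fourth-moment method, both here and in \cite{Theta-supnorm-Is}, is that it bypasses amplification entirely: one bounds $\sum_{\varphi}(|\varphi(g_1)|^2-|\varphi(g_2)|^2)^2$ (or $\sum_\varphi |\varphi(g)|^4$ when $k$ is large) directly by Bessel's inequality against $\|\Theta(g_1,\cdot)-\Theta(g_2,\cdot)\|_2^2$, and then estimates that $L^2$-norm geometrically. There is no positivity-of-amplifier issue to check, and framing the argument around one misrepresents the mechanism.

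Second, the archimedean and nonarchimedean inputs do \emph{not} ``factorize cleanly'' in the counting problem. The test functions factor over places, but after unfolding the $L^2$-norm over Siegel domains one lands on a single global count: pairs $\gamma_1,\gamma_2$ in the conjugated (partially dualized) lattice $R(\ell;g)$ lying in the archimedean region $\Omega(\delta,T)$ with $\det\gamma_1=\det\gamma_2$, where $\delta\asymp k^{-1}$ encodes the weight and $\ell\mid d_BN$, $T\preccurlyeq (d_BNk)^{1/2}/\ell$ encode the level. This is a genuinely coupled problem; the paper handles it by reducing to the ternary lattice $R(\ell;g)^0$, splitting into Type I (count in $\Omega(\delta,T)$) and Type II (fixed-determinant fibres) estimates, and proving both uniformly in $k,\ell,d_B,N$ via successive-minima bounds and a commutator argument. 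Your sketch also omits the near-cusp input in the split case: for $H(g)\ge N^{-1/2}$ one needs the Fourier-expansion bound (Deligne plus Hoffstein--Lockhart, cf.\ the lemma giving \eqref{eq:Holonearcuspramanujan}--\eqref{eq:Holonearcuspexpdecay}) to cover the region where the fourth-moment bound \eqref{eqn:main-theorem-adelic-hol} degrades.
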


% addressed by Theorems \ref{thm:thetasup-paper:let-gamm-be} and \ref{thm:individual-sup-holo-compact},

\subsection{Selected applications}

A straightforward application of these improved sup-norms is to $L^p$-norms for $2\le p \le \infty$ by means of interpolation. We state here only the split holomorphic case, as in this case, strong $L^4$-bounds were given by Buttcane--Khan \cite{MR3368079} with subconvexity input from \cite{MR3635360}.

%Through interpolation, the improved sup-norm bounds give rise to new bounds on $L^p$-norms. We state here only the split holomorphic case as in this case strong $L^4$-bounds were given by Buttcane--Khan \cite{MR3368079} with subconvexity input from \cite{MR3635360}.

\begin{corollary} Let $q$ denote an odd prime and $f$ a  cuspidal holomorphic newform for $\Gamma_0(q)$ with trivial character and weight $k$. Suppose $f$ is $L^2$-normalized with respect to the hyperbolic probability measure on $\Gamma_0(q) \backslash \mathbb{H}$. Then, for $2 \le p \le \infty$ and any $\eta>0$, we have 
$$
\| \Im(\cdot)^{\frac{k}{2}} f \|_p \ll_{k, \eta} \begin{cases} q^{\frac{1}{6}-\frac{1}{3p}+\eta} , & 2 \le  p\le 4, \\ q^{\frac{1}{4}-\frac{2}{3p}+\eta} , & 4 \le p \le \infty, \end{cases}
$$
for $k$ sufficiently large in terms of $\eta$.
\end{corollary}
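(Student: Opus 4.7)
The plan is to combine three input bounds on $F := \Im(\cdot)^{k/2} f$ via log-convexity of $L^p$-norms (Hölder's inequality).  The $L^2$-endpoint is simply $\|F\|_2 = 1$, given by the assumed normalization.  The $L^\infty$-endpoint is $\|F\|_\infty \ll_{\eta} q^{\frac{1}{4}+\eta}$, obtained by specializing Theorem~\ref{thm:individual-sup-holo-compact} to $B = \Mat_{2\times 2}(\mathbb{Q})$ and $N = q$, using $V \asymp q^{1+o(1)}$.  The intermediate $L^4$-bound $\|F\|_4 \ll_{k,\eta} q^{\frac{1}{12}+\eta}$ is the Buttcane--Khan fourth-moment estimate \cite{MR3368079}, which rests on the subconvex bound of \cite{MR3635360}; one checks that both branches of the claimed formula give the exponent $\frac{1}{12}$ at $p = 4$, so this is the correct target.

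For $2 \le p \le 4$ I would write $\frac{1}{p} = \frac{1-\theta}{2} + \frac{\theta}{4}$, so that $\theta = 2 - \frac{4}{p}$, and apply Hölder to obtain
\[
\|F\|_p \le \|F\|_2^{1-\theta}\|F\|_4^{\theta} \ll_{k,\eta} q^{\theta(\frac{1}{12} + \eta)} = q^{\frac{1}{6} - \frac{1}{3p} + \eta'}.
\]
For $4 \le p \le \infty$ I would write $\frac{1}{p} = \frac{1-\alpha}{4} + \frac{\alpha}{\infty}$, so that $\alpha = 1 - \frac{4}{p}$, and interpolate between the $L^4$ and $L^\infty$ bounds to obtain
\[
\|F\|_p \le \|F\|_4^{1-\alpha} \|F\|_\infty^{\alpha} \ll_{k,\eta} q^{(1-\alpha)/12 + \alpha/4 + \eta'} = q^{\frac{1}{4} - \frac{2}{3p} + \eta'}.
\]

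The argument is essentially exponent bookkeeping; there is no serious obstacle, since all the depth is encapsulated in Theorem~\ref{thm:individual-sup-holo-compact} and in the $L^4$ bound of \cite{MR3368079}.  The only routine point of care is to verify that the cited $L^p$-norms refer to the same probability-measure normalization on $\Gamma_0(q) \backslash \mathbb{H}$ used here; discrepancies amount at worst to factors of $\vol(\Gamma_0(q) \backslash \mathbb{H})^{o(1)} = q^{o(1)}$, which are absorbed into the $q^\eta$ slack.
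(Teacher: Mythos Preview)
Your proposal is correct and matches the paper's intended argument: the paper does not spell out a proof but presents the corollary as a ``straightforward application'' of the sup-norm bound ``by means of interpolation,'' citing exactly the Buttcane--Khan $L^4$-bound \cite{MR3368079} with subconvexity input from \cite{MR3635360}. Your three-point interpolation ($L^2$, $L^4$, $L^\infty$) and exponent bookkeeping are precisely what is meant, and the requirement that $k$ be sufficiently large in terms of $\eta$ indeed originates from the Buttcane--Khan result.
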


% \rs{The $\epsilon$-convention does not apply here, thus it is stated here with $\eta>0$.}

Further applications of sup-norm bounds include shifted convolution problems and subconvexity results for $L$-function, see for example \cite{MR1990914, MR2207235, MR3991392, MR4190047, NordentoftEisSup}. Often, such an application would go through the use of a uniform version of Wilton's estimate. Following the arguments of \cite[\S 2.7]{MR2207235} with the improved sup-norm bound, one may derive the following corollary.

\begin{corollary}
	Let $\lambda(m)$, $m \in \mathbb{N}$, denote the Hecke eigenvalues, normalized so that the Ramanujan conjecture reads $|\lambda(m)| \ll_\eps  m^{\eps}$, of either a cuspidal Hecke--Maa{\ss} newform or a  cuspidal holomorphic newform of weight $k$ on $\Gamma_0(N)$ with trivial character, where $N$ is squarefree. Then, for any $\alpha \in \mathbb{R}$, one has
	$$
	\sum_{m \le M} \lambda(m) e(m\alpha) \ll_{\epsilon} M^{\frac{1}{2}+\epsilon} \cdot \begin{cases} N^{\frac{1}{4}+\epsilon}, & \text{in the Maa{\ss} case}, \\ N^{\frac{1}{4}+\epsilon} k^{\frac{1}{2}+\epsilon} , & \text{in the holomorphic case}, \end{cases}
	$$
	where the implied constant in the Maa{\ss} case further depends on the eigenvalue of the form.
%\end{corollary}
%
%\begin{corollary} Let $\lambda(m)$, $m \in \mathbb{N}$, denote the Hecke eigenvalues, normalized such that $\lambda(m)=m^{o(1)}$, of either a Hecke--Maa{\ss} newform or a classical cuspidal holomorphic newform of weight $k$ on $\Gamma_0(N)$ with trivial character, where $N$ is squarefree.

%Furthermore, for any $X,Y \ge 1$, any integers $a,b$, and any complex sequence $\mathbf{c}=\{c_h\}_{h \in \mathbb{Z}}$, one has
%$$
%\sum_{h \in \mathbb{Z}} c_h \sum_{\substack{m \le X, \, n \le Y \\ am-bn=h}} \overline{\lambda(m)}\lambda(n)  \ll_{\epsilon} (XY)^{\frac{1}{2}+\epsilon} \|\mathbf{c}\|_2 \begin{cases} N^{\frac{1}{2}+\epsilon}, & \text{in the Maa{\ss} case}, \\ N^{\frac{1}{2}+\epsilon} k^{1+\epsilon} , & \text{in the holomorphic case}, \end{cases}
%$$
%where the implied constant in the Maa{\ss} case further depends on the eigenvalue of the form.
\end{corollary}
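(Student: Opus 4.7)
The plan is to combine the improved sup-norm estimates of Theorems \ref{thm:individual-sup-Gamma_0(N)} and \ref{thm:individual-sup-holo-compact} (the latter specialized to $B=\Mat_{2\times 2}(\mathbb{Q})$) with the Wilton-type inequality developed in \cite[\S 2.7]{MR2207235}. That inequality already expresses a sum of the form $\sum_{m \le M} \lambda(m) e(m\alpha)$ in terms of the $L^\infty$-norm of the associated automorphic form on $\Gamma_0(N)\backslash\mathbb{H}$, with an overall prefactor of $M^{1/2+\eps}$; substituting the new bounds $\|\varphi\|_\infty \ll_{\lambda_\varphi,\eps} N^{1/4+\eps}$ and $\|\Im(\cdot)^{k/2} f\|_\infty \ll_\eps (kN)^{1/4+\eps}$ as input should yield the corollary at once.

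Operationally, I would first reduce by partial summation to bounding smooth dyadic sums $\sum_m \lambda(m) e(m\alpha) W(m/M)$ with $W \in C^\infty_c((1/2,2))$. Next I would apply Dirichlet approximation $\alpha = a/q + \beta$ with $(a,q)=1$, $q \le M^{1/2}$, and $|\beta| \le 1/(q M^{1/2})$, reducing to the essentially rational case. The twisted smoothed sum $\sum_m \lambda(m) e(ma/q) W(m/M)$ is then analyzed via Voronoi summation---equivalently, via the Fourier expansion of $\varphi$ (resp.\ $y^{k/2} f$) at the cusp $a/q$ of $\Gamma_0(N)$---which expresses it as the pointwise value of the automorphic form at a suitable point of $\mathbb{H}$ times Mellin/gamma factors absorbing the test function $W$ and the $\beta$-phase. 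Bounding this value by the new sup-norm estimate and tracking the normalization of the leading Fourier coefficient (which, in the probability-measure normalization and via Rankin--Selberg together with standard bounds for $L(1,\Ad\varphi)$, contributes only $(kN)^\eps$) completes the Maa{\ss} case.

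For the holomorphic case, the additional factor $k^{1/2+\eps}$ reflects the Gaussian concentration of the terms in the expansion $f(z) = a(1)\sum_{n\ge 1} n^{(k-1)/2} \lambda(n) e(nz)$: a single evaluation of $y^{k/2} f$ at height $y \sim k/M$ captures only a window of $\sim M/\sqrt k$ consecutive values of $n$ near the peak at $n \sim M$, so reconstructing the full sum $\sum_{n \le M}$ requires summation over $\sim \sqrt k$ overlapping evaluations. The main potential obstacle is the $M^{1/2}$ prefactor in the Wilton-type inequality itself; this is a Fourier-theoretic identity, established in \cite[\S 2.7]{MR2207235} independently of the numerical value of the input sup-norm exponent, so it transfers to the present setting with no change.
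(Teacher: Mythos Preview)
Your proposal is correct and coincides with the paper's own argument, which consists solely of the sentence ``following the arguments of \cite[\S 2.7]{MR2207235} with the improved sup-norm bound, one may derive the following corollary.'' Your outline (partial summation, Dirichlet approximation, Fourier expansion at the relevant cusp, then invoking the sup-norm) is precisely the content of that reference, with Theorems \ref{thm:individual-sup-Gamma_0(N)} and \ref{thm:individual-sup-holo-compact} substituted for the earlier $N^{1/3+\eps}$ bound.

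One minor remark: your heuristic for the extra weight factor in the holomorphic case slightly overcounts. The sup-norm input is $(kN)^{1/4+\eps}$ while the target is $N^{1/4+\eps}k^{1/2+\eps}$, so the discrepancy to be explained is only $k^{1/4}$, not $k^{1/2}$. Summing $\sqrt{k}$ evaluations by the triangle inequality would overshoot; the correct bookkeeping involves both the Gaussian width $\sim M/\sqrt{k}$ of the Fourier weight $n^{(k-1)/2}e^{-2\pi n y}$ and the normalization of $a_f(1)$ via Hoffstein--Lockhart, and these combine to give exactly $k^{1/4}$. This is handled in the cited argument and does not affect the validity of your approach.
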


As a consequence, we may, for example, improve the main theorem in \cite{MR3991392}.

\begin{corollary} Let $\varphi$ either be a cuspidal Hecke--Maa{\ss} newform or a  cuspidal holomorphic newform with respect to $\Gamma_0(q)$ with $q$ a prime. Let $\chi$ be a primitive Dirichlet character of modulus $m$ with $(m,q)=1$. Suppose $q = m^{\eta}$ with $0<\eta<2$. Then, we have
$$
L(\varphi \otimes \chi, \tfrac{1}{2}) \ll_{\epsilon} \mathcal{C}^{\frac{1}{4}+\epsilon} \left( \mathcal{C}^{-\frac{\eta}{4(2+\eta)}}+\mathcal{C}^{- \frac{2- \eta - 4 \vartheta}{8(2+\eta)}} \right),
$$
where the implied constant depends on the eigenvalue respectively weight of $\varphi$, $\mathcal{C}=qm^2$ is the conductor of the $L$-function, and $\vartheta$ is the current best bound towards the generalized Ramanujan conjecture if $\varphi$ is a Maa{\ss} form and $0$ if $\varphi$ is holomorphic.
\end{corollary}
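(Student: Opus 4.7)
The plan is to imitate the proof of the main theorem of \cite{MR3991392}, substituting the improved Wilton-type estimate of the preceding corollary for the weaker input used there. The improvement of the sup-norm exponent (from $N^{1/3}$-type to $N^{1/4}$-type) then propagates mechanically through the subconvexity argument and produces the claimed improvement.

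First I would invoke the approximate functional equation for $L(\varphi\otimes\chi,\tfrac12)$, writing the central value, up to negligible error, as a smoothly weighted partial sum
\[
  S(X) \;=\; \sum_n \frac{\lambda(n)\chi(n)}{\sqrt{n}}\, V\!\left(\frac{n}{X}\right), \qquad X \asymp \sqrt{\mathcal{C}},
\]
together with its dual. I would then open $\chi$ into additive characters using the Gauss sum $\chi(n) = \tau(\bar\chi)^{-1}\sum_{a\bmod m}\bar\chi(a)\, e(an/m)$, where $|\tau(\bar\chi)| = \sqrt m$, reducing the problem to estimating the resulting additively twisted exponential sums.

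At this point two complementary treatments enter. The first applies Cauchy--Schwarz in the variable $a\pmod m$ combined with Rankin--Selberg on the diagonal; the diagonal contributes $\ll m^{1/2}$ (equivalently $\mathcal{C}^{1/4-\eta/(4(2+\eta))+\eps}$), producing the first exponent in the bound. The second treatment handles the off-diagonal contribution by applying Voronoi summation (which modifies the relevant arithmetic conductor when $(m,q)=1$) and then invoking the preceding Wilton-type corollary on the dual sum; in the Maa{\ss} case, the individual Hecke-eigenvalue bound $|\lambda(n)| \ll n^{\vartheta+\eps}$ enters here and yields the second exponent $\mathcal{C}^{1/4-(2-\eta-4\vartheta)/(8(2+\eta))+\eps}$, which collapses to the holomorphic case when $\vartheta = 0$ by Deligne. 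Optimizing between the two treatments gives the stated two-term bound.

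The step I expect to be the main technical obstacle is the off-diagonal/Voronoi analysis, where one must carefully track the interplay of the level $q$ of $\varphi$ with the modulus $m$ of $\chi$ (using $(m,q) = 1$) and verify that the new exponent $1/4$ coming from the improved sup-norm input replaces, in the correct slot, the weaker exponent used in \cite{MR3991392}. Since exactly this bookkeeping has been carried out there for the older input, the present corollary reduces to re-running that analysis with the sharpened Wilton-type bound from the preceding corollary as input.
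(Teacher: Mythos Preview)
Your proposal is correct and takes essentially the same approach as the paper: the paper itself gives no proof beyond the remark that the result follows by re-running the argument of \cite{MR3991392} with the improved Wilton-type estimate of the preceding corollary as input, which is precisely what you propose. Your more detailed sketch of the internal mechanics of \cite{MR3991392} (approximate functional equation, opening $\chi$ via Gauss sums, Cauchy--Schwarz versus Voronoi, the appearance of $\vartheta$) goes beyond what the paper records, so any inaccuracies there are questions about \cite{MR3991392} rather than discrepancies with the present paper.
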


%\todo{Add in improved bounds on shifted convolutions sums and some improved subconvexity bounds}

%\todo{Add any other application, which only depends on the individual sup-norm

%triple product first moments

%$\int |\varphi|^4 \psi$

%$\int |\varphi|^4(x+iy) e(h x) d x$?

%Interpolation bounds for various $L^p$ norms. (is there a stronger $L^4$ bound in the level aspect?)
%}

%\rs{State of the art $L^p$-bounds}

\subsection{The fourth moment and further applications}

%The theta approach 

The method underlying most previous works on this problem, including the work of Harcos--Templier giving the bound $\ll_{\epsilon} N^{1/3+\epsilon}$, is based on the amplification method introduced in the original paper of Iwaniec--Sarnak. Recently, Steiner \cite{MR4099641} and Khayutin--Steiner \cite{Theta-supnorm-Is} introduced a new method based on analysis of fourth moments over families. The key observation of these papers was that such a fourth moment naturally arises as the $L^2$-norm of a theta kernel. Alternatively, Blomer \emph{et al.} \cite{BeyondSphericalSup} have demonstrated that one may use Vorono\"i summation for Rankin--Selberg convolutions in place of a theta kernel.
% Besides being the central object of the theta correspondence
% \cite{MR1478492}
Prior to the application to fourth moments, theta kernels have played similar roles in the study of quantum variance \cite{nelson-variance-73-2s,nelson-variance-IIs,nelson-variance-3s,Nelson-TwistedSym2}, numerical computations \cite{MR3356036}, and in the proof of Waldspurger's formula \cite{MR783511}.  In each of these earlier works, theta kernels apparently served as a substitute for parabolic Fourier expansions, giving a tool for establishing analogues on compact quotients (where such expansions are not available) of results known already for non-compact quotients.  The present work differs in that our main result is new even for the non-compact quotients $\Gamma_0(N) \backslash \mathbb{H}$.

% In all applications of the method hitherto, it has apparently served as a substitute for parabolic Fourier expansions.
% The latter being a powerful feature when combined with pre-trace formulae such as the Kuznetsov or Petersson trace formula.

In this paper, we follow generally the theta kernel strategy of the prequel \cite{Theta-supnorm-Is} and prove a fourth moment bound from which one may deduce the Theorems \ref{thm:individual-sup-Gamma_0(N)}, \ref{thm:individual-sup-compact}, and \ref{thm:individual-sup-holo-compact} after some additional analysis near any cusps.  In what follows, we let $\Gamma = \Gamma^B_0(N)$ be a lattice as in Theorem \ref{thm:individual-sup-compact} and denote by $V =  (d_B N)^{1+o(1)}$ the volume of $\Gamma \backslash \mathbb{H}$.

The formulation of our results requires some quantification of the closeness of a point $z \in \Gamma \backslash \mathbb{H}$ to the cusps.  If $\Gamma \backslash \mathbb{H}$ is non-compact (i.e., $d_B = 1$), then we may assume that $\Gamma = \Gamma_0(N)$, and we set
\begin{equation*}
  H(z) = \max_{\gamma \in A_0(N)} \Im(\gamma z),
\end{equation*}
where $A_0(N)$ denotes the lattice of Atkin--Lehner operators for $\Gamma_0(N)$ (see Section \S\ref{sec:results-split-case} for another formulation of the definition of $H$).  If $\Gamma \backslash \mathbb{H}$ is compact, then we set $H(z) = 0$.

\begin{theorem}\label{thm:thetasup-paper-merge-20220418:let-gamma-=}
  Let $\Gamma = \Gamma^B_0(N)$ be as in Theorem \ref{thm:individual-sup-compact}.  Fix $\Lambda > 0$, and let $(\varphi_i)_i$ be an orthonormal set of cuspidal Hecke--Maa{\ss} newforms with Laplace-eigenvalue bounded by $\Lambda$ on the hyperbolic surface $\Gamma \backslash \mathbb{H}$ equipped with the hyperbolic probability measure. Then, for any two points $z,w \in \Gamma \backslash \mathbb{H}$, we have
	\begin{equation}
	\sum_i \left( |\varphi_i(z)|^2-|\varphi_i(w)|^2 \right)^2 \ll_{\epsilon, \Lambda} V^{1+\epsilon} \left(1+ V[H(z)^2+H(w)^2] \right).
		\label{eq:main-fourth-moment-thm-ineq}
	\end{equation}
	
	Similarly, for an orthonormal set $(f_i)_i$ of cuspidal holomorphic newforms for $\Gamma$ of weight $k$ and trivial character with respect to the hyperbolic probability measure on $\Gamma \backslash \mathbb{H}$, we have
	\begin{multline*}
	\sum_i \left( |\Im(z)^{\frac{k}{2}}f_i(z)|^2-|\Im(w)^{\frac{k}{2}}f_i(w)|^2 \right)^2 \\
	 \ll_{\epsilon} (Vk)^{1+\epsilon} \left(1+V^{\frac{1}{2}}[H(z)+H(w)]+Vk^{-\frac{1}{2}}[H(z)^2+H(w)^2] \right)
	\end{multline*}
	for any two points $z,w \in \Gamma \backslash \mathbb{H}$.
	\label{thm:intro-thm}
\end{theorem}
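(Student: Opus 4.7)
The plan is to realize the fourth moment as a positive multiple of the $L^2$-norm of a theta kernel on (a metaplectic cover of) $\SL_2$, following and extending the strategy of the prequel \cite{Theta-supnorm-Is}. Let $B$ be the relevant quaternion algebra, viewed as a rank-four quadratic space via its reduced norm. To the reductive dual pair $(\widetilde{\SL}_2, \O(B))$ and a Schwartz function $\phi = \phi_\infty \otimes \phi_\fin \in \mathcal{S}(B(\mathbb{A}))$ I associate the theta kernel $\Theta_\phi(g; h_1, h_2)$. With $\phi_\fin$ taken essentially as the characteristic function of an Eichler order $R$ of level $N$, and $\phi_\infty$ adapted to the archimedean parameter (a Gaussian of width $\sim k^{-1/2}$ for holomorphic weight $k$; a fixed bump in the Maa{\ss} case), the Shimizu--Waldspurger decomposition on the orthogonal side yields
\[
\Theta_\phi(g; h_1, h_2) = \sum_f \theta_\phi(f)(g)\, f(h_1)\overline{f(h_2)} + (\text{Eisenstein/residual}),
\]
where $f$ runs over Hecke eigenforms and $\theta_\phi(f)$ is the Shimizu lift of $|f|^2$ to the metaplectic side.

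Specializing $(h_1,h_2)$ to $(z,z)$ and $(w,w)$, I would form the difference $\Psi(g) := \Theta_\phi(g; z, z) - \Theta_\phi(g; w, w)$, whose spectral expansion becomes
\[
\Psi(g) = \sum_f \theta_\phi(f)(g)\bigl(|f(z)|^2 - |f(w)|^2\bigr) + (\text{Eisenstein}).
\]
By Parseval, together with a uniform lower bound $\|\theta_\phi(f)\|_{L^2} \gg 1$ in the spectral range of interest (secured by the choice of $\phi_\infty$, in the spirit of the archimedean analysis of the prequel), one obtains
\[
\sum_f \bigl(|f(z)|^2 - |f(w)|^2\bigr)^2 \ll \|\Psi\|_{L^2}^2 + (\text{Eisenstein contribution}),
\]
and the Eisenstein contribution is then controlled separately via explicit expansions of the Eisenstein series for $\Gamma$, exploiting the squarefreeness of $N$.

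The $L^2$-norm on the right is computed geometrically via the Rallis inner product formula, unfolding into a sum over $\gamma$ in an appropriate subset of $B(\mathbb{Q})$ of local pairings depending on $\phi$, $z$, and $w$. Away from cusps, the main contribution comes from a bounded set of short integral quaternions in $R$, producing the main term $V^{1+\epsilon}$ (resp.\ $(Vk)^{1+\epsilon}$). Near a cusp where $H(z)$ is large, the point $z$ encounters many small integral quaternions aligned with the stabilizer of that cusp; an explicit count of these, uniform in the level $N$ and the discriminant $d_B$, yields the additional term $V^{2+\epsilon}H(z)^2$ in the Maa{\ss} case. In the holomorphic case, the tighter Gaussian localization of $\phi_\infty$ produces the intermediate term $V^{3/2}kH$ as well as the sharper $V^2 k^{1/2} H^2$ contribution, reflecting the scale of concentration set by the weight.

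The main obstacle is the uniform geometric count of integral elements of $B$ of controlled reduced norm, weighted by archimedean data at $z$ and $w$, with the correct dependence on $H(z)$ and $H(w)$ extracted cleanly in both the level and discriminant aspects. A secondary difficulty is treating the Eisenstein part of the spectral decomposition without losing powers of $V$: this seems to require either an auxiliary subtraction built into $\phi$ or a careful bound on Eisenstein values near the cusps of $\Gamma \backslash \mathbb{H}$, where the interpretation of $H$ via Atkin--Lehner operators plays an essential bookkeeping role.
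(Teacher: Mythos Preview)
Your broad strategy---theta kernel, difference at two points, bound the fourth moment by an $L^2$-norm---matches the paper, but two of your proposed mechanisms are not what the paper does, and one of them is where the actual difficulty lies.

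First, a small correction: since $\dim B = 4$ is even, no metaplectic cover of $\SL_2$ is needed.  More importantly, you invoke Parseval and then worry about the Eisenstein contribution.  The paper uses Bessel's inequality instead: the theta identity $\langle \Theta(g,\cdot),\varphi^{\JL}\rangle / \|\varphi^{\JL}\|^2 = V^{-1}|\varphi(g)|^2$ together with positivity gives
\[
\sum_{\varphi} \frac{\|\varphi^{\JL}\|^2}{V^2}\bigl(|\varphi(g_1)|^2-|\varphi(g_2)|^2\bigr)^2 \leq \|\Theta(g_1,\cdot)-\Theta(g_2,\cdot)\|_2^2,
\]
so the Eisenstein spectrum is simply dropped, not estimated.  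Your ``secondary difficulty'' evaporates.

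Second, and this is the real gap: you propose computing $\|\Psi\|_2^2$ via the Rallis inner product formula.  The paper does not do this and it is unclear that such an approach would yield the required uniformity in the level.  Instead, the paper bounds $\|\Theta(g_1,\cdot)-\Theta(g_2,\cdot)\|_2^2$ by covering $\Gamma_0(d_BN)\backslash\mathbb{H}$ with Siegel sets and integrating the explicit Fourier expansion of $\theta$ cusp by cusp.  The decisive maneuver---absent from your proposal---is to first apply the Fricke involution to the standard tiling, which has the effect of shrinking the Siegel domain attached to the cusp of width $\ell$ down to height $\gg \ell^2/(d_BN)$.  This restricts the counting parameter $T$ to the range $T \ll (d_BN)^{1/2}/\ell$ rather than $T \ll \ell^{1/2}$, and without this rebalancing the Type~I count (traceless elements of the partially-dualized lattice $R(\ell)^0$ in a box) would be hopeless for large $\ell$.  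After this, the paper splits the resulting second moment of the theta sum into Type~I (pairs with unequal trace, handled by a successive-minima argument extending Blomer--Michel to the indefinite case) and Type~II (pairs with equal trace and equal norm on the traceless part, handled by a delicate commutator argument showing that each fiber over a fixed norm has essentially $O(1)$ elements).  Your proposal gestures at a ``geometric count'' but does not identify either the Siegel-domain rebalancing or the Type~I/Type~II dichotomy, which together constitute the main content of the proof.
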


In the case that the hyperbolic surface $\Gamma \backslash \mathbb{H}$ is compact, we may integrate $z$ and $w$ over the whole surface and get an essentially sharp bound on the fourth moment of fourth norm in the level aspect, thereby extending a result of Blomer \cite{MR3082245} to the case of cocompact lattices $\Gamma$.

\begin{corollary}
  With notation and assumptions as in Theorem \ref{thm:thetasup-paper-merge-20220418:let-gamma-=}, and assuming further that $\Gamma \backslash \mathbb{H}$ is compact, we have
  % Let $\Gamma = \Gamma^B_0(N)$ be a lattice as in Theorem \ref{thm:individual-sup-compact}. Further, let $\varphi_i$ be an orthonormal set of Hecke--Maa{\ss} newforms with Laplace-eigenvalue bounded by $\Lambda$ on the compact hyperbolic surface $\Gamma \backslash \mathbb{H}$ equipped with the hyperbolic probability measure. Then, we have
	\begin{equation*}
	\sum_i \|\varphi_i\|_4^4 \ll_{\epsilon, \Lambda} V^{1+\epsilon},
%		\label{eq:main-fourth-norm-thm-ineq}
	\end{equation*}
	% Likewise, for an orthonormal set $f_i$ of classical cuspidal holomorphic newforms for $\Gamma$ of weight $k$ and trivial character with respect to the hyperbolic probability measure on $\Gamma \backslash \mathbb{H}$, we have
	\begin{equation*}
	\sum_i \|\Im(\cdot)^{\frac{k}{2}}f_i\|_4^4 \ll_{\epsilon} (Vk)^{1+\epsilon}.
	\end{equation*}
	\label{cor:intro-thm}
\end{corollary}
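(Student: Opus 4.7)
The plan is to integrate the pointwise fourth-moment estimate of Theorem \ref{thm:intro-thm} over both free variables against the hyperbolic probability measure. Since $\Gamma\backslash\mathbb{H}$ is assumed compact we have $H\equiv 0$, so the two bounds of Theorem \ref{thm:intro-thm} specialize to
\[
\sum_i \bigl(|\varphi_i(z)|^2 - |\varphi_i(w)|^2\bigr)^2 \ll_{\epsilon, \Lambda} V^{1+\epsilon},
\]
and analogously for the weight-$k$ holomorphic family with $V^{1+\epsilon}$ replaced by $(Vk)^{1+\epsilon}$, uniformly in $z,w \in \Gamma\backslash\mathbb{H}$.

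I would then integrate this inequality against $dz\,dw$ on $(\Gamma\backslash\mathbb{H})^2$ and swap the finite sum with the integral. For each $i$, using $\|\varphi_i\|_2 = 1$ with respect to the probability measure, one computes
\[
\iint \bigl(|\varphi_i(z)|^2 - |\varphi_i(w)|^2\bigr)^2 dz\,dw \;=\; 2\|\varphi_i\|_4^4 - 2\|\varphi_i\|_2^4 \;=\; 2\|\varphi_i\|_4^4 - 2.
\]
Summing over $i$ gives $2\sum_i \|\varphi_i\|_4^4 \ll_{\epsilon,\Lambda} V^{1+\epsilon} + 2N$, where $N$ is the cardinality of the orthonormal family, and the analogous identity holds in the holomorphic case.

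It remains to bound $N$. Classical estimates supply $N \ll_\Lambda V$ in the Maa{\ss} case (via the heat-kernel or Selberg-trace-formula upper bound on the Laplace eigenvalue counting function for a surface of volume $V$) and $N \ll Vk$ in the holomorphic case (via the standard dimension bound $\dim S_k(\Gamma) \ll Vk$ for $k \ge 2$, e.g.\ from Riemann--Roch). In either case $N$ is of the same order as the main term $V^{1+\epsilon}$, resp.\ $(Vk)^{1+\epsilon}$, and so can be absorbed, yielding the stated corollary. The entire argument is a short integration-and-counting reduction; the substance resides in Theorem \ref{thm:intro-thm}, so no serious obstacle is anticipated.
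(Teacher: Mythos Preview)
Your proposal is correct and matches the paper's own (terse) indication: the paper simply remarks that one may ``integrate $z$ and $w$ over the whole surface'' to obtain the corollary, which is precisely your double-integration argument. Your explicit handling of the residual $-2N$ term via standard eigenvalue-counting / dimension bounds is an appropriate completion of what the paper leaves implicit.
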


This result may also be recast as a double average of triple $L$-functions by means of Watson's formula \cite[Theorem 3]{watson-2008s}.

The final application of Theorem \ref{thm:intro-thm} we mention is to the diameter of compact  arithmetic hyperbolic surfaces $\Gamma \backslash \mathbb{H}$ \cite{SteinerSmallDiameter}. Here, one may use the sharp bound on the ``fourth moment" of exceptional eigenforms, together with a strong density estimate for the exceptional eigenvalues, to get an optimal estimate on the almost diameter and an estimate on the diameter of the same strength as if one were to assume the Selberg eigenvalue conjecture.

\subsection{The added complexity of the level aspect}

% \rs{One can add the fourth moment of fourth norm corollary - though possibly better in an additional application / corollary section}

Compared to the weight aspect treated in the prequel, the level aspect requires many new ideas.  Here we tacitly restrict to the case of \emph{squarefree} level; the general case would require a more nuanced discussion.  In some sense, the level aspect may be understood as intermediate in difficulty between the holomorphic and eigenvalue aspects.  Indeed, relative to known techniques, the difficulty in the sup-norm problem is reflected in the essential support of the matrix coefficient of the automorphic form being bounded.  In the weight, (squarefree) level and eigenvalue aspects, the matrix coefficient concentrates on a space of dimension one, two and three, respectively.
% two compared to the holomorphic weight aspect which concentrates on a space of dimension one and the Maa{\ss} eigenvalue aspect on a space of dimension three.
% \rs{@Paul, are these dimensions correct? If one averages Bergman kernels over the weight, doesn't it localise to $K$, which is one dimensional?}  \pn{ Yes, you're correct.  The numbers I cited are what one gets by passing to Lie algebras and projectivizing, which drops the dimensions by one.  As you've explained it here, the dimensions should be bumped by $1$ } \rs{Thanks - edited} \rs{Should one expand on this a bit?}
%General method, as in weight aspect paper, is an unamplified fourth moment, but the required analysis is much more difficult in the level aspect, and several new ideas and methods required.  In a certain sense, the level aspect may be understood as intermediary in difficulty between the holomorphic and Maass aspects.

%\pn{I put $q$ for the level here, but feel free to change back to $N$} \rs{changed back to be coherent with the rest of the paper}

We now briefly recall the main idea of the theta approach and discuss some of the new challenges that arise in the level aspect. We focus first on the case of Hecke--Maa{\ss} forms on $\Gamma_0(N) \backslash \mathbb{H}$, as in Theorem \ref{thm:individual-sup-Gamma_0(N)}.  Take $R=\sm \mathbb{Z} & \mathbb{Z} \\ N \mathbb{Z} & \mathbb{Z} \esm$, so that that the set of proper units of $R$ is precisely $\Gamma_0(N)$. For $\ell \mid N$, let $R(\ell)= \sm \mathbb{Z} & \mathbb{Z} \slash \ell \\ N \mathbb{Z} \slash \ell & \mathbb{Z} \esm$ denote the partially dualized lattices of the order $R$. Let $\sigma_z \in \SL_2(\mathbb{R})$ be any matrix taking $i$ to $z \in \mathbb{H}$.  Let $\varphi$ be an \emph{arithmetically-normalized} cuspidal Hecke--Maa{\ss} newform. The theta identity at the heart of the argument then reads
\begin{equation}
  \langle \theta(z,w;\cdot), \varphi \rangle =  \frac{1}{V} \varphi(z)\overline{\varphi(w)},
  %\frac{\langle \theta(z,w;\cdot), \varphi \rangle}{\langle \varphi, \varphi \rangle} =  \frac{1}{V} \frac{\varphi(z)\overline{\varphi(w)}}{\langle \varphi, \varphi \rangle},
  \label{eq:intro-theta-key-identity}
\end{equation}
where $V$ denotes the covolume of $\Gamma_0(N)$ and the theta function is given by
\begin{equation}
  \theta(z,w;s) = \Im(s) \sum_{\sm a & b \\ c & d \esm \in \sigma_z^{-1} R \sigma_w} e^{- \pi  (a^2+b^2+c^2+d^2) \Im(s)} e^{2 \pi i  (ad-bc) \Re(s)}.
  \label{eq:intro-theta}
\end{equation}
By Bessel's inequality, the left-hand side of \eqref{eq:main-fourth-moment-thm-ineq} is in essence captured by the $L^2$-norm of the difference of the theta kernels $\theta(z,z;\cdot)-\theta(w,w;\cdot)$. From here, one may then proceed as in the prequel by covering a fundamental domain by Siegel sets and making use of the orthogonality relations in the unipotent direction. One ends up with a weighted sum over matrices $\gamma_1, \gamma_2 \in R(\ell)$ satisfying $\det(\gamma_1)=\det(\gamma_2)$ and for which the entries of $\sigma_z^{-1} \gamma_{i} \sigma_z$, $i=1,2$ satisfy certain bounds (and similarly for $w$).  The bounds imposed on these entries depend crucially upon the precise choice of Siegel domains, so it is important that we make a good choice.  Like in the prequel, we split the count according to whether $\trace(\gamma_1) = \trace(\gamma_2)$ or not.

In the case of non-equal trace, the na\"ive choice of Siegel domains consisting of $\Gamma_0(N) \backslash \SL_2(\mathbb{Z})$-translates of the standard Siegel domain for $\SL_2(\mathbb{Z})$ leads to a rather challenging counting problem. In order to get a sharp bound on \eqref{eq:main-fourth-moment-thm-ineq}, one faces the challenge of counting, for each divisor $\ell$ of $N$ and each $T$ with $\ell^{-1/2} \ll T \ll 1$, the sextuples of integers $(a_1,b_1,c_1,a_2,b_2,c_2)$ satisfying 
\begin{equation}
  (c_i y N / \ell)^2 + 2 ( a_i  - c_i xN/\ell)^2 + y^{-2} ( 2 a _i x + b / \ell - c_i x^2 N/ \ell)^2 \leq T^2, \quad i=1,2, \label{eq:intro-type-I-ineq}
  \end{equation}
\begin{equation}
  b_1 c_1 \equiv b_2 c_2 \pmod{\ell^2/N}. \label{eq:intro-type-I-congruence}
\end{equation}
We would need to know that the number of such sextuples is roughly $\O(\ell T^2)$ in the range $N^{-1} \ll y \ll N^{-1/2}$ and $|x| \le \frac{1}{2}$.  We do not know how to establish such a bound directly when, for instance, $\ell = N$.  On the other hand, when $\ell = 1$, the congruence condition is void and, using arguments of Harcos--Templier, we can prove the required bound with some room to spare, namely, for $T$ up to $N^{1/2}$. Our solution to this dichotomy is thus to decrease the size of the Siegel domains associated to larger $\ell$ at the expense of increasing those associated to smaller $\ell$.  This solution may be implemented most simply by applying an Atkin--Lehner involution to the covering of $\Gamma_0(N) \backslash \mathbb{H}$ by $\SL_2(\mathbb{Z})$-translates of the standard fundamental domain for $\SL_2(\mathbb{Z})$.  With this maneuver, we reduce to considering the range $T \ll N^{\frac{1}{2}} \ell^{-1}$.  We are then able to prove the required bound by forgoing the congruence condition, reducing the problem to counting triples of integers $(a_i,b_i,c_i)$ satisfying \eqref{eq:intro-type-I-ineq}, which we carry out using geometry of numbers techniques.  We refer subsequently to this type of counting problem, where we count traceless matrices $\gamma \in R(\ell)^0$ with a bound on the entries of $\sigma_{z}^{-1}\gamma \sigma_z$, as ``Type I''.

In the case of equal trace, we need to count sextuples of integers $(a_1,b_1,c_1,a_2,b_2,c_2)$ satisfying \eqref{eq:intro-type-I-ineq} and
\begin{equation}
  a_1^2+b_1c_1 \tfrac{N}{\ell^2} = a_2^2+b_2c_2 \tfrac{N}{\ell^2}.
  \label{eq:intro-type-II}
\end{equation}
We need to bound this count by $O(\ell T)$ in the same ranges as before.  We refer to this type of counting problem as ``Type II''.  The key observation is that $(a_1,b_1,c_1)$ turns out to determine $(a_2,b_2,c_2)$ up to a small number of possibilities.  This allows us to reduce Type II estimates to Type I estimates.

The above arguments suffice for non-compact quotients, i.e., for the proof of Theorem \ref{thm:thetasup-paper:let-n-be}.  They rely on the use of matrix coordinates $\sm
  a & b \\
  c & d
\esm$ with respect to which the lattices $\Gamma_0(N)$ are described by the simple congruence condition $c \equiv 0 \pod{N}$.  We were unable to find an analogously straightforward way to separate the variables in the compact setting (e.g., using fixed quadratic subalgebras of $B$).  In the case that $B$ is definite, the Type I counts were treated in a coordinate-free way by Blomer--Michel \cite{MR2852302,MR3103131}, who controlled the successive minima of the ternary quadratic lattice underlying $\Gamma_0^B(N)$ in terms of only the content, level, and discriminant of that lattice.  We extend their arguments to the case that $B$ is indefinite by defining analogous archimedean quantities that control the disparity of the reduced norm and a majorant, such as the square of the Frobenius norm of $\sigma_z^{-1} \gamma \sigma_z$ for $\gamma \in R(\ell)^0$.  Following the same strategy as in the non-compact case, it remains then only to reduce Type II estimates to Type I estimates.  This reduction is perhaps the most subtle part of our counting arguments.  It requires us to establish the analogue in the compact setting of the key observation noted following \eqref{eq:intro-type-II}.  For example, in case that $B$ is definite, writing $R$ for an Eichler order of level $N$, we need to show that for each $n \ll T^2$, the number of elements $\gamma \in R$ with trace $0$ and norm $n$ is essentially $\O(1)$, uniformly in $N$ and $B$.  We eventually managed to do so through a delicate argument involving commutators and representations of binary quadratic forms.

\subsection{Organization of the paper}

The complete statement of results may be found in Section \S\ref{sec:resuls}. In Section \S\ref{sec:reduction-proof}, we reduce the proof of our main result to that of two auxiliary collections of results:
\begin{itemize}
\item those concerning matrix counting, and
\item those reducing the required estimates for theta functions to matrix counting.
\end{itemize}
% to the main counting theorems and to the reduction from the $L^2$-norms of theta-function differences to the matrix counting.
The latter including the appropriate splicing of a fundamental domain into Siegel sets may be found in Section \S\ref{sec:real-manifolds}. In Section \S\ref{sec:Theta-L2norm}, we summarize the required properties of the theta functions. %\pn{Point out that the reduction itself contains a new idea (that we were stuck on for quite awhile) of ``balancing'' appropriately.}
The proof of said properties we defer to Appendix \ref{sec:appendix-theta}.

Sections \S\ref{sec:order-theor-prel} and \S\ref{sec:estim-succ-minima} are dedicated to the anisotropic extension of the lattice counting argument of Blomer--Michel which we subsequently apply to the Type I counting problem in Section \S\ref{sec:type-i-estimates}.

The final section, \S\ref{sec:type-ii-estimates}, treats the crucial Type II counting problem.

%	\pn{Remind reader that ``Type I'' counts are adaptations of BHM, whereas ``Type II'' counts are completely new.}

\subsection{Acknowledgements}

We would like to thank E. Assing, V. Blomer, F. Brumley, G. Harcos, Y. Hu, S. Marshall, A. Saha, W. Sawin, and R. Toma for their helpful feedback on an earlier draft. We would also like to thank P. Sarnak for fruitful discussions on this and surrounding topics as well as his continued encouragement and support.

I.K. is deeply grateful for support of the AMS Centennial fellowship and the Sloan Research Fellowship.

This paper was completed while P.N. was at the Institute for Advanced Study during the academic year 2021-2022, where he was supported by the National Science Foundation under Grant No. DMS-1926686.  Some work on this project also occurred during a short-term visit of P.N. to the Institute for Advanced Study in February 2020.

R.S. wishes to extend his gratitude to the Institute for Advanced Study, where he was supported by the National Science Foundation Grant No. DMS -- 1638352 and the Giorgio and Elena Petronio Fellowship Fund II, and the Institute for Mathematical Research (FIM) at ETH Z\"urich.

\section{Statement of results}

\label{sec:resuls}

\subsection{Setup}\label{sec:statements-setup}
Let $B$ be a quaternion algebra over $\mathbb{Q}$.  We denote by $d_B$ its reduced discriminant, or equivalently, the product of the primes at which $B$ ramifies. We write $G$ for the linear algebraic group over $\mathbb{Q}$ given by $G(L)=\lfaktor{L^\times}{(B\otimes L)^\times}$ for any $\mathbb{Q}$-algebra $L$.  Then $G$ is an inner form of $\PGL_2$, and all rational forms of $\PGL_2$ arise in this way. Denote by $[G]$ the adelic quotient $G(\mathbb{Q}) \backslash G(\mathbb{A})$. We fix the probability Haar measure on $[G]$. Let $K_\infty$ be a compact maximal torus of $G(\mathbb{R})$.  We assume that $K_\infty$ comes equipped with a choice of isomorphism $\kappa : \mathbb{R} / \pi \mathbb{Z} \xrightarrow{\sim} K_\infty$. In the split case $B=\Mat_{2 \times 2}(\mathbb{Q})$, we identify $G=\operatorname{PGL}_2$ and set $\kappa(\theta)=\sm \cos(\theta) & \sin(\theta) \\ -\sin(\theta) & \cos(\theta) \esm$. 

Let $R$ be an \emph{Eichler order} in $B$, i.e., an intersection of two maximal orders.  We denote by $N$ the \emph{level} of $R$.  It is a natural number, coprime to $d_B$, characterized as follows: for each prime $p \nmid d_B$, there is an isomorphism $B_p\coloneqq B\otimes \mathbb{Q}_p \cong \Mat_{2 \times 2}(\mathbb{Q}_p)$ under which $R_p \coloneqq R \otimes_{\mathbb{Z}} \mathbb{Z}_p$ maps to the order $\left(
  \begin{smallmatrix}
    \mathbb{Z}_p &\mathbb{Z}_p \\
    N \mathbb{Z}_p& \mathbb{Z}_p
  \end{smallmatrix}
\right)$.  We may then identify $G(\mathbb{Q}_p)$ with $\PGL_2(\mathbb{Q}_p)$ and the image of $R_p^\times$ with a finite index subgroup of $\PGL_2(\mathbb{Z}_p)$.  We assume that $N$ is squarefree, so that $d_B N$ is likewise squarefree. We denote by $K_R$ the compact open subgroup of $G(\mathbb{A}_f)=\prod_p ' G(\mathbb{Q}_p)$ given by the image of $\prod_p R_p^\times$. 

Fix $k \in 2\mathbb{Z}$.  Let $\mathcal{A}$ denote the set of cusp forms $\varphi : [G] \rightarrow \mathbb{C}$ having the following properties:
\begin{itemize}
\item $\varphi(g \kappa(\theta)) = e^{i k \theta} \varphi(g)$ for all $\theta$.
\item $\varphi$ is an eigenfunction for some fixed Casimir operator for $G(\mathbb{R})$, with eigenvalue $\lambda_{\varphi}$. For the sake of concreteness, we scale the Casimir operator such that it agrees with the standard Laplace operator on the locally symmetric space $G(\mathbb{R}) \slash K_{\infty}$, which identifies with either $\mathbb{H}$ or $S^2$.
\item $\varphi$ is $K_R$-invariant: $\varphi(g k) = \varphi(g)$ for $k \in K_R$.
\item $\varphi$ belongs to the \emph{newspace} for $R$, i.e., $K_R$ is the largest subgroup of $G(\mathbb{A}_f)$ keeping $\varphi$ invariant.  Equivalently, $\varphi$ is orthogonal the space of $K_{R'}$-invariant cusp forms for every Eichler orders $R'$ strictly containing $R$.
\item $\varphi$ is an eigenform for almost all Hecke operators.
\end{itemize}
If $k \geq 2$, then we write $\mathcal{A}^{\hol} \subseteq \mathcal{A}$ for the subspace of automorphic lifts of holomorphic forms, or equivalently, the kernel of the raising (resp. lowering) operator attached to $K_\infty$ if $B$ is definite (resp. indefinite). 

Denote by $\mathcal{F}$ a maximal orthonormal subset of $\mathcal{A}$. % for the space generated by $\mathcal{A}$ consisting of eigenforms for almost all Hecke operators.
Analogously, we define $\mathcal{F}^{\hol} \subseteq \mathcal{A}^{\hol}$ if $k \ge 2$. Because of the multiplicity-one theorem for $\operatorname{GL}_2$ and its inner forms, the bases $\mathcal{F}, \mathcal{F}^{\hol}$ are unique up to rescaling each element by a scalar of unit magnitude. We note that the sets $\mathcal{A}$, $\mathcal{A}^{\hol}$, $\mathcal{F}$, and $\mathcal{F}^{\hol}$ depend on $k$; while we suppress this dependence from the notation, $k$ is one of the main parameters of interest.

We will consider several subfamilies of $\mathcal{F}$ and $\mathcal{F}^{\hol}$.  Here a minus sign in the exponent signifies the indefinite case, a plus sign the definite case.
\begin{itemize}
\item If $B$ is indefinite and $k=0$, then we take $\mathcal{F}^{-} \coloneqq \mathcal{F}$ and let $\mathcal{F}^{-}_{\lambda}$ (resp. $\mathcal{F}^{-}_{\le L}$) denote the subsets defined by taking the Casimir eigenvalue equal to $- \lambda$ (respectively, at most $L$ in magnitude).
\item If $B$ is indefinite and $k \ge 2$, then we take $\mathcal{F}^{-, \hol} \coloneqq \mathcal{F}^{\hol}$.
\item If $B$ is definite and $k=0$, then we let $\mathcal{F}^{+}_m \subset \mathcal{F}$ be the subset of forms,  whose associated automorphic representation at infinity is isomorphic to the unique irreducible unitary representation of $\SU_2(\mathbb{C})$ of degree $m+1$. In other words, their eigenvalue with respect to the Casimir operator equals to $-m(m+1)$.
\item If $B$ is definite and $k \ge 2$, then we let $\mathcal{F}^{+, \hol}=\mathcal{F}^{\hol}$.
\end{itemize}

% , and denote by $\mathcal{F}$ an orthonormal basis for $\mathcal{A}$ consisting of eigenforms for almost all Hecke operators.  The set $\mathcal{F}$ is unique up to scalars of magnitude one. We let $\mathcal{F}_{\lambda}$ denote the subset of cusp forms $\varphi$ of $\mathcal{F}$ whose eigenvalue $\lambda_{\varphi}$ equals $\lambda$. Furthermore, we let $\mathcal{F}_{\le L}$ denote the subset of cusp forms $\varphi$ of $\mathcal{F}$ whose eigenvalue $\lambda_{\varphi}$ has magnitude bounded from above by $L$.  We similarly define $\mathcal{F}^{\hol}$ and its subsets $\mathcal{F}^{\hol}_{\lambda}$ in terms of $\mathcal{A}^{\hol}$.

\subsection{The split case}\label{sec:results-split-case}
Assume for the moment that $B$ is split.  We may suppose then that
\begin{equation} \label{eq:split-case-class-rep} B = \Mat_{2 \times 2}(\mathbb{Q}), \quad G = \PGL_2, \quad R = \begin{pmatrix}
    \mathbb{Z}  & \mathbb{Z}  \\
    N \mathbb{Z} & \mathbb{Z}
  \end{pmatrix},
\end{equation}
\begin{equation}\label{eqn:split-case-K-infty-kappa}
  K_\infty = \operatorname{PSO}_2(\mathbb{R}),
  \quad
  \kappa(\theta) =
  \begin{pmatrix}
    \cos \theta  & \sin \theta  \\
    -\sin \theta & \cos \theta
  \end{pmatrix},
\end{equation}
and may identify
\[
  [G] / K_\infty K_R \cong \Gamma_0(N) \backslash \mathbb{H}.
\]
We define
\[
  H : [G] / K_\infty K_R \rightarrow \mathbb{R}_{>0},
\]
as follows.  Let $A_0(N)<\operatorname{GL}_2(\mathbb{Q})^+$ denote the group generated by $\Gamma_0(N)$ and all Atkin--Lehner operators.  If $g \in [G] / K_\infty K_R$ identifies with $z \in \Gamma_0(N) \backslash \mathbb{H}$, then we set $$H(g) = H(z) \coloneqq \max_{\gamma \in A_0(N)} \Im(\gamma z).$$ Since the Atkin--Lehner operators constitute scaling matrices for the various cusps of $\Gamma_0(N)$ (cf. \S\ref{sec:cusps-AL-operators}), the function $H$ may be understood as a normalized height or as quantifying closeness to the cusps. Let $\mathfrak{a} \in P^{1}(\mathbb{Z})$ be a cusp of $\Gamma_0(N)$, and let $\sigma_{\mathfrak{a}}\in \SL_2(\mathbb{Z})$ such that $\sigma_{\mathfrak{a}} \infty = \mathfrak{a}$. Then,
\begin{equation}
	H(z) = \max_{\mathfrak{a}} \frac{\Im(z_{\mathfrak{a}})}{w_{\mathfrak{a}}},
	\label{eq:alt-height-description}
\end{equation}
where $\mathfrak{a}$ runs over all cusps of $\Gamma_0(N)$, $z_{\mathfrak{a}}= \sigma_{\mathfrak{a}}^{-1} z$, and $w_{\mathfrak{a}}$ is the cusp width of $\mathfrak{a}$.

%For each cusp $\mathfrak{a} \in P^1(\mathbb{Z})$ of $\Gamma_0(N)$, let $\sigma_{\mathfrak{a}}\in \SL_2(\mathbb{Z})$ be a matrix such that $\sigma_{\mathfrak{a}}\infty=\mathfrak{a}$. For $z \in \mathbb{H}$, set $z_{\mathfrak{a}}=\sigma_{\mathfrak{a}}^{-1}z$ and write $z_{\mathfrak{a}}=x_{\mathfrak{a}}+iy_{\mathfrak{a}}$. Further, let $w_{\mathfrak{a}}$ denote the width of the cusp $\mathfrak{a}$ with respect to $\Gamma_0(N)$. Then, we have
%\begin{equation}
%	H(z)=\max_{\gamma \in A_0(d_BN)} \Im(\gamma z) = \max_{\mathfrak{a}} \frac{y_{\mathfrak{a}}}{w_{\mathfrak{a}}}.
%	\label{eq:alt-height-description}
%\end{equation}

%We refer the reader to \S\ref{sec:cusps-AL-operators}, where this reformulation is explained in further detail.

%\todo{Add the reformulation including the special Atkin--Lehner involutions}

\subsection{Results on forms}
\label{sec:results-forms}

We adopt the following asymptotic notation $\preccurlyeq$:
$$
A_1 \preccurlyeq A_2 \quad \iff \quad A_1 \ll_{\eps} \left(d_BN(1+k)(1+\mu)\right)^{\eps} A_2,
$$
where $\mu$ is a quantity relating to the eigenvalues with respect to the Casimir operator of the automorphic forms of relevance to the inequality. Concretely, when talking about the families $\mathcal{F}^{-}_{\lambda},\mathcal{F}^{-}_{\le L}, \mathcal{F}^{+}_m, \mathcal{F}^{\pm,\hol}$ we mean $\mu=|\lambda|,L,m, k$, respectively.

\begin{theorem}\label{thm:main-result-fourth-moment-adelic-formulation}
  Let $g_1,g_2 \in [G]$.  If $B$ is indefinite, then
  \begin{equation}\label{eqn:main-theorem-adelic}
    \sum _{\varphi \in \mathcal{F}^{-}_{\le L} } (|\varphi(g_1)|^2-|\varphi(g_2)|^2)^2
    \preccurlyeq_{L}
    d_B N \left( 1+ d_BN \left[H(g_1)^2+H(g_2)^2 \right] \right),
  \end{equation}
  for $L > 0$, and
  \begin{multline}\label{eqn:main-theorem-adelic-hol}
    \sum _{\varphi \in \mathcal{F}^{-,\hol} }  (|\varphi(g_1)|^2-|\varphi(g_2)|^2)^2 \\
    \preccurlyeq d_B N k \left( 1+(d_BN)^{\frac{1}{2}} \left[H(g_1)+H(g_2)\right]+ d_B N k^{-\frac{1}{2}} \left[H(g_1)^2+H(g_2)^2 \right] \right).
  \end{multline}
  for $k \ge 2$ even. In both cases, the term involving $H(g_{1,2})$ is only present if $B$ is split.

  If $B$ is definite, then
  \begin{equation}\label{eqn:main-theorem-definite-adelic}
    \sum _{\varphi \in \mathcal{F}^{+}_{m} } (|\varphi(g_1)|^2-|\varphi(g_2)|^2)^2
    \preccurlyeq
    d_B N(m+1)^2,
  \end{equation}
  for $m \in \mathbb{N}_0$, and
  \begin{equation}\label{eqn:main-theorem-definite-adelic-hol}
    \sum _{\varphi \in \mathcal{F}^{+,\hol} } |\varphi(g_1)|^4
    \preccurlyeq
    d_B N k,
  \end{equation}
  for $k \in 2 \mathbb{N}$.
\end{theorem}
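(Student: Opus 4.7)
The plan is to execute the theta kernel strategy sketched in the introduction. Starting from a theta identity of the shape $\langle \theta(g_1,g_2;\cdot),\varphi\rangle \propto \varphi(g_1)\overline{\varphi(g_2)}$ (whose precise form I would fix in Section \ref{sec:Theta-L2norm}), summing $|\varphi(g_1)|^2-|\varphi(g_2)|^2$ squared against the orthonormal family $\mathcal{F}$ is, via Bessel's inequality, controlled by the squared $L^2$-norm of the \emph{difference} of diagonal theta kernels, $\|\theta(g_1,g_1;\cdot)-\theta(g_2,g_2;\cdot)\|_2^2$. Working with the difference is crucial: the ``constant'' contribution (coming from the identity element and from the central character) cancels, leaving a sum whose contribution is genuinely sensitive to the geometry near the cusps; this is what produces the $H(g_i)$-dependent factor in the indefinite split case.

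Next I would expand $\|\theta(g,g;\cdot)\|_2^2$ by unfolding against a fundamental domain and integrating out the unipotent direction. Following the prequel, the spectral expansion in $s$ forces a dichotomy: the contributing matrix pairs $(\gamma_1,\gamma_2)\in R(\ell)\times R(\ell)$ either satisfy $\trace(\gamma_1)=\trace(\gamma_2)$ (Type II) or come with matched \emph{traceless} components (Type I). To make the ensuing counting problem tractable uniformly in $\ell\mid N$, I would follow the introduction and cover $\Gamma \backslash \mathbb{H}$ by the Atkin--Lehner translates of the standard Siegel domain for $\SL_2(\mathbb{Z})$, rather than the naive $\Gamma_0(N)\backslash \SL_2(\mathbb{Z})$ translates; this rebalances the entry-size ranges so that for each $\ell$ only the regime $T\ll (d_BN)^{1/2}\ell^{-1}$ needs to be counted, and the congruence \eqref{eq:intro-type-I-congruence} can then safely be discarded.

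With the geometric setup in place, the proof reduces to two matrix counting estimates. For Type I, I would apply the geometry-of-numbers estimates on successive minima of the ternary quadratic lattice $R(\ell)^0$ developed in Sections \S\ref{sec:order-theor-prel}--\S\ref{sec:type-i-estimates}; for the indefinite case this relies on the archimedean quantities controlling the Frobenius norm of $\sigma^{-1}\gamma \sigma$ that extend the Blomer--Michel framework to the non-compact setting. For Type II, I would use the key ``determinacy'' principle: given $\gamma_1$, the admissible $\gamma_2$ of equal trace and equal norm number only $\preccurlyeq 1$ (essentially because a traceless integral quaternion of fixed reduced norm is rigid up to commutation), and so the Type II count collapses to a Type I count with a controlled loss, as executed in Section \S\ref{sec:type-ii-estimates}. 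Assembling the two counts, summing dyadically in $T$ and summing over $\ell\mid N$, and finally converting the arithmetic normalization of $\varphi$ used in the theta identity back to the $L^2$ normalization (absorbing the Petersson-norm factor, which is $\preccurlyeq 1$), yields the four advertised bounds: the indefinite Maa{\ss} case \eqref{eqn:main-theorem-adelic} and its holomorphic counterpart \eqref{eqn:main-theorem-adelic-hol}, together with the definite analogues \eqref{eqn:main-theorem-definite-adelic} and \eqref{eqn:main-theorem-definite-adelic-hol} where $H$ is absent because $[G]$ is compact.

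The main obstacle is the Type II reduction in the anisotropic setting, where the simple parametrization $\sm a&b\\c&d\esm$ used in the split case is unavailable. Bounding, uniformly in $B$ and $N$, the number of elements of trace zero and fixed reduced norm in an Eichler order of squarefree level requires the delicate commutator and binary-quadratic-form argument alluded to at the end of Section 1.3; this is where the coordinate-free extension of Blomer--Michel is genuinely new, and where I would concentrate the bulk of the technical work. The archimedean analysis in the holomorphic case and the bookkeeping of the $H(g_i)$-terms near the cusps are comparatively routine modifications of the argument of the prequel.
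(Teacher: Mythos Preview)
Your proposal is correct and follows essentially the same route as the paper: Bessel's inequality applied to the difference of diagonal theta kernels (Propositions \ref{prop:reductiontocountmaass}--\ref{prop:reductiontocountdefholo}), the Fricke-twisted Siegel covering of Lemma \ref{lem:generalL2bound} in the \emph{symplectic} variable to restrict $T\preccurlyeq (d_BN k)^{1/2}/\ell$, the trace/traceless decomposition \eqref{eq:OmegaTypesplit}--\eqref{eq:PsiTypeIIsplit} reducing to the Type~I and Type~II counts of Theorems \ref{thm:typeI} and \ref{thm:typeII}, and finally the Hoffstein--Lockhart bound to pass between normalizations. The only slip is that the Siegel covering lives on the $\SL_2$-side (the variable $s$ of the theta kernel), not on $\Gamma\backslash\mathbb{H}$ itself; otherwise your outline matches \S\ref{sec:reduction-proof}--\S\ref{sec:type-ii-estimates} of the paper.
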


\begin{remark} In the indefinite holomorphic case \eqref{eqn:main-theorem-adelic-hol}, one may have the same bound for the fourth moment rather than the squared difference under the assumption that the weight satisfies $k \gg_{\eta} (d_BN)^{\eta}$ for some $\eta>0$, in which case the implied constant also depends on $\eta$ and the implied constant in the assumed lower bound for the weight.
\end{remark}

\begin{corollary}
  \label{cor:main-corollary-fourth-moment-adelic-formulation}
For $k \ge 2$ and $\varphi \in \mathcal{F}^{\hol}$, we have
$$
\| \varphi\|_{\infty} \preccurlyeq (d_BN k)^{\frac{1}{4}}.
$$
For $k=0$ and $\varphi \in \mathcal{F}$, we have
$$
\| \varphi \|_{\infty} \preccurlyeq_{\lambda_{\varphi}} (d_BN)^{\frac{1}{4}}.
$$
If $B$ is definite, then we have more precisely
$$
\| \varphi \|_{\infty} \preccurlyeq (d_BN)^{\frac{1}{4}} (1+|\lambda_{\varphi}|)^{\frac{1}{4}}.
$$
\end{corollary}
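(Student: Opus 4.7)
The corollary follows from Theorem~\ref{thm:main-result-fourth-moment-adelic-formulation} by extracting a single form $\varphi$ from the sum using positivity and converting the squared-difference bound into a pointwise estimate through a carefully chosen comparison point.

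For the definite holomorphic case, inequality \eqref{eqn:main-theorem-definite-adelic-hol} immediately yields $|\varphi(g_1)|^4 \le \sum_{\varphi'} |\varphi'(g_1)|^4 \preccurlyeq d_BNk$, hence $\|\varphi\|_\infty \preccurlyeq (d_BNk)^{1/4}$. For every remaining case, using $\|\varphi\|_{L^2([G])}=1$ with respect to the probability measure, Chebyshev's inequality supplies a point $g_2$ with $|\varphi(g_2)|^2 \le 2$. Positivity then gives
\[
  (|\varphi(g_1)|^2-2)^2 \le \sum_{\varphi'}(|\varphi'(g_1)|^2-|\varphi'(g_2)|^2)^2 \preccurlyeq \text{RHS of Theorem~\ref{thm:main-result-fourth-moment-adelic-formulation}}.
\]
In the \emph{compact} cases ($B$ definite or $B$ indefinite with $d_B>1$) the $H$-terms vanish, and a fourth root of the resulting estimate produces each claimed exponent: $(d_BN)^{1/4}$ for indefinite $k=0$, $(d_BNk)^{1/4}$ for indefinite $k\ge 2$, and $(d_BN)^{1/4}(m+1)^{1/2} \asymp (d_BN)^{1/4}(1+|\lambda|)^{1/4}$ for definite $k=0$, using that the Casimir eigenvalue is $-m(m+1)$.

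The \emph{main obstacle} is the split (non-compact) indefinite case. There $H$ is bounded below by a positive absolute constant on $[G]/K_\infty K_R$, so the bound $d_BN \cdot (1+d_BN[H(g_1)^2+H(g_2)^2])$ is dominated by its $(d_BN)^2 H^2$ piece and, even with $g_2$ in the bulk, gives only $\|\varphi\|_\infty \preccurlyeq (d_BN)^{1/2}$ in the Maass case (and an analogous $(d_BN)^{1/2}k^{1/8}$ in the holomorphic case), short of the claimed $(d_BN)^{1/4}$ by a factor of $(d_BN)^{1/4}$. To close the gap, the fourth moment must be supplemented by cuspidal decay: for $g_1$ with $H(g_1)$ above a threshold $H_0$, the Whittaker/Fourier expansion of $\varphi$ at the nearest cusp --- exploiting exponential decay of $K_{ir}$ in the Maass case and of $\Im(z)^{k/2} e^{-2\pi y}$ in the holomorphic case, together with Hecke-normalized Fourier coefficients $|a_n| \ll \sqrt{n}$ --- yields a direct bound, while for $H(g_1) \le H_0$ the fourth moment is applied. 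Calibrating $H_0$ cusp-by-cusp via Atkin--Lehner scaling so that the two bounds match at the threshold delivers the claimed exponent throughout $[G]$. This is the ``additional analysis near any cusps'' referenced in the introduction and depends on the Siegel-set splicing constructed in Sections~\ref{sec:reduction-proof}--\ref{sec:real-manifolds}.
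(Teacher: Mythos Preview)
Your proposal is correct and follows essentially the paper's approach: compact cases via a comparison point $g_2$ with $|\varphi(g_2)|\le 1$ plus positivity, and the split case via a threshold $H_0=N^{-1/2}$ separating the Fourier-expansion regime ($H(g_1)\ge H_0$, where the paper invokes \cite[Prop.~3.1--3.2]{MR3372076} for Maa{\ss} forms and a Deligne--Hoffstein--Lockhart argument for holomorphic forms) from the fourth-moment regime ($H(g_1)\le H_0$). One clarification worth making explicit: in the split bulk the paper does \emph{not} pick $g_2$ via Chebyshev but instead takes any $g_2$ with $H(g_2)=N^{-1/2}$ and bounds $|\varphi(g_2)|$ by the Fourier-expansion estimate at that height, so that $d_BN[H(g_1)^2+H(g_2)^2]\le 2$ in \eqref{eqn:main-theorem-adelic} and the triangle inequality $|\varphi(g_1)|^2\le |\varphi(g_2)|^2+\bigl||\varphi(g_1)|^2-|\varphi(g_2)|^2\bigr|$ closes the argument.
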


By a well-known procedure, these statements may be translated into the classical language, thus giving rise to the theorems in the introduction. For further details, see for example \cite[\S3.2 \& \S3.6]{MR1431508} for the indefinite case and \cite{MR3103131} for the definite case.

\subsection{Counting problems: setup}
\label{sec:counting-setup}

\subsubsection{Lattices locally dual to $R$}
Let $\ell$ be a divisor of the squarefree number $d_B N$.  We denote by $R(\ell)$ the lattice in $B$ whose local components $R(\ell)_p$ are given
\begin{itemize}
\item for $p$ dividing $\ell$, by the lattice $R_p^\vee \subseteq B_p$ dual to $R_p$, and
\item otherwise, by $R_p$.
\end{itemize}

\subsubsection{Reduced trace and norm}
We denote by $\tr$ and $\det$ the reduced trace and reduced norm on $B$, and also on its completions.  We use a superscripted $0$, as in $R^0$ or $R(\ell)^0$, to denote the kernel of the reduced trace.

\subsubsection{Coordinates tailored to $K_\infty$} \label{sec:Kinftynotation}
\def\i{\mathbf{i}} \def\j{\mathbf{j}} \def\k{\mathbf{k}}
Define $B_\infty\coloneqq B\otimes \mathbb{R}$. If $B$ is indefinite, then $B_\infty\cong \Mat_{2 \times 2}(\mathbb{R})$ is split; otherwise, $B_\infty$ is isomorphic to the real Hamilton quaternions.
The exponential series identifies $B_\infty^0$ with the Lie algebra of $G(\mathbb{R})$.  We write $\i \in B_\infty^0$ for the derivative at the identity of $\kappa$, so that $\kappa(\theta) = \exp( \theta \i)$.  Then, $\i^2 = -1$. %We identify the subalgebra $\mathbb{R}(\i)$ of $B_\infty$ with the field $\mathbb{C}$ of complex numbers. 
We may find $\j \in B_\infty^0$ with $\j^2 = \pm 1$ ($+1$ if $B$ is indefinite, $-1$ if $B$ is definite) so that $B_\infty = \mathbb{R}(\i) \oplus \mathbb{R}(\i) \j$. We note that $\j$ is not uniquely determined, but any two choices differ by multiplication by a norm one element of $\mathbb{R}(\i)$. We set $\k = \i\j$.  Then, $\i,\j,\k$ give an $\mathbb{R}$-basis of $B_\infty^0$.  For real numbers $a,b,c$, we set $[a,b,c] \coloneqq a \i + b \j + c \k$.  A general element of $B_\infty$ may then be written $[a,b,c] + d$, where we identify the real number $d$ with a scalar element of $B_\infty$.  In these coordinates,
\begin{equation}\label{eqn:tr-det-coordinates}
  \tr([a,b,c]+d) = 2 d,
  \quad
  \det([a,b,c]+d) = a^2 \mp (b^2 + c^2) + d^2.
\end{equation}

\begin{example}
  Suppose that $B_\infty = \Mat_{2 \times 2}(\mathbb{R})$ and that $\kappa$ is as in \eqref{eqn:split-case-K-infty-kappa}.  Then, with suitable choices,
  \[
    [a,b,c] + d =
    \begin{pmatrix}
      d + c & b + a \\
      b- a & d-c
    \end{pmatrix}.
  \]
\end{example}

\subsubsection{Archimedean regions}\label{sec:archimedean-regions}
For $T > 0$ and $\delta \in (0,1]$, we denote by $\Omega(\delta,T)$ the set of all elements $[a,b,c] + d$ of $B_\infty$ for which
% \[
%   \left\{ \gamma \in B_\infty - \{0\} : P(\gamma) \leq T^2, \, u(\gamma) \leq \delta T^2 \right\}.
% \]
% Explicitly, $\Omega(\delta,T)$ consists of all non-zero elements $[a,b,c] + d$ for which
\[
  a^2 + b^2 + c^2 + d^2 \leq T^2, \quad b^2 + c^2 \leq \delta T^2.
\]
With $\Omega^{\star}(\delta,T)$, we denote the subset of \emph{non-zero} elements of $\Omega(\delta,T)$. Likewise, for $T>0$ and $\delta \in (0,1]$, we let $\Psi(\delta,T)$ denote the set of all elements $[a,b,c]+d$ of $B_{\infty}$ for which
\[
  a^2 + b^2 + c^2 + d^2 \leq T^2, \quad a^2 + d^2 \leq \delta T^2;
\]
and $\Psi^{\star}(\delta,T)$ its subset consisting of \emph{non-zero} elements.

\subsection{Counting problems: results}
\label{sec:counting-results}

We adopt the following asymptotic notation for counting estimates (compare with the notation $\preccurlyeq$ introduced in \S\ref{sec:results-forms}):
\[
  A_1 \prec A_2 \quad \iff \quad A_1 \ll_\eps (d_B N (1+T))^\eps A_2.
\]

Recall from \S\ref{sec:results-split-case} the height function $H$ defined in the split case.  In the non-split case, we adopt the convention in the following results that any terms involving $H$ (in minima or sums) should be omitted.

\begin{theorem}[Type I estimates]
  \label{thm:typeI} %Let $B$ be a quaternion algebra over $\mathbb{Q}$ of reduced discriminant $d_B$ and $R$ any Eichler order of squarefree level $N$. If $B$ is split (over $\mathbb{Q}$), then we assume $B$, $R$, and the maximal compact $K_{\infty}$ to be as in \S\ref{sec:results-split-case}.
  Let $g \in G(\mathbb{R})$. Then, the first successive minima (see Definition \ref{defn:succ-min}) of $g^{-1} R(\ell)^0 g$ with respect to $\Omega(\delta, 1)\cap B_{\infty}^0$ is $\gg \min \left\{ \ell^{-\frac{1}{2}} , \ell^{-1} \delta^{-\frac{1}{2}} H(g)^{-1} \right\}$. Furthermore, we have
  \[
    |g^{-1} R(\ell)^0 g \cap \Omega(\delta, T)| \prec 1 + \left(\ell^{\frac{1}{2}} +\ell \delta^{\frac{1}{2}} H(g) \right) T + \left( \frac{\ell^{\frac{3}{2}} \delta^{\frac{1}{2}} }{(d_B N)^{\frac{1}{2}}} + \ell \delta H(g) \right) T^2 + \frac{\ell^2 \delta}{d_B N} T^{3}.
  \]
	
  If $B$ is non-split, we further have that the first successive minima of $g^{-1} R(\ell)^0 g$ with respect to $\Psi(\delta, 1)\cap B_{\infty}^0$ is at least $\gg \ell^{-\frac{1}{2}}$ and
  \[
    |g^{-1} R(\ell)^0 g \cap \Psi(\delta, T)| \prec 1 + \ell^{\frac{1}{2}} T + \frac{\ell^{\frac{3}{2}} }{(d_B N)^{\frac{1}{2}}} T^2 + \frac{\ell^2 \delta^{\frac{1}{2}}}{d_B N} T^{3}.
  \]
\end{theorem}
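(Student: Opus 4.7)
The plan is to estimate the three successive minima $\lambda_1 \le \lambda_2 \le \lambda_3$ of the lattice $L = g^{-1} R(\ell)^0 g$ with respect to the symmetric convex body $K = \Omega(\delta,1) \cap B_\infty^0$ and then apply the standard geometry-of-numbers bound
\[
|L \cap TK| \ll 1 + \frac{T}{\lambda_1} + \frac{T^2}{\lambda_1\lambda_2} + \frac{T^3}{\lambda_1\lambda_2\lambda_3}.
\]
Conjugation preserves covolume, so a local calculation (the partial dualization at primes $p \mid \ell$ multiplies the covolume by $p^{-2}$) yields $\operatorname{covol}(L) \asymp (d_BN)/\ell^2$, while $\vol(K) \asymp \delta$. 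Minkowski's second theorem therefore gives $\lambda_1\lambda_2\lambda_3 \asymp (d_BN)/(\ell^2\delta)$, which produces the leading $T^3$ coefficient $\ell^2\delta/(d_BN)$.

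The crux of the argument is the lower bound on $\lambda_1$. Let $v = [a,b,c] \in L \setminus \{0\}$ realize the minimum and set $\gamma = gvg^{-1} \in R(\ell)^0$. By \eqref{eqn:tr-det-coordinates} one has $\det\gamma = a^2 \mp (b^2+c^2)$, and $\det\gamma \in \ell^{-1}\mathbb{Z}$ since $R(\ell)$ has reduced norms in $\ell^{-1}\mathbb{Z}$. When $B$ is non-split the reduced-norm form is anisotropic, so $\det\gamma \ne 0$ and hence $|\det\gamma| \ge \ell^{-1}$. Combined with $a^2 \le \lambda_1^2$ and $b^2+c^2 \le \delta\lambda_1^2 \le \lambda_1^2$ this forces $\lambda_1 \gg \ell^{-1/2}$, extending the Blomer--Michel argument for definite ternary lattices to all non-split cases; the same reasoning applies to $\Psi$. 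In the split case $B = \Mat_{2\times 2}(\mathbb{Q})$, if $\det\gamma \ne 0$ one still obtains $\lambda_1 \gg \ell^{-1/2}$, but $\det\gamma$ may vanish, in which case $\gamma$ is a nonzero nilpotent traceless element of $R(\ell)^0$ fixing some cusp $\mathfrak{a}$ of $\Gamma_0(N)$. After applying an Atkin--Lehner involution to move $\mathfrak{a}$ to $\infty$ and writing $g$ in Iwasawa coordinates, the $\Omega$-norm of $g^{-1}\gamma g$ is directly comparable to $\Im(\sigma_{\mathfrak{a}}^{-1} g \cdot i)/w_{\mathfrak{a}}$. Combined with the integrality of the entries of $\gamma$ (living in $\ell^{-1}\mathbb{Z}$ and $(N/\ell)\mathbb{Z}$), this yields $\lambda_1 \gg (\ell\delta^{1/2} H(g))^{-1}$ via \eqref{eq:alt-height-description}, and taking the minimum of the two cases produces the stated first-minimum bound.

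The $T$-coefficient $1/\lambda_1 \ll \ell^{1/2} + \ell\delta^{1/2} H(g)$ is now immediate. For the $T^2$-coefficient I write $1/(\lambda_1\lambda_2) = \lambda_3/(\lambda_1\lambda_2\lambda_3) \asymp \lambda_3 \, \ell^2 \delta/(d_BN)$ and bound $\lambda_3$ from above by exhibiting three linearly independent lattice vectors of controlled size. Natural candidates arise from the generators of $R(\ell)^0$ at each prime conjugated by $g$, and in the split case one adds a short nilpotent vector associated to the nearest cusp. This produces an estimate of the form $\lambda_3 \ll (d_BN)^{1/2}/(\ell^{1/2}\delta^{1/2}) + (d_BN) H(g)/\ell$, and hence the claimed $T^2$-coefficient $\ell^{3/2}\delta^{1/2}/(d_BN)^{1/2} + \ell\delta H(g)$. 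The main obstacle is the nilpotent sub-case of the $\lambda_1$ estimate in the split setting: capturing the precise dependence on the Atkin--Lehner-symmetric height $H(g)$ demands a careful analysis at every cusp simultaneously, and is the principal new ingredient beyond the Blomer--Michel framework that makes the indefinite split case work.
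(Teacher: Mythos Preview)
Your overall framework is sound: the geometry-of-numbers bound via successive minima, Minkowski's second theorem for $\lambda_1\lambda_2\lambda_3 \asymp (d_BN)/(\ell^2\delta)$, and the anisotropy argument for $\lambda_1 \gg \ell^{-1/2}$ in the non-split case all match the paper. Your split-case $\lambda_1$ argument differs slightly from the paper's --- the paper does not split according to whether $\det\gamma = 0$, but rather according to whether the pair $(a,c)$ in $\beta_0 = \sm a & b \\ c & -a \esm$ vanishes, invoking the spacing lemma $|cz-a|^2 \geq 1/\ell$ when it does not --- but your version can be made to work.

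The genuine gap is the $T^2$ coefficient, i.e., the upper bound on $\lambda_3$. Your plan to ``exhibit three linearly independent lattice vectors of controlled size'' does not go through in the non-split \emph{indefinite} case. There, $\det$ has signature $(1,2)$ on $B_\infty^0$ while $P$ is positive-definite, so the $\det$-preserving group $G(\mathbb{R})$ acts with unbounded orbits on the $P$-unit sphere: conjugation by an adversarial $g$ can make any \emph{fixed} element of $R(\ell)^0$ arbitrarily large in the $\Omega$-norm. The phrase ``generators of $R(\ell)^0$ at each prime conjugated by $g$'' does not produce a uniform bound. This is precisely the obstacle the paper flags in the introduction as requiring a coordinate-free extension of Blomer--Michel.

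The paper's substitute is Proposition~\ref{prop:anisotropiclatticecount}: one attaches to the pair $(Q,L)$ three invariants --- content $C$, level $N$, discriminant $\Delta$ --- defined multiplicatively from archimedean and non-archimedean contributions (\S\ref{sec:non-arch-invar}--\S\ref{sec:adelic-invariants}), and proves $\lambda_n \ll N^{1/2}$ via a Cramer's-rule argument comparing $(P^{-1})_{nn}$ and $(S^{-1})_{nn}$ for the Gram matrices of $Q$ and $q$. Applying this with $Q = P + \delta^{-1}u$ (level $\asymp \delta^{-1}$) and $L = g^{-1}R(\ell)^0 g$ (level $\asymp d_BN/\ell$) gives $\lambda_3 \ll (d_BN/(\ell\delta))^{1/2}$ directly, without ever naming a vector. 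In the split case the anisotropy hypothesis fails, and the paper abandons the invariant framework entirely: it runs a direct Harcos--Templier count (first $c$, then $(a,b)$ as lattice points in a disk via Lemma~\ref{lem:latticeball}), obtains an upper bound on $|L \cap \Omega(\delta,T)|$, and then reads off $\lambda_1\lambda_2$ by specializing to $T = N^{1/2}/(\ell\delta)^{1/2}$.
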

\begin{theorem}[Type II estimates]
  \label{thm:typeII}
  Let $g \in G(\mathbb{R})$ and $n \in \frac{1}{\ell} \mathbb{Z}$.  We have
  \[
    |g^{-1} R(\ell)^0 g \cap \Omega(\delta, T) \cap \det{}^{-1}(\{n\})| \prec 1 + \ell \delta^{\frac{1}{2}} H(g) T +\frac{\ell^2}{d_B N} \delta T^2.
  \]
\end{theorem}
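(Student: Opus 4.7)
My plan is to exploit the algebraic structure of trace-zero elements of a fixed non-zero reduced norm and to reduce the count to a one-dimensional problem via commutators and representations of binary quadratic forms, as alluded to in the introduction.

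For $n\neq 0$ (the case $n=0$ follows from a mild variant of Theorem~\ref{thm:typeI} restricted to the two-dimensional nullcone of $\det|_{B^0}$), fix some $\gamma_0\in R(\ell)^0$ with $\det\gamma_0=n$ and $g^{-1}\gamma_0 g\in\Omega(\delta,T)$, if such an element exists (otherwise the count is empty). For any other $\gamma$ in the counted set, the relation $\gamma_0^2=-n$ together with the trace-zero conditions on $\gamma_0$ and $\gamma$ yields the decomposition
\[
	\gamma_0\gamma = s + \eta,\qquad s=\tfrac{1}{2}\tr(\gamma_0\gamma)\in\mathbb{Q},\qquad \eta=\tfrac{1}{2}[\gamma_0,\gamma]\in B^0,
\]
together with the identities $\det\eta = n^2-s^2$, $\tr(\gamma_0\eta)=0$, and $\gamma = -\tfrac{1}{n}\gamma_0(s+\eta)$. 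In particular the map $\gamma\mapsto(s,\eta)$ is injective, and $\eta$ lies in the two-dimensional $\mathbb{Q}$-subspace $V=\gamma_0^{\perp}\cap B^0$ (orthogonal with respect to the trace form) and in a specific rational lattice $\Lambda\subseteq V$ determined by $\gamma_0$ and $R(\ell)$.

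The restriction $Q$ of the reduced norm to $V$ is a binary quadratic form. For each $s$ with $s^2\neq n^2$, the classical divisor bound for representations by a binary quadratic form yields at most $\prec 1$ many $\eta\in\Lambda$ with $Q(\eta)=n^2-s^2$; the degenerate values $s=\pm n$ correspond to nullcone solutions whose contribution is dominated by the target bound through a direct reduction to Theorem~\ref{thm:typeI}. Hence the total count is $\prec$ the number of admissible values of $s$.

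The number of admissible $s$ is a one-dimensional lattice-point count. Here $s$ lies in an integral translate of a one-dimensional rational lattice whose covolume one calculates prime-by-prime from the local structure of $R(\ell)$ (the partial dualization at primes $p\mid\ell$ and the Eichler/ramified structure at primes $p\mid d_BN/\ell$), while the conditions $g^{-1}\gamma g,\,g^{-1}\gamma_0 g\in\Omega(\delta,T)$ confine $s=\tfrac{1}{2}\tr(\gamma_0\gamma)$ to an interval whose length one controls by a refined Cauchy--Schwarz-type inner-product analysis on $\Omega(\delta,T)\cap B^0$. Combining these inputs produces the claimed bound. The main obstacle will be carrying out these two calculations (the covolume of the $s$-lattice, and the length of the $s$-interval in terms of $\delta$, $T$ and $H(g)$) uniformly in $\gamma_0$, $N$ and $d_B$: this is elementary in the split case using matrix coordinates, but in the non-split case it requires threading the anisotropic framework developed for Theorem~\ref{thm:typeI} (cf.\ Sections~\ref{sec:order-theor-prel} and~\ref{sec:estim-succ-minima}) through the commutator map $\gamma\mapsto[\gamma_0,\gamma]$ on $R(\ell)^0$.
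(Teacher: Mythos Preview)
Your overall strategy coincides with the paper's (Propositions~\ref{prop:typeII-omega} and~\ref{prop:typeII-split}): fix $\gamma_0$, parametrize the remaining $\gamma$ by $(s,\eta)=(\tfrac12\tr(\gamma_0\gamma),\tfrac12[\gamma_0,\gamma])$, use the divisor bound for the binary form $\det|_{\gamma_0^\perp\cap B^0}$ (Lemma~\ref{lem:N2-1}) to pin down $\eta$ given $s$, and then count the admissible $s$. Your archimedean control on $s$ via the inner product on $\Omega(\delta,T)\cap B^0_\infty$ is equivalent to the paper's route through $\det\eta=n^2-s^2=O(\delta T^4)$ (Lemma~\ref{lem:N2-3}).

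There are, however, two genuine gaps. First, the non-archimedean input is \emph{not} that $s$ lies in a single translate of a rational lattice of large covolume; a priori one only has $s\in\tfrac{1}{2\ell}\mathbb{Z}$, and the factor $\ell^2/(d_BN)$ comes instead from the \emph{quadratic} congruence $n^2-s^2=\det\eta\in\tfrac{d_BN}{4\ell^3}\mathbb{Z}$ (Lemmas~\ref{lem:commutatorcong} and~\ref{lem:N2-2}), which confines $s$ to $O(\tau(d_BN))$ residue classes modulo $d_BN/\ell^2$. This is the ``delicate argument involving commutators'' alluded to in the introduction; a direct local computation of the $s$-lattice would miss it. Second, and more seriously, your treatment of the degenerate case $s=\pm n$ (i.e.\ $\det\eta=0$, $\eta\neq 0$, which occurs only when $B$ is split) does \emph{not} reduce to Theorem~\ref{thm:typeI}: the first-minimum bound there yields only $O(1+\ell^{1/2}T)$ points on the relevant affine line, and $\ell^{1/2}T$ is not dominated by the target bound (take $\ell=1$, $\delta=1$, $H(g)\asymp N^{-1/2}$, $T\asymp N^{1/2}$). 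This degenerate case is precisely the source of the $\ell\delta^{1/2}H(g)T$ term, and the paper handles it (Proposition~\ref{prop:typeII-split}) by conjugating to the cusp $\mathfrak{a}$ at which the null direction of $\eta$ becomes strictly upper-triangular, then combining the congruence $b\equiv 0\pmod{w_{\mathfrak{a}}/\ell}$ with the archimedean bound $|b|\ll\delta^{1/2}Ty_{\mathfrak{a}}$ and the identity $H(g)=\max_{\mathfrak{a}}y_{\mathfrak{a}}/w_{\mathfrak{a}}$. The same objection applies to your proposed treatment of $n=0$.
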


The proof of these results occupies \S\ref{sec:order-theor-prel} onwards.  In \S\ref{sec:reduction-proof}, we explain how these results imply our main fourth moment bound, Theorem \ref{thm:main-result-fourth-moment-adelic-formulation}.

% we may assume that the conditions of \S\ref{sec:results-split-case} are satisfied and that $g \in [G] / K_\infty K_R$ corresponds to $z \in \mathbb{H}$ lying in the standard fundamental domain for $A_0(N)$.

% For the proof of the holomorphic case \eqref{eqn:main-theorem-adelic-hol}, we may assume that $k$ is sufficiently large in terms of $\eps$ (otherwise, the required bound follows from \eqref{eqn:main-theorem-adelic}).

\section{Division and reduction of the proof}\label{sec:reduction-proof}

\subsection{Traversing the genus}
\label{sec:prelim-proof-reduction}

Recall that $K_R$ is defined as the image of the subgroup $\prod_p R_p^{\times}$ in $G(\mathbb{A}_f)$; it is a compact open subgroup of $G(\mathbb{A}_f)$. In due course,  we will consider the conjugated sets $h_f K_R h_f^{-1}$, for $h_f \in G(\mathbb{A}_f)$. These are precisely the compact open subgroups $K_{R'}$ associated to the Eichler orders $R'$ in the genus of $R$.  We note that $R'$ has the same level as $R$, and may be given explicitly by the following intersection:
$$
R' = h_f (R \otimes \widehat{\mathbb{Z}}) h_f^{-1} \cap B(\mathbb{Q}).
$$
We further note that the action of $G(\mathbb{A}_f)$ on the genus of $R$ commutes with partial dualization in the sense that
$$
R'(\ell) = h_f (R(\ell) \otimes \widehat{\mathbb{Z}}) h_f^{-1} \cap B(\mathbb{Q}).
$$
This observation permits us to formulate the required $L^2$-estimates for our differences of theta kernels in terms of integration over archimedean, rather than adelic, arguments.  To that end, we introduce the notation
$$
R(\ell; h) \coloneqq h_\infty^{-1} R'(\ell) h_\infty = h_\infty^{-1} ( h_f ( R(\ell) \otimes \widehat{\mathbb{Z}} ) h_f^{-1} \cap B(\mathbb{Q})) h_\infty
$$
for $h=(h_\infty,h_f) \in G(\mathbb{A})$. We note that for $h \in G(\mathbb{R})$ (i.e., $h_f = 1$), the set $R(\ell; h)$ is just $h^{-1}R(\ell)h$. Since taking the trace commutes with conjugation, we may extend the notation to kernels of the reduced trace without concern for confusion regarding the order of operation, i.e.,
$$
R(\ell; h)^0 = (h_\infty^{-1} R'(\ell) h_\infty)^0 = h_\infty^{-1} R'(\ell)^0 h_\infty = h_\infty^{-1} ( h_f ( R(\ell)^0 \otimes \widehat{\mathbb{Z}} ) h_f^{-1} \cap B(\mathbb{Q})) h_\infty.
$$
If $B$ is split, then the class number of $R$ is one and we have fixed the representative as in \eqref{eq:split-case-class-rep}. In this case, we find for $h \in G(\mathbb{A})$ that $h^{-1}R h=h'^{-1}Rh'$, where $h' \in G(\mathbb{R})$ has the same image under the isomorphism $[G]\slash K_{\infty}K_R \cong \Gamma_0(N) \backslash \mathbb{H}$ as $h$ does.  In particular, we have the equality of height functions (see \S\ref{sec:results-split-case}) $H(h)=H(h')$.

\begin{remark}
  By considering a right translate of $\varphi \in \mathcal{F}$ and thereby moving the maximal compact $K_{\infty}$ and the Eichler order $R$ around, one could reduce the statement of the main Corollary \ref{cor:main-corollary-fourth-moment-adelic-formulation} to the case that $g$ is the identity.
% Indeed, in the case where $B$ is non-split, our lattice counting arguments of \S \ref{sec:order-theor-prel}-\S\ref{sec:type-ii-estimates} only depend on the genus of $R$ and hence it may seem like a good idea to make this reduction from the start in order to minimize notational load. 
However, in the split case, our counting arguments \emph{do} depend on the particular order in the genus.  Moreover, our method relies on a difference of theta kernels defined relative to different $g$.  Such a reduction would thus be premature.
\end{remark}

% The required estimate for $(R,K_\infty,K_R, g)$, where $g=(g_{\infty},g_f)$, is the same as that for $(R, g_{\infty}^{-1}K_\infty g_{\infty}, g_f^{-1}K_R g_f , 1)$ and thus the same as $(R',K_{\infty},K_{R'},1)$ for some global order $R'$ in the same genus as $R$. We shall abuse some notation and write $g^{-1}Rg$ for $R'$.

% If $[G] / K_\infty K_R$ is compact, our estimates will only depend on the genus, $\Gen R$, of $R$ and thus we may assume that $g=1$.

% In the split case, we do need to keep track of the dependence of $g^{-1}Rg$ in terms of $g$. In fact, $R'=(g')^{-1} R g'$ for some $g' \in G(\mathbb{R})$ with $g'i =z$, where $z$ is the image of $g$ under the map $[G] \mapsto [G]/K_{\infty}K_R \cong \Gamma_0(N) \backslash \mathbb{H}$.

\subsection{Estimating fourth moments via lattice sums}

In \S\ref{sec:Theta-L2norm}, we introduce certain theta kernels.  A spectral expansion of their $L^2$-norms will yield the fourth moments of interest, while a ``geometric'' expansion, using Siegel domains and Fourier expansions, bounds those $L^2$-norms in terms of certain lattice sums.  We now state the latter bounds.

\begin{proposition}
  \label{prop:reductiontocountmaass}
  Suppose $B$ is indefinite. Then, for $g_1, g_2 \in [G]$, there exists $\ell|d_BN$, $g \in \{g_1, g_2\}$, and $0< T \preccurlyeq \frac{(d_B N(k+1))^{\frac{1}{2}}}{\ell}$ (here the notation $\preccurlyeq$ is as in \S\ref{sec:results-forms}) so that, for $k=0$,
  \begin{equation}\label{eq:reductiontocountmaass}
    \sum _{\varphi \in \mathcal{F}^{-}_{\le L} }
    (|\varphi(g_1)|^2 - |\varphi(g_2)|^2)^2
    \preccurlyeq_{L}
    1+
    \frac{d _B N}{\ell T^2}
    \sum _{\substack{
        \gamma_1, \gamma_2 \in  R(\ell;g)  \cap \Omega^{\star}(1, T): \\
        \det(\gamma_1) = \det(\gamma_2)
      }
    }
    1,
  \end{equation}
  while for $k > 0$,
  \begin{equation}\label{eq:reductiontocountholosmall}
    \sum _{\varphi \in \mathcal{F^{-,\hol}} }
    (|\varphi(g_1)|^2 - |\varphi(g_2)|^2)^2
    \preccurlyeq
    1+
    \frac{d _B N k}{\ell T^2}
    \sum _{\substack{
        \gamma_1, \gamma_2 \in R(\ell;g)  \cap \Omega^{\star}(1, T): \\
        \det(\gamma_1) = \det(\gamma_2)
      }
    }
    1.
  \end{equation}
\end{proposition}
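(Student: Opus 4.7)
The plan is to exploit the reproducing property of the theta kernel. First, one invokes an adelic version of the identity \eqref{eq:intro-theta-key-identity} so that the difference kernel $\theta(g_1,g_1;\cdot)-\theta(g_2,g_2;\cdot)$ pairs against each newform $\varphi$ to a scalar multiple of $|\varphi(g_1)|^2-|\varphi(g_2)|^2$. An application of Bessel's inequality, together with the spectral decomposition of the ambient $L^2$-space in which these theta kernels live (so that the Plancherel measure is concentrated where $\varphi$ sits), then bounds the fourth moment on the left-hand side by $\|\theta(g_1,g_1;\cdot)-\theta(g_2,g_2;\cdot)\|_2^2$, weighted by a normalization of order $d_BN$ in the Maass case and $d_BNk$ in the holomorphic case. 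This purely spectral step is the task of Section \ref{sec:Theta-L2norm}.

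Next, I would expand the $L^2$-norm geometrically. Cover a fundamental domain for the theta group acting on $s$ by a disjoint union of Siegel sets. A subtlety flagged in the introduction is that the naive $\SL_2(\mathbb{Z})$-translate cover forces an additional congruence condition like \eqref{eq:intro-type-I-congruence} that is prohibitive when $\ell$ is large; I would therefore use an Atkin--Lehner twisted cover, so that the Siegel set corresponding to a divisor $\ell \mid d_BN$ naturally supports the partially dualized lattice $R(\ell;g)$ for an appropriate $g\in\{g_1,g_2\}$, with no residual congruence. On each Siegel set, open $|\theta|^2$ as a double sum over $R(\ell;g)\times R(\ell;g)$: Fourier expansion in the unipotent (horocyclic) direction selects pairs with $\det(\gamma_1)=\det(\gamma_2)$, because the reduced norm appears linearly in the $\Re(s)$-phase of \eqref{eq:intro-theta}, while the Gaussian decay in $\Im(s)$ combined with the archimedean weight (a Gaussian in the Maass case, a Gaussian--Bergman product in the holomorphic case) restricts each $\gamma_i$ to a set of the shape $\Omega^\star(1,T)$. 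The parameter $T$ is a dyadic scale for $\Im(s)^{1/2}$; the upper bound $T\preccurlyeq (d_BN(1+k))^{1/2}/\ell$ is dictated by the diameter in Frobenius norm of $\sigma^{-1}R(\ell)\sigma$ after the Atkin--Lehner normalization, together with the archimedean spread of the weight ($\sim 1$ for Maass forms, $\sim k^{1/2}$ for holomorphic forms).

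Pigeonholing across the divisor-many choices of $\ell\mid d_BN$ and the two choices of $g\in\{g_1,g_2\}$, together with a dyadic summation in $T$, isolates a dominant triple $(\ell,g,T)$ and yields \eqref{eq:reductiontocountmaass} and \eqref{eq:reductiontocountholosmall}, with contributions from $\gamma_1=\gamma_2=0$ and from the smallest dyadic scales absorbed into the leading $1$. The main obstacle is the construction and verification of the Atkin--Lehner twisted cover: one has to check both that the twisted Siegel domains fit together into a bona fide fundamental domain and that, on each piece, the Fourier expansion precisely identifies the lattice $R(\ell;g)$, so that the resulting counting problem lines up with the Type I/II estimates of Theorems \ref{thm:typeI}--\ref{thm:typeII}. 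The execution of this step is naturally the content of Section \ref{sec:real-manifolds}, with the supporting theta function identities collected in Section \ref{sec:Theta-L2norm} and proved in Appendix \ref{sec:appendix-theta}.
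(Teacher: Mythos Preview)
Your proposal is correct and follows essentially the same route as the paper: theta identity plus Bessel's inequality on the spectral side, the Atkin--Lehner twisted Siegel cover (Lemma~\ref{lem:generalL2bound}) together with the transformation law \eqref{eq:thetamaasstranform} on the geometric side, Fourier expansion in the horocyclic variable to extract the equal-determinant condition, Cauchy--Schwarz to separate $g_1$ and $g_2$, then integration in $y$ producing incomplete Gamma functions whose super-polynomial decay truncates the dyadic scale at $T \preccurlyeq (d_BN(k+1))^{1/2}/\ell$. One small clarification: the dyadic parameter $T$ in the paper is a scale for $\max_i P(\gamma_i)^{1/2}$ rather than for $\Im(s)^{1/2}$ directly (though these are dual via the Gaussian), and the cutoff on $T$ comes from the lower bound $y \geq \frac{\sqrt{3}}{2}\ell^2/(d_BN)$ on the Siegel set combined with the decay of $Q(k+1,\cdot)$, not from lattice geometry per se.
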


\begin{proposition}
  \label{prop:reductiontocountholo}
  Suppose $B$ is indefinite. Let $g \in [G]$, and assume that $k \gg (d_BN)^{\eta}$ for some arbitrarily small $\eta >0$.  Then, there exists $\ell|d_BN$ and $ %\min\{\ell^{-\frac{1}{2}},\ell^{-1} k^{\frac{1}{2}} H(g)^{-1}\} \prec
  0<T \preccurlyeq \frac{(d_B N k)^{\frac{1}{2}}}{\ell}$ so that
  \begin{equation}\label{eq:reductiontocountholo}
    \sum _{\varphi \in \mathcal{F}^{-,\hol} }
    |\varphi(g)|^4
    \preccurlyeq_{\eta}
    1+
    \frac{d _B N k}{\ell T^2}
    \sum _{\substack{
        \gamma_1, \gamma_2 \in  R(\ell;g)  \cap \Omega^{\star}(k^{-1+\eps}, T): \\
        \det(\gamma_1) = \det(\gamma_2)>0
      }
    }
    1.
  \end{equation}
  % or \pn{maybe we can discard this by restricting to the case that $k \geq (d_B N)^{\eps}$, treating the complementary case using the general approach for weight $k$ Maa{\ss} forms?  }
  % \begin{equation}\label{eq:reductiontocountholo}
  %   \sum _{\varphi \in \mathcal{F}^{\hol} } |\varphi(g)|^4 \prec_A k^{-A} \frac{d _B N k}{\ell T} \sum _{\substack{
  %   \gamma_1, \gamma_2 \in g^{-1} R(\ell) g  \cap \Omega(1, T): \\
  %   \det(\gamma_1) = \det(\gamma_2) } } 1.
  % \end{equation}
\end{proposition}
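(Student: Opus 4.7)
The plan is to follow the same theta-kernel scheme as in the holomorphic case of Proposition~\ref{prop:reductiontocountmaass}, specialized so that the archimedean component of the kernel is concentrated at scale $\delta = k^{-1+\eps}$ in the $b,c$ directions (i.e.\ the $\j,\k$-plane inside $B_\infty^0$), exactly where weight-$k$ holomorphic vectors localize. This concentration is the source of both the restricted region $\Omega^\star(k^{-1+\eps},T)$ and the sign condition $\det(\gamma_i)>0$ on the counting side.

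Concretely, I would take the archimedean Schwartz function to be (roughly) Gaussian at unit scale in the $a,d$ directions, Gaussian at scale $k^{-1/2}$ in the $b,c$ directions, and multiplied by the weight-$k$ matrix coefficient of the holomorphic discrete series, which is supported (up to exponential error in $k$) on elements of positive reduced norm. The resulting theta kernel $\theta(g,g;\cdot)$ satisfies a reproducing identity parallel to~\eqref{eq:intro-theta-key-identity}, and Bessel's inequality applied to its cuspidal projection gives
\[
\sum_{\varphi \in \mathcal{F}^{-,\hol}} |\varphi(g)|^4 \;\ll\; V^2 \, \bigl\|\theta(g,g;\cdot)_{\cusp}\bigr\|_{L^2}^2 \;+\; O(1),
\]
reducing the task to an $L^2$-bound for the cuspidal part of $\theta(g,g;\cdot)$.

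To bound the right-hand side geometrically, I would run the Siegel-domain cover and unipotent Fourier expansion of~\S\ref{sec:real-manifolds}, exactly as in the squared-difference case. The Atkin--Lehner twist optimally placing $g$ in its Siegel set selects the divisor $\ell\mid d_B N$ and the scale $T \preccurlyeq (d_BNk)^{1/2}/\ell$. Unipotent orthogonality in the unfolding forces $\det(\gamma_1)=\det(\gamma_2)$ in the resulting double sum over $\gamma_1,\gamma_2 \in R(\ell;g)$; the unit-scale Gaussian imposes $\gamma_i \in \Omega(1,T)$; and the concentrated Gaussian together with the weight-$k$ matrix coefficient sharpen this to $b_i^2+c_i^2 \leq k^{-1+\eps}T^2$ and $\det(\gamma_i)>0$, up to exponentially small error in $k$. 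The prefactor $\tfrac{d_BNk}{\ell T^2}$ arises from the Siegel volume against the chosen Gaussian normalizations.

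The principal obstacle is the non-cuspidal projection of $\theta(g,g;\cdot)$, which, unlike in the squared-difference setup, is not automatically cancelled. Geometrically it corresponds to the degenerate stratum $\det(\gamma)=0$ and to constant Fourier frequencies, whose natural contribution is a fixed power of $V$. Combined with the $k^{-1+\eps}$ archimedean concentration, this contribution is bounded by $\preccurlyeq V\cdot k^{-1+\eps}$, which is absorbed into the additive~$1$ on the right of~\eqref{eq:reductiontocountholo} precisely when $k \gg (d_BN)^\eta$. Executing this trade-off cleanly, and verifying that the $\det\le 0$ piece can be discarded by its exponential suppression (mismatched against the holomorphic discrete-series matrix coefficient), is the most delicate step and is the reason the hypothesis on $k$ appears.
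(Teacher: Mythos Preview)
Your broad strategy (theta kernel, Bessel, Siegel cover, unipotent unfolding) matches the paper, but the mechanism is misidentified in two linked places.

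The paper does not use a Schwartz Gaussian with ad hoc $k^{-1/2}$ scaling; it uses the Bergman test function $\Phi_\infty^{-,\hol}$ of Definition~\ref{def:archimedean-test-functions}, producing the kernel~\eqref{eq:defthetaholo}, which is supported on $\det>0$ and is \emph{not} Schwartz. The factor $\det(\gamma)^{k-1}\overline{X(\gamma)}^{-k}$ has modulus $(1+u(\gamma)/\det\gamma)^{-k/2}$, and it is this intrinsic decay of the discrete-series matrix coefficient, not an imposed Gaussian, that produces the concentration to $u \le k^{-1+\eps}\det\gamma$. Because the kernel is supported on $\det>0$, the theta series~\eqref{eq:defthetaholo} has no $n\le 0$ Fourier coefficient and already lies in $L^2$; there is no non-cuspidal obstacle and no subtraction is needed. (If you genuinely used an anisotropic Schwartz Gaussian instead, you would also lose the exact reproducing identity of Proposition~\ref{prop:thetamaassmainidentity}, which holds only for the specific isotypical test functions of Definition~\ref{def:archimedean-test-functions}.)

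Consequently, your diagnosis of the hypothesis $k\gg(d_BN)^\eta$ is wrong. It is not used to control a degenerate $\det=0$ stratum---there is none. It is used to dispose of the \emph{tail} $u(\gamma)>k^{-1+\eps}\det(\gamma)$: there the matrix-coefficient factor is at most $(1+k^{\eps-1})^{-k/2}$ (when $u\le\det\gamma$) or $2^{-k/4}$ (when $u>\det\gamma$), giving super-polynomial decay in $k$, and the hypothesis converts this into super-polynomial decay in $d_BN$ so that the resulting counts over the larger region $\Omega^\star(1,T)$ are absorbed into the additive~$1$. The surviving range $u\le k^{-1+\eps}\det\gamma$ is the main case, and yields exactly the count over $\Omega^\star(k^{-1+\eps},T)$ with $\det>0$ appearing in~\eqref{eq:reductiontocountholo}.
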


\begin{proposition}
  \label{prop:reductiontocountspherical}
  Suppose $B$ is definite and the weight is $k=0$. Then, for $g_1,g_2 \in [G]$ and $m \in \mathbb{N}_0$, there exists $\ell|d_BN$, $%\tfrac{1}{2} \ell^{-\frac{1}{2}} \leq
  0< T \preccurlyeq \frac{(d_B N (m+1))^{\frac{1}{2}}}{\ell}$ and $\frac{1}{m^2+1} \preccurlyeq \delta \le 1$ so that
  \begin{equation}\label{eq:reductiontocountspherical}
    \sum _{\varphi \in \mathcal{F}^{+}_{m}}
    \left(|\varphi(g_1)|^2-|\varphi(g_2)|^2\right)^2
    \preccurlyeq
    1+
    \frac{d_B N}{\ell \delta^{\frac{1}{2}} T^2}
    \sum _{\substack{
        \gamma_1, \gamma_2 \in R(\ell;g)  \\
        \gamma_1, \gamma_2 \in \Omega^{\star}(\delta, T)\cup \Psi^{\star}(\delta, T): \\
        \det(\gamma_1) = \det(\gamma_2)
      }
    }
    1.
  \end{equation}
\end{proposition}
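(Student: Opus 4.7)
The plan is to apply the theta-kernel strategy, following the pattern laid out in the prequel \cite{Theta-supnorm-Is}, specialized to the definite case. Because $B$ is definite, the adelic quotient $[G]$ is a disjoint union of finitely many $G(\mathbb{R})K_R$-orbits (indexed by the class set of $R$), and $G(\mathbb{R})/K_\infty \cong S^2$; consequently there are no cusps and the $L^2$-estimates take place on a compact, finite-dimensional space. This obviates the Siegel-set and Fourier expansion analysis that dominates the proofs of the analogous Propositions \ref{prop:reductiontocountmaass} and \ref{prop:reductiontocountholo} in the indefinite case.

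The first step is to construct, for each divisor $\ell \mid d_BN$ and each $m$, a Schwartz function $\phi^{(m,\ell)}_s$ on $B_\infty$, parametrized by $s$ in a suitable compact symmetric space, with two key properties. Spectrally, its archimedean content should be concentrated in the unique $(m+1)$-dimensional representation of $G(\mathbb{R})$. Geometrically, it should be essentially a Gaussian of overall scale $T$, whose width $\delta^{1/2}T$ in the directions transverse to the matrix coefficient's peaks is governed by a parameter $\delta \in [\tfrac{1}{m^2+1},1]$. The matrix coefficient of the $(m+1)$-dimensional representation (a Legendre-type polynomial) is concentrated near two antipodal points fixed by $K_\infty$; in the coordinates $[a,b,c]+d$ of \S\ref{sec:Kinftynotation}, these two peaks correspond to the regions $\Omega^\star(\delta,T)$ (concentrated near $\mathbb{R}\oplus\mathbb{R}\mathbf{i}$) and $\Psi^\star(\delta,T)$ (concentrated near $\mathbb{R}\mathbf{j}\oplus\mathbb{R}\mathbf{k}$) appearing on the right-hand side of \eqref{eq:reductiontocountspherical}.

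With this choice in hand, the theta identity, in analogy with \eqref{eq:intro-theta-key-identity}, yields
\[
\langle \Theta^{(m,\ell)}(g,g;\cdot),\varphi\rangle = c_{m,\ell,T,\delta}\,|\varphi(g)|^2 \qquad (\varphi \in \mathcal{F}^{+}_{m}),
\]
where $\Theta^{(m,\ell)}$ is the theta kernel built from $\phi^{(m,\ell)}_s$ (summed over $R(\ell;g)$ as $g$ ranges through the genus) and the pairing is taken in the $s$-variable. Subtracting these identities at $g=g_1$ and $g=g_2$ and invoking Bessel's inequality bounds the left-hand side of \eqref{eq:reductiontocountspherical} by $c_{m,\ell,T,\delta}^{-2}\,\bigl\|\Theta^{(m,\ell)}(g_1,g_1;\cdot) - \Theta^{(m,\ell)}(g_2,g_2;\cdot)\bigr\|_{L^2}^2$. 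Expanding the square and integrating out $s$ using the Gaussian structure of $\phi^{(m,\ell)}_s$ yields a weighted sum over pairs $(\gamma_1,\gamma_2)\in R(\ell;g)^2$, $g\in\{g_1,g_2\}$, with the Gaussian enforcing the constraints $\gamma_i \in \Omega^\star(\delta,T) \cup \Psi^\star(\delta,T)$ and $\det \gamma_1 = \det \gamma_2$. Tracking $c_{m,\ell,T,\delta}$ together with the Gaussian's total mass produces the prefactor $\frac{d_BN}{\ell\delta^{1/2}T^2}$, while the $\gamma_1 = \gamma_2 = 0$ diagonal contribution gives the leading $1$.

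The main obstacle is the archimedean construction of $\phi^{(m,\ell)}_s$: it must be tuned so that its theta lift picks out the $(m+1)$-dimensional representation with the correct normalizing constant $c_{m,\ell,T,\delta}$, while simultaneously having a Gaussian profile whose transverse width reproduces the sharp concentration $\delta\asymp(m+1)^{-2}$ of the Legendre/Jacobi polynomials on $S^2$. A secondary subtlety, handled by an Atkin--Lehner-type twist at the primes $p \mid \ell$, is arranging that the resulting lattice sum runs over the partially dualized lattices $R(\ell;g)$ rather than $R(g)$ itself.
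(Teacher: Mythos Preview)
Your proposal has the right skeleton (theta identity, Bessel, $L^2$-norm of the kernel, reduction to counting), but there is a genuine misunderstanding in the second sentence that propagates through the rest of the sketch. The compactness of $[G]$ does \emph{not} obviate the Siegel-set analysis. The theta kernel $\Theta(g,s)$ is a function of two variables: the orthogonal variable $g\in[G]$ and the symplectic variable $s\in[\SL_2]$. The inner product in \eqref{eq:thetamaassinner} and the $L^2$-norm $\|\Theta(g_1,\cdot)-\Theta(g_2,\cdot)\|_2^2$ are taken in the $s$-variable, and $[\SL_2]/U_R^1\cong \Gamma_0(d_BN)\backslash\mathbb{H}$ is non-compact regardless of whether $B$ is definite. (Indeed, the Jacquet--Langlands lifts $\varphi^{\JL}$ against which one pairs live on $\Gamma_0(d_BN)$.) Consequently the paper's proof applies Lemma~\ref{lem:generalL2bound} exactly as in the indefinite case, and the divisor $\ell\mid d_BN$ arises from the cusp decomposition of that lemma together with the transformation law \eqref{eq:thetamaasstranform}, not from a separate non-archimedean choice of test function or an Atkin--Lehner twist at primes $p\mid\ell$.

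A second point: the paper does not construct a family of Gaussians with a built-in transverse scale $\delta$. It uses a single fixed archimedean test function ${\Phi}_\infty^{+,m}$ (Definition~\ref{def:archimedean-test-functions}) involving the Legendre polynomial $P_m$, which makes the theta lift land exactly on the Jacquet--Langlands lift. After the Siegel-domain unipotent integration, one is left with $|P_m((|X(\gamma)|^2-u(\gamma))/\det\gamma)|^2$ in the summand; the parameter $\delta$ then enters only at the estimation stage, by invoking the Bernstein inequality \eqref{eq:Bernstein} for $P_m$ and dyadically decomposing the quantity $|X(\gamma)|^2\cdot u(\gamma)$. The dichotomy $\Omega^\star(\delta,T)$ versus $\Psi^\star(\delta,T)$ does not correspond to two separate peaks of the matrix coefficient, but rather to which of $u(\gamma)$ or $|X(\gamma)|^2$ is the small factor when their product is forced to be $\ll\delta T^4$.
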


\begin{proposition}
  \label{prop:reductiontocountdefholo}
  Suppose $B$ is definite. Then, for $g \in [G]$, there exists $\ell|d_BN$ and $%\tfrac{1}{2} \ell^{-\frac{1}{2}} \leq
  0< T \preccurlyeq \frac{(d_B N k)^{\frac{1}{2}}}{\ell}$ so that
  \begin{equation}\label{eq:reductiontocountdefholo}
    \sum _{\varphi \in \mathcal{F}^{+,\hol}}
    |\varphi(g)|^4
    \preccurlyeq
    1+\frac{d _B N k}{\ell T^2}\left(
      \sum _{\substack{
          \gamma_1, \gamma_2 \in R(\ell;g)  \cap \Omega^{\star}(k^{-1+\eps}, T): \\
          \det(\gamma_1) = \det(\gamma_2)
        }
      }1
      +
      \sum _{\substack{
          \gamma_1, \gamma_2 \in R(\ell;g)  \cap \Omega^{\star}(1, T): \\
          \det(\gamma_1) = \det(\gamma_2)
        }
      }
      k^{-2027}\right)
  \end{equation}
\end{proposition}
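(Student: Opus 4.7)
The proof follows the theta-kernel / Bessel-inequality strategy of the prequel \cite{Theta-supnorm-Is}, used already to establish Propositions \ref{prop:reductiontocountmaass}--\ref{prop:reductiontocountspherical}. I construct a theta kernel $\theta(g;\cdot)$ on a metaplectic cover of $[\SL_2]$, attached to a Schwartz--Bruhat function on $B(\mathbb{A})$ whose non-archimedean component is (a translate of) the characteristic function of $R(\ell)\otimes \widehat{\mathbb{Z}}$, for a divisor $\ell \mid d_BN$ matching a specific Atkin--Lehner sign pattern, and whose archimedean component $\Phi_\infty$ is a highest-weight vector for the weight-$k$ representation of $K_\infty$ in the Schr\"odinger model. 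In the coordinates of \S\ref{sec:Kinftynotation}, a direct calculation (using $K_\infty = \exp(\mathbb{R}\i)$ and the resulting weight decomposition $z_1 = d+a\i$, $z_2 = b+c\i$) identifies the highest-weight vector as a constant multiple of $z_1^k e^{-\pi\Im(s)\|x\|^2}$, so that
\[
  |\Phi_\infty([a,b,c]+d;\, s)|^2 \asymp (a^2+d^2)^{k}\, e^{-2\pi \Im(s)(a^2+b^2+c^2+d^2)}.
\]
The Petersson-type identity analogous to \eqref{eq:intro-theta-key-identity} yields $\langle \theta(g;\cdot),\varphi\rangle = V^{-1}|\varphi(g)|^2$ for newforms matching the chosen sign pattern, and Bessel's inequality, optimized over the finitely many choices of $\ell$, reduces the fourth moment to a bound on $\|\theta(g;\cdot)\|_2^2$ for a specific $\ell$.

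A standard Siegel-domain unfolding for $\SL_2(\mathbb{Z})\backslash\SL_2(\mathbb{R})$ together with Fourier expansion along the unipotent expresses $\|\theta(g;\cdot)\|_2^2$, up to harmless smaller-order contributions, as a weighted lattice sum
\[
  \|\theta(g;\cdot)\|_2^2 \preccurlyeq \frac{1}{V^2 T^2}\, \sum_{\substack{\gamma_1, \gamma_2 \in R(\ell;g)^0 \\ \det\gamma_1 = \det\gamma_2}} w_\infty(\gamma_1, \gamma_2),
\]
for some $T$ with $0 < T \preccurlyeq (d_BNk)^{1/2}/\ell$ and $\Im(s)T^2 \asymp 1$, and an archimedean weight $w_\infty$ supported in $\Omega^{\star}(1,T)^2$ and normalized so that $\sup w_\infty \asymp 1$. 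Explicitly, $w_\infty(\gamma_1,\gamma_2)$ is bounded by a product over $i=1,2$ of the ratios $(a_i^2+d_i^2)^{k/2}e^{-\pi\Im(s)\|\gamma_i\|^2}$ divided by their maximum $T^k e^{-\pi\Im(s)T^2}$ over $\|\gamma_i\| \leq T$.

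The dichotomy in \eqref{eq:reductiontocountdefholo} now reflects the concentration of $w_\infty$. On $\Omega^{\star}(k^{-1+\eps}, T)^2$, the trivial bound $w_\infty \ll 1$ gives the first summand. If instead some $\gamma_i$ has $b_i^2+c_i^2 > k^{-1+\eps}T^2$, then (on the support of $w_\infty$) $a_i^2 + d_i^2 \leq T^2(1 - k^{-1+\eps})$, and hence
\[
  w_\infty(\gamma_1, \gamma_2) \ll (1-k^{-1+\eps})^{k} \ll e^{-c k^{\eps}} \ll k^{-2027},
\]
so bounding the remaining count by the unrestricted count over $\Omega^{\star}(1,T)^2$ yields the second summand. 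The exponent $2027$ here (or any fixed constant) is inherited from the super-polynomial decay $e^{-ck^\eps}$ and is used merely to absorb the polynomial cost of the crude counting estimate.

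The principal technical obstacle is the rigorous archimedean analysis: identifying $\Phi_\infty$ as the weight-$k$ highest-weight vector in the Schr\"odinger model and computing the explicit form of $w_\infty$ after unfolding, as well as controlling the error terms from the Siegel-domain cutoff and the Eisenstein contribution to the $\SL_2$-spectrum. These computations are variants of those carried out for the indefinite holomorphic case in \cite{Theta-supnorm-Is}, simplified on the $G$-side by the compactness of $G(\mathbb{R})$ in the definite setting, and are deferred to Appendix \ref{sec:appendix-theta}.
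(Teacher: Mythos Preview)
Your overall strategy---theta identity, Bessel's inequality, Siegel-domain unfolding with unipotent integration, and the dichotomy on $u(\gamma)=b^2+c^2$ versus $k^{-1+\eps}$ giving super-polynomial decay $(1-k^{-1+\eps})^{k/2}\ll k^{-A}$---is exactly the paper's, and the computation of the archimedean weight via $|X(\gamma)|^2=a^2+d^2$ is correct.

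That said, several details are misdescribed and should be corrected. First, the divisor $\ell$ does not enter through the non-archimedean test function or through any ``Atkin--Lehner sign pattern'' restriction: the paper uses the single test function $\prod_p \mathds{1}_{R_p}$, and the resulting $\Theta^{+,\hol}(g,\cdot)$ captures \emph{all} of $\mathcal{F}^{+,\hol}$ at once via \eqref{eq:thetamaassinner}. The parameter $\ell$ appears only on the $\SL_2$-side, through the Siegel-domain covering of Lemma~\ref{lem:generalL2bound} for $\Gamma_0(d_BN)$ (not $\SL_2(\mathbb{Z})$) together with the transformation law \eqref{eq:thetamaasstranform}; the existential ``there exists $\ell,T$'' in the statement then just records the dominant term of the resulting sum over $\ell\mid d_BN$ and dyadic $T_j$. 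Second, the lattice sum at this stage is over $R(\ell;g)$, not $R(\ell;g)^0$; the reduction to the traceless sublattice happens later, in \S\ref{sec:reduction-proof}. Third, no metaplectic cover is needed: the quadratic space $(B,\det)$ is four-dimensional, so the Weil representation factors through $\SL_2$. Finally, there is no ``Eisenstein contribution'' to control, since Bessel's inequality is applied only as an upper bound and no spectral expansion of $\|\Theta\|_2^2$ is performed.
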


\subsection{Reduction to ternary lattices}

In this section, we reduce the vital counting problem involving quaternary quadratic form to problems involving only ternary quadratic forms. The key observation is that we may orthogonally decompose the quaternion algebra $B_{\infty}$ into its trace part and its traceless part $B_{\infty}^{0}$. Thus, for any $\alpha = \frac{1}{2}\tr(\alpha) + \alpha^0 \in \mathbb{R} \oplus B_{\infty}^{0}$, we have
\begin{equation}
  \det(\alpha)= \tfrac{1}{4}\tr(\alpha)^2+\det(\alpha^0). %\! \mid_{B_{\infty}^{0}}.
  \label{eq:normdecomp}
\end{equation}
We further note that the trace is invariant under conjugation. Hence, we have $\tr(R(\ell;g)) \subseteq \mathbb{Z}$. We conclude that
\begin{equation}
  R(\ell;g) \subseteq \tfrac{1}{2} \mathbb{Z} \oplus \tfrac{1}{2} R(\ell;g)^{0},
  \label{eq:latticesplitting}
\end{equation}
is a sublattice of the direct sum of the lattices $\frac{1}{2}\mathbb{Z}$ in $\mathbb{R}$ and $\tfrac{1}{2} R(\ell;g)^{0}$ in $B_{\infty}^0$. Using this decomposition, we deduce
\begin{equation}
  \sum_{\substack{\gamma_1,\gamma_2 \in R(\ell;g) \cap\Omega^{\star}(\delta,T) \\ \det(\gamma_1) = \det(\gamma_2)}} 1 \ll_{\eps} T^{\eps} \left|R(\ell;g)^0 \cap \Omega(\delta,2T) \right|^2 
  + T \sum_{\substack{\gamma_1,\gamma_2 \in R(\ell;g)^0  \cap\Omega(\delta,2T) \\ \det(\gamma_1) = \det(\gamma_2)}}1
  \label{eq:OmegaTypesplit}
\end{equation}
by distinguishing the two cases of equal and non-equal trace and applying the divisor bound to the equality
$$
\det(\gamma_1)=\det(\gamma_2) \Leftrightarrow \tfrac{1}{4}\tr(\gamma_1)^2 - \tfrac{1}{4}\tr(\gamma_2)^2 = \det(\gamma_2^0)-\det(\gamma_1^0).
$$
We remark that we have forfeited the congruence condition $\det(\gamma_1^0) \equiv \det(\gamma_2^0) \ \mod(1)$, and this forfeiture will be reflected in the suboptimality of our final counting estimates on larger scales when $\ell>1$. We circumnavigate these larger scales by an appropriate choice of a covering domain (cf. Lemma \ref{lem:generalL2bound}).

Note that we may further bound the diagonal contribution by considering its largest fiber:
\begin{equation}
  \sum_{\substack{\gamma_1,\gamma_2 \in R(\ell;g)^0 \cap\Omega(\delta,2T) \\ \det(\gamma_1) = \det(\gamma_2)}}1 \le |R(\ell;g)^0 \cap\Omega(\delta,2T)| \times \max_{\substack{n \in \frac{1}{\ell} \mathbb{Z} \\ |n|\le 4T^2}} |R(\ell;g)^0 \cap\Omega(\delta,2T) \cap \det{}^{-1}(\{n\})|.
  \label{eq:OmegaTypeIIsplit}
\end{equation}
Arguing along the same lines, we also arrive at
\begin{equation}
  \sum_{\substack{\gamma_1,\gamma_2 \in  R(\ell;g) \cap\Psi^\star(\delta,T) \\ \det(\gamma_1) = \det(\gamma_2)}} 1 \ll_{\eps} T^{\eps} \left|R(\ell;g)^0 \cap \Psi(\delta,2T) \right|^2 
  + \delta^{\frac{1}{2}}T \sum_{\substack{\gamma_1,\gamma_2 \in R(\ell;g)^0 \cap\Psi(\delta,2T) \\ \det(\gamma_1) = \det(\gamma_2)}}1
  \label{eq:PsiTypesplit}
\end{equation}
and
\begin{equation}
  \sum_{\substack{\gamma_1,\gamma_2 \in R(\ell;g)^0 \cap\Psi(\delta,2T) \\ \det(\gamma_1) = \det(\gamma_2)}}1 \le |R(\ell;g)^0 \cap\Psi(\delta,2T)| \times \max_{\substack{n \in \frac{1}{\ell} \mathbb{Z} \\ |n|\le 4T^2}} |R(\ell;g)^0 \cap\Omega(1,2T) \cap \det{}^{-1}(\{n\})|.
  \label{eq:PsiTypeIIsplit}
\end{equation}
Note that in this last inequality, we have passed from $\Psi(\delta,2T)$ to the larger set $\Omega(1,2T)=\Psi(1,2T)$; the resulting bound remains adequate for us thanks to the additional saving of $\delta^{\frac{1}{2}}$ in \eqref{eq:PsiTypesplit}.

\subsection{Proof of Theorem \ref{thm:main-result-fourth-moment-adelic-formulation}}
\label{sec:proof-thm1}

Theorem \ref{thm:main-result-fourth-moment-adelic-formulation} is an immediate consequence of the following pair of lemmas
% inequalities \eqref{eq:Omegafullcount}, \eqref{eq:Psifullcount} and
together with
Propositions \ref{prop:reductiontocountmaass} through \ref{prop:reductiontocountdefholo}.

\begin{lemma}\label{lem:thetasup-paper-merge-20220418:we-have-beginm}
  We have
\begin{multline}
  \sum_{\substack{\gamma_1,\gamma_2 \in  R(\ell;g) \cap\Omega^{\star}(\delta,T) \\ \det(\gamma_1) = \det(\gamma_2)}} 1 \prec \ell T^2 \left(1 + \ell^{\frac{1}{2}} H(g) \delta^{\frac{1}{2}} + \ell^{\frac{1}{2}} H(g) \delta T +  \frac{\ell^2 }{d_B N} \delta T^2  \right) \\
  \times \left(1 + \ell^{\frac{1}{2}} H(g) \delta^{\frac{1}{2}} + \ell^{\frac{1}{2}} H(g) \delta^{\frac{1}{2}} T + \frac{\ell^2 }{d_B N} \delta T^2 \right). %\\ + \ell^{\frac{1}{2}} T^2 \left(1 + \ell^{\frac{1}{2}} \delta H(g) T + \frac{\ell^2 \delta T^2}{d_B N} \right) \left(1 + \ell \delta H(g) T + \frac{\ell^2 \delta T^2}{d_B N} \right)
  \label{eq:Omegafullcount}
\end{multline}  
\end{lemma}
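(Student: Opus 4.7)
The plan is to start from the inequality~\eqref{eq:OmegaTypesplit}, which, together with~\eqref{eq:OmegaTypeIIsplit} applied to the diagonal term, reduces the count to bounding
\[
  T^{\eps} X^2 + T \cdot X \cdot Y,
\]
where $X := |R(\ell;g)^0 \cap \Omega(\delta, 2T)|$ and
\[
  Y := \max_{\substack{n \in \frac{1}{\ell}\mathbb{Z}\\ |n|\le 4T^2}} |R(\ell;g)^0 \cap \Omega(\delta, 2T) \cap \det{}^{-1}(\{n\})|.
\]
I then bound $X$ using the Type I estimate of Theorem~\ref{thm:typeI} and $Y$ using the Type II estimate of Theorem~\ref{thm:typeII}.

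Denoting the two target factors on the right-hand side of~\eqref{eq:Omegafullcount} by $F_1$ and $F_2$, respectively, the key algebraic step is to recognize the factored reformulations
\[
  X \prec 1 + \ell^{\tfrac{1}{2}}T \cdot F_1,
  \qquad
  Y \prec \ell^{\tfrac{1}{2}} F_2.
\]
For $X$, the Type I terms $\ell^{\frac{1}{2}}T$, $\ell\delta^{\frac{1}{2}}H(g)T$, and $\ell\delta H(g)T^2$ appear directly in $\ell^{\frac{1}{2}}T F_1$; the mixed term $\ell^{\frac{3}{2}}\delta^{\frac{1}{2}}(d_BN)^{-\frac{1}{2}}T^2$ is captured by AM-GM against the two extremes $\ell^{\frac{1}{2}}T$ and $\ell^{\frac{5}{2}}\delta(d_BN)^{-1}T^3$ of $\ell^{\frac{1}{2}}T F_1$; and the top Type I term $\ell^2\delta(d_BN)^{-1}T^3$ is dominated by $\ell^{\frac{5}{2}}\delta(d_BN)^{-1}T^3$. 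The reformulation for $Y$ is obtained by an analogous direct comparison. The residual ``$1$'' in the bound on $X$ is only present when $T$ is smaller than the first successive minima of Theorem~\ref{thm:typeI}, in which regime $R(\ell;g)^0 \cap \Omega(\delta,2T) = \{0\}$; the original sum then reduces to a count on the scalar subalgebra of $R(\ell;g)$, giving $O(T)$ pairs, which is trivially absorbed by the right-hand side.

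Combining yields $T^{\eps}X^2 \prec \ell T^{2+\eps} F_1^2 \leq \ell T^{2+\eps} F_1 F_2$ using $F_1 \leq F_2$, and $T \cdot X \cdot Y \prec T \cdot \ell^{\frac{1}{2}}T F_1 \cdot \ell^{\frac{1}{2}} F_2 = \ell T^2 F_1 F_2$, which is the claim. The main obstacle is precisely the AM-GM split of the mixed Type I term against the extremes of $\ell^{\frac{1}{2}}T F_1$: without it, one would only obtain $X \prec \ell^{\frac{1}{2}}T F_2$, leading to the weaker symmetric bound $\ell T^2 F_2^2$ in place of the asymmetric $\ell T^2 F_1 F_2$. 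The asymmetry is significant in the anisotropic regime $\delta \ll 1$, where the gap $F_2 - F_1 \asymp \ell^{\frac{1}{2}}H(g)\delta^{\frac{1}{2}}(1-\delta^{\frac{1}{2}})T$ can be sizeable, and is essential for the definite applications underlying Proposition~\ref{prop:reductiontocountspherical}.
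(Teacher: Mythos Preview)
Your proposal is correct and follows the same strategy as the paper: reduce via \eqref{eq:OmegaTypesplit}--\eqref{eq:OmegaTypeIIsplit} to bounding $T^\eps X^2 + T X Y$, then invoke Theorems~\ref{thm:typeI} and~\ref{thm:typeII}. You supply algebraic detail that the paper leaves implicit---the factored forms $X \prec 1 + \ell^{1/2}T F_1$, $Y \prec \ell^{1/2}F_2$, and the AM--GM absorption of the mixed Type~I term $\ell^{3/2}\delta^{1/2}(d_BN)^{-1/2}T^2$---which is a genuine service to the reader.

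Two minor remarks. First, the paper's handling of the small-$T$ regime is slightly cleaner than yours: rather than reducing to scalars and counting $O(T)$ pairs, it observes that the first successive minimum of the \emph{full} lattice $R(\ell;g)$ with respect to $\Omega(\delta,1)$ is $\gg \Lambda := \min\{\ell^{-1/2}, \ell^{-1}\delta^{-1/2}H(g)^{-1}\}$, so for $T \ll \Lambda$ the set $R(\ell;g)\cap\Omega^\star(\delta,T)$ is empty and the left-hand side vanishes outright; for $T \gg \Lambda$ one has $\ell^{1/2}TF_1 \gg 1$ and the residual ``$1$'' is absorbed. Your scalar argument also works, but your phrasing ``the residual `1' is only present when\ldots'' is imprecise---the constant is always present in the Type~I bound; what you mean is that it is only \emph{dominant} in that regime.

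Second, your closing remark that the asymmetry $F_1 < F_2$ is ``essential for the definite applications underlying Proposition~\ref{prop:reductiontocountspherical}'' is misplaced. In the definite (indeed any non-split) case, $H(g)=0$ by convention, so $F_1=F_2$ and there is no asymmetry. The asymmetry matters instead in the \emph{split} indefinite holomorphic setting of Proposition~\ref{prop:reductiontocountholo}, where $\delta \asymp k^{-1}$ and $H(g)\neq 0$: there the symmetric bound $\ell T^2 F_2^2$ would produce an $H^2$-term of order $(d_BN)^2 k H^2$ rather than the required $(d_BN)^2 k^{1/2}H^2$.
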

\begin{proof}
  Recall, from the discussion of Section \S\ref{sec:prelim-proof-reduction}, that we may express $R(\ell;g)^0$, for $g \in G(\mathbb{A})$, as $(g')^{-1}R'(\ell)^0g'$, where $R'$ is an Eichler order of the same level and $g' \in G(\mathbb{R})$, with $H(g)=H(g')$ in the case that $B$ is split. We may thus apply the results of Section \S\ref{sec:counting-results}.
  
Since $\tr(R(\ell;g)) \subseteq \mathbb{Z}$, we find that the first successive minimum of $R(\ell;g)$ with respect to $\Omega(\delta,1)$ is at least the minimum of $1$ and the first successive minimum of $R(\ell;g)^0$ with respect to $\Omega(\delta,1)\cap B_{\infty}^{0}$. The latter is $\gg \min\{ \ell^{-\frac{1}{2}}, \ell^{-1} \delta^{-\frac{1}{2}} H(g)^{-1} \} =: \Lambda$ by Theorem \ref{thm:typeI}, where the term involving $H(g)$ is to be omitted if $B$ is non-split. Thus we find that $R(\ell;g) \cap \Omega^{\star}(\delta,T)$ is empty for $T \ll \Lambda$, in which case there is nothing to show.  Next, assume instead that $T \gg \Lambda$. Then, inequalities \eqref{eq:OmegaTypesplit} and \eqref{eq:OmegaTypeIIsplit}, in conjunction with Theorems \ref{thm:typeI} and \ref{thm:typeII}, imply the required bound.
\end{proof}

\begin{lemma}
  Assume that $B$ is non-split.  Then

\begin{equation}
  \sum_{\substack{\gamma_1,\gamma_2 \in  R(\ell;g) \cap\Psi^{\star}(\delta,T) \\ \det(\gamma_1) = \det(\gamma_2)}} 1 \prec \ell T^2 \left(1 +  \frac{\ell}{(d_BN)^{\frac{1}{2}}} T +   \frac{\ell^2 }{d_B N} \delta^{\frac{1}{2}} T^2  \right)^2.
  \label{eq:Psifullcount}
\end{equation}
\end{lemma}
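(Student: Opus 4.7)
The plan is to mirror the proof of the preceding lemma essentially verbatim, with $\Psi$ replacing $\Omega$ throughout. First I would transfer the sum to the archimedean setting via the discussion in Section \S\ref{sec:prelim-proof-reduction}: writing $R(\ell;g)^0 = (g')^{-1} R'(\ell)^0 g'$ for some $g' \in G(\mathbb{R})$ and some Eichler order $R'$ in the genus of $R$, the Type I and Type II bounds of Section \S\ref{sec:counting-results} apply directly, with terms involving $H$ omitted because $B$ is non-split. The first successive minimum estimate in Theorem \ref{thm:typeI} for $\Psi(\delta,1) \cap B_\infty^0$ gives a lower bound $\gg \ell^{-1/2}$; together with $\tr(R(\ell;g)) \subseteq \mathbb{Z}$ this ensures that $R(\ell;g) \cap \Psi^{\star}(\delta,T)$ is empty when $T \ll \ell^{-1/2}$, so I may assume $T \gg \ell^{-1/2}$, and the constant term in the Type I bound is then absorbed into the $\ell^{1/2} T$ term.

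Next I would combine \eqref{eq:PsiTypesplit} and \eqref{eq:PsiTypeIIsplit} to bound the sum of interest by
\[
  T^{\eps} \, |R(\ell;g)^0 \cap \Psi(\delta, 2T)|^2 \;+\; \delta^{1/2} T \cdot |R(\ell;g)^0 \cap \Psi(\delta, 2T)| \cdot N_0,
\]
where $N_0 := \max_{n} |R(\ell;g)^0 \cap \Omega(1, 2T) \cap \det{}^{-1}(\{n\})|$. Theorem \ref{thm:typeI} (non-split case) provides
\[
  |R(\ell;g)^0 \cap \Psi(\delta, 2T)| \prec \ell^{1/2} T \left( 1 + \tfrac{\ell}{(d_B N)^{1/2}} T + \tfrac{\ell^{3/2} \delta^{1/2}}{d_B N} T^2 \right),
\]
and Theorem \ref{thm:typeII} (non-split, with $\delta=1$) yields $N_0 \prec 1 + \tfrac{\ell^2}{d_B N} T^2$.

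The final step is the algebraic verification that these two contributions together are $\prec \ell T^2 (1 + \tfrac{\ell}{(d_B N)^{1/2}} T + \tfrac{\ell^2 \delta^{1/2}}{d_B N} T^2)^2$. Squaring the Type I bound gives precisely the claimed shape with $\ell^{3/2}$ in place of $\ell^2$ in the middle term, which is tighter since $\ell \geq 1$. For the Type II contribution, after factoring out $\ell T^2$ the prefactor becomes $\delta^{1/2} \ell^{-1/2} \leq 1$; expanding the remaining product and comparing monomial by monomial against the expansion of $(1 + \tfrac{\ell}{(d_B N)^{1/2}} T + \tfrac{\ell^2 \delta^{1/2}}{d_B N} T^2)^2$ shows each term is dominated using only the elementary inequalities $\delta \leq 1 \leq \ell$.

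The main (and essentially the only) obstacle is this final bookkeeping. It is routine and, in fact, slightly easier than the analogous step in the previous lemma: the extra $\delta^{1/2}$ in the $\Psi$-count compared with the $\Omega$-count, combined with the additional $\delta^{1/2}$ appearing as a prefactor in \eqref{eq:PsiTypesplit}, supply exactly the $\delta^{1/2}$ factors present in the nontrivial terms of the target, so the bound closes comfortably.
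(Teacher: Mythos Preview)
Your proposal is correct and follows essentially the same approach as the paper's proof: reduce to the archimedean setting, use the first successive minimum bound from Theorem \ref{thm:typeI} (together with $\tr(R(\ell;g))\subseteq\mathbb{Z}$, which for $\Psi$ gives the trace contribution $\gg \delta^{-1/2}\geq \ell^{-1/2}$) to dispose of $T\ll\ell^{-1/2}$, and then combine \eqref{eq:PsiTypesplit}, \eqref{eq:PsiTypeIIsplit} with Theorems \ref{thm:typeI} and \ref{thm:typeII}. One small caution in your final bookkeeping: bounding the prefactor $\delta^{1/2}\ell^{-1/2}$ by $1$ and then comparing is not quite enough for the $AC$ cross-term; you must retain the $\delta^{1/2}$ from the prefactor to match it against the $2AD$ term on the right, but this is indeed routine as you say.
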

\begin{proof}
As in the proof of Lemma \ref{lem:thetasup-paper-merge-20220418:we-have-beginm}, we find that the first successive minimum of $R(\ell;g)$ with respect to $\Psi(\delta,1)$ is at least the minimum of $\delta^{-\frac{1}{2}}$ and the first successive minimum of $R(\ell;g)^0$ with respect to $\Psi(\delta,1)\cap B_{\infty}^{0}$. The latter is $\gg \ell^{-\frac{1}{2}}$ by Theorem \ref{thm:typeI}. Therefore, $R(\ell;g) \cap \Psi^{\star}(\delta,T)$ is empty for $T \ll \ell^{-\frac{1}{2}} \le 1 \le \delta^{-\frac{1}{2}}$, in which case there is nothing to show. If $T \gg \ell^{-\frac{1}{2}}$, then inequalities \eqref{eq:PsiTypesplit} and \eqref{eq:PsiTypeIIsplit} together with Theorems \ref{thm:typeI} and \ref{thm:typeII} yield the required bound.
\end{proof}

\subsection{Proof of Corollary \ref{cor:main-corollary-fourth-moment-adelic-formulation}}

Let $\varphi \in \mathcal{F}$, respectively $\mathcal{F}^{\hol}$, be $L^2$-normalized. Assume first that $B$ is non-split. Then, since $[G] \slash K_{\infty} K_{R}$ is compact and equipped with a probability measure, we may find $g_2$ in $[G]$ such that $|\varphi(g_2)| \le 1$. Hence, Corollary \ref{cor:main-corollary-fourth-moment-adelic-formulation} follows immediately from Theorem \ref{thm:main-result-fourth-moment-adelic-formulation} by positivity and the particular choice of $g_2$.

We now turn our attention to the case that $B$ is split, in other words when $d_B=1$. Here, we need to supplement Theorem \ref{thm:main-result-fourth-moment-adelic-formulation} with the additional information that for $H(g) \ge N^{-\frac{1}{2}}$, we have
\begin{align}
  |\varphi(g)| &\preccurlyeq_{\lambda_{\varphi}} N^{\frac{1}{4}}
                 &&\text{if $\varphi \in \mathcal{F}^{-}$,}
             \label{eq:Maasslargeheight}
  \\
  |\varphi(g)| &\preccurlyeq (kN)^{\frac{1}{4}}
          &&\text{if $\varphi \in \mathcal{F}^{-,\hol}$.}
             \label{eq:hololargeheight}
\end{align}
The former is recorded in \cite[Prop. 3.1 \& 3.2]{MR3372076}, for example, and the latter may be deduced from the Fourier expansion along the lines of Xia \cite{Xiasupnorm}. We include a brief proof here for the sake of completeness.
\begin{lemma} Assume $B$ is split and let $\varphi \in \mathcal{F}^{-,\mathrm{hol}}$ be an $L^2$-normalized holomorphic cuspidal newform of squarefree level $N$ and even weight $k \ge 2$. Then, we have for all $g \in [G]$,
  \begin{equation}
    |\varphi(g)| \preccurlyeq H(g)^{o(1)} \left( k^{\frac{1}{4}}H(g)^{-\frac{1}{2}}+ k^{-\frac{1}{4}}H(g)^{\frac{1}{2}} \right) .
    \label{eq:Holonearcuspramanujan}
  \end{equation}
  If $H(g) \ge \frac{k}{2 \pi}$, then we have the stronger bound
  \begin{equation}
    |\varphi(g)| \preccurlyeq 1.
    \label{eq:Holonearcuspexpdecay}
  \end{equation}
\end{lemma}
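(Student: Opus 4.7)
The plan is to invoke the Fourier expansion of the underlying classical newform $f$ at the cusp at which $H(g)$ is attained. Since for a newform of squarefree level and trivial character the Atkin--Lehner pseudo-eigenvalues satisfy $|\eta_Q|=1$, the function $|\Im(\cdot)^{k/2}f(\cdot)|$ is invariant under the Atkin--Lehner group $A_0(N)$.  Combined with the alternative description \eqref{eq:alt-height-description} of $H$, this reduces to the case that $z$ is situated near the cusp $\infty$ with $\Im(z)=:Y=H(g)$.

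Write $f(z)=\sum_{n\geq 1}\lambda(n)n^{(k-1)/2}e^{2\pi i n z}$, using Deligne's bound $|\lambda(n)|\leq d(n)\ll n^{\eps}$, and let $\phi_f$ denote the arithmetic adelic lift.  A standard Rankin--Selberg unfolding together with known bounds $L(1,\mathrm{Ad}\,f)\asymp(kN)^{o(1)}$ gives $\|\phi_f\|_2^2\asymp (kN)^{o(1)}\,\Gamma(k)/(4\pi)^k$; consequently,
\[
  |\Im(z)^{k/2}\varphi(z)|\preccurlyeq Y^{o(1)}\cdot\frac{(4\pi)^{k/2}}{\Gamma(k)^{1/2}}\cdot Y^{k/2}\sum_{n\geq 1}n^{(k-1)/2}e^{-2\pi nY}.
\]

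The exponential sum is analyzed by saddle point, with critical point $n_0=k/(4\pi Y)$ and peak width $\sigma\asymp k^{1/2}/Y$. For $Y\ll k^{1/2}$ the peak is wide ($\sigma\gg 1$), so the discrete sum is well approximated by the Gamma integral $\Gamma((k+1)/2)/(2\pi Y)^{(k+1)/2}$, and the Legendre duplication formula applied to the normalization $(4\pi)^{k/2}/\Gamma(k)^{1/2}$ yields $\preccurlyeq k^{1/4}Y^{-1/2}$.  For $k^{1/2}\ll Y\leq k/(4\pi)$ the peak is sharp ($\sigma\ll 1$) and the sum is dominated by the single term at $n\approx\max(n_0,1)$; a direct Stirling computation of $n^{(k-1)/2}e^{-2\pi nY}$ at this point yields $\preccurlyeq k^{-1/4}Y^{1/2}$.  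Combining the two regimes (and noting that the bounds agree at the transition $Y\asymp k^{1/2}$, with common value $\asymp 1$) gives \eqref{eq:Holonearcuspramanujan}.

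For the stronger bound \eqref{eq:Holonearcuspexpdecay}, write $Y=\alpha k/(4\pi)$ in the regime $Y\geq k/(4\pi)$; the normalized contribution of the dominant $n=1$ term equals $(\alpha e^{-(\alpha-1)})^{k/2}\cdot k^{1/4}$ up to absolute constants.  Since $\alpha e^{-(\alpha-1)}<1$ strictly for $\alpha\neq 1$ and in particular $2/e<1$, this is exponentially small in $k$ once $\alpha\geq 2$, i.e.\ $Y\geq k/(2\pi)$, yielding $\preccurlyeq 1$.  The principal technical obstacle is the careful bookkeeping at the transition $Y\asymp k^{1/2}$, where saddle-point integration breaks down and the sharply-peaked sum must be estimated via direct Stirling asymptotics to verify that the delicate cancellation between $\Gamma(k)^{1/2}$, $(4\pi)^{k/2}$, and $Y^{k/2}$ indeed produces the claimed exponent $k^{-1/4}$ in the second term of \eqref{eq:Holonearcuspramanujan}.
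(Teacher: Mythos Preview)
Your proposal is correct and follows the same approach as the paper: Atkin--Lehner reduction to the cusp $\infty$, Fourier expansion, Deligne's bound on Hecke eigenvalues, Hoffstein--Lockhart for the normalization, and comparison of the resulting sum $\sum_n n^{(k-1)/2}e^{-2\pi nY}$ to an integral. The only difference is that the paper sidesteps your regime split at $Y\asymp k^{1/2}$ by bounding the unimodal sum uniformly as integral-plus-maximum in one stroke, so the two terms $k^{1/4}Y^{-1/2}$ and $k^{-1/4}Y^{1/2}$ drop out simultaneously from $\tfrac{1}{y}\Gamma\!\left(\tfrac{k+1}{2}\right)$ and $\bigl(\tfrac{k-1}{2}\bigr)^{(k-1)/2}e^{-(k-1)/2}$ respectively, and the transition you flag as the ``principal technical obstacle'' never arises.
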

\begin{proof} Suppose that $g$ corresponds to $z=x+iy \in \Gamma_0(N) \backslash \mathbb{H}$. As $|\varphi(g)|$ is further invariant under the Atkin--Lehner operators we may further assume that $z$ has maximal imaginary part under the action of the group $A_0(N)$, thus $H(g)=y$.  We shall subsequently make use of the Fourier expansion of $\varphi$ at $\infty$:
$$
|\varphi(g)| = \left| y^{\frac{k}{2}} \sum_{n=1}^{\infty} a_n e(n(x+iy)) \right |.
$$
We may bound the Fourier coefficients by appealing to Deligne's bound for the Hecke eigenvalues. This implies $|a_n| \ll_{\eps} n^{\frac{k-1}{2}+\eps} |a_1|$.\footnote{At the ramified primes $p|N$ the stronger bound $|a_{p^l}| \le (p^{l})^{\frac{k}{2}-1} |a_1|$ holds and is a far less deep result (cf. \cite[Thm. 3]{MR0268123}).} We find
$$
|\varphi(g)| \ll_{\eps} |a_1| (2\pi)^{-\frac{k}{2}} y^{\frac{1}{2}-\eps} \sum_{n=1}^{\infty} (2 \pi n y)^{\frac{k-1}{2}+\eps} e^{-2 \pi n y}.
$$
The above sum, we may bound by comparison to the corresponding integral. For this manner, we note that the function $x^{\alpha}e^{-x}$ increases up to $x= \alpha$ and then decreases. We may also bound the first Fourier coefficient $a_1$ by a result of Hoffstein--Lockhart \cite{HL94} (cf. \cite[Eq. (31)]{MR2207235}\footnote{Notice the different normalization of the measure and $a_1$ in Section \S 2.}). The bound reads $|a_1| \ll_{\eps} (Nk)^{\eps} (4\pi)^{\frac{k}{2}} \Gamma(k)^{-\frac{1}{2}}$. We thus arrive at
$$\begin{aligned}
  |\varphi(g)| &\ll_{\eps} (Nk)^{\eps}y^{-\eps} \frac{2^{\frac{k}{2}}y^{\frac{1}{2}}}{\Gamma(k)^{\frac{1}{2}}} \left( \frac{1}{y} \Gamma\left( \frac{k+1}{2} \right) + \left(\frac{k-1}{2}\right)^{\frac{k-1}{2}+\eps}e^{-\frac{k-1}{2}} \right) \\
  &\ll_{\eps} (Nk)^{\eps} y^{-\eps} \left( k^{\frac{1}{4}}y^{-\frac{1}{2}}+ k^{-\frac{1}{4}}y^{\frac{1}{2}} \right),
\end{aligned}$$ where we have made use of Stirling's approximation. If $y \ge \frac{k}{2 \pi }$, then the maximum summand occurs when $n=1$ and we may derive the improved bound
$$\begin{aligned}
  |\varphi(g)| &\ll_{\eps} (Nk)^{\eps} y^{-\eps} \frac{2^{\frac{k}{2}}y^{\frac{1}{2}}}{\Gamma(k)^{\frac{1}{2}}} \left( \frac{1}{y} \Gamma\left( \frac{k+1}{2} \right) + \left(2 \pi y\right)^{\frac{k-1}{2}+\eps}e^{-2 \pi y } \right) \\
  & \ll_{\eps} (Nk)^{\eps} y^{-\eps}.
\end{aligned}$$
\end{proof}
To deduce \eqref{eq:hololargeheight} from the lemma, we consider separately the cases $N^{-\frac{1}{2}} \leq H(g) \leq \frac{k}{ 2 \pi }$ and $H(g) \geq \frac{k}{2 \pi }$, applying \eqref{eq:Holonearcuspramanujan} in the former case and \eqref{eq:Holonearcuspexpdecay} in the latter.

We may now deduce the split case of Corollary \ref{cor:main-corollary-fourth-moment-adelic-formulation}, as follows.  Our task is to bound $\varphi(g_1)$ suitably for $g_1 \in [G]$.  We may assume that $H(g_1) \leq N^{-\frac{1}{2}}$, as otherwise the estimates \eqref{eq:Maasslargeheight} and \eqref{eq:hololargeheight} are adequate.  In that case, we choose $g_2 \in [G]$ arbitrarily with $H(g_2) = N^{-\frac{1}{2}}$ and apply Theorem \ref{thm:main-result-fourth-moment-adelic-formulation} and positivity to estimate the difference $\varphi(g_1) - \varphi(g_2)$.  We then apply the triangle inequality, together with \eqref{eq:Maasslargeheight} and \eqref{eq:hololargeheight}  to control the contribution of $\varphi(g_2)$.  Recalling that $d_B = 1$, we readily obtain the required estimates.

% \begin{proposition} Let $\varphi$ be a geometrically normalized Hecke--Maa{\ss} cusp form of squarefree level $N$ and eigenvalue $\lambda>0$. Then, we have for all $g \in \mathrm{SL}_2(\mathbb{R})$,
%   \begin{equation}
%\varphi(g) \ll (1+\lambda)^{o(1)}N^{o(1)} \left( \frac{\lambda^{\frac{1}{4}}}{H(g)^{\frac{1}{2}}}+\lambda^{\frac{1}{12}} \right).
%\label{eq:Maassnearcusp}
%\end{equation}
%Furthermore, if $0 \le \theta<\frac{1}{2}$ is a bound towards the Ramanujan--Petersson conjecture, then one also has
%\begin{equation}
%\varphi(x+iy) \ll (1+\lambda)^{o(1)}N^{o(1)} \left( \frac{\lambda^{\frac{1}{4}}}{y^{\frac{1}{2}}}+\frac{\lambda^{\frac{1}{12}-\frac{\theta}{2}}}{y^{\theta}} \cdot \begin{cases} \lambda^{-\frac{1}{4}}y^{\frac{1}{2}}, & \text{if }\lambda^{\frac{1}{6}} \ll y \ll \lambda^{\frac{1}{2}}, \\ \lambda^{-\frac{1}{6}}, & \text{if } y \ll \lambda^{\frac{1}{6}} \end{cases} \right).
%\label{eq:Maassnearcuspramanujan}
%\end{equation}
% \end{proposition}
% \begin{proof} This is part of \cite[Prop. 3.1 \& 3.2]{MR3372076}.
% \end{proof}

\section{Arithmetic quotients as real manifolds}
\label{sec:real-manifolds}

\subsection{Measure normalizations} 

For indefinite $B$, we fix an isomorphism $G(\mathbb{R})\cong \operatorname{PGL}_2(\mathbb{R})$ sending $K_\infty$ to $\operatorname{PSO}_2(\mathbb{R})$. We fix the Haar measure $\dif g = \frac{\dif y \dif x}{y^2}\frac{\dif \theta}{2 \pi}$ for $g=\sm y^{1/2} & xy^{-1/2} \\ 0 & y^{-1/2} \esm \kappa(\theta)$ on $\operatorname{SL}_2(\mathbb{R})$. The push-forward of this measure to the hyperbolic plane $\mathbb{H}\cong\faktor{\operatorname{SL}_2(\mathbb{R})}{\operatorname{SO}_2(\mathbb{R})}$ is then the measure $\frac{\dif x \dif y}{y^2}$.  The Haar measure on $\operatorname{PGL}_2(\mathbb{R})$ is fixed so that its restriction to $\operatorname{PSL}_2(\mathbb{R})$ coincides with the push-forward of the Haar measure from $\operatorname{SL}_2(\mathbb{R})$.

If $B$ is definite we fix an isomorphism $G(\mathbb{R})\cong \operatorname{SO}_3(\mathbb{R})$ sending $K_\infty$ to $\operatorname{SO}_2(\mathbb{R})$. We fix a Haar measure on $\operatorname{SO}_3(\mathbb{R})$ so that the measure of the $2$-sphere $S^2\cong \faktor{\operatorname{SO}_3(\mathbb{R})}{\operatorname{SO}_2(\mathbb{R})}$ is $4\pi$.

\subsection{Volumes} \label{sec:unadelic-volume}

Recall, that we fixed the measure on $[G]$ to be the probability Haar measure. Hence, the volume of the quotient $[G]/K_R$ is $1$. In due course, we shall also require the volume of said quotient when viewed as a real manifold with respect to our fixed Haar measure on $G(\mathbb{R})$. More specifically, we will need the volume with respect to the measure on $G'(\mathbb{R})$, where $G'$ is the linear algebraic group defined over $\mathbb{Q}$ whose rational points are the proper unit quaternions $B^1$. There is an obvious isogeny map $G'\to G$, where $G'$ is the simply-connected form and $G$ is the adjoint one. Define $R_p^1=R_p\cap G'(\mathbb{Q}_p)$ to be the proper unit quaternions in the local order $R_p$, and set $K_R^1=\prod_{p} R_p^1$. Then, the map $[G']/K_R^{1}\to[G]/K_R$ is a homeomorphism that pushes forward the probability Haar measure on $[G']/K_R^{1}$ to the probability Haar measure on $[G]/K_R$, see Lemma \ref{lem:isogeny2isomorphism}. In general, this map is not bijective if $K_R$ is replaced by a general compact open subgroup of $G(\mathbb{A}_f)$ and the fact that the map is indeed a homeomorphism is due to $K_R$ being the projectivized group of units of an Eichler order.

By Borel's finiteness of class numbers \cite{Borel-cl-finite}, $[G']/K_R^1$ is a finite collection of $G'(\mathbb{R})$-orbits with representatives $\delta_1,\ldots,\delta_h\in G'(\mathbb{A})$. Define $\Gamma_i=G'(\mathbb{Q})\cap \delta_i K_R^1 \delta_i^{-1}$; the intersection is taken in $G'(\mathbb{A}_f)$ but regarded as a subset of $G'(\mathbb{Q})$ and hence also of $G'(\mathbb{R})$.  In particular, $\Gamma_i$ is a lattice in $G'(\mathbb{R})$.  It follows that
\begin{equation*}
	[G]/K_R\cong [G']/K_R^{1}=\bigsqcup_{i=1}^h {\Gamma_i}\backslash{G'(\mathbb{R})}.
\end{equation*}
This is a finite disjoint union of finite-volume homogeneous spaces for the real Lie group $G'(\mathbb{R})$.  We define $\operatorname{covol}(\Gamma_i)$ to be the measure of a fundamental domain for the action of $\Gamma_i$ on $G'(\mathbb{R})$ with respect to the fixed Haar measure on $G'(\mathbb{R})$, either $\frac{\dif x \dif y}{y^2} \frac{\dif \theta}{2\pi}$ in the indefinite case or the measure giving volume $4\pi$ to $\faktor{\operatorname{SO}_3(\mathbb{R})}{\operatorname{SO}_2(\mathbb{R})}$ in the definite case. We finally set
\begin{equation*}
	V = V_{d_B,N}=\sum_{i=1}^h \operatorname{covol}(\Gamma_i).
\end{equation*}

If $B$ is indefinite, then $G'(\mathbb{R})\cong \operatorname{SL}_2(\mathbb{R})$ is non-compact and strong approximation implies that $h=1$ and we can write $[G]/K_R\cong \Gamma \backslash \operatorname{SL}_2(\mathbb{R})$. In this case, $V$ is the volume of the hyperbolic surface $\lfaktor{\Gamma}{\mathbb{H}}$ with respect to the volume form $\frac{\dif x \dif y}{y^2}$. If $B$ is definite, then in general $h$ can be large and $[G]/K_\infty K_R$ is a finite collection of quotients of $2$-spheres by discrete rotation groups.

%Let $B$ be an indefinite quaternion algebra over $\mathbb{Q}$, so that $B(\mathbb{R}) \cong \Mat_{2 \times 2}(\mathbb{R})$, in particular the set of proper units $B^1(\mathbb{R}) \cong \SL_2(\mathbb{R})$. Let $d_B$ denote the reduced discriminant of $B$ and $R$ be an Eichler order in $B$ of squarefree level $N$. Then, the set of proper units $\Gamma$ of $R$ (under the isomorphism of $B(\mathbb{R})$) is a Fuchsian group of the first kind \cite{Eichlermodcorr}. 

Recall that we have denoted by $d_B$ the reduced discriminant of $B$ and by $N$ the squarefree level of the Eichler order $R$.  The volume is given in both cases by
\begin{equation}
	V = V_{d_B,N} = 
	\frac{\pi}{3} d_BN \prod_{p | d_B} \left(1-\frac{1}{p}\right) \prod_{p | N} \left(1+\frac{1}{p} \right)
	= (d_BN)^{1+o(1)}.
	\label{eq:Unadel-covolume}
\end{equation} 
This follows from a corresponding mass formula, see \cite[Thm 39.1.8]{voightQA} in the indefinite case and \cite[Thm 25.1.1 \& Thm 25.3.18]{voightQA} in the definite one.  The space is furthermore compact if and only if $B$ non-split, i.e., $d_B > 1$.

\subsection{Siegel domains}

The main purpose of this section is to provide a specific Siegel-domain covering in order to bound the $L^2$-norm of the (difference of) theta kernels in \S\ref{sec:L2-theta-red-counting}.  Let $M$ be a squarefree natural number, and set $U = \sm \widehat{\mathbb{Z}} & \widehat{\mathbb{Z}} \\ M \widehat{\mathbb{Z}} & \widehat{\mathbb{Z}} \esm \cap \SL_{2}(\mathbb{A}_f)$.  The theta functions of interest will turn out to be right invariant in the symplectic variable under $U$ with $M = d_B N$, but the present discussion applies to any squarefree $M$.

% \begin{remark} The notation in \S\ref{sec:cusps-AL-operators} is also used on the orthogonal side in the split case (e.g., \S\ref{sec:results-split-case} \& \S\ref{sec:type-II-split-ext}), where we hope that little confusion occurs as one has $d_B=1$.
% \end{remark}

\subsubsection{Cusps and Atkin--Lehner operators}
\label{sec:cusps-AL-operators} Since $M$ is squarefree, a representative set of cusps for $\Gamma_0(M)$ is given by the ratios $\frac{\ell}{M}$, where $\ell$ runs through the positive divisors of $M$. The width of the cusp $\frac{\ell}{M}$ (understood here as with respect to the group $\Gamma_0(M)$) is given by $\ell$ \cite{Iw97}. For each $\ell|M$, we choose an element $\tau_{\ell} \in \SL_2(\mathbb{Z})$ satisfying
\begin{equation}
	\tau_{\ell} \equiv \begin{pmatrix} 0 & 1 \\ -1 & 0 \end{pmatrix} \mod{\ell}, \quad \tau_{\ell} \equiv \begin{pmatrix} 1 & 0 \\ 0 & 1 \end{pmatrix} \mod{ M / \ell}.
	\label{eq:tau-ell}
\end{equation}
Then, the cusp $\tau_{\ell} \infty$ is $\Gamma_0(M)$-equivalent to the cusp $\frac{\ell}{M}$. Hence, writing $n(x)=\sm 1 & x\\ & 1 \esm$, we see that the elements $\tau_{\ell}n(j)$, where $j=0,\dots,\ell-1$ and $\ell|M$, give a complete system of representatives for $\Gamma_0(M) \backslash \SL_{2}(\mathbb{Z})$.  The normalized matrices $\tilde{\tau}_{\ell} \coloneqq \tau_{\ell}a(\ell)$, where $a(y)= \diag(y^{\frac{1}{2}},y^{-\frac{1}{2}})$, are scaling matrices for the respective cusps. Furthermore, the matrices $\tilde{\tau_{\ell}}$ are Atkin--Lehner operators for $\Gamma_0(M)$ and give a set of representatives for $A_0(M) \slash \Gamma_0(M)$.

\subsubsection{Coverings}

The basic idea of the following lemma is to apply the Fricke involution to the tiling of $\Gamma_0(M) \backslash \mathbb{H}$ by translates of the standard fundamental domain for $\SL_2(\mathbb{Z}) \backslash \mathbb{H}$.
\begin{lemma} Let $F : [\SL_2] \rightarrow \mathbb{R}_{\geq 0}$ be a measurable function that is right $U$
	invariant and of weight $0$. Then,
	$$
	\int_{[\SL_2]} F(g) dg \le \frac{1}{V_{1,M}} \sum_{\ell|M} \int_{\frac{\sqrt{3}}{2} \frac{l^2}{M}}^{\infty} \int_0^{\ell} F  \left(\tau_{\ell} \sm y^{1/2} & x y^{-1/2} \\ & y^{-1/2} \esm K_{\infty} U) \right) dx \frac{dy}{y^2}
	$$
	\label{lem:generalL2bound}
\end{lemma}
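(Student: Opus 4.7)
The plan is to reduce the adelic integral to an integral on $\Gamma_0(M) \backslash \mathbb{H}$ via strong approximation, then dominate that by a sum of cuspidal Siegel sets, one for each divisor of $M$. For the first step, strong approximation for $\SL_2$ gives $\SL_2(\mathbb{A}) = \SL_2(\mathbb{Q}) \cdot \SL_2(\mathbb{R}) \cdot U$ with $\SL_2(\mathbb{Q}) \cap (\SL_2(\mathbb{R})\,U) = \Gamma_0(M)$. Combined with the right $U$-invariance and weight-zero property of $F$, this identifies $F$ with a function on $\Gamma_0(M)\backslash\mathbb{H}$. Comparing the probability Haar measure on $[\SL_2]$ against our fixed Haar measure on $\SL_2(\mathbb{R})$ (under which $\Gamma_0(M)$ has covolume $V_{1,M}$, cf.\ \S\ref{sec:unadelic-volume}) gives
\[
\int_{[\SL_2]} F(g)\,dg = \frac{1}{V_{1,M}} \int_{\Gamma_0(M)\backslash\mathbb{H}} F(z) \,\frac{dx\,dy}{y^2}.
\]

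For each divisor $\ell \mid M$ I would consider the strip $\Sigma_\ell = \tau_\ell \cdot \{x + iy : 0 \leq x < \ell,\; y \geq \sqrt{3}\ell^2/(2M)\}$. Under the Atkin--Lehner scaling matrix $\tilde{\tau}_\ell = \tau_\ell a(\ell)$, the strip $\Sigma_\ell$ is exactly the Siegel neighborhood at the cusp $\ell/M$ of scaled-height $\sqrt{3}\ell/(2M)$; in particular its hyperbolic area is $2M/(\sqrt{3}\ell)$ and the total area $\sum_\ell 2M/(\sqrt{3}\ell) = 2\sqrt{3}\,V_{1,M}/\pi$ slightly exceeds $V_{1,M}$. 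The geometric heart of the proof is the covering claim: the images of the $\Sigma_\ell$ in $\Gamma_0(M)\backslash\mathbb{H}$ jointly cover the quotient (with overlap, which is harmless since $F \geq 0$). Granting this, the $\tau_\ell$-invariance of the hyperbolic measure yields $\int_{\Sigma_\ell} F\,\frac{dx\,dy}{y^2} = \int_{\sqrt{3}\ell^2/(2M)}^{\infty}\int_0^{\ell} F(\tau_\ell(x+iy))\,dx\,\frac{dy}{y^2}$, and summing over $\ell$ produces the desired inequality.

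To prove the covering, I would start from the standard tiling $\mathbb{H} = \bigsqcup_{\gamma} \gamma \mathcal{F}$ over $\gamma \in \Gamma_0(M)\backslash\SL_2(\mathbb{Z})$, parametrized by $\gamma = \tau_\ell n(j)$ with $\ell \mid M$, $0 \leq j < \ell$. The $\ell$-periodicity $\tau_\ell n(\ell) \tau_\ell^{-1} \in \Gamma_0(M)$---verifiable for squarefree $M$ by reducing the defining congruences of $\tau_\ell$ modulo $\ell$ and modulo $M/\ell$ and applying CRT, yielding the identity mod $\ell$ and $n(\ell) \in \Gamma_0(M/\ell)$ mod $M/\ell$---places $\bigsqcup_j \tau_\ell n(j)\mathcal{F}$ inside $\tau_\ell \cdot \{0 \leq x < \ell,\; y \geq \sqrt{3}/2\}$ modulo $\Gamma_0(M)$. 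For $\ell^2 \leq M$ the latter already sits in $\Sigma_\ell$. For $\ell^2 > M$ the below-threshold portion $\sqrt{3}/2 \leq y < \sqrt{3}\ell^2/(2M)$ escapes $\Sigma_\ell$; this shortfall is recovered by transferring via an Atkin--Lehner involution in $A_0(M)$ that interchanges the cusp $\ell/M$ with a cusp $\ell'/M$ satisfying $(\ell')^2 < M$, mapping the low-$y$ strip at $\ell/M$ onto a corresponding high-$y$ piece near $\ell'/M$ that fits inside $\Sigma_{\ell'}$.

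The main obstacle is carrying out the Atkin--Lehner redistribution for $\ell^2 > M$ rigorously: one must match both the $x$- and $y$-ranges of the transferred strip into $\Sigma_{\ell'}$. This bookkeeping is tractable because $M$ is squarefree, so that the Atkin--Lehner group $A_0(M)/\Gamma_0(M) \cong (\mathbb{Z}/2\mathbb{Z})^{\omega(M)}$ acts on the cusps via involutions indexed by divisors, and the cusp widths follow the clean pattern that $\ell/M$ has width $\ell$.
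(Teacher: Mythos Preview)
Your reduction to $\Gamma_0(M)\backslash\mathbb{H}$ and the formulation of the covering claim are fine, but the proposed proof of the covering has a gap. An Atkin--Lehner operator $W \in A_0(M) \setminus \Gamma_0(M)$ does \emph{not} preserve $\Gamma_0(M)$-orbits: for $z \in \mathbb{H}$ the classes of $z$ and $Wz$ in $\Gamma_0(M)\backslash\mathbb{H}$ are in general distinct. Applying such a $W$ to the below-threshold portion of $\tau_\ell \cdot \{0 \le x < \ell,\ y \ge \sqrt{3}/2\}$ therefore moves it to a set with a \emph{different} image in the quotient, so this does not show that its original image lies inside the image of some $\Sigma_{\ell'}$. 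Equivalently, $F$ is only $\Gamma_0(M)$-invariant, not $A_0(M)$-invariant, so $\int_S F$ and $\int_{WS} F$ need not agree and your piecewise transfer does not control the integral.

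The paper uses the same Fricke ingredient but applies it \emph{globally} as a change of variables, which is exactly what makes it legitimate: since $\tilde{\tau}_M$ normalizes $\Gamma_0(M)$ and preserves the hyperbolic measure, one has $\int_{\Gamma_0(M)\backslash\mathbb{H}} f = \int_{\Gamma_0(M)\backslash\mathbb{H}} f \circ \tilde{\tau}_M$. One then bounds the right-hand side by the standard tiling (all strips with uniform threshold $y \ge \sqrt{3}/2$), uses the relation $\tilde{\tau}_M \tilde{\tau}_\ell \in \Gamma_0(M)\,\tilde{\tau}_{M/\ell}$ to rewrite $f(\tilde{\tau}_M \tau_\ell z) = f\bigl(\tau_{M/\ell}\,(M/\ell^2)z\bigr)$, and substitutes $(M/\ell^2)z \mapsto z$. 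Each term becomes exactly the integral over $\Sigma_{M/\ell}$, with no case split on $\ell^2 \lessgtr M$ and none of the range-matching bookkeeping you flag as the main obstacle.
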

\begin{proof} Let $f: \mathbb{H} \to \mathbb{R}_{\geq 0}$ be given by 
	$$f(z)=F \left( \sm y^{1/2} & x y^{-1/2} \\ & y^{-1/2} \esm K_{\infty} U \right).$$
	Then, $f$ is $\Gamma_0(M)$ invariant on the left and we have
	$$
	\int_{[\SL_2]} F(g) dg = \frac{1}{V_{1,M}} \int_{\Gamma_0(M)\backslash \mathbb{H}} f(z) \frac{dxdy}{y^2} = \frac{1}{V_{1,M}} \int_{\Gamma_0(M)\backslash \mathbb{H}} f( \tilde \tau_{M}z) \frac{dxdy}{y^2}.
	$$
The standard Siegel set $\{z \in \mathbb{H} | \ 0\le\Re(x)\le 1 \text{ and } \Im(z) \ge \frac{\sqrt{3}}{2}\}$ contains a fundamental domain for $\SL_2({\mathbb{Z}})$. Using that the $\tau_{\ell}n(j)$, for $j=0,\dots,\ell-1$ and $\ell |M$, form a representative set for $\Gamma_0(M) \backslash \SL_2(\mathbb{Z})$ and that $f$ is nonnegative, we may bound
	$$ \int_{\Gamma_0(M)\backslash \mathbb{H}} f( \tilde \tau_{M}z) \frac{dxdy}{y^2} \le \sum_{\ell|M} \int_{\frac{\sqrt{3}}{2}}^{\infty} \int_0^{\ell} f( \tilde{\tau}_{M}\tau_{\ell} z) dx \frac{dy}{y^2}.
	$$
	Since $\tilde{\tau}_{M} \tilde{\tau}_{\ell} = \gamma \tilde{\tau}_{\frac{M}{\ell}}$ for some $\gamma \in \Gamma_0(M)$, we have the identity
	\begin{equation*}
		f(\tilde{\tau}_{M}\tau_{\ell} z) = 
		f( \tilde{\tau}_{\tfrac{M}{\ell}}a(\ell)^{-1} z) =
		f(\tau_{\tfrac{M}{\ell}} a(\tfrac{M}{\ell})a(\ell)^{-1} z).
	\end{equation*}
	Substituting this identity above and applying the change of variables $\frac{M}{l^2}z \mapsto z$ gives the desired result.
\end{proof}

\section{Theta kernels and their $L^2$-norms}\label{sec:Theta-L2norm}

\subsection{Theta kernels and lifts}

In this section, we summarize the required results on theta kernels and their lifts. The necessary theory is developed in Appendix \ref{sec:appendix-theta}.

\subsubsection{Theta functions} \label{sec:theta-functions-definitions} 
The theta kernels constructed in Appendix \ref{sec:appendix-theta} are modular functions on $\O_{\det}(\mathbb{A})\times \SL_2(\mathbb{A})$. The group $G$ acts by conjugation on the quadratic space $(B,\det)$, preserving the quadratic form. This gives an embedding $G\hookrightarrow \operatorname{O}_{\det}$.  We are mainly concerned here with the pullback of the theta kernels to $G(\mathbb{A})\times \SL_2(\mathbb{A})$.  We denote that pullback by $\Theta(g,s)$. The function $\Theta(g,s)$ is right $K_R \times U_R^1$ invariant, where $U_R^1 \coloneqq \sm \widehat{\mathbb{Z}} & \widehat{\mathbb{Z}} \\ d_BN \widehat{\mathbb{Z}} & \widehat{\mathbb{Z}} \esm \cap \SL_2(\mathbb{A}_f)$, and of moderate growth.  We caution that it is \emph{not} a theta kernel for the Howe dual pair of the orthogonal group of the traceless quaternions and $\SL_2$. 

We shall require several explicit expressions of the theta kernels $\Theta$. Define functions $P, u, X$ on $B_\infty$ by setting, for $\gamma = [a,b,c] + d \in B_\infty$,
\begin{equation}\label{eq:pgamma-coloneqq-a2}
  P(\gamma) \coloneqq a^2 + b^2 + c^2 + d^2, \quad u(\gamma) \coloneqq b^2 + c^2, \quad X(\gamma) \coloneqq d + i a.
\end{equation}
In other words, by identifying $\i \in B_{\infty}$ with $i \in \mathbb{C}$, we have that
\begin{itemize}
\item $X$ is the projection from $B_\infty = \mathbb{C} \oplus \mathbb{C} \j$ to the summand $\mathbb{C}$,
\item $u$ is the squared magnitude of the projection onto the other summand $\mathbb{C} \j$, and
\item $P$ is the sum of the squared magnitudes of the two projections.
\end{itemize}
Upon recalling the notation $R(\ell;g)$ from \ref{sec:prelim-proof-reduction}, we define for $g \in G(\mathbb{A})$ and $z=x+iy \in \mathbb{H}$ the theta functions
\begin{equation}
	\theta_{g, \ell}^{-}(z) \coloneqq
	y^{1 + \frac{k}{2}}
	\sum _{\gamma \in R(\ell;g)}
	X(\gamma)^k
	e^{-2 \pi y P(\gamma)}
	e(x \det(\gamma)), %e^{ik\theta}
	\label{eq:defthetamaass}
\end{equation}
if $k \ge 6$
\begin{equation}
	\theta_{g,\ell}^{-,\hol}(z) \coloneqq
	\frac{k-1}{4 \pi }
	y^{ k/2}
	\sum _{
		\substack{
			\gamma \in R(\ell;g) \\
			\det(\gamma) > 0 
		}
	}
	\det(\gamma)^{k-1}
	\overline{X(\gamma)}^{-k}
	e(z \det(\gamma)),% e^{ik\theta},
	\label{eq:defthetaholo}
\end{equation}
\begin{equation}
	\theta_{g,\ell}^{+,m}(z) \coloneqq (2m+1)
	y^{1+ m}
	\sum _{
		\gamma \in R(\ell;g) 
	}
	\det(\gamma)^{m}
	P_{m}\left( \frac{|X(\gamma)|^2-u(\gamma)}{\det(\gamma)} \right)
	e(z \det(\gamma)), % e^{i(2m+2)\theta},
	\label{eq:defthetaplus}
\end{equation}
\begin{equation}
	\theta_{g,\ell}^{+,\hol}(z) \coloneqq (k+1)
	y^{1+ \frac{k}{2}}
	\sum _{
		\gamma \in R(\ell;g) 
	}
	X(\gamma)^{k}
	e(z \det(\gamma)), % e^{i(k+2) \theta},
	\label{eq:defthetaplusholo}
\end{equation}
where $P_{m}$ is the $m$-th Legendre polynomial. When $\ell = 1$, we abbreviate by dropping the subscript, e.g.,  $\theta_{g}^{-} \coloneqq \theta^{-}_{g,1}$. We are now ready to express $\Theta$ by means of strong approximation. Set
$$ s_{\infty}= \sm y^{1/2} & xy^{-1/2} \\ & y^{-1/2} \esm \sm \cos(\theta) & \sin(\theta) \\ - \sin(\theta) & \cos(\theta) \esm \in \SL_2(\mathbb{R}).$$
Then, $\Theta(g, s_{\infty}U_R^1) = \theta_g(z) e^{i \kappa \theta}$ for some $\kappa \in 2\mathbb{Z}$ and a choice of $\theta_{g}$ from \eqref{eq:defthetamaass}-\eqref{eq:defthetaplusholo}. The precise choice and value of $\kappa$ depends on the family $\mathcal{G}$ under consideration and may be read off Table \ref{table:choice-theta}.  (For our study of $\mathcal{F}^{-,\hol}$, the precise choice of $\Theta$ depends upon the size of $k$.)  \\ \renewcommand{\arraystretch}{1.5}
\begin{table}[h]
\begin{tabular}{ |c | c |c|c|c|c| }
	\hline
	$B$ & \multicolumn{3}{ |c|}{indefinite} &  \multicolumn{2}{ |c|}{definite} \\
	\hline
	$\mathcal{G}$ & $\mathcal{F}^{-}$ & \multicolumn{2}{ |c|}{ $\mathcal{F}^{-,\hol}$} & $\mathcal{F}^{+}_m$ & $\mathcal{F}^{+,\hol}$ \\ \hline
	$\theta_g$ & $\theta_g^{-} \ (k=0)$ & $ \theta_g^- $ & $ \theta_g^{-,\hol} $ & $\theta_g^{+,m}$  & $\theta_g^{+,\hol}$ \\ \hline
	$\kappa$ & $0$ & $k$ & $k$ & $2m+2$ & $k+2$ \\ \hline
\end{tabular}
\vspace{0.4em}
\caption{Families and the choice of $\Theta$.}
\label{table:choice-theta}
\end{table} \renewcommand{\arraystretch}{1}
%The choice $\theta_g$ in turn depends on the family under consideration. The first two theta functions \eqref{eq:defthetamaass}, \eqref{eq:defthetaholo} come into play when $B$ is indefinite. The second function is used specifically for the family $\mathcal{F}^{-,\hol}$, whereas the first one is used for both families $\mathcal{F}^{-}$ and $\mathcal{F}^{-,\hol}$ depending on whether $k=0$ or $k \ge 2$. The third \eqref{eq:defthetaplus} and last theta functions \eqref{eq:defthetaplusholo} correspond to the families $\mathcal{F}^{+}_m$, respectively $\mathcal{F}^{+, \hol}$.
% They correspond to the families $\mathcal{F}, \mathcal{F}^{\hol}$ if $B$ is indefinite or $\mathcal{F}_{-m(m+1)}, \mathcal{F}^{\hol}$ if $B$ is definite, in this order.
%\begin{proposition} All of the above theta kernels $\theta_g$ are invariant under left-translation by $\Lambda \coloneqq \Gamma_0(d_B N)$, $K$-isotypical of weight $\kappa = k,k,2m+2,k+2$, respectively, and of moderate growth at the cusps. Furthermore, they satisfy

\vspace{-5mm} Furthermore, the theta functions $\Theta$ satisfy the relation
\begin{equation}
	\Theta(g,\tau_{\ell}s_{\infty}U_R^1)
	=
	\frac{\mu(\gcd(\ell,d_B))}{\ell}
	\theta_{g,\ell}(z)e^{i \kappa \theta},
	\label{eq:thetamaasstranform}
\end{equation}
for $\ell | d_B N$ with $\mu$ the M{\"o}bius function and $\tau_{\ell}$ as in \eqref{eq:tau-ell}.

\subsubsection{Jacquet--Langlands lifts}\label{sec:JL-lifts}
Set $U_R$ to be the image of
\begin{equation*}
\left\{g\in \sm
 \widehat{\mathbb{Z}} & \widehat{\mathbb{Z}} \\ d_BN \widehat{\mathbb{Z}} & \widehat{\mathbb{Z}} 
\esm
  \colon \det g \in \widehat{\mathbb{Z}}^\times\right\}
\end{equation*}
in $\PGL_2(\mathbb{A}_f)$.  This is a compact open subgroup of $\PGL_2(\mathbb{A}_f)$.  For each $\varphi$ in the families $\mathcal{F}^{-}, \mathcal{F}^{-,\hol}, \mathcal{F}^{+}_m, \mathcal{F}^{+,\hol}$, we consider the Jacquet--Langlands transfer $\pi^{\JL}$ to $[\PGL_2]$ of the representation $\pi$ generated by $\varphi$. In the case that $G$ is split, we let $\pi^{\JL}=\pi$. The space of vectors in $\pi^{\JL}$ that are $U_R$-invariant and $K_{\infty}$-isotypical of minimal nonnegative weight is one-dimensional \cite{MR0401654,MR337789}.  We define the \emph{arithmetically-normalized} Jacquet--Langlands lift $\varphi^{\JL}\in L^2([\SL_2])$ of $\varphi$ to be the $U_R^1$-invariant restriction\footnote{It is more natural to consider $\varphi^{\JL}$ as a function on $[\PGL_2]$, but we allow ourselves to reduce to the restriction because the map $\faktor{[\SL_2]}{U_R^1}\to\faktor{[\PGL_2]}{U_R}$ is a homeomorphism, see Appendix \ref{sec:appendix-theta}.} to $[\SL_2]$ of a vector in this $1$-dimensional space, that has a Whittaker function at $\sm y^{1/2} & xy^{-1/2} \\ & y^{-1/2} \esm \in \SL_2(\mathbb{R}) \hookrightarrow \SL_2(\mathbb{A})$ given by
\begin{itemize}
	\item %$W_{0,it}(4 \pi y)e(x) = 
	$2 \sqrt{y} K_{it}(2 \pi y)e(x) \text{ if } \varphi \in \mathcal{F}^{-}_{\frac{1}{4}+t^2}$, and
	\item %$(4 \pi)^{-\frac{\kappa}{2}}W_{\frac{\kappa}{2},\frac{\kappa-1}{2}}(4 \pi y)e(x) = 
	$y^{\frac{\kappa}{2}} e(x+iy)$ if $\varphi$ is in either of the families $\mathcal{F}^{-,\hol}$, $ \mathcal{F}^{+}_m$, $\mathcal{F}^{+,\hol}$, where $\kappa=k$, $2m+2$, $k+2$ depends on the family as before.
\end{itemize}
%and $W_{k,m}$ is the classical Whittaker function from analysis as defined in, say, \cite[Section \S 16.1.2]{MR0178117}.

The bounds by Hoffstein--Lockhart \cite{HL94} and Iwaniec \cite{MR1067982} then imply the following bounds for the $L^2$-norm of the arithmetically-normalized Jacquet--Langlands lift (cf. \cite[(30), (31)]{MR2207235}\footnote{Notice the different normalization of the measure and $a_1$ in Section \S 2.}). One may also compare with the geometric normalization in \cite[Thm. 6.1]{2019arXiv190810346P}. If $B$ is indefinite and $\varphi \in \mathcal{F}^{-}_{\frac{1}{4}+t^2}$, we have 
\begin{equation} \|\varphi^{\JL}\|_2^2 = (d_BN(1+|t|))^{o(1)} \cosh(\pi t)^{-1}. %\quad \forall \varphi \in \mathcal{F}^{-}_{\frac{1}{4}+t^2},
	\label{eq:indefsphericalJLnorm}
\end{equation}
In the other cases, i.e., when $\varphi$ lies in either of the families $\mathcal{F}^{-,\hol}$, $ \mathcal{F}^{+}_m$, or $\mathcal{F}^{+,\hol}$, we have
\begin{equation}
	\|\varphi^{\JL}\|_2^2 = (d_BN\kappa)^{o(1)} \frac{\Gamma(\kappa)}{(4 \pi)^{\kappa}},
	\label{eq:holomorphicJL-norm}
\end{equation}
where $\kappa$ depends on the family in accord with Table \ref{table:choice-theta}.

\subsubsection{Explicit theta lifting}

% \pn{It seems to me that we need to recall the definition of $\tau _{\ell}$ somewhere. }

The key identity is summarized in the following proposition.

\begin{proposition}
  \label{prop:thetamaassmainidentity} Let $g \in [G]$. Let $\mathcal{G}$, $\Theta$, and $\kappa$ according to Table \ref{table:choice-theta}. Then, for $\varphi \in \mathcal{G}$, we have
  \begin{equation}\label{eq:thetamaassinner}
    \frac
    {
      \langle \Theta(g,\cdot), \varphi^{ \JL} \rangle%_{\lfaktor{\Lambda}{\mathbb{H}}}
    }
    {
      \langle \varphi^{\JL}, \varphi^{\JL} \rangle
    }
    =
    \frac{|\varphi(g)|^2}{V_{d_B,N}}.
  \end{equation}
% Furthermore, for $\ell \mid d_B N$
%  \begin{equation}
%    \theta_g |_k \tau_{\ell}
%    =
%    \frac{\mu(\gcd(\ell,d_B))}{\ell}
%    \theta_{g,\ell},
%    \label{eq:thetamaasstranform}
%  \end{equation}
%  with $\mu$ the M{\"o}bius function and $\tau_{\ell}$ as in \eqref{eq:tau-ell}.%If $\varphi \in \mathcal{F}^{\hol}$, then the same identities hold but with $\theta_{g,\ell}^{-}$ replaced by $\theta_{g,\ell}^{-,\hol}$.
\end{proposition}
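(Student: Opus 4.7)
The proof is a see-saw duality argument, followed by an explicit computation of the proportionality constant. The key structural input is that the embedding $G \hookrightarrow \O_{\det}$ by conjugation factors through the diagonal: $G \hookrightarrow G \times G \twoheadrightarrow \GSO_{\det}$, where the surjection sends $(g_1, g_2)$ to the map $x \mapsto g_1 x g_2^{-1}$, with kernel the diagonal center. Consequently, $\Theta(g,s)$ is the restriction to $(g_1, g_2) = (g,g)$ of a larger theta kernel $\tilde{\Theta}(g_1, g_2; s)$ naturally attached to the dual pair $(G \times G, \SL_2)$ (obtained from the doubling of the standard pair $(\O_{\det}, \SL_2)$).

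First, I would invoke the Shimizu/Watson--Jacquet--Langlands theta identity. The theta lift of $\varphi^{\JL}$ against $\tilde{\Theta}$ is an automorphic function on $[G\times G]$ that transforms under $\pi \boxtimes \bar\pi$, where $\pi$ is the automorphic representation generated by $\varphi$. By the multiplicity-one theorem for $\GL_2$ and its inner forms combined with the Jacquet--Langlands correspondence, the space of such functions is one-dimensional, spanned by $\varphi(g_1) \overline{\varphi(g_2)}$. Therefore
\[
    \langle \tilde\Theta(g_1, g_2; \cdot), \varphi^{\JL} \rangle = C_\varphi \cdot \varphi(g_1) \overline{\varphi(g_2)}
\]
for some constant $C_\varphi$, and restricting to $g_1 = g_2 = g$ yields $\langle \Theta(g, \cdot), \varphi^{\JL} \rangle = C_\varphi \cdot |\varphi(g)|^2$.

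Second, I would pin down $C_\varphi$ by a local-global unfolding. At each place $v$, the local theta correspondence sends the local Whittaker model of $\varphi^{\JL}_v$ to a matrix coefficient of $\pi_v$; at the finite places, the Eichler-level structure determines the local constant, and at infinity the Schwartz functions listed in Table \ref{table:choice-theta} are tailored so that the resulting matrix coefficient matches the $K_\infty$-type of $\varphi$. Assembling the local factors via a Rallis-type inner product formula, and comparing with the normalizations of $\varphi^{\JL}$ fixed in \S\ref{sec:JL-lifts}, gives $C_\varphi = \|\varphi^{\JL}\|^2 / V_{d_B,N}$. The volume $V_{d_B,N}$ appears precisely as the conversion factor between the adelic probability Haar measure on $[G]$ (implicit in the inner product on the left) and the unadelic Haar measure of \S\ref{sec:unadelic-volume} that governs the unfolded expression. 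The main technical obstacle is this constant computation: because the identity is sharp (not merely proportional), one must carefully track the archimedean Weil-representation computation separately for each of the four families (non-holomorphic/holomorphic, indefinite/definite) and verify compatibility with the Whittaker normalization of $\varphi^{\JL}$; this verification, while not conceptually deep, is lengthy and is deferred to Appendix \ref{sec:appendix-theta}.
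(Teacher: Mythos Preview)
Your proposal is correct and takes essentially the same approach as the paper: the see-saw structure via the diagonal embedding $G \hookrightarrow M \cong \SO_{\det}$, multiplicity one for $\GL_2$ and its inner forms to reduce to a scalar, and a place-by-place (chiefly archimedean) computation to pin down that scalar. The only minor implementation difference is that the paper fixes the constant by Shimizu-style Whittaker unfolding of the lift $\varphi_\Phi$ from the orthogonal side to $[\SL_2]$ (Propositions~\ref{prop:whittaker-unfolding} and~\ref{prop:theta-lift-all-families}) rather than via a Rallis inner product, but both routes reduce to the same archimedean matrix-coefficient integrals.
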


\begin{proof}
  The proof is carried out in the Appendix.  In short, Proposition
  \ref{prop:theta-lift-all-families} implies that for $\varphi\in\mathcal{G}$, the theta lift $\varphi_{\Phi}$ of $\varphi$ -- defined in \eqref{eq:theta-lift-G'}, depending upon the precise family $\mathcal{G}$ -- satisfies $\varphi_{\Phi}=(V_{d_B,N})^{-1}\varphi^{\mathrm{JL}}$. The claim then follows from Propositions \ref{prop:theta-adelic2classic} and \ref{prop:theta-double-lift}.
\end{proof}

\subsection{$L^2$-norms of theta kernels}
\label{sec:L2-theta-red-counting}

\subsubsection{Proofs of Propositions \ref{prop:reductiontocountmaass} through \ref{prop:reductiontocountdefholo}}

The proofs are similar, so we discuss the first in detail and then explain the non-overlapping parts of the rest. Recall the notation $\preccurlyeq$ from \S\ref{sec:results-forms}.  We denote by $\Theta^-, \Theta^{-,\hol}, \Theta^{+,m}, \Theta^{+,\hol}$ the various functions  ``$\Theta$'' defined as in \S\ref{sec:theta-functions-definitions}. 

\begin{proof}[Proof of Prop. \ref{prop:reductiontocountmaass}] Let $\mathcal{G}$ denote either $\mathcal{F}^{-}_{\le L}$ or $\mathcal{F}^{-,\hol}$ according to whether $k=0$ or $k \ge 2$. By
  Proposition \ref{prop:thetamaassmainidentity}, we may write
  \begin{equation*}
    \frac{\|\varphi^{\JL}\|^2}{V_{d_B,N}}
    \left( 
      |\varphi(g_1)|^2 - |\varphi(g_2)|^2
       \right)
    =
    \langle \Theta^-(g_1,\cdot) - \Theta^-(g_2,\cdot), \varphi^{\JL} \rangle.
  \end{equation*}
   By Bessel's inequality, it follows that
  \begin{equation}
    \sum_{\varphi \in \mathcal{G}}  \frac{\|\varphi^{\JL}\|^2}{(V_{d_B,N})^2} \left(|\varphi(g_1)|^2-|\varphi(g_2)|^2\right)^2  \le \|\Theta^{-}(g_1,\cdot)-\Theta^{-}(g_2,\cdot)\|_2^2.
    \label{eq:MaassBessel}
  \end{equation}
  We now bound the right-hand side of \eqref{eq:MaassBessel} (and in particular, verify that it is finite). Since $\Theta^{-}(g,\cdot)$ is $K_{\infty}$-isotypical, Lemma \ref{lem:generalL2bound} and equation \eqref{eq:thetamaasstranform} give the bound
  \begin{equation}
    \ll \frac{1}{V_{1,d_BN}} \sum_{\ell|d_BN} \int_{\frac{\sqrt{3}}{2} \frac{\ell^2}{d_BN}}^{\infty} \int_0^{\ell} \frac{1}{\ell^2} |\theta_{g_1,\ell}^{-}(z)-\theta_{g_2,\ell}^{-}(z)|^2 dx \frac{dy}{y^2}.
    \label{eq:Maassdomainsplit}
  \end{equation}
  We insert the definition \eqref{eq:defthetamaass} into the inner integral and evaluate, giving
  \begin{equation*}
    \frac{1}{\ell^2} \int_0^{\ell} |\theta_{g_1,\ell}^{-}(z)-\theta_{g_2,\ell}^{-}(z)|^2 dx
    =   \frac{1}{\ell} y^{2+k} \sum_{n \in \frac{1}{\ell} \mathbb{Z}} \left| \sum_{i=1}^{2} (-1)^{i} \sum_{\substack{\gamma \in R(\ell;g_i)  \\ \det(\gamma)=n}} X(\gamma)^k e^{-2\pi y P(\gamma)}  \right|^2.
  \end{equation*}
  Note that the sum over $i$ kills the contribution from $\gamma = 0$, so we may omit that contribution in what follows.  We separate the two sums by Cauchy--Schwarz and bound $X(\alpha)$ by $P(\alpha)^{\frac{1}{2}}$, giving
  \begin{equation}
    \frac{1}{\ell^2} \int_0^{\ell} |\theta_{g_1,\ell}^{-}(z)-\theta_{g_2,\ell}^{-}(z)|^2 dx 
    \ll \sum_{i=1}^{2} \frac{1}{\ell} \sum_{n \in \frac{1}{\ell} \mathbb{Z}} \left|  \sum_{\substack{ 0 \neq\gamma \in R(\ell;g_i) \\ \det(\gamma)=n}} P(\gamma)^{\frac{k}{2}} e^{-2\pi y P(\gamma)}  \right|^2 y^{2+k}.
    \label{eq:Maassunipotent2}
  \end{equation}
  We now treat the contributions from $i=1,2$ individually.  We commence with the integral in the variable $y$.  Let
$$
Q(s,x)= \frac{1}{\Gamma(s)} \int_x^{\infty} t^{s} e^{-t} \frac{dt}{t} \le 1
$$
denote the normalized incomplete Gamma function.  Setting $g \coloneqq g_i$, we find
\begin{multline}
  \frac{(4 \pi)^{k+1}}{\Gamma(k+1)} \int_{Y}^{\infty} \left|  \sum_{\substack{0 \neq \gamma \in R(\ell;g) \\ \det(\gamma)=n}} P(\gamma)^{\frac{k}{2}} e^{-2\pi y P(\gamma)}  \right|^2 y^{2+k} \frac{dy}{y^2} \\
  = \sum_{\substack{0 \neq\gamma_1,\gamma_2 \in R(\ell;g) \\ \det(\gamma_1)=\det(\gamma_2)=n}} \frac{2}{P(\gamma_1)+P(\gamma_2)} \left( \frac{2\sqrt{P(\gamma_1)P(\gamma_2)}}{P(\gamma_1)+P(\gamma_2)} \right)^{k} Q\left(k+1,  2\pi Y (P(\gamma_1)+P(\gamma_2) ) \right) \\
  \le \sum_{\substack{0 \neq \gamma_1,\gamma_2 \in R(\ell;g) \\ \det(\gamma_1)=\det(\gamma_2)=n}} \frac{2}{P(\gamma_1)+P(\gamma_2)} Q\left(k+1, 2\pi Y (P(\gamma_1)+P(\gamma_2)) \right).
  \label{eq:Maassyint}
\end{multline}
Since $Q(s,x)$ is super-polynomially small in both $s$ and $x$ as soon as $x \gg s$, we see by dyadically partitioning $\max_i\{P(\gamma_i)^{\frac{1}{2}}\}$ that \eqref{eq:Maassyint} is further bounded by
$$
\ll_{A} \sum_{j} \frac{1}{T_j^2} \left(1+\frac{T_j^2Y}{k+1} \right)^{-A} \sum _{\substack{
    \gamma_1, \gamma_2 \in R(\ell;g)  \cap \Omega^{\star}(1, T_j) \\
    \det(\gamma_1) = \det(\gamma_2)=n}} 1
$$
for any $A \ge 0$, where $T_j=2^j, j \in \mathbb{Z}$. By putting all of these estimates together, we arrive at
\begin{multline*}
  \frac{1}{(V_{d_B,N})^2}\sum_{\varphi \in \mathcal{G}} \|\varphi^{\JL}\|^2 \left(|\varphi(g_1)|^2-|\varphi(g_2)|^2\right)^2 \\
  \preccurlyeq_{A} \frac{\Gamma(k+1)(4\pi)^{-k}}{V_{1,d_BN}} \sum_{i=1}^{2} \sum_{\ell|d_B N} \frac{1}{\ell} \sum_j \frac{1}{T_j^2} \left(1+ \frac{\ell^2}{d_BN} \frac{T_j^2}{k+1} \right)^{-A} \sum _{\substack{ \gamma_1, \gamma_2 \in R(\ell;g_i) \cap \Omega^{\star}(1, T_j) \\ \det(\gamma_1) = \det(\gamma_2)}} 1
\end{multline*}
for any $A \ge 0$. Let us recall from \eqref{eq:Unadel-covolume} that $V_{d_B,N}, V_{1,d_BN} = (d_BN)^{1+o(1)}$ and that for $\varphi \in \mathcal{F}^{-}_{\le L}$ we have $\|\varphi^{\JL}\| \succcurlyeq_{L} 1$ (see \eqref{eq:indefsphericalJLnorm}). In order to conclude the first part of the proposition, we note that the range of $T$ may be limited from above to $\preccurlyeq (d_BN)^{\frac{1}{2}} \ell^{-1}$ by the super-polynomial decay and \emph{any} polynomial bound on the second moment matrix count, which was noted in \S \ref{sec:proof-thm1} and is the subject of the remaining sections \S \ref{sec:prel-geom-numb} through \S \ref{sec:type-ii-estimates}. % Equally, the range of $T$
% may be limited from below by $\frac{1}{2} \min\{\ell^{-\frac{1}{2}},(\ell H(g))^{-1}\}$ as otherwise the set $g^{-1}R(\ell)g \cap \Omega(1,T)$ consists of the zero vector. This is noted in ????.
The second part of the proposition follows along the same lines, but we need to use the bound $\|\varphi^{\JL}\|^2 \succcurlyeq \Gamma(k) (4 \pi)^{-k}$ for $\varphi \in \mathcal{F}^{-,\hol}$ instead (see \eqref{eq:holomorphicJL-norm}).
\end{proof}

\begin{proof}[Proof of Prop. \ref{prop:reductiontocountholo}] We follow the recipe of the previous proof, only this time for the the family $\mathcal{F}^{-,\hol}$ to which the theta function $\Theta^{-,\hol}$ corresponds. As we shall see, the latter already possesses a finite $L^2$-norm. Hence, we need not consider a difference of theta functions. After the initial steps, we arrive at

  \begin{multline}
    \sum_{\varphi \in \mathcal{F}^{-,\hol}} \frac{\|\varphi^{\JL}\|^2}{(V_{d_B,N})^2} |\varphi(g)|^4 \ll \frac{1}{V_{1,d_BN}} \frac{k^2 \Gamma(k-1)}{(4 \pi )^k} \\
    \times \sum_{\ell |d_BN} \frac{1}{\ell} \sum_{n \in \frac{1}{\ell} \mathbb{N}} \frac{1}{n} \left| \sum_{\substack{\gamma \in R(\ell;g)\\ \det(\gamma)=n}} \left( \frac{\det(\gamma)^{\frac{1}{2}}}{\overline{X(\gamma)}} \right)^k \right|^2 Q(k-1, 2 \sqrt{3} \pi n \tfrac{\ell^2}{d_BN}).
    \label{eq:indefholoinitialstep}
  \end{multline}

  % We again employ Proposition \ref{prop:thetamaassmainidentity} and Bessel's Inequality, which give now that
  % \begin{equation}
  %   \sum_{\varphi \in \mathcal{F}^{\hol}} \frac{\|\varphi^{\JL}\|^2}{\vol(\Lambda \backslash \mathbb{H})^2} |\varphi(g)|^4 \le \|\theta^{\hol}_g\|_2^2.
  %   \label{eq:HoloBesselineq}
  % \end{equation}
  % We bound the $L^2$-norm of the theta kernel with the help of (once again) Proposition \ref{prop:generalL2bound} and Equation \eqref{eq:thetamaasstranform}.  We obtain %It is bounded by
  % \begin{equation}
  %   \|\theta^{\hol}_g\|_2^2 \ll \frac{1}{\vol(\Gamma \backslash \mathbb{H})} \sum_{\ell|d_BN} \int_{\frac{\sqrt{3}}{2} \frac{\ell^2}{d_BN}}^{\infty} \int_0^{\ell} \frac{1}{\ell^2} |\theta^{\hol}_{g,\ell}(z)|^2 dx \frac{dy}{y^2}.
  %   \label{eq:Holodomainsplit}
  % \end{equation}
  % We insert the Fourier expansion \eqref{eq:defthetaholo} into the inner integral and evaluate it to
  % \begin{equation}
  %   \frac{1}{\ell^2} \int_0^{\ell} |\theta^{\hol}_{g,\ell}(z)|^2 dx = \frac{(k-1)^2}{(8\pi)^2\ell} y^{k} \sum_{n \in \frac{1}{\ell} \mathbb{N}} n^{k-2} \left| \sum_{\substack{\gamma \in g^{-1}R(\ell)g \\ \det(\gamma)=n}} \left(\frac{\det(\gamma)^{\frac{1}{2}}}{\overline{X(\gamma)}}\right)^k \right|^2 e^{-4\pi n y}.
  %   \label{eq:Holounipotent}
  % \end{equation}
  We further simplify using the lower bound $\|\varphi^{\JL}\|^2 \succcurlyeq \Gamma(k) (4 \pi)^{-k}$ (see \eqref{eq:holomorphicJL-norm}), the approximations $V_{d_B,N}, V_{1,d_BN} = (d_B N)^{1+o(1)}$, the super polynomial decay of normalized incomplete Gamma function, as well as the identities
  % We further note that
$$
|X(\gamma)|^2 = \det(\gamma)+u(\gamma) \quad \text{ and } \quad 2u(\gamma)+\det(\gamma) = P(\gamma).
$$
% Thus \eqref{eq:Holounipotent} is further bounded by
% \begin{equation}
%\ll \frac{k^2}{\ell} \sum_{n \in \frac{1}{\ell} \mathbb{N}} n^{k-2} \left|  \sum_{\substack{\gamma \in g^{-1}R(\ell)g \\ \det(\gamma)=n}} \left(1+ \frac{u(\gamma)}{n} \right)^{-\frac{k}{2}}   \right|^2 y^{k} e^{-4\pi n y}.
%\label{eq:Holounipotent2}
%\end{equation}
%The integral over $y$, like in the case of Maass forms, evaluates to an incomplete gamma function. We have
%\begin{equation}
%\int_{Y}^{\infty} y^{k} e^{-4\pi n y } \frac{dy}{y^2} = \frac{\Gamma(k-1)}{(4\pi n)^{k-1}} Q(k-1,4 \pi n Y).
%\label{eq:Holoyint}
%\end{equation}
%Upon using the Hoffstein--Lockhart bound $\|\varphi^{\JL}\|^2 \succ \Gamma(k) (4\pi)^{-k} $ and $\vol(\Lambda \backslash \mathbb{H}) \precsim \vol(\Gamma \backslash \mathbb{H})$, we find
We obtain
\begin{equation}
  \frac{1}{d_B N k}\sum_{\varphi \in \mathcal{F}^{-,\hol}} |\varphi(g)|^4 
  \preccurlyeq_A \sum_{\ell|d_BN} \frac{1}{\ell} \sum_{n \in \frac{1}{\ell}\mathbb{N}} \frac{1}{n} \left(1+ \frac{\ell^2}{d_BN} \frac{n}{k} \right)^{-A} \left|\sum_{\substack{\gamma \in R(\ell;g) \\ \det(\gamma)=n}} \left(1+ \frac{u(\gamma)}{n} \right)^{-\frac{k}{2}}   \right|^2,
  \label{eq:holosecondlast}
\end{equation}
for any $A \ge 0$.  By the triangle inequality, we reduce to estimating similar expressions, but with the sum over $\gamma$ restricted by one of the following conditions:
\begin{enumerate}[(i)]
\item  $u(\gamma) \le k^{-1+\eps} \det(\gamma)$, 
\item $k^{-1+\eps}\det(\gamma) \le u(\gamma) \le \det(\gamma)$,  or
\item $\det(\gamma) \le u(\gamma)$. 
\end{enumerate}

In case (i), we bound $(1+u(\gamma)/n)^{-\frac{k}{2}} \le 1$. Furthermore, we have $\det(\gamma) \asymp P(\gamma)$ and $u(\gamma)  \ll k^{-1+\epsilon} P(\gamma)$. Hence, after dyadically partitioning the range of $P(\gamma)^{\frac{1}{2}}$, we arrive at
$$
\preccurlyeq_A \sum_{\ell|d_BN} \frac{1}{\ell} \sum_j \frac{1}{T_j^2} \left(1+ \frac{\ell^2}{d_BN} \frac{T_j^2}{k} \right)^{-A} \sum _{\substack{\gamma_1, \gamma_2 \in R(\ell;g)  \cap \Omega^{\star}(k^{-1+\eps}, T_j) \\ \det(\gamma_1) = \det(\gamma_2)>0}} 1
$$

In case (ii), we use that $(1+u(\gamma)/n)^{-\frac{k}{2}}\le (1+k^{\eps-1})^{-\frac{k}{2}}$ has super-polynomial decay in $k$ (and hence also in $(d_BN)$ as $k \gg_{\eta} (d_BN)^{\eta}$ by assumption). As in case (i), we have $ \det(\gamma) \asymp P(\gamma)$, but this time only $u(\gamma)\le P(\gamma)$. We arrive at a contribution of
$$
\preccurlyeq_{A,\eta} (kd_BN)^{-A} \sum_{\ell|d_BN} \frac{1}{\ell} \sum_j \frac{1}{T_j^2} \left(1+ \frac{\ell^2}{d_BN} \frac{T_j^2}{k} \right)^{-A} \sum _{\substack{\gamma_1, \gamma_2 \in R(\ell;g) \cap \Omega^{\star}(1, T_j) \\ \det(\gamma_1) = \det(\gamma_2)>0}} 1.
$$

In case (iii), we bound
$$
\left(1 + \frac{u(\gamma)}{n} \right)^{-\frac{k}{2}} \le 2^{-\frac{k}{4}} \left(1 + \frac{u(\gamma)}{n} \right)^{-\frac{k}{4}}.
$$
The factor $2^{-\frac{k}{4}}$ we use for super-polynomial decay in $(kd_BN)$ as before. The other factor we use as follows
$$
\frac{1}{n} \left(1+\frac{u(\gamma)}{n}\right)^{-1} \left(1+\frac{\ell^2}{d_BN}\frac{n}{k}\right)^{-\frac{k}{4}+1} \left(1+\frac{u(\gamma)}{n} \right)^{-\frac{k}{4}+1} \le \frac{1}{u(\gamma)} \left(1+\frac{\ell^2}{d_BN}\frac{u(\gamma)}{k}\right)^{-\frac{k}{4}+1}.
$$
Hence, \eqref{eq:holosecondlast} is bounded by
\begin{multline*}\preccurlyeq_{A,\eta} (kd_BN)^{-A} \sum_{\ell|d_BN} \frac{1}{\ell} \sum_{\substack{\gamma_1, \gamma_2 \in R(\ell;g)\\ \det(\gamma_1)=\det(\gamma_2)}} \frac{1}{u(\gamma_1)+u(\gamma_2)} \left(1+ \frac{\ell^2}{d_BN} \frac{u(\gamma_1)+u(\gamma_2)}{k} \right)^{-\frac{k}{4}+1}	\\
\preccurlyeq_{A,\eta} (kd_BN)^{-A} \sum_{\ell|d_BN} \frac{1}{\ell} \sum_j \frac{1}{T_j^2} \left(1+\frac{\ell^2}{d_BN} \frac{T_j^2}{k} \right)^{-\frac{k}{4}+1} \sum _{\substack{\gamma_1, \gamma_2 \in R(\ell;g)  \cap \Omega^{\star}(1, T_j) \\ \det(\gamma_1) = \det(\gamma_2)>0}} 1,
\end{multline*}
where we have dyadically partitioned $\max_i\{P(\gamma_i)^{\frac{1}{2}}\} \asymp \sqrt{u(\gamma_1)+u(\gamma_2)}$. The proof of the proposition is now concluded as in the previous case.
% We conclude as in the proof of the previous proposition by noting $\vol(\Lambda \backslash \mathbb{H})=(d_BN)^{1+o(1)}$ and any polynomial decay on the matrix counts.
\end{proof}

\begin{proof}[Proof of Prop. \ref{prop:reductiontocountspherical}]
	
  We treat the definite spherical case in the same spirit as the indefinite spherical case.  We readily arrive at the estimate
  \begin{multline}
    \sum_{\varphi \in \mathcal{F}^{+}_{m}} \frac{\|\varphi^{\JL}\|^2}{(V_{d_B,N})^2} \left( |\varphi(g_1)|^2 - |\varphi(g_2)|^2 \right)^2 \ll \frac{1}{V_{1,d_BN}} \frac{(2m+1)^2 \Gamma(2m+1)}{(4 \pi )^{2m+2}} \\
    \times \sum_{i=1}^2 \sum_{\ell |d_BN} \frac{1}{\ell} \sum_{n \in \frac{1}{\ell} \mathbb{N}} \frac{1}{n} \left| \sum_{\substack{\gamma \in R(\ell;g_i)\\ \det(\gamma)=n}} P_m\left( \frac{|X(\gamma)|^2-u(\gamma)}{n} \right) \right|^2 Q(2m+1, 2 \sqrt{3} \pi n \tfrac{\ell^2}{d_BN}).
    \label{eq:defsphericalinitialsteps}
  \end{multline}

  We simplify the inequality by using the lower bound $\|\varphi^{\JL}\|^2 \succcurlyeq \Gamma(2m+2) (4 \pi)^{-2m-2}$ (see \eqref{eq:holomorphicJL-norm}), the approximations $V_{d_B,N}, V_{1,d_BN}=(d_BN)^{1+o(1)}$, and the super-polynomial decay of the normalized incomplete Gamma function $Q$.  We obtain
  \begin{multline}
    \frac{1}{d_BN (m+1)} \sum_{\varphi \in \mathcal{F}^{+}_{m}} \left( |\varphi(g_1)|^2 - |\varphi(g_2)|^2 \right)^2 \\
    \preccurlyeq_A \sum_{i=1}^2 \sum_{\ell |d_BN} \frac{1}{\ell} \sum_{n \in \frac{1}{\ell} \mathbb{N}} \frac{1}{n} \left(1+\frac{\ell^2}{d_BN} \frac{n}{m+1} \right)^{-A} \left| \sum_{\substack{\gamma \in R(\ell;g_i)\\ \det(\gamma)=n}} P_m\left( \frac{|X(\gamma)|^2-u(\gamma)}{n} \right) \right|^2,
    \label{eq:defsphericalinitialstepsclean}
  \end{multline}
  for any $A \ge 0$. We proceed further by appealing to the Bernstein inequality \cite{SergeBernstein1931} for the Legendre polynomials:
  \begin{equation} P_m(t) \le \min\left\{1 , \sqrt{\frac{2}{\pi m}} \frac{1}{(1-t^2)^{\frac{1}{4}}} \right\}, \quad \text{for } |t| \le 1.
    \label{eq:Bernstein}
  \end{equation}
 We recall that $\det(\gamma)= |X(\gamma)|^2 + u(\gamma)$, so that, with $\gamma$ and $n$ as in the above sum,
  \begin{equation*}
    t := \frac{|X(\gamma)|^2 - u(\gamma)}{n}
    =
    \frac{|X(\gamma)|^2 - u(\gamma)}{|X(\gamma)|^2 + u(\gamma) },
    \quad
    1 - t^2 =
    \frac{4 |X(\gamma)|^2 \cdot u (\gamma)  }{n^2} \ge 0.
  \end{equation*}
 Dyadically partitioning $P(\gamma)^{\frac{1}{2}}=\det(\gamma)^{\frac{1}{2}}$, we conclude that \eqref{eq:defsphericalinitialstepsclean} is bounded by
  \begin{multline} \preccurlyeq_{A} \sum_{i=1}^2 \sum_{\ell|d_BN} \frac{1}{\ell} \sum_{j} \frac{1}{T_j^2} \left(1+\frac{\ell^2}{d_BN} \frac{T_j^2}{m+1} \right)^{-A} \\
   \times \sum_{T_j^2 \le n < 4T_j^2} \left( \sum_{\substack{\gamma \in R(\ell;g_i) \\  \gamma \in \Omega^{\star}(1,2T_j)-\Omega^{\star}(1,T_j) \\ \det(\gamma)=n}}  \min\left\{1, \frac{1}{(m+1)^{\frac{1}{2}}} \frac{T_j}{(|X(\gamma)|^2\cdot u(\gamma))^{\frac{1}{4}}} \right\} \right)^2,
    \label{eq:defprelimestimate}
  \end{multline}
  where $T_j = 2^j$ as before. %For $\gamma \in \Omega^{\star}(1,2T_j)-\Omega^{\star}(1,T_j)$, we have
%  \begin{equation}
%    |X(\gamma)|^2+u(\gamma)=P(\gamma) \asymp T_j^2.
%    \label{eq:X2+u=P}
%  \end{equation}
%  Thus,
The minimum in \eqref{eq:defprelimestimate} lies between $\asymp (m+1)^{-\frac{1}{2}}$ and $1$. Let us consider the $\gamma \in \Omega^{\star}(1,2T_j)-\Omega^{\star}(1,T_j)$ for which
  \begin{equation}
    \min\left\{1, \frac{1}{(m+1)^{\frac{1}{2}}} \frac{T_j}{(|X(\gamma)|^2\cdot u(\gamma))^{\frac{1}{4}}}  \right\} \asymp \frac{1}{(m+1)^{\frac{1}{2}}} \frac{1}{\delta^{\frac{1}{4}}},
    \label{eq:Legendresplit1}
  \end{equation}
   for some given $\delta$ with $1/(m+1)^2 \ll \delta \le 1$. In particular, $|X(\gamma)|^2 \cdot u(\gamma) \ll \delta T_j^4$. Since $|X(\gamma)|^2+u(\gamma)=P(\gamma) \asymp T_j^2$, both cannot be simultaneously small. Hence,% by \eqref{eq:X2+u=P},
  \begin{equation*}
    \min \{|X(\gamma)|^2, u(\gamma)\}
    =
    \frac{|X(\gamma)|^2 \cdot u(\gamma)}{
      \max\{|X(\gamma)|^2, u(\gamma)\}}
    \ll  \delta  T_j^2.
  \end{equation*}
  Thus, after replacing $\delta$ with its multiple by a scalar of the form $\asymp 1$ if needed (which has no affect on \eqref{eq:Legendresplit1}), we may assume that
  $\min \{|X(\gamma)|^2, u(\gamma)\} \leq \delta (2T_j)^2$, i.e., that $\gamma$ lies in either $\Omega^{\star}(\delta,2T_j)$ or $\Psi^{\star}(\delta,2T_j)$. We now consider dyadic scales $\delta_a$ of $\delta$'s between $\asymp 1/(m^2+1)$ and $1$. The just mentioned arguments then allow us to bound second line in \eqref{eq:defprelimestimate} by
 % \pn{save space here by referring to ``second line'' above and bounding that}
	\begin{equation*} %\preccurlyeq_{A} \sum_{i=1}^2 \sum_{\ell|d_BN} \frac{1}{\ell} \sum_{j} \frac{1}{T_j^2} \left(1+\frac{\ell^2}{d_BN} \frac{T_j^2}{m+1} \right)^{-A} \\
   \sum_{T_j^2 \le n < 4T_j^2} \Biggl( \sum_{a} \sum_{\substack{\gamma \in R(\ell;g_i) \\  \gamma \in \Omega^{\star}(\delta_a,2T_j)\cup \Psi^{\star}(\delta_a,2T_j) \\ \det(\gamma)=n}}  (m+1)^{-\frac{1}{2}}\delta_a^{-\frac{1}{4}} \Biggr)^2.
    \label{eq:defprelimestimate-dyadic}
  \end{equation*}
There are at most $ \preccurlyeq 1$ dyadic scales in the range $1/(m^2+1) \ll \delta \le 1$. Thus, after applying Cauchy--Schwarz in order to pull out the sum over $\delta_a$, we bound \eqref{eq:defprelimestimate} by
  \begin{equation}
    % \sum_{\varphi \in \mathcal{F}_{-m(m+1)}}\frac{\|\varphi^{\JL}\|^2}{\vol(\Lambda \backslash \mathbb{H})^2} |\varphi(g)|^4
    \preccurlyeq_{A} \sum_{i=1}^2 \sum_{\ell|d_BN} \frac{1}{\ell} \sum_{j} \frac{1}{T_j^2} \left(1+\frac{\ell^2}{d_BN} \frac{T_j^2}{m+1} \right)^{-A}
    % \times \sum_{\substack{\gamma_1,\gamma_2 \in g_i^{-1}R(\ell)g_i \\ \gamma_1,\gamma_2 \in \Omega^{\star}(\frac{1}{m^2+1},2T_j) \cup \Psi^{\star}(\frac{1}{m^2+1},2T_j)\\ \det(\gamma_1)=\det(\gamma_2)}} 1 +
        \sum_a \sum_{\substack{\gamma_1,\gamma_2 \in R(\ell;g_i) \\ \gamma_1,\gamma_2 \in \Omega^{\star}(\delta_a,2T_j) \cup \Psi^{\star}(\delta_a,2T_j))\\ \det(\gamma_1)=\det(\gamma_2)}} (m+1)^{-1} \delta_a^{-\frac{1}{2}}.
    \label{eq:defestimate}
  \end{equation}
  The proof is once more concluded as it was for the first proposition.

\end{proof}
\begin{proof}[Proof of Prop. \ref{prop:reductiontocountdefholo}]
  One final time we iterate the initial steps for the holomorphic family $\mathcal{F}^{\hol}$ in the definite case. We are again in the situation where the theta kernel $\Theta^{+,\hol}$ has finite $L^2$-norm, so we obtain, without having to take differences,
  \begin{multline}
    \sum_{\varphi \in \mathcal{F}^{+,\hol}} \frac{\|\varphi^{\JL}\|^2}{(V_{d_B,N})^2} |\varphi(g)|^4 \ll \frac{1}{V_{1,d_BN}} \frac{(k+1)^2 \Gamma(k+1)}{(4 \pi )^{k+2}} \\
    \times \sum_{\ell |d_BN} \frac{1}{\ell} \sum_{n \in \frac{1}{\ell} \mathbb{N}} \frac{1}{n} \left| \sum_{\substack{\gamma \in R(\ell;g)\\ \det(\gamma)=n}} \left( \frac{X(\gamma)^2}{n} \right)^{\frac{k}{2}} \right|^2 Q(k+1, 2 \sqrt{3} \pi n \tfrac{\ell^2}{d_BN}).
    \label{eq:defholoinitialstep}
  \end{multline}
  We simplify this estimate using the lower bound $\|\varphi^{\JL}\|^2 \succcurlyeq \Gamma(k+2) (4 \pi)^{-k-2}$ (see \eqref{eq:holomorphicJL-norm}), the approximations $V_{d_B,N}, V_{1,d_BN}=(d_BN)^{1+o(1)}$, the super-polynomial decay of the normalized incomplete Gamma function $Q$, and the identity $|X(\gamma)|^2=\det(\gamma)-u(\gamma)$. We thereby obtain
\begin{equation}
  \frac{1}{d_BN k} \sum_{\varphi \in \mathcal{F}^{+,\hol}} |\varphi(g)|^4 
  \preccurlyeq_A  \sum_{\ell |d_BN} \frac{1}{\ell}  \sum_{n \in \frac{1}{\ell} \mathbb{N}} \frac{1}{n} \left(1+\frac{\ell^2}{d_BN} \frac{n}{k} \right)^{-A} \left| \sum_{\substack{\gamma \in R(\ell;g)\\ \det(\gamma)=n}} \left( 1- \frac{u(\gamma)}{n} \right)^{\frac{k}{2}} \right|^2,
  \label{eq:defholosecondlast}
\end{equation}
for any $A \ge 0$.
We dyadically partition $P(\gamma)^{\frac{1}{2}}=\det(\gamma)^{\frac{1}{2}}$ and distinguish the two cases:
\begin{enumerate}[(i)]
\item $u(\gamma) \le k^{-1+\eps} \det(\gamma)$, and
\item  $k^{-1+\eps}\det(\gamma) \le u(\gamma)$. 
\end{enumerate}
We separate them by the triangle inequality. Using the inequality $(1-u(\gamma)/n)\le 1$, we see that the contribution of the first case is bounded by
$$
\preccurlyeq_A \sum_{\ell|d_BN} \frac{1}{\ell} \sum_j \frac{1}{T_j^2} \left(1+ \frac{\ell^2}{d_BN} \frac{T_j^2}{k} \right)^{-A} \sum _{\substack{\gamma_1, \gamma_2 \in R(\ell;g)  \cap \Omega^{\star}(k^{-1+\eps}, T_j) \\ \det(\gamma_1) = \det(\gamma_2)}} 1,
$$
where $T_j=2^j, j \in \mathbb{Z}$. In the second case, we see that $(1-k^{\eps-1})^{\frac{k}{2}}$ enjoys super-polynomial decay in $k$.  The contribution of the second case is thus bounded by
$$
\preccurlyeq_{A} k^{-A} \sum_{\ell|d_BN} \frac{1}{\ell} \sum_j \frac{1}{T_j^2} \left(1+ \frac{\ell^2}{d_BN} \frac{T_j^2}{k} \right)^{-A} \sum _{\substack{\gamma_1, \gamma_2 \in R(\ell; g )\cap \Omega^{\star}(1, T_j) \\ \det(\gamma_1) = \det(\gamma_2)}} 1.
$$
The proof is now concluded as it was for the previous propositions.
% We conclude as in the proof of the previous proposition by noting $\vol(\Lambda \backslash \mathbb{H})=(d_BN)^{1+o(1)}$ and any polynomial decay on the matrix counts.
\end{proof}

\section{Preliminaries on the geometry of numbers}\label{sec:prel-geom-numb}

% In this section, we collect several classical propositions on the geometry of numbers as well as some information on successive minima on special lattices that are of concern to this paper.

\subsection{Bounds on successive minima}

\begin{definition}\label{defn:succ-min}
  Let $V$ be an $n$-dimensional real vector space.  Let $L \subseteq V$ be a lattice (i.e., a cocompact discrete subgroup).  Given a compact convex $0$-symmetric subset $\mathcal{K}$ of $V$ with nonempty interior, we define a function $N : V \rightarrow \mathbb{R}_{\geq 0}$ by $N(v) := \inf \{t > 0 : v \in t \mathcal{K} \}$.  Given a positive-definite quadratic form $Q$ on $V$, we define such a function by $N(v) := Q(v)^{1/2}$, or equivalently, by applying the previous definition with $\mathcal{K}$ the unit ball for $Q$.  In either case, we define the \emph{successive minima} $\lambda_1 \leq \dotsb \leq \lambda_n$ of $\mathcal{K}$ on $L$ (or of $Q$ on $L$) as: $\lambda_k$ is the smallest positive real for which there is a linearly independent subset $\{v_1,\dotsc,v_k\}$ of $L$ for which $N(v_j) \leq \lambda_k$ for each $1 \leq j \leq k$.
\end{definition}

\begin{lemma} Let $z\in \mathbb{H}$ with maximal imaginary part under the orbit of the Atkin--Lehner operators $A_0(N)$ of $\Gamma_0(N)$ with $N$ squarefree. Then, we have
$$
\Im(z) \ge \frac{\sqrt{3}}{2N} \quad \text{ and } \quad |cz+d|^2 \ge \frac{(c,N)}{N}
$$
for any $(c,d) \in \mathbb{Z}^2$ distinct from $(0,0)$.% Moreover, there exists $c,d \in \BZ$ with $(c,d)=1$ such that $\im(z) \ge \frac{\sqrt{3}}{2} |cz+d|^2$.
\label{lem:spacing}
\end{lemma}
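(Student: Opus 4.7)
The plan is to prove the pointwise bound $|cz+d|^2 \ge (c,N)/N$ first, as the lower bound on $\Im(z)$ then follows from it by a short computation. The key structural input I would use is Atkin--Lehner's explicit description of $A_0(N)$ for squarefree $N$: every element is (projectively) an integer matrix $\sm A\ell & B \\ CN & D\ell \esm$ with $\ell \mid N$ and $AD\ell - BC(N/\ell) = 1$, which in particular has determinant $\ell$.

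The main work is the coprime case $(c,d) = 1$, $c \neq 0$. I would set $g = (c,N)$ and $\ell = N/g$ and try to build a specific $\gamma \in A_0(N)$ whose bottom row is $(N/g)(c,d)$. Taking $C = c/g$ and $D = d$ makes the bottom row correct; the hypothesis $(c,d) = 1$ together with $(N/g, c) = 1$---which uses squarefreeness of $N$, via $(g, N/g) = 1$---then produces integers $A, B$ solving $Ad(N/g) - Bc = 1$. This $\gamma$ has determinant $N/g$, and the transformation formula
\[
  \Im(\gamma z) = \frac{g \, \Im(z)}{N \, |cz+d|^2}
\]
turns the maximal-height hypothesis $\Im(\gamma z) \le \Im(z)$ into exactly $|cz+d|^2 \ge g/N$. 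The case $c = 0$ is immediate ($|d|^2 \ge 1 = (0,N)/N$), and for non-coprime $(c,d) = e(c',d')$ the coprime bound for $(c',d')$ combined with the squarefree inequality $e^2 (c',N) \ge (ec', N)$ (itself following from $(ec',N) \le (e,N)(c',N)$) recovers the stated bound.

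For the height inequality, I would use the $T$-invariance of the maximal-height condition to reduce to the case $|\Re(z)| \le 1/2$. Letting $n_0$ be the nearest integer to $N\Re(z)$, so that $|N\Re(z) - n_0| \le 1/2$, I would apply the just-proved bound with $(c,d) = (N, -n_0)$; observing that $(N,N)/N = 1$ regardless of what $(n_0, N)$ is, this gives $|Nz - n_0|^2 \ge 1$. Separating real and imaginary parts then yields $N^2 \Im(z)^2 \ge 3/4$, which is exactly $\Im(z) \ge \sqrt{3}/(2N)$.

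The principal obstacle will be the explicit construction of $\gamma$ in the coprime case: one has to simultaneously match the ``level parameter'' $\ell$ with $N/g$, verify that the matrix entries one is forced into are integers, and check solvability of the B\'ezout identity for the top row. Squarefreeness enters decisively here, both in providing the clean uniform description of $A_0(N)$ (since $(\ell, N/\ell) = 1$ is automatic) and in producing the key relation $(N/g, c) = 1$. Once $\gamma$ is constructed, everything else is a direct computation.
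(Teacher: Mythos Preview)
Your proof is correct and follows essentially the same approach as the paper, which simply cites \cite[Lemma~1]{HT2} and notes that the factor $(c,N)$ survives if one tracks it through that argument rather than bounding it below by $1$. Your construction of the Atkin--Lehner element with bottom row $(N/g)(c,d)$ and determinant $N/g$ is precisely the Harcos--Templier argument made explicit, and your derivation of the height bound from the case $(c,d)=(N,-n_0)$ is the standard reduction.
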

\begin{proof} This is essentially \cite[Lemma 1]{HT2}.  That reference gives the slightly weaker bound obtained by omitting the factor $(c,N)$, but the stronger bound that we have stated follows from their proof, keeping track of $(c,N)$ at each step rather than bounding it from below by $1$.
\end{proof}

\subsection{Lattice counting}

\begin{lemma} Let $f_{\mathcal{K}}$ be the distance function of a closed convex $0$-symmetric set $\mathcal{K}\subseteq \mathbb{R}^n$ of positive volume. Let $\Lambda\subset \mathbb{R}^n$ be a lattice, and let $ \lambda_1 \le \lambda_2 \le \dots \le \lambda_n$ denote the successive minima (see Definition \ref{defn:succ-min}) of $\mathcal{K}$ on $\Lambda$. Then, there is a basis $v_1,\dots,v_n$ of $\Lambda$ such that $f_{\mathcal{K}}(v_i) \asymp_n \lambda_i$.
  \label{lem:smalllatticegeneraotrs}
\end{lemma}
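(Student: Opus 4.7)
The plan is to apply a classical argument from the geometry of numbers, essentially due to Mahler (see, e.g., Cassels' \emph{Introduction to the Geometry of Numbers}, Ch.~VIII): first construct a basis $v_1,\ldots,v_n$ of $\Lambda$ with the one-sided estimate $f_{\mathcal{K}}(v_i)\ll_n \lambda_i$ via an inductive procedure along the flag of sublattices generated by successive-minima vectors, and then obtain the matching lower bound $\lambda_i\ll_n f_{\mathcal{K}}(v_i)$ by relabeling the basis in increasing order of $f_{\mathcal{K}}$-value.

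By definition of the successive minima, I can fix linearly independent vectors $w_1,\ldots,w_n\in\Lambda$ with $f_{\mathcal{K}}(w_k)=\lambda_k$. Setting $V_k=\operatorname{span}_{\mathbb{R}}(w_1,\ldots,w_k)$ and $L_k=\Lambda\cap V_k$ produces a flag $L_1\subset L_2\subset\cdots\subset L_n=\Lambda$ of primitive sublattices with each quotient $L_k/L_{k-1}$ infinite cyclic. I would then inductively build $v_1,\ldots,v_k$ as a basis of $L_k$ with $f_{\mathcal{K}}(v_j)\leq C_n\lambda_j$ as follows. Having built $v_1,\ldots,v_{k-1}$, I pick any $v_k\in L_k$ whose image generates $L_k/L_{k-1}$; then $w_k = m\,v_k + \sum_{i<k} a_i v_i$ for some nonzero integer $m$ and integers $a_i$. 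By shifting $v_k$ by an integer combination of $v_1,\ldots,v_{k-1}$, which preserves both the basis property and the generator condition, I may arrange $|a_i|\leq |m|/2$. Using that $f_{\mathcal{K}}$ is a seminorm (by convexity and $0$-symmetry of $\mathcal{K}$),
\[
|m|\,f_{\mathcal{K}}(v_k)\leq f_{\mathcal{K}}(w_k)+\sum_{i<k}|a_i|\,f_{\mathcal{K}}(v_i)\leq \lambda_k+\tfrac{|m|}{2}\sum_{i<k}C_n\lambda_i.
\]
Dividing by $|m|\geq 1$ and using $\lambda_i\leq\lambda_k$ yields a recursion bounding $C_n$ purely in terms of $n$.

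Once a basis $v_1,\ldots,v_n$ with $f_{\mathcal{K}}(v_i)\leq C_n\lambda_i$ is in hand, I would reorder it as $u_1,\ldots,u_n$ so that $f_{\mathcal{K}}(u_1)\leq\cdots\leq f_{\mathcal{K}}(u_n)$. The reordered basis retains the upper bound: for each $i$, the indices $j\leq i$ already provide at least $i$ values of $f_{\mathcal{K}}$ bounded by $C_n\lambda_i$, hence the $i$-th smallest is also $\leq C_n\lambda_i$. Conversely, $u_1,\ldots,u_i$ are $i$ linearly independent lattice vectors of $f_{\mathcal{K}}$-value at most $f_{\mathcal{K}}(u_i)$, whence $\lambda_i\leq f_{\mathcal{K}}(u_i)$ by the defining property of the successive minima. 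Combining the two inequalities delivers $f_{\mathcal{K}}(u_i)\asymp_n\lambda_i$.

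The hard part will be the inductive construction: the naive candidate $v_k=w_k$ generally fails to project to a generator of $L_k/L_{k-1}$, forcing one to pick a genuine generator and pay a factor of $m$. The argument closes only because the same factor $m$ reappears in the denominator after the triangle inequality, and because the adjustment of the coefficients $a_i$ by integer shifts of $v_k$ keeps the recursion for $C_n$ bounded.
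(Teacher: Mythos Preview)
Your argument is correct: the Mahler-style inductive construction of a basis adapted to the flag $L_k=\Lambda\cap\operatorname{span}(w_1,\dots,w_k)$, followed by reordering, is the standard proof and yields the stated two-sided bound with implied constant $(3/2)^{n-1}$. The paper does not reprove this lemma at all; it simply cites \cite[Thm~2, p.~66]{GrubLekkGeometryofNumbers}, which records exactly the argument you have written out.
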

\begin{proof} This is \cite[Thm 2, p. 66]{GrubLekkGeometryofNumbers}.
\end{proof}

\begin{lemma} Let $\mathcal{K}\subseteq \mathbb{R}^n$ be a closed convex $0$-symmetric set of positive volume. Let $\Lambda\subset \mathbb{R}^n$ be a lattice and let $ \lambda_1 \le \lambda_2 \le \dots \le \lambda_n$ denote the successive minima of $\mathcal{K}$ on $\Lambda$. Then
  % \begin{linenomath}
$$
|\mathcal{K} \cap \Lambda| \asymp_n \prod_{i=1}^n \left( 1+\frac{1}{\lambda_i} \right).
$$%\end{linenomath}
\label{lem:latticepointcount}
\end{lemma}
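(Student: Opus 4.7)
The plan is to deduce both bounds from the basis $v_1, \dots, v_n$ of $\Lambda$ produced by the preceding Lemma \ref{lem:smalllatticegeneraotrs}, whose members satisfy $f_{\mathcal{K}}(v_i) \asymp_n \lambda_i$. Every lattice point of $\Lambda$ admits a unique representation $\sum_{i=1}^n a_i v_i$ with $a_i \in \mathbb{Z}$, so the task reduces to counting the admissible integer tuples $(a_1, \dots, a_n)$.

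For the lower bound I would argue directly. Pick a small constant $c = c(n) > 0$ so that $c C_n n \le 1$, where $C_n$ is the implied constant in $f_{\mathcal{K}}(v_i) \le C_n \lambda_i$. By the subadditivity of the gauge, any integer tuple with $|a_i| \le c/\lambda_i$ satisfies
\[
  f_{\mathcal{K}}\Bigl(\sum_i a_i v_i\Bigr) \le \sum_i |a_i|\, C_n \lambda_i \le c C_n n \le 1,
\]
and the corresponding lattice points therefore lie in $\mathcal{K}$. The number of such tuples is $\prod_i (1 + 2 \lfloor c/\lambda_i \rfloor) \gg_n \prod_i (1 + 1/\lambda_i)$, which gives the claimed lower bound.

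For the upper bound I would argue by induction on $n$, the case $n = 1$ being immediate. Let $W = \mathrm{span}(v_1, \dots, v_{n-1})$ and $\Lambda' = \mathbb{Z} v_1 \oplus \dots \oplus \mathbb{Z} v_{n-1}$. First, using the projection $\pi \colon \mathbb{R}^n \to \mathbb{R}^n/W$, I would bound the last coordinate: for $x = \sum a_i v_i \in \mathcal{K}$ one has $\pi(x) = a_n \pi(v_n) \in \pi(\mathcal{K})$, so $|a_n| \le 1/f_{\pi(\mathcal{K})}(\pi(v_n))$. The key geometric input is that $f_{\pi(\mathcal{K})}(\pi(v_n)) \gg_n \lambda_n$: if this gauge were smaller than $\lambda_n / C$, one could lift to produce a lattice vector $v_n - w'$ (with $w' \in \Lambda'$ an approximation in the $\mathcal{K}$-metric to the required element of $W$) of original $f_{\mathcal{K}}$-value $< \lambda_n$, which together with $v_1, \dots, v_{n-1}$ would furnish $n$ linearly independent lattice vectors contradicting the definition of $\lambda_n$. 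This yields $|a_n| \ll_n 1 + 1/\lambda_n$. For each fixed $a_n$, the remaining coefficients $(a_1, \dots, a_{n-1})$ correspond to lattice points of $\Lambda'$ in the affine slice $(\mathcal{K} - a_n v_n) \cap W$, which by convexity and central symmetry of $\mathcal{K}$ (using the standard observation $x_1 - x_2 \in \mathcal{K} - \mathcal{K} = 2\mathcal{K}$ for $x_1, x_2$ in the slice) is contained, after one translation, in $2\mathcal{K} \cap W$. Applying the inductive hypothesis to the sublattice $\Lambda'$ in $W$ with convex body $2\mathcal{K} \cap W$---whose successive minima are $\asymp_n \lambda_1, \dots, \lambda_{n-1}$, since $v_1, \dots, v_{n-1}$ already realize them in $\mathcal{K} \cap W$---completes the count.

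The main obstacle I anticipate is the gauge-projection bound $f_{\pi(\mathcal{K})}(\pi(v_n)) \gg_n \lambda_n$. The subtlety is that the lifting step needs to round the $W$-component of any short element lying above $\pi(v_n)$ to the sublattice $\Lambda'$ without inflating $f_{\mathcal{K}}$ too much; the natural rounding costs $O(\lambda_{n-1})$, which is only harmless when $\lambda_n \asymp \lambda_{n-1}$. To cover the remaining case I would iterate the projection argument, bounding $|a_j|$ for each $j$ using the gauge of $v_j$ in the appropriate quotient and absorbing the discrepancies by the minimax characterization of the $\lambda_j$'s in the same stratum. The associated technical point, that translates of a symmetric body are no longer symmetric, is handled uniformly by the $2\mathcal{K}$-trick above at the cost of a factor depending only on $n$.
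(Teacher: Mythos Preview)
Your lower bound is correct and takes a more elementary route than the paper: you count integer tuples in a box using the basis from Lemma~\ref{lem:smalllatticegeneraotrs}, whereas the paper invokes van der Corput's generalization of Minkowski's first theorem applied to the convex hull of the vectors $\pm\lambda_i^{-1}v_i$ for those $i$ with $\lambda_i\le 1$. Both arguments are valid; yours is arguably simpler.

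For the upper bound, the paper does not give a self-contained proof at all---it simply cites \cite[Prop.~2.1]{Succminupper}. Your inductive slicing strategy is standard and the $2\mathcal{K}$-trick for the slice count is correct, with the successive minima of $\Lambda'$ on $2\mathcal{K}\cap W$ indeed comparable to $\lambda_1,\dots,\lambda_{n-1}$. The obstacle you flag is genuine: the rounding argument only yields $f_{\pi(\mathcal{K})}(\pi(v_n))\ge\lambda_n-O_n(\lambda_{n-1})$, which says nothing when $\lambda_n\asymp\lambda_{n-1}$. Your proposed ``stratum'' fix is the right instinct---group indices into blocks where the $\lambda_i$ are mutually comparable and project past an entire block at once, so that the rounding cost (controlled by the largest $\lambda_i$ below the block) is genuinely small relative to the block---but as written it is a sketch rather than a proof, and making the block decomposition interact cleanly with the induction takes care. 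An alternative that sidesteps this is to first restrict to the span $W_d$ of vectors realizing $\lambda_1,\dots,\lambda_d$, where $d=\max\{i:\lambda_i\le 1\}$ (all lattice points of $\mathcal{K}$ lie in $W_d$ since $\lambda_{d+1}>1$), and then in $W_d$ apply a packing argument together with Minkowski's second theorem: now all relevant minima are $\le 1$, so the fundamental parallelepiped for the reduced basis sits inside $O_n(1)\cdot(\mathcal{K}\cap W_d)$. Either route supplies more detail than the paper itself.
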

\begin{proof} The lower bound follows from van der Corput's generalization of Minkowski's first theorem \cite{vanderCorput1936}. It states that for $\mathcal{K}' \subset \mathbb{R}^d$ a closed convex $0$-symmetric set and $\Lambda \subset \mathbb{R}^d$ a lattice, one has
  \begin{equation}\label{eq:mathc-lambd-geq-1}
|\mathcal{K}'\cap \Lambda'|+1
\geq
|\mathrm{int}\,\mathcal{K}'\cap \Lambda'|+1 \ge 2^{1-d} \frac{\vol(\mathcal{K}')}{\vol(\mathbb{R}^d / \Lambda')}.
\end{equation}
Let $d$ be the largest integer such that $\lambda_d \le 1$.  Let $v_i\in \Lambda$, for $i=1,\dots,d$, be a set of linearly independent vectors such that $\lambda_i^{-1}v_i \in \mathcal{K}$. Let $\mathcal{K}'$ be the convex hull of the vectors $\pm\lambda_i^{-1}v_i$ and $\Lambda'$ the span of the vectors $v_i$.  In particular, $\mathcal{K} '$ is nonempty, hence $0 \in \mathcal{K}' \cap \Lambda'$, and so
\begin{equation*}
  2 |\mathcal{K}' \cap \Lambda'| \geq |\mathcal{K}' \cap \Lambda'| + 1.
\end{equation*}
Using \eqref{eq:mathc-lambd-geq-1}, it follows now that
$$
|\mathcal{K}\cap \Lambda| \ge |\mathcal{K}'\cap \Lambda'| \ge 2^{-d} \frac{\vol(\mathcal{K}')}{\vol(\mathbb{R}^d / \Lambda')} = \frac{1}{d!} \prod_{i=1}^d \frac{1}{\lambda_i}.
$$
For the upper bound, we refer to \cite[Prop. 2.1]{Succminupper}.
\end{proof}

\begin{lemma} Let $\Lambda \subset \mathbb{R}^2$ be a lattice of rank $2$ and $B\subseteq \mathbb{R}^2$ a ball of radius $R$ (not necessarily centred at $0$). If $\lambda_1 \le \lambda_2$ are the successive minima of $\Lambda$, then
$$
|B \cap \Lambda| \ll 1+\frac{R}{\lambda_1}+\frac{R^2}{\lambda_1\lambda_2}.
$$
\label{lem:latticeball}
\end{lemma}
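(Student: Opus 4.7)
The argument is a standard geometry-of-numbers computation in two steps: reduce to a ball centered at a lattice point, then count in a Minkowski-reduced basis.

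First, I would handle the case $B\cap\Lambda=\emptyset$ trivially, so assume there is some $p\in B\cap\Lambda$. Then for every $v\in B$ we have $|v-p|\le 2R$, hence $B\cap\Lambda\subseteq p+B'$ where $B'$ is the closed ball of radius $2R$ centered at the origin. Since $p\in\Lambda$, translation by $-p$ is a bijection of $\Lambda$, so $|B\cap\Lambda|\le |B'\cap\Lambda|$. This lets me assume from now on that $B$ is centered at $0$ (and has radius $2R$, which is harmless for the bound up to an absolute constant).

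Next, I would apply Lemma \ref{lem:smalllatticegeneraotrs} with $\mathcal{K}$ the closed Euclidean unit ball (so $f_{\mathcal{K}}(v)=|v|$) to obtain a $\mathbb{Z}$-basis $v_1,v_2$ of $\Lambda$ with $|v_i|\asymp \lambda_i$. Writing $v_2=\alpha v_1+h w$ for some $\alpha\in\mathbb{R}$ and a unit vector $w$ orthogonal to $v_1$, the covolume of $\Lambda$ equals $|v_1|\cdot h$. By Minkowski's second theorem (in dimension two), $\operatorname{covol}(\Lambda)\asymp \lambda_1\lambda_2$, so $h\asymp \lambda_1\lambda_2/|v_1|\asymp \lambda_2$.

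Now any $v=m_1v_1+m_2v_2=(m_1+m_2\alpha)v_1+m_2 h\, w$ in $B$ satisfies, by orthogonality,
\[
(m_1+m_2\alpha)^2|v_1|^2+m_2^2h^2 = |v|^2\le (2R)^2.
\]
Hence $|m_2|\le 2R/h$, giving $\ll 1+R/\lambda_2$ possibilities for $m_2$; and for each such $m_2$, the value $m_1+m_2\alpha$ is confined to an interval of length $\ll R/|v_1|\asymp R/\lambda_1$, yielding $\ll 1+R/\lambda_1$ possibilities for $m_1$. Multiplying, and using $\lambda_1\le\lambda_2$ to absorb the cross term $R/\lambda_2\le R/\lambda_1$, we obtain
\[
|B\cap\Lambda|\ll \Bigl(1+\tfrac{R}{\lambda_1}\Bigr)\Bigl(1+\tfrac{R}{\lambda_2}\Bigr)\ll 1+\tfrac{R}{\lambda_1}+\tfrac{R^2}{\lambda_1\lambda_2},
\]
as required. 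There is no real obstacle here; the only points requiring a little care are the off-center reduction and the invocation of Minkowski's second theorem to relate $h$ to $\lambda_2$.
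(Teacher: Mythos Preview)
Your proof is correct and self-contained. The paper does not give its own argument here but simply cites \cite[Lemma 2.1]{MR3038127} (Harcos--Templier), whose proof is essentially the same standard geometry-of-numbers computation you wrote out.
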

\begin{proof} See \cite[Lemma 2.1]{MR3038127}.
\end{proof}

\section{Local preliminaries on orders}\label{sec:order-theor-prel}

\subsection{Quadratic preliminaries}

Let $F$ be a non-archimedean local field of characteristic $\neq 2$.  Let $E/F$ be a separable quadratic extension, thus $F$ is either the split quadratic extension $F \oplus F$ or a quadratic field extension.  We write $\mathfrak{o}$ (resp. $\mathfrak{o}_E$) for the ring of integers in $F$ (resp. $E$), $x \mapsto \bar{x}$ for the canonical involution on $E$, and
\begin{equation*}
  \nr(x) = x \bar{x}, \quad \tr(x) = x + \bar{x}
\end{equation*}
for the norm and trace.  Recall that the different ideal $\mathfrak{d}$ for this extension is the smallest $\mathfrak{o}_E$-ideal for which $\tr(\mathfrak{d}^{-1}) \subseteq \mathfrak{o}$, and in fact $\mathfrak{d}^{-1} = \{ x \in E: \tr(x \mathfrak{o}_E) \subseteq \mathfrak{o} \}$.  If $E/F$ is split or unramified, then $\mathfrak{d} = \mathfrak{o}_E$.

We may regard $E$ as a two-dimensional vector space over $F$.

Let $q : E \rightarrow F$ be a nondegenerate binary quadratic form with the property that for all $e,x \in E$, we have $q(e x) =\nr(e) q(x)$.  In other words, $q$ is an $F$-multiple of $\nr$, specifically $q = q(1)\nr$.

For $x,y \in E$ we set $\langle x, y \rangle \coloneqq q(x+y) - q(x) - q(y) = q(1) \tr(x \bar{y})$ , so that $q(x) = \langle x, x \rangle/2$.

Let $\mathfrak{a} \subset E$ be a fractional $\mathfrak{o}_E$-ideal.  Write $\mathfrak{a}^\vee$ for the dual of $\mathfrak{a}$ with respect to the quadratic form $q$, i.e., $\mathfrak{a}^\vee \coloneqq \{ x \in E : \langle x, \mathfrak{a} \rangle \subseteq \mathfrak{o} \}$.

Let $\mathfrak{n}$ denote the fractional $\mathfrak{o}$-ideal generated by $q(\mathfrak{a})$.

\begin{lemma}
  We have $\mathfrak{a} = \mathfrak{d} \mathfrak{n} \mathfrak{a}^\vee$.
\end{lemma}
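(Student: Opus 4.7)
The plan is to compute $\mathfrak{a}^\vee$ explicitly as a fractional $\mathfrak{o}_E$-ideal and then check that multiplying by $\mathfrak{d}\mathfrak{n}$ recovers $\mathfrak{a}$. Unwinding the definition, $x \in \mathfrak{a}^\vee$ if and only if $\tr(x \bar{\mathfrak{a}}) \subseteq q(1)^{-1} \mathfrak{o}$, and since $\bar{\mathfrak{a}}$ is again a fractional $\mathfrak{o}_E$-ideal, so is the product $q(1) x \bar{\mathfrak{a}}$.

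First I would record the characterization that, for any fractional $\mathfrak{o}_E$-ideal $\mathfrak{c}$, the condition $\tr(\mathfrak{c}) \subseteq \mathfrak{o}$ is equivalent to $\mathfrak{c} \subseteq \mathfrak{d}^{-1}$. One direction is the defining property of $\mathfrak{d}^{-1}$; for the other, if $\tr(\mathfrak{c}) \subseteq \mathfrak{o}$ then for each $y \in \mathfrak{c}$ and $r \in \mathfrak{o}_E$ we have $ry \in \mathfrak{c}$, whence $\tr(y \mathfrak{o}_E) \subseteq \mathfrak{o}$ and so $y \in \mathfrak{d}^{-1}$. Applying this to $\mathfrak{c} = q(1) x \bar{\mathfrak{a}}$ rewrites the dual as
\[
\mathfrak{a}^\vee = \{x \in E : x \bar{\mathfrak{a}} \subseteq q(1)^{-1} \mathfrak{d}^{-1}\} = q(1)^{-1} \mathfrak{d}^{-1} \bar{\mathfrak{a}}^{-1},
\]
where the last equality uses invertibility of $\bar{\mathfrak{a}}$ (immediate in the split case, and since $\mathfrak{o}_E$ is a DVR in the non-split local case).

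Next I would invoke the identity $\mathfrak{a} \bar{\mathfrak{a}} = \nr(\mathfrak{a}) \mathfrak{o}_E$, valid for any fractional $\mathfrak{o}_E$-ideal: in the split case $E = F \oplus F$ with $\mathfrak{a} = \mathfrak{a}_1 \oplus \mathfrak{a}_2$ both sides equal $\mathfrak{a}_1 \mathfrak{a}_2 \mathfrak{o}_E$, and in the non-split local case one writes $\mathfrak{a} = \pi_E^n \mathfrak{o}_E$ and verifies both sides by a direct computation (using ramification index $e \in \{1,2\}$). This yields $\bar{\mathfrak{a}}^{-1} = \nr(\mathfrak{a})^{-1} \mathfrak{a}$, and hence
\[
\mathfrak{a}^\vee = q(1)^{-1} \nr(\mathfrak{a})^{-1} \mathfrak{d}^{-1} \mathfrak{a} = \mathfrak{n}^{-1} \mathfrak{d}^{-1} \mathfrak{a},
\]
since $\mathfrak{n} = q(\mathfrak{a}) \mathfrak{o} = q(1) \nr(\mathfrak{a}) \mathfrak{o}$. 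Multiplying by $\mathfrak{d}\mathfrak{n}$ gives the desired equality $\mathfrak{d}\mathfrak{n}\mathfrak{a}^\vee = \mathfrak{a}$.

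There is no real obstacle: the content is essentially the classical formula $\mathfrak{a}^\vee = \mathfrak{d}^{-1}\bar{\mathfrak{a}}^{-1}$ for the trace-pairing dual, combined with bookkeeping to account for the scalar $q(1)$ and the norm ideal $\nr(\mathfrak{a})$. The only minor subtleties are verifying $\mathfrak{a}\bar{\mathfrak{a}} = \nr(\mathfrak{a})\mathfrak{o}_E$ in the split and ramified cases uniformly, both of which reduce to a single-line computation in the local setting.
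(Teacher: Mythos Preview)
Your proposal is correct and follows essentially the same approach as the paper. The paper's proof is more terse: it picks a generator $\alpha$ of $\mathfrak{a}$ (possible since $\mathfrak{o}_E$ is local), writes $\mathfrak{n} = q(1)\alpha\bar{\alpha}\,\mathfrak{o}$ and $\mathfrak{a}^\vee = q(1)^{-1}\bar{\alpha}^{-1}\mathfrak{d}^{-1}$ directly, then multiplies through---but this is exactly your computation specialized to a principal generator, and the underlying identity $\mathfrak{a}^\vee = q(1)^{-1}\mathfrak{d}^{-1}\bar{\mathfrak{a}}^{-1}$ is the same in both.
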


\begin{proof}
  Let $\alpha$ be a generator of $\mathfrak{a}$.  Then $\mathfrak{n} = q(1)\nr(\mathfrak{a}) = \mathfrak{o} q(1) \alpha \bar{\alpha}$, $\mathfrak{a}^\vee = \{ q(1)^{-1} x: x \in E, \tr(x \bar{\mathfrak{a}}) \subseteq \mathfrak{o} \} = q(1)^{-1} \bar{\alpha}^{-1} \mathfrak{d}^{-1}$.  Multiplying through, the conclusion follows.
\end{proof}

\begin{corollary}
  Suppose that $E/F$ is unramified and that $q$ is integral on $\mathfrak{a}$, so that $\mathfrak{a} \subseteq \mathfrak{a}^\vee $.  Then the elementary divisors for the $\mathfrak{o}$-module inclusion $\mathfrak{a} \hookrightarrow \mathfrak{a}^\vee$ are $(\mathfrak{n}, \mathfrak{n})$.
\end{corollary}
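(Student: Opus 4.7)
The plan is to read off the corollary directly from the preceding lemma once the unramified hypothesis has been invoked. In the unramified case the relative different satisfies $\mathfrak{d} = \mathfrak{o}_E$, so the lemma $\mathfrak{a} = \mathfrak{d}\mathfrak{n}\mathfrak{a}^\vee$ reduces to the identity $\mathfrak{a} = \mathfrak{n}\,\mathfrak{a}^\vee$, where the $\mathfrak{o}$-ideal $\mathfrak{n}$ acts on $\mathfrak{a}^\vee$ by scalar multiplication.

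Next I would observe that $\mathfrak{a}^\vee$ is a free $\mathfrak{o}$-module of rank $2$. If $E/F$ is an unramified field extension, this is because $\mathfrak{o}_E$ is a DVR, so every fractional $\mathfrak{o}_E$-ideal is principal and hence free of rank $1$ over $\mathfrak{o}_E$, while $\mathfrak{o}_E$ itself is free of rank $2$ over $\mathfrak{o}$. If $E = F \oplus F$ is split, then $\mathfrak{a}^\vee$ decomposes as $\mathfrak{a}_1 \oplus \mathfrak{a}_2$ for two fractional $\mathfrak{o}$-ideals, each free of rank $1$ over $\mathfrak{o}$.

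Fixing an $\mathfrak{o}$-basis $(e_1,e_2)$ of $\mathfrak{a}^\vee$, the identity $\mathfrak{a} = \mathfrak{n}\,\mathfrak{a}^\vee$ upgrades to $\mathfrak{a} = \mathfrak{n} e_1 \oplus \mathfrak{n} e_2$, exhibiting the inclusion $\mathfrak{a} \hookrightarrow \mathfrak{a}^\vee$ in Smith normal form with both diagonal entries generating $\mathfrak{n}$. The elementary divisors are therefore $(\mathfrak{n},\mathfrak{n})$, as claimed. The main content sits entirely in the preceding lemma; there is no substantive obstacle here beyond the bookkeeping of converting an $\mathfrak{o}_E$-module identity into an $\mathfrak{o}$-module statement about elementary divisors.
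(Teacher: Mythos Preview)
Your proof is correct and follows essentially the same route as the paper: both reduce the lemma $\mathfrak{a} = \mathfrak{d}\mathfrak{n}\mathfrak{a}^\vee$ to $\mathfrak{a} = \mathfrak{n}\mathfrak{a}^\vee$ via $\mathfrak{d} = \mathfrak{o}_E$, and then read off the elementary divisors. The only cosmetic difference is that the paper phrases the conclusion via the quotient isomorphism $\mathfrak{a}^\vee/\mathfrak{a} \cong \mathfrak{o}_E/\mathfrak{n}\mathfrak{o}_E \cong (\mathfrak{o}/\mathfrak{n})^2$, while you exhibit an explicit $\mathfrak{o}$-basis of $\mathfrak{a}^\vee$ and write $\mathfrak{a}$ in Smith normal form relative to it.
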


\begin{proof}
  Our hypotheses imply that $\mathfrak{d} = \mathfrak{o}$ and that $\mathfrak{n}$ is an integral ideal.  The lemma implies that there is an isomorphism (first of $\mathfrak{o}_E$-modules, then of $\mathfrak{o}$-modules) $\mathfrak{a}^\vee / \mathfrak{a} \cong \mathfrak{o}_E / \mathfrak{n} \mathfrak{o}_E \cong (\mathfrak{o}/\mathfrak{n})^2$, whence the conclusion.
\end{proof}

\begin{remark}
  Under the hypotheses of the corollary, the discriminant ideal of the binary quadratic form $(q,\mathfrak{a})$ is $\mathfrak{n}^2$.  More generally, under the hypotheses of the lemma, the discriminant ideal is $\mathfrak{D} \mathfrak{n}^2$, with $\mathfrak{D} =\nr (\mathfrak{d})$.  Conversely, given the discriminant ideal, we may compute $\mathfrak{n}$ as its square-root.
\end{remark}

\subsection{Quaternionic preliminaries: general case}

Let $F$ be a non-archimedean local field, let $B$ be a quaternion $F$-algebra, and let $E$ be a separable quadratic $F$-subalgebra of $B$.  We equip $B$ with the quadratic form $q : B \rightarrow F$ given by the reduced norm, whose bilinearization $\langle\, , \rangle$ as above is described by the reduced trace and the main involution on $B$ via the formula $\langle x, y \rangle = \tr(x \bar{y})$.  We have a canonical decomposition $B = E \oplus E^\perp$, where $E^\perp = \{ x \in B : \langle x, y \rangle = 0 \text{ for all } y \in E \}$.

Let $\mathfrak{o}$ and $\mathfrak{o}_E$ denote the respective maximal orders of $F$ and $E$.  We write $\mathfrak{d}$ for the different ideal, as before.

Let us say that an order $R$ in $B$ is \emph{$E$-adapted} if it is of the form $R = \mathfrak{o}_E \oplus \mathfrak{a}$ for some $\mathfrak{o}_E$-submodule $\mathfrak{a}$ of $E^\perp$ (for the action by either left or right multiplication -- it doesn't matter which, because they are conjugates of each other).

Consider such an order $R$.  Its traceless submodule is given by
\[
  R^0 = \mathfrak{o}_E^0 \oplus \mathfrak{a}.
\]
We aim to compute the dual lattice $(R^0)^\vee$ with respect to $q$.  To that end, it suffices to dualize each summand in the above decomposition, because $(R^0)^\vee = (\mathfrak{o}_E^0)^\vee \oplus \mathfrak{a}^\vee$.  We generally have $(\mathfrak{d}^{-1})^0 \subseteq (\mathfrak{o}_E^0)^\vee \subseteq \frac{1}{2} (\mathfrak{d}^{-1})^0$.  If $E$ is unramified or split, then $(\mathfrak{o}_E^0)^\vee = \frac{1}{2} \mathfrak{o}_E^0 = \frac{1}{2} (\mathfrak{d}^{-1})^0$.  On the other hand, we can compute $\mathfrak{a}^\vee$ using the results of the previous section.  Indeed, the choice of any invertible element $j \in E^\perp$ defines an isomorphism $E \rightarrow E^\perp$, $x \mapsto x j$.  Transporting $q$ and $\mathfrak{a}$ via the inverse of this isomorphism gives us a fractional ideal in $E$ and a quadratic form on $E$ that satisfy the hypotheses of that section.  We obtain
\begin{equation*}
  \mathfrak{a}^\vee = \mathfrak{d}^{-1} \mathfrak{n}^{-1}
  \mathfrak{a},
\end{equation*}
where $\mathfrak{n}$ is the integral $\mathfrak{o}$-ideal characterized by either of the following properties:
\begin{itemize}
\item $\mathfrak{n}$ is generated by $q(\mathfrak{a})$.
\item $\mathfrak{D} \mathfrak{n}^2$ is the discriminant ideal of $(q,\mathfrak{a})$.
\end{itemize}

Let $\mathcal{D}$ denote the discriminant ideal of $R$.  The discriminant ideal of the summand $(q,\mathfrak{o}_E)$ is $\mathfrak{D}$.  Since the discriminant ideal is multiplicative with respect to direct sums, we obtain
\begin{equation}\label{eqn:discriminant-factor}
  \mathcal{D} = \mathfrak{D}^2 \mathfrak{n}^2.
\end{equation}
We may regard this last identity as a formula for $\mathfrak{n}$.

\subsection{Quaternionic preliminaries: unramified case}

Let us restrict henceforth to the case that \emph{$E/F$ is unramified}.  (The ramified case would be relevant for studying, e.g., the ``minimal vectors'' considered in \cite{HNSminimal,2018arXiv181011564Hs, saha2019sup}.)

The above formula then simplifies to
\[\mathfrak{n}^2 = \mathcal{D},\]
and we obtain
\[\mathfrak{a}^\vee / \mathfrak{a} \cong
  (\mathfrak{o}/\mathfrak{n})^2.\]

There are three possibilities for $F \subset E \subset B$, up to isomorphism:
\begin{enumerate}[(i)]
\item $B$ is split and $E \cong F \oplus F$.  We may then find an isomorphism $B \cong \Mat_{2 \times 2}(F)$ under which $E$ identifies with the diagonal subalgebra.
\item $B$ is split and $E/F$ is the unique unramified quadratic field extension.
\item $B$ is non-split and $E/F$ is the unique unramified quadratic field extension.
\end{enumerate}

In case (i), the $E$-adapted orders $R$ are just the Eichler orders.  An Eichler order of level $\mathfrak{q}$ has discriminant $\mathfrak{q}^2$ (cf. \cite[\S 23.4]{voightQA}), hence the ideal $\mathfrak{n}$ defined above is $\mathfrak{q}$.

The case (ii) corresponds to another type of ``minimal vectors'', which we do not consider in this paper.

In case (iii), the maximal order $R$ is $E$-adapted and has discriminant ideal $\mathfrak{p}^2$ (cf. \cite[\S 15.2.11, \S 23.4]{voightQA}), hence $\mathfrak{n} = \mathfrak{p}$.

\subsection{Bounds for commutators of elements of $R^0$}

\begin{lemma} Let $E \slash F$ unramified and $R$ be an $E$-adapted order. Then, we have
  \begin{enumerate}[(i)]
  \item \label{itm:thetasup-paper-merge-20220418:1} $q([\gamma_1,\gamma_2]) \in \mathfrak{n}$ for all $\gamma_1,\gamma_2 \in R$,
  \item \label{itm:thetasup-paper-merge-20220418:2} $[\gamma_1,\gamma_2] \in \mathfrak{n}^{-1}R$ for all $\gamma_1,\gamma_2 \in R^{\vee}$,
  \item \label{itm:thetasup-paper-merge-20220418:3} $q([\gamma_1,\gamma_2]) \in \mathfrak{n}^{-2}$ for all $\gamma_1,\gamma_2 \in R^{\vee}$.
  \end{enumerate}
  \label{lem:commutatorcong}
\end{lemma}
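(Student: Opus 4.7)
The plan is to leverage the $E$-adapted description of $R$ in order to carry out an explicit commutator expansion, and then read off (i)--(iii) as ideal inclusions. First I would fix a generator $j \in E^\perp$ of $\mathfrak{a}$ as an $\mathfrak{o}_E$-module, so that $\mathfrak{a} = \mathfrak{o}_E j$. Using $q(cj) = \nr_{E/F}(c)\, q(j)$ for $c \in E$ and $q(\mathfrak{o}_E) = \mathfrak{o}$, one obtains $\mathfrak{n} = q(j)\mathfrak{o}$, and the quaternion multiplication is governed by the two rules $je = \bar{e} j$ for $e \in E$ and $j^2 \in F$ with $j^2 \mathfrak{o} = \mathfrak{n}$. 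A short preparatory computation---combining the unramified self-duality of $\mathfrak{o}_E$ with respect to $\tr_{E/F}$ with the identity $\mathfrak{a}^\vee = \mathfrak{n}^{-1}\mathfrak{a}$ from the previous subsection---will yield the description $R^\vee = \mathfrak{o}_E \oplus \mathfrak{n}^{-1}\mathfrak{a}$.

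Writing $\gamma_i = x_i + a_i j$ and applying the rules above together with the commutativity of $E$, a direct expansion will give the key identity
\[
  [\gamma_1,\gamma_2] = \bigl(a_2(x_1 - \bar{x}_1) - a_1(x_2 - \bar{x}_2)\bigr)\, j + (a_1 \bar{a}_2 - a_2 \bar{a}_1)\, j^2.
\]
The first summand lies in $E^\perp$ and the second in $E^0$, so they are $q$-orthogonal. Each of (i)--(iii) then reduces to bookkeeping with powers of $\mathfrak{n}$.

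Concretely, for (i) the inputs satisfy $a_i, x_i - \bar{x}_i \in \mathfrak{o}_E$, so the $E^\perp$-part contributes $q$-value in $\mathfrak{o} \cdot \mathfrak{n} = \mathfrak{n}$, while the $E^0$-part contributes $q$-value in $\mathfrak{o} \cdot \mathfrak{n}^2 \subseteq \mathfrak{n}$. For (ii), the coefficients $a_i$ are now in $\mathfrak{n}^{-1}\mathfrak{o}_E$, so the $E^\perp$-summand lies in $\mathfrak{n}^{-1}\mathfrak{a}$ and the $E^0$-summand in $\mathfrak{n}^{-2}\mathfrak{o}_E^0 \cdot \mathfrak{n} = \mathfrak{n}^{-1}\mathfrak{o}_E^0$, both of which are contained in $\mathfrak{n}^{-1}R$. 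For (iii), the same bookkeeping shows that the $q$-values of the two pieces lie in $\mathfrak{n}^{-2}\cdot \mathfrak{n} = \mathfrak{n}^{-1}$ and $\mathfrak{n}^{-4}\cdot\mathfrak{n}^{2} = \mathfrak{n}^{-2}$, whose sum lies in $\mathfrak{n}^{-2}$.

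There is no serious obstacle: the proof is routine once the commutator is expanded. The one point that warrants care is to carry out the expansion without introducing any factor of $2$ in a denominator, so that the argument applies equally well in residue characteristic $2$. For this reason I would insist on writing the cross-term using $x_i - \bar{x}_i \in \mathfrak{o}_E^0$ rather than the ``traceless part'' $(x_i - \bar{x}_i)/2$.
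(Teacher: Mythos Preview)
Your proof is correct and follows essentially the same route as the paper: decompose $\gamma_i$ along $\mathfrak{o}_E \oplus \mathfrak{a}$, expand the commutator bilinearly, note that the mixed terms land in $E^\perp$ while $[\mathfrak{a},\mathfrak{a}] \subset \mathfrak{a}^2 = \mathfrak{n}\,\mathfrak{o}_E$ lands in $E$, and then read off the ideal containments via orthogonality. The only cosmetic difference is that you fix a generator $j$ of $\mathfrak{a}$ and write out the commutator identity explicitly (with the careful $x_i-\bar{x}_i$ in place of a division by $2$), whereas the paper works directly with the module $\mathfrak{a}$ without choosing coordinates.
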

\begin{proof} By the previous discussion, we may write $\gamma_i=\alpha_i+\beta_i$ with $\alpha_i \in \mathfrak{o}_E$ and $\beta_i \in \mathfrak{a}$ if $\gamma_i \in R$, respectively $\mathfrak{n}^{-1}\mathfrak{a}$ if $\gamma_i \in R^{\vee}$. We have $[\alpha_1,\alpha_2]=0$ and $[\alpha_1,\beta_2] = 2\alpha_1\beta_2 \in \mathfrak{a}$, respectively $\in \mathfrak{n}^{-1}\mathfrak{a}$. Lastly, we have $\mathfrak{a}^2=\mathfrak{n} \mathfrak{a}^{\vee} \mathfrak{a} = \mathfrak{n}$ and $(\mathfrak{a}^{\vee})^2=\mathfrak{n}^{-1} \mathfrak{a} \mathfrak{a}^{\vee} = \mathfrak{n}^{-1}$. Hence,
  % An element $j \in E^{\perp}$ satisfies the quadratic equation $j^2= \tr(j) j - \nr(j) = -\nr(j)$. Hence, $j^2 \in F$. If we chose $j \in E^{\perp}$ with $\nr(j)\neq 0$, then $E^{\perp} = Ej=jE$ and thus $E^{\perp} E^{\perp} = E^2 j^2 \subseteq E$. We conclude
$$
[\beta_1,\beta_2] = \beta_1\beta_2 - \beta_2\beta_1 \in \mathfrak{n}, \text{ respectively } \mathfrak{n}^{-1}.
$$
% where
%$$
%\tr(\beta_1\beta_2) = q(\beta_1-\beta_2)-q(\beta_1)-q(\beta_2) \in \mathfrak{n}, \text{ respectively } \in \mathfrak{n}^{-1}.
%$$
%This further proves $\beta_1\beta_2 \in \mathfrak{n}$, respectively $\mathfrak{n}^{-1}$.  Consequently, $[\beta_1,\beta_2] \in \mathfrak{n}$, respectively $\mathfrak{n}^{-1}$.
By the bilinearity of the commutator, we obtain
\eqref{itm:thetasup-paper-merge-20220418:2} and subsequently \eqref{itm:thetasup-paper-merge-20220418:3}. Similarly, we have $q([\beta_1,\beta_2]) \in \mathfrak{n}^2$ for $\beta_i \in \mathfrak{a}$. Thus, by orthogonality, we have
% Hence,
%$$
%q(2 \beta_1\beta_2-\tr(\beta_1\beta_2)) = q(2 \beta_1)q(\beta_2)-\tr(\beta_1\beta_2)^2-2\tr(\beta_1\beta_2)^2 \in \mathfrak{n}^2, \text{ respectively } \in \mathfrak{n}^{-2}.
%$$
%Finally,
\begin{equation*}
  q([\gamma_1,\gamma_2]) = q(2\alpha_1\beta_2-2\alpha_2\beta_1+[\beta_1,\beta_2]) = q(2\alpha_1\beta_2-2\alpha_2\beta_1)+q([\beta_1,\beta_2]) \in \mathfrak{n},%, \text{ respectively } \in \mathfrak{n}^{-2}.
\end{equation*}
for $\gamma_i \in R$, which gives \eqref{itm:thetasup-paper-merge-20220418:3}.
\end{proof}

\section{Invariants of rational
  quadratic forms}\label{sec:estim-succ-minima}
Let $V$ be an $n$-dimensional $\mathbb{Q}$-vector space and $q : V \rightarrow \mathbb{Q}$ a nondegenerate quadratic form. We normalize the polarization $\langle \,, \rangle$ of $q$ such that $\langle x,x \rangle = 2 q(x)$.

Given a lattice $L \subseteq V$ and a positive-definite quadratic form $Q$ on $V \otimes_{\mathbb{Q}} \mathbb{R}$, one can define the successive minima of the pair $(L,Q)$.  Our aim in this section is to provide certain estimates for those successive minima in terms of other invariants of $(L,Q)$.  Our results may be understood as a generalization of those of Blomer--Michel \cite[\S3, \S4]{MR3103131}, who treated the special case that $q$ is definite and $Q = q$.  We mention also the work of Saha \cite[\S2]{saha2019sup}.

\subsection{Non-archimedean invariants}\label{sec:non-arch-invar}
Let $L \subseteq V$ be a lattice, i.e., a $\mathbb{Z}$-submodule whose rank is the dimension of $V$.  Define
\begin{itemize}
\item the \emph{content} $C$ of $L$ to be the greatest common divisor of $q(L)$,
\item the \emph{level} $N$ of $L$ to be the reciprocal of the content of the dual lattice $L^\vee$, and
\item the (unsigned) \emph{discriminant} $\Delta$ of $L$ to be the absolute value of the determinant of the Gram matrix of $q$ on $L$.  (The discriminant of a quadratic form is traditionally defined without taking absolute values, but the sign will not matter for us.)
\end{itemize}

\begin{remark} In general, the content of $L$ does not agree with the content of the Gram matrix of $q$ on $L$, i.e., the greatest common divisor of the entries, cf. \cite[\S 2.9]{Bosch-Alg}.  However, if $q$ is integral on $L$, then the level of $L$ agrees with the level of the Gram matrix of $q$.%, that is that is the smallest integer $N$ such that $N A^{-1}$ is even and integral, where $q(x)=\frac{1}{2} x^T A x$ for some choice of $\mathbb{Z}$-basis of $L$ with $A$ symmetric.
\end{remark}

\begin{remark} For our purposes, the content, respectively level, may be replaced by the first, respectively last, elementary divisor of the Gram matrix of $q$ as these quantities differ by a bounded power of $2$.
\end{remark}

To get acquainted with these quantities, consider for instance the case that $L$ admits a basis $e_1, \dotsc, e_n$ with respect to which $q$ is given by the diagonal quadratic form $q(\sum x_i e_i) = \frac{1}{2} \sum a_i x_i^2$ for some non-zero rational numbers $a_1,\dotsc,a_n$.  Then
\[C = \tfrac{1}{2} \gcd(a_1,\dotsc,a_n),\]
\[N = 2/\gcd(1/a_1,\dotsc,1/a_n) = 2 \cdot \lcm(a_1,\dotsc,a_n),\]
\[\Delta = |a_1 \dotsb a_n|.\]

We may relate the invariants attached to homothetic lattices: the effect of the substitution $L \mapsto m L$ for a non-zero rational scalar $m$ is
\[C \mapsto m^2 C, \quad \quad \quad N \mapsto m^2 N, \quad \quad \quad \Delta \mapsto m^{2 n} N.\]

\subsection{Archimedean invariants}\label{sec:arch-invar}

Next, write $V_{\mathbb{R}} := V \otimes_{\mathbb{Q}} \mathbb{R}$ and let $Q : V_{ \mathbb{R} } \rightarrow \mathbb{R}$ be a positive-definite quadratic form.  We may find a basis $e_1,\dotsc,e_n$ of $V _{ \mathbb{R} }$ so that, writing $x = \sum x_i e_i$, we have $Q(x) = \frac{1}{2}\sum x_i^2$ and $q(x) = \frac{1}{2} \sum a_i x_i^2$ for some non-zero real numbers $a_1,\dotsc,a_n$.  The \emph{dual} $Q^\vee$ of $Q$ with respect to $q$ may be defined by $Q^{\vee}(x) = \frac{1}{2} \sum a_i^2 x_i^2$.  We note that in the scaled coordinates $y = \sum a_i^{-1} y_i e_i$, we have $Q^{\vee}(y) = \frac{1}{2} \sum y_i^2$ and $q(y) = \frac{1}{2} \sum a_i^{-1} y_i^2$.  The \emph{Gram matrix relative to $q$} of $Q$ is defined to be the diagonal matrix with entries $(a_1,\dotsc,a_n)$.

Define
\begin{itemize}
\item the \emph{content} $C$ of $Q$ to be the infimum of the ratio $Q/|q|$ over the set where $q \neq 0$,
\item the \emph{level} $N$ of $Q$ to be the reciprocal of the content of the dual $Q^{\vee}$ of $Q$, and
\item the \emph{discriminant} $\Delta$ of $Q$ to be the absolute value of the reciprocal of the determinant of the Gram matrix relative to $q$ of $Q$.
\end{itemize}

The invariants of $Q$ may be described in terms of coordinates as above by
\[C = 1/\max(|a_1|,\dotsc,|a_n|) = \min(1/|a_1|,\dotsc,1/|a_n|),\]
\[N = \max(1/|a_1|,\dotsc,1/|a_n|) = 1/\min(|a_1|,\dotsc,|a_n|).\]
\[\Delta = 1/ |a_1 \dotsb a_n|.\]

These again behave predictably under homotheties: if we substitute $Q \mapsto Q_m \coloneqq [x \mapsto Q(m^{-1} x)$] for some non-zero real scalar $m$ (which has the effect of multiplying the $Q$-unit ball by $m$), then the coefficients transform like $a_i \mapsto m^2 a_i$ and hence the invariants like
\[C \mapsto m^{-2} C, \quad \quad \quad N \mapsto m^{-2} N, \quad \quad \quad \Delta \mapsto m^{-2 n} \Delta.\]

% More intrinsically, the level $N$ of $Q$ is the operator norm of the inverse of the Gram matrix of $Q$.  By singular value decomposition, we may find nonsingular matrices $A$ and $D$, with $D$ diagonal, so that
% \[P = A^t A,\]
% \[S = A^t D A.\] The level of $Q$ is then the operator norm of the matrix $D^{-1}$.  For instance, if we choose our basis so that $Q(\sum x_i e_i) = \sum x_i^2$ and $q(\sum x_i e_i) = \sum d_i x_i^2$, then the level of $Q$ is $\max |d_i|^{-1}$.

% \pn{the following discussion is now partially redundant; the whole section probably needs to be edited once we figure out what's really needed }
For later reference, it will be convenient to explicate the definition of ``level'' in terms of matrices.  To that end, we note first that for any basis $e_1,\dotsc,e_n$ of $V$, we may find symmetric matrices $S$ and $P$ that represent $q$ and $Q$ in the sense that, e.g., $q(\sum x_i e_i) = \frac{1}{2} \sum \sum x_i S_{i j} x_j$.  By singular value decomposition, we may find non-singular matrices $A$ and $D$, with $D$ diagonal, so that
\begin{equation}%\begin{aligned}
  P = A^t A \ \ \text{ and } \ \, S = A^t D A.
  \label{eq:singvaluedecomp}
  % \end{aligned}
\end{equation}
The level of $Q$ is then the operator norm of the matrix $D^{-1}$.  For instance, if we choose our basis so that $Q(\sum x_i e_i) = \frac{1}{2} \sum x_i^2$ and $q(\sum x_i e_i) = \frac{1}{2} \sum d_i x_i^2$, then the level of $Q$ is $\max |d_i|^{-1}$.

\begin{remark}
  We may relate the above definition of level to more standard notions.  We recall that the form $Q$ is a \emph{majorant} of $q$ if $P S^{-1} P = S$, or equivalently, if $D^2 = 1$.  Suppose that $|q| \leq Q$.  Then the level of $Q$ is always at least $1$, and it is equal to $1$ if and only if $Q$ is a majorant of $q$.  Indeed, the assumption $|q| \leq Q$ implies that $|d_i| \leq 1$ for each $i$, with equality precisely when $D^2 = 1$.
\end{remark}

\subsection{Duality}

We note that replacing $L$ (resp. $Q$) with its dual $L^\vee$ (resp. $Q^\vee$) has the following effect on the invariants:
\begin{equation} (C, N, \Delta) \mapsto (1/N, 1/C, 1/\Delta).
  \label{eq:elem-div-dual}
\end{equation}
In what follows, this relation allows us to reduce slightly the number of computations required.  For instance, we can read off the invariants of the dual of an Eichler order (or its traceless submodule) from those of the Eichler order itself.

% \subsection{Analogies}
% \label{sec:}

% Just as the content $C$ of $L$ is the largest rational number for which $(1/C) q$ is integral on $L$, the content $C$ of $Q$ is the largest real number for which $C |q| \leq Q$.  In other words,
% \begin{itemize}
% \item the content of $L$ quantifies how much we can enlarge $q$ in a non-archimedean sense while keeping it integral on $L$, while
% \item the content of $Q$ quantifies how much we can enlarge $q$ in an archimedean sense while keeping it $1$-bounded on the $Q$-unit ball.
% \end{itemize}

\subsection{Adelic invariants}
\label{sec:adelic-invariants}

Let $(Q,L)$ be a pair consisting of a positive-definite quadratic form $Q$ and a lattice $L$ as above.  We define the \emph{content} (resp. \emph{level}, \emph{discriminant}) of the pair to be the product of the corresponding invariants of $Q$ and $L$.

We note that the invariants $C, N, \Delta$ assigned to the pair $(Q,L)$ are invariant by rational homotheties, i.e., replacing $L$ by $m L$ and $Q$ by $Q_m$ for the same non-zero rational scalar $m$, and also under automorphisms of $V$ that preserve $q$.

% In fancier terms, they are invariant by the action of the $\mathbb{Q}$-points of the orthogonal similitude group $GO(V,q)$, so via the orbit map we can think of them as functions on the adelic quotient $[GO(V,q)] = GO(V,q)(\mathbb{Q}) \backslash GO(V,q)(\mathbb{A})$, which we can study in turn using reduction theory (Siegel domains, etc.).  We could just as well restrict to $[SO(V,q)$].  The latter is actually more-or-less what we've been doing all along via exceptional isomorphisms.  I don't think there's any point in adopting this language in a first paper, but it might be worth keeping in mind.

Furthermore, the discriminant of the pair $(Q,L)$ is the same as the determinant of the Gram matrix of $Q$ with respect to a $\mathbb{Z}$-basis of $L$, as the discriminant of $Q$ is nothing but the inverse of the determinant of the matrix $D$ in the singular value decomposition \eqref{eq:singvaluedecomp}.

\subsection{Statement of result}

\begin{proposition}
  Let $V$ be an $n$-dimensional $\mathbb{Q}$-vector space.  Let $q : V \rightarrow \mathbb{Q}$ be an anisotropic quadratic form.  Let $L \subset V$ be a lattice.  Let $Q : V_{\mathbb{R}} \rightarrow \mathbb{R}$ be a positive-definite quadratic form.  Let $\lambda_1 \leq \dotsb \leq \lambda_n$ denote the successive minima of $Q$ on $L$ (see Definition \ref{defn:succ-min}).  Let $C, N, \Delta$ denote the content, level and discriminant of the pair $(Q,L)$.  Then,
  \begin{enumerate}[(i)]
  \item $\lambda_1 \geq C^{1/2}$,
  \item $\lambda_1 \dotsb \lambda_n \asymp \Delta^{1/2}$, and
  \item $\lambda_1 \dotsb \lambda_{n-1} \gg (\Delta / N)^{1/2}$.
  \end{enumerate}
  In particular, for $n = 3$, we have for all $X > 0$ that

  \[| \{ v \in L : Q(v) \leq X^2 \}| \ll 1 + \frac{X}{\sqrt{C}} + \frac{X^2}{\sqrt{\Delta/N}} + \frac{X^3}{\sqrt{\Delta}}.\]
  \label{prop:anisotropiclatticecount}
\end{proposition}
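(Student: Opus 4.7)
The plan is to prove the three successive minima bounds (i)--(iii) in turn, then read off the lattice point count for $n=3$ from Lemma \ref{lem:latticepointcount}.

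For (i) I would use anisotropy directly. For any nonzero $v\in L$, $q(v)\neq 0$, so by the definition of the content of $L$ (the largest rational $c$ with $q(L)\subseteq c\mathbb{Z}$) we have $|q(v)|\geq C_L$. Combined with the defining inequality $Q\geq C_Q|q|$ on $\{q\neq 0\}$, this yields $Q(v)\geq C_QC_L=C$, hence $\lambda_1\geq C^{1/2}$.

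For (ii) I would invoke Minkowski's second theorem applied to the $Q^{1/2}$-unit ball. The key identification is that $\sqrt{\Delta}$ is exactly the covolume of $L$ in the Euclidean structure induced by $Q$: using a singular value decomposition $P=A^tA$, $S=A^tDA$ as in \eqref{eq:singvaluedecomp}, one finds $\Delta=\Delta_Q\Delta_L=(\det A)^2$, and $|\det A|$ is precisely that covolume. Since the volume of the $Q^{1/2}$-unit ball depends only on $n$, Minkowski yields $\lambda_1\cdots\lambda_n\asymp_n\sqrt{\Delta}$.

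For (iii) I would combine Mahler's reciprocity for successive minima with the duality \eqref{eq:elem-div-dual}. Since $q$ remains anisotropic on $V$, part (i) applies equally to the pair $(Q^\vee,L^\vee)$, whose content is $1/N$; this gives $\lambda_1(Q^\vee,L^\vee)\geq N^{-1/2}$. Mahler's theorem asserts $\lambda_n(Q,L)\cdot\lambda_1(Q^\vee,L^\vee)\asymp_n 1$, so $\lambda_n\ll\sqrt{N}$. Dividing (ii) by this estimate gives $\lambda_1\cdots\lambda_{n-1}\gg\sqrt{\Delta/N}$.

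The final lattice point bound in the case $n=3$ follows from Lemma \ref{lem:latticepointcount} applied to the dilated ball $\{Q\leq X^2\}$ (whose successive minima on $L$ are $\lambda_i/X$), which gives $|\{v\in L:Q(v)\leq X^2\}|\ll\prod_{i=1}^3(1+X/\lambda_i)$. Expanding and using $\lambda_1\leq\lambda_2\leq\lambda_3$ to bound each elementary symmetric expression by its largest term yields the four contributions $1$, $X/\lambda_1$, $X^2/(\lambda_1\lambda_2)$, and $X^3/(\lambda_1\lambda_2\lambda_3)$; substituting the estimates (i), (iii), (ii) gives the asserted bound. The main obstacle will be (iii), where one must confirm the precise form of Mahler's reciprocity in the duality setup of the paper (the transformation rule \eqref{eq:elem-div-dual} and the compatibility of $L^\vee$, $Q^\vee$ with the standard dual-lattice/polar-body construction via the identification $V\cong V^*$ induced by the polarization of $q$).
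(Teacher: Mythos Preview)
Your proof is correct. Parts (i), (ii), and the deduction of the final count are essentially identical to the paper's argument (the paper phrases (ii) via Lemma \ref{lem:smalllatticegeneraotrs} rather than citing Minkowski's second theorem by name, but the content is the same).

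The genuine difference is in (iii). The paper does not invoke Mahler's transference; instead it argues directly from Gram matrices. Fixing a basis of $L$ as in Lemma \ref{lem:smalllatticegeneraotrs} and writing $P,S$ for the Gram matrices of $Q,q$, Cramer's rule gives $(P^{-1})_{nn}=\det(Q,L_{n-1})/\det(Q,L_n)\asymp 1/\lambda_n^2$ and $(S^{-1})_{nn}=\det(q,L_{n-1})/\det(q,L_n)$. Anisotropy forces $(S^{-1})_{nn}\neq 0$, and since $2N_L(S^{-1})_{nn}\in\mathbb{Z}$ one gets $|(S^{-1})_{nn}|\geq 1/(2N_L)$; combining with the operator-norm bound $|(S^{-1})_{nn}|\leq N_Q(P^{-1})_{nn}$ yields $\lambda_n\ll\sqrt{N_LN_Q}=\sqrt{N}$. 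Your route via duality is cleaner and more conceptual: you reuse (i) on the dual pair and let Mahler do the work. The compatibility you flagged does hold: in coordinates where $Q(x)=\tfrac{1}{2}\sum x_i^2$ and $q(x)=\tfrac{1}{2}\sum a_i x_i^2$, one has $L^\vee=D^{-1}L^{\mathrm{Euc}}$ with $D=\mathrm{diag}(a_i)$ and $Q^\vee(y)=Q(Dy)$, so $\lambda_i(Q^\vee,L^\vee)=\lambda_i(Q,L^{\mathrm{Euc}})$, and Mahler applies verbatim. The paper's argument is more self-contained (no external transference theorem) and yields the intermediate Gram-determinant identity \eqref{eq:vol-sub-lattice} for all $m$, not just $m=n-1,n$; yours is shorter and makes the role of duality transparent.
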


\begin{remark}
  This last estimate is scale-invariant in the sense that replacing $(Q,L)$ with $(Q_m, m L)$ for a positive rational scalar $m$ has no effect on the RHS.  This feature is not surprising in view of the multiplication-by-$m$ bijection $\{ v \in L : Q(v) \leq X^2 \} \cong \{ v \in m L : Q_m(v) \leq X\}$.  The estimate is likewise invariant under replacing $(Q,X)$ by $(Q_m, X/m^2)$ for some non-zero real number $m$, as one might expect for similar reasons.
\end{remark}

\begin{proof}
  We follow the basic strategy of Blomer--Michel \cite[\S3, \S4]{MR3103131}, who established the corresponding result for $q$ positive-definite and $Q=q$.

  Let $v$ be a non-zero element of $L$.  Since $L$ is anisotropic, we have $q(v) \neq 0$. Let $C_Q, C_L$ denote the content of $Q$, respectively $L$. By the definition of the content, we have $C_L|q(v)$ and $Q \ge C_Q |q|$.  Thus, $Q(v) \geq C_Q |q(v)| \geq C_QC_L = C$, giving (i).

  By Lemma \ref{lem:smalllatticegeneraotrs}, we may find a basis $e_1,\dotsc,e_n$ of $L$ so that the submodules $L_m \coloneqq \sum_{j \leq m} \mathbb{Z} e_j$ have covolume $\asymp \prod_{j \leq m} \lambda_j$ in their real span, with volume defined using the restriction of $Q$.  On the other hand, that covolume is the square root of the Gram determinant of $Q$ on $L_m$.  Write $\det(Q,L_m)$ and $\det(q,L_m)$ for the respective Gram determinants of $Q$ and $q$.  Then \begin{equation}
  	\prod_{j \leq m} \lambda_j \asymp \det(Q,L_m)^{\frac{1}{2}}.  
  	\label{eq:vol-sub-lattice}
  \end{equation} Since $\det(Q,L_n) = \Delta$ by the remark in \S\ref{sec:adelic-invariants}, the case $m=n$ of this estimate gives (ii).

  For the proof of (iii), observe first that in view of (ii), it is equivalent to check that $\lambda_n \ll (N_L N_Q)^{1/2}$, where $N_L$, respectively $N_Q$, is the level of $L$, respectively $Q$.  To that end, write $P = A^t A$ and $S = A^t D A$ for the matrices of $Q$ and $q$, as in \S\ref{sec:arch-invar}, and consider the final matrix entry $(P^{-1})_{n n}$ of the inverse of $P$.  Cramer's rule expresses $(P^{-1})_{n n} = \det(Q,L_{n-1}) / \det(Q, L_n)$, so by the cases $m=n-1,n$ of \eqref{eq:vol-sub-lattice}, we have $(P^{-1})_{n n} \asymp 1/\lambda_n^2$.  On the other hand, since $N_Q$ bounds the operator norm of $D^{-1}$, we have
  \[|(S^{-1})_{n n}| = |\langle A^{-t} e_n, D^{-1} A^{-t} e_n \rangle| \leq N_Q \langle A^{-t} e_n, A^{-t} e_n\rangle = N_Q (P^{-1})_{n n}.\] Cramer's rule likewise expresses $(S^{-1})_{n n} = \det(q,L_{n-1}) / \det(q,L_n)$ as a ratio of Gram determinants.  Since $q$ is anisotropic, both determinants are non-zero.  Since $2N_L (S^{-1})_{n n} \in \mathbb{Z}$, it follows that $1/(2N_L) \leq |(S^{-1})_{n n}|$.  Thus $1 / \lambda_n^2 \asymp (P^{-1})_{n n} \geq 1/(2N_L N_Q)$, giving the required estimate.
\end{proof}

% We will require only the following special case:

% \begin{corollary}
%   Assume that $q$ is anisotropic, that $Q$ is a majorant of $q$, and that $n=3$.  Then the successive minima $\lambda_1 \leq \lambda_2 \leq \lambda_3$ of $Q$ on $L$ satisfy $\lambda_1 \geq C^{1/2}$, $\lambda_1 \lambda_2 \gg (\Delta/N_L)^{1/2}$, $\lambda_1 \lambda_2 \lambda_3 \asymp \Delta^{1/2}$.
% \end{corollary}

\section{Type I estimates}\label{sec:type-i-estimates}
The local computations of Section \S\ref{sec:order-theor-prel}, together with the behavior of invariants under duality recorded in \S\ref{eq:elem-div-dual}, imply that the elementary divisors of the Gram matrix of the reduced trace form on $g^{-1}R(\ell)^0g$ are given by
\begin{equation}
  \begin{cases} \left(\frac{1}{\ell},\frac{d_B N}{\ell^2},\frac{2d_B N}{\ell}\right), & 2 \!\not | \, d_BN, \\
    \left(\frac{2}{\ell},\frac{d_BN}{\ell^2},\frac{d_BN}{\ell}\right), & 2 | d_BN, 2 \!\not | \, \ell, \\
    \left(\frac{1}{\ell},\frac{d_BN}{\ell^2},\frac{d_BN}{2\ell}\right), & 2 | d_BN, 2 | \ell.
  \end{cases}
  \label{eq:elem-div-order}
\end{equation}
Hence, the content, level, and discriminant of $g^{-1}R(\ell)^0g$ with respect to the reduced norm are comparable to $1/\ell$, $d_BN/\ell$, and $(d_BN)^2/\ell^4$ respectively. Here ``comparable to'' means the ratios are bounded from above and below by positive constants. Suppose that the reduced norm on $R$ is anisotropic. In this case, we wish to apply Proposition \ref{prop:anisotropiclatticecount} to the lattice $g^{-1}R(\ell)^0g$ with $q$ given by the reduced norm. As a first choice, we let $Q=P+\delta^{-1}u$, whose content, level, and discriminant are comparable to $1,\delta^{-1}$, and $\delta^{-2}$ respectively. This yields that the first successive minima of $g^{-1}R(\ell)^0g$ with respect to $P+\delta^{-1}u$ is $\gg \ell^{-\frac{1}{2}}$, and hence also with respect to $\Omega(\delta,1) \cap B_{\infty}^0$. Furthermore,
$$
|g^{-1} R(\ell)^0 g \cap \Omega(\delta, T)| \prec 1 + \ell^{\frac{1}{2}} T + \frac{\ell^{\frac{3}{2}} \delta^{\frac{1}{2}} }{(d_B N)^{\frac{1}{2}}} T^2 + \frac{\ell^2 \delta}{d_B N} T^{3}.
$$
This proves the first half of Theorem \ref{thm:typeI}. Similarly, we have for the choice $Q=P+\delta^{-1} |X|^2$ that the content, level, and discriminant $Q$ are comparable to $1,\delta^{-1}$, and $\delta^{-1}$ respectively. Thus the first successive minima of $g^{-1}R(\ell)^0g$ with respect to $\Psi(\delta,1)\cap B_{\infty}^0$ is $\gg \ell^{-\frac{1}{2}}$ and
$$
|g^{-1} R(\ell)^0 g \cap \Psi(\delta, T)| \prec 1 + \ell^{\frac{1}{2}} T + \frac{\ell^{\frac{3}{2}}}{(d_B N)^{\frac{1}{2}}} T^2 + \frac{\ell^2 \delta^{\frac{1}{2}}}{d_B N} T^{3},
$$
which is the second half of Theorem \ref{thm:typeI}.
% \begin{proposition} Let $R$ be an Eichler order of squarefree level $q$ inside an quaternion algebra $B$ over $\mathbb{Q}$ of reduced discriminant $d_B$. Let $l|qd_B$, $T \ge 0$, and $0 < \delta \le 1$. If $B(\mathbb{Q})$ is a division algebra, then
%$$
%\sum_{n \in \frac{1}{l} \mathbb{Z}} \left| \Omega((R_{(l)}^0)^{\sigma};n;\delta;T) \right| \precsim 1+l^{\frac{1}{2}}T^{\frac{1}{2}}+\frac{l^{\frac{3}{2}}}{(qd_B)^{\frac{1}{2}}}\delta^{\frac{1}{2}}T+\frac{l^2}{qd_B}\delta T^{\frac{3}{2}}.
%$$
%If $B(\mathbb{Q})$ is the matrix algebra $\Mat_{2 \times 2}(\mathbb{Q})$, $R=\sm \mathbb{Z} & \mathbb{Z} \\ q \mathbb{Z} & \mathbb{Z} \esm$, and $\sigma= \sm 1 & x \\ & 1 \esm \sm y^{\frac{1}{2}} & \\ & y^{-\frac{1}{2}} \esm$, where $z=x+iy \in \mathcal{F}(q)$, then
%$$
%\sum_{n \in \frac{1}{l} \mathbb{Z}} \left| \Omega((R_{(l)}^0)^{\sigma};n;\delta;T) \right| \precsim 1+l^{\frac{1}{2}}T^{\frac{1}{2}}+\left(\frac{l^{\frac{3}{2}}}{(qd_B)^{\frac{1}{2}}}\delta^{\frac{1}{2}}+l\delta y\right)T+\frac{l^2}{qd_B}\delta T^{\frac{3}{2}}.
%$$
%\label{prop:ternaryfirstmomentcopy}
% \end{proposition}

We now turn to the case that the quaternion algebra B is split. Here, we proceed in a more ad hoc manner. First, we note that $R(\ell)^0$ is normalized by the Atkin--Lehner operators. Thus, we need only consider $g \in G(\mathbb{R})$ such that $g \cdot i =x+iy$ has maximal imaginary part under the action of the Atkin--Lehner operators. In particular, we have $H(g)=y$. Let $\lambda_1\le \lambda_2 \le \lambda_3$ be the successive minima (Definition \ref{defn:succ-min}) of the closed convex $0$-symmetric set $\Omega(\delta,1)\cap B_{\infty}^0$
%$$
%\mathcal{K} = \left \{\alpha_0 \in \Mat_{2 \times 2}(\mathbb{R})^{0} | P(\alpha_0) \le 1 \text{ and } u(\alpha_0) \le \delta \right\}
%$$
with respect to the lattice $g^{-1}R(\ell)^0g$. Since $\Omega(\delta,1)$ is both left and right $K_{\infty}$-invariant, we may further assume that $g=\sm 1 & x \\ & 1 \esm \diag(y^{\frac{1}{2}} , y^{-\frac{1}{2}})$. By Lemma \ref{lem:latticepointcount}, we have
$$
|g^{-1}R(\ell)^0g \cap \Omega(\delta,T) | = |g^{-1}R(\ell)^0g \cap T\Omega(\delta,1)| \asymp 1+\frac{T}{\lambda_1}+\frac{T^2}{\lambda_1\lambda_2}+\frac{T^3}{\lambda_1\lambda_2\lambda_3}.
$$
Let $\beta_0 = \sm a & b \\ c & -a \esm \in R(\ell)^0$, thus $a \in \mathbb{Z}, b \in \frac{1}{\ell}\mathbb{Z}, c \in \frac{N}{\ell}\mathbb{Z}$. Let
$$
\alpha_0= g^{-1} \beta_0 g = \begin{pmatrix} a-cx & \frac{1}{y}(2ax+b-cx^2) \\ cy & cx-a \end{pmatrix}.
$$
Suppose $\beta_0 \neq 0$. If $(a,c)\neq (0,0)$, then by Lemma \ref{lem:spacing} we have $P(\alpha_0) \ge \frac{1}{2}|cz-a|^2 \ge \frac{1}{2\ell}$. Otherwise, we have $u(\alpha_0)= (\frac{b}{y})^2 \ge \frac{1}{2(\ell y)^2}$. Hence, we have $\lambda_1 \gg \min\{ \ell^{-\frac{1}{2}} , \ell^{-1} y^{-1} \delta^{-\frac{1}{2}} \}$. In order to get a lower bound on $\lambda_1\lambda_2$ and $\lambda_1\lambda_2\lambda_3$, we shall give an upper bound on $|g^{-1}R(\ell)^0g \cap \Omega(\delta,T)|$ along the lines of Harcos--Templier \cite{MR3038127}. First, we bound the number of choices of $c$ by $\ll 1+\frac{\ell T}{Ny}$ as $|cy| \le P(\alpha_0)^{\frac{1}{2}} \le T$. For each such choice of $c$, the equation
$$
\frac{1}{y^2} \left|-cz^2+2az+b\right|^2 = u(\alpha_0) \le \delta T^2
$$
defines a circle of radius $\delta^{\frac{1}{2}}T y$ and center $cz^2$ in which we need to count lattice points of the lattice generated by $2z$ and $\frac{1}{\ell}$. This lattice has covolume $2y/\ell$ and first successive minima $\ge (\ell N)^{-\frac{1}{2}}$ by Lemma \ref{lem:spacing}. We may thus apply Lemma \ref{lem:latticeball} to bound the number of $(a,b)$  by $\ll 1+\ell^{\frac{1}{2}}N^{\frac{1}{2}}\delta^{\frac{1}{2}}T y + \ell\delta T^2 y$. We obtain
$$
\frac{T^2}{\lambda_1\lambda_2} + \frac{T^{3}}{\lambda_1\lambda_2\lambda_3} \ll \left( 1+\frac{\ell T}{Ny} \right)\left( 1+\ell^{\frac{1}{2}}N^{\frac{1}{2}}\delta^{\frac{1}{2}}Ty + \ell\delta T^2 y \right).
$$
By letting $T$ tend to $\infty$, it follows that $\lambda_1\lambda_2\lambda_3 \gg N \ell^{-2}\delta^{-1}$.  By taking $T=N^{\frac{1}{2}}/(\ell\delta)^{\frac{1}{2}}$, we obtain
$$
\frac{1}{\lambda_1\lambda_2} \ll \frac{\ell\delta}{N}+\frac{\ell^{\frac{3}{2}}\delta^{\frac{1}{2}}}{N^{\frac{3}{2}}y}+\ell\delta y + \frac{\ell^{\frac{3}{2}}\delta^{\frac{1}{2}}}{N^{\frac{1}{2}}} \ll \frac{\ell^{\frac{3}{2}}\delta^{\frac{1}{2}}}{N^{\frac{1}{2}}} + \ell \delta y.
$$
We thereby conclude the proof of the final case of Theorem \ref{thm:typeI}.

% If $\nr(\alpha_0)=\nr(\beta_0) \neq 0$, then $P(\alpha_0) \ge |\nr(\alpha_0)| \ge \frac{1}{l}$.

\section{Type II estimates}\label{sec:type-ii-estimates}

\subsection{Bounds for representation numbers of binary quadratic forms}

\begin{lemma}\label{lem:N2-1}
  Let $M$ be a free $\mathbb{Z}$-module of rank $2$.  Let $q : M \rightarrow \mathbb{Z}$ be a nondegenerate integral binary quadratic form.  Let $Q$ be a positive-definite quadratic form on $M \otimes \mathbb{R}$ such that $|q| \leq Q$.  Let $n$ be a non-zero integer, and let $X \geq 1$.  Set
  \[
    S \coloneqq \{ \beta \in M : q(\beta) = n, Q(\beta) \leq X^2 \}.
  \]
  Then,
  \[
    |S| \ll_{\eps} (X |n|)^\eps.
  \]
\end{lemma}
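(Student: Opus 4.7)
The plan is to decompose $S$ into orbits under the stabilizer of $q$ in $\GL(M)$, bound the number of orbits by a classical divisor-type estimate, and control each orbit using the majorant hypothesis $|q|\le Q$. The argument splits according to whether $q$ is isotropic over $\mathbb{Q}$.

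If $\disc(q)$ is a perfect square, then $q$ factors over $\mathbb{Q}$ as a product of two linearly independent integral linear forms $L_1,L_2$ divided by a positive integer, and $q(\beta)=n$ reduces to an equation $L_1(\beta)L_2(\beta)=n'$ for an integer $n'$ bounded polynomially in $|n|$. Each ordered factorization of $n'$ determines $\beta$ uniquely in $M\otimes\mathbb{Q}$, so the divisor bound yields $|S|\ll_\eps|n|^\eps$ with no $X$-dependence needed. Otherwise, set $K=\mathbb{Q}(\sqrt{\disc(q)})$ and choose an embedding $\iota:M\otimes\mathbb{Q}\hookrightarrow K$ so that $q$ becomes a rational multiple $c\cdot N_{K/\mathbb{Q}}$ of the norm form. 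Then $\iota(M)$ is a fractional ideal for its multiplier order $\mathcal{O}\subseteq K$, and the $\mathcal{O}^\times$-orbits on $\{\alpha\in\iota(M):N_{K/\mathbb{Q}}(\alpha)=n/c\}$ biject, up to bounded combinatorial factors, with $\mathcal{O}$-ideals of a given norm, whose number is $\ll_\eps|n|^\eps$ uniformly in $\mathcal{O}$.

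In the imaginary case ($\disc(q)<0$), the group $\mathcal{O}^\times$ is finite of order at most $6$, so the orbit count alone finishes the job and $|S|\ll_\eps|n|^\eps$. The substantive case is $\disc(q)>0$, where $\mathcal{O}^\times=\{\pm1\}\times\langle\epsilon\rangle$ for a fundamental unit $\epsilon>1$. To bound the number of $k\in\mathbb{Z}$ satisfying $Q(\epsilon^k\beta_0)\le X^2$ for an orbit representative $\beta_0$, I would pass to the eigenbasis $e_\pm$ of $\epsilon$ on $M\otimes\mathbb{R}$, so that $\epsilon e_\pm=\epsilon^{\pm1}e_\pm$. Since $\epsilon$ preserves $q$ with eigenvalues $\ne\pm1$, both $e_\pm$ are null vectors of $q$. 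Writing $q(se_++te_-)=2\gamma st$ and $Q(se_++te_-)=As^2+2Bst+Ct^2$, the hypothesis $|q|\le Q$ forces $A,C>0$ and $AC\ge\gamma^2$; writing $\beta_0=ue_++ve_-$, the condition $q(\beta_0)=n\ne 0$ gives $uv=n/(2\gamma)$, so $u,v\ne 0$. One computes
\[
  Q(\epsilon^k\beta_0)=Au^2\epsilon^{2k}+2Buv+Cv^2\epsilon^{-2k},
\]
a strictly convex function of $k\in\mathbb{R}$ whose infimum is at least $|q(\beta_0)|=|n|$. A brief AM-GM computation then shows that $\{k\in\mathbb{R}:Q(\epsilon^k\beta_0)\le X^2\}$ is an interval of length $O(\log X/\log\epsilon)$.

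The final ingredient is the elementary uniform lower bound $\epsilon\ge(1+\sqrt 5)/2$ for the fundamental unit of any real quadratic order, which holds because fundamental units of non-maximal orders are positive powers of fundamental units of the ambient field, and $\mathbb{Q}(\sqrt 5)$ achieves the minimum among real quadratic fields. Thus $\log\epsilon$ is bounded below by a positive absolute constant, each orbit contributes $O(\log X)$ to $|S|$, and combining with the orbit count gives $|S|\ll_\eps|n|^\eps\log X\ll_\eps(|n|X)^\eps$. The main obstacle is the real-case orbit estimate: its quantitative form depends essentially on the majorant hypothesis $|q|\le Q$, which prevents $Q$ from degenerating along the null directions of $q$, so that the convexity of $k\mapsto Q(\epsilon^k\beta_0)$ is governed by the intrinsic invariants $|n|$ and $\epsilon$ rather than by finer features of the pair $(q,Q)$.
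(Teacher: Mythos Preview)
Your proof is correct and follows essentially the same route as the paper: reduce the isotropic case to a divisor bound, and in the anisotropic case use the form--ideal correspondence to bound the number of unit orbits by $\tau(n)$, then control the size of each orbit in the real quadratic case via the uniform lower bound $\epsilon\ge(1+\sqrt5)/2$. The only cosmetic differences are that the paper passes to the maximal order $\mathfrak{o}_{\max}$ rather than the multiplier order (which slightly streamlines the orbit count), and for the real-case orbit length it estimates $q(\beta_0+\epsilon^k\beta_0)=n(\epsilon^{k/2}+\epsilon^{-k/2})^2$ in two ways rather than diagonalizing $\epsilon$ and invoking convexity---but your AM--GM argument, using that $Q\ge|q|$ at the minimizer forces $|2Buv|\le 2\sqrt{AC}\,|uv|-|n|$, gives the same $O(\log X/\log\epsilon)$ bound.
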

\begin{proof}
  We begin with a preliminary reduction.  Suppose we can find a lattice $M'$ in $M \otimes \mathbb{Q}$ that contains $M$ and on which the $\mathbb{Q}$-bilinear extension of $q$ is integral.  It suffices then to verify the lemma after replacing $M$ with $M'$.  Indeed, doing so enlarges the set $S$.  In particular, we may assume that the quadratic form $M$ is \emph{primitive}.

  Suppose now that $M$ is primitive and anisotropic.  Without loss of generality, $M$ is either positive-definite or indefinite.  By the form-ideal correspondence, we may assume then that $M$ is an invertible ideal of an order $\mathfrak{o}$ in a quadratic field, with $q$ given by the element norm $\nu$ divided by the ideal norm $\nu(M)$ of $M$:
  \[
    q(\beta) = \nu(\beta)/ \nu(M).
  \]
  We will establish the estimate
  \begin{equation}\label{eqn:bound-S-non-split-quadratic-case}
    |S| \ll \log(2 + X^2/|n|) \tau(n),
  \end{equation}
  which suffices in view of the divisor bound.  Let $\mathfrak{o}_{\max}$ denote the maximal order in the quadratic field containing $\mathfrak{o}$.  For each $\beta \in S$, the $\mathfrak{o}$-ideal $M^{-1} \beta$ has norm $|n|$, as does the $\mathfrak{o}_{\max}$-ideal $\mathfrak{o}_{\max} M^{-1} \beta$.  The number of $\mathfrak{o}_{\max}$-ideals of norm $|n|$ is at most $\tau(n)$.  Suppose two elements $\beta_0, \beta \in S$ give rise to the same $\mathfrak{o}_{\max}$-ideal.  Then, $\beta/\beta_0$ is a norm one unit in $\mathfrak{o}_{\max}^{\times}$.  The required estimate follows in the imaginary quadratic case (without the logarithmic factor) because $| \mathfrak{o}_{\max}^{\times}| \leq 6$.  In the real quadratic case, we fix a positive generator $\eta$ for the group $\cong \mathbb{Z}$ of non-root-of-unity norm one units in $\mathfrak{o}_{\max}$ and write $\beta = \pm \beta_0 \eta^{\ell}$ for some $\ell \in \mathbb{Z}$.  It will suffice then to verify that $\ell \ll \log(2 + X^2/|n|)$.  To that end, we estimate $q(\beta_0 + \beta)$ in two ways.  On the one hand, the triangle inequality for the Euclidean norm defined by $Q$ gives the upper bound $|q(\beta_0 + \beta)| \leq Q(\beta_0 + \beta) \ll Q(\beta_0) + Q(\beta) \ll X^2$.  On the other hand, the multiplicativity of $\nu$ gives the identity $q(\beta_0 + \beta) = n \nu(1 \pm \eta^{\ell})$.  The lower bound $\nu(1 \pm \eta^{\ell}) \geq \frac{1}{4} \cdot 1.618^{\ell}$ for fundamental units now yields the required estimate for $\ell$.

  It remains to consider the case that $M$ is isotropic and $q$ nondegenerate. In that case, after applying our preliminary reduction to enlarge $M$ if necessary, we may assume that $M = \mathbb{Z}^2$ and $q(x,y) = x y$.\footnote{Indeed, we may choose a basis $e_1, e_2$ for $M$ with $e_1$ isotropic.  Then, $q$ is given with respect to the coordinates $x e_1 + y e_2$ by $q(x,y) = a x y + b y^2$ for some $a,b \in \mathbb{Z}$, with $a \neq 0$.  Then $q (\tfrac{x - b y}{a}, y) = x y$ and $M \subseteq M' := \left\{ \tfrac{x - b y}{a} e_1 + y e_2 : x, y \in \mathbb{Z}  \right\}$, so $(M',q)$ gives the required enlargement.}
  % , $q(x,y)=cx^2$, or $q(x,y)=0$.
  Since $n \neq 0$, the divisor bound then gives $|S| = 2 \tau(n) \ll_{\eps} |n|^{\eps}$.
  % so suppose $n = 0$.  Write $Q(x,y) = a x^2 + b x y + c y^2$.  Then $a = Q(1,0) \geq \lambda_1^2$ and $c = Q(0,1) \geq \lambda_1^2$.  Since $S = \{ (x,0) : x \in \mathbb{Z}, a x^2 \leq X^2 \} \cup \{ (0,y) : y \in \mathbb{Z}, c y^2 \leq X^2 \}$, we have
  % \[|S| \ll (1 + X^2/a)^{1/2} + (1 + X^2/c)^{1/2} \ll 1 + X/\lambda_1,\] as required.
\end{proof}

\subsection{Local quaternionic preliminaries}

Let $B$ be a quaternion algebra over the rationals.  We write $d_B$ for its reduced discriminant and $q$ for its reduced norm.

% \pn{For most of the following, we should be able to refer to \S\ref{sec:order-theor-prel}.  Maybe this section can be deleted.  Not sure yet }

\subsubsection{Non-archimedean preliminaries}

Let $R \subset B$ be an Eichler order of level $N$, with $N$ coprime to $d_B$.%We write $R^0$ for its traceless submodule.

\begin{lemma}
  \label{lem:N2-2}
  For $x,y \in R(\ell)^0$, we have $[x,y] \in \frac{1}{\ell} R^0$ and $q([x,y]) \in \frac{d_B N}{\ell^3} \mathbb{Z}$.
\end{lemma}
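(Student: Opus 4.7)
The plan is to verify both claims prime-by-prime, reducing at each prime to the commutator congruence Lemma~\ref{lem:commutatorcong}. Writing $\ell_p = p^{v_p(\ell)}$, it suffices to show at each prime $p$ that $[x,y] \in \tfrac{1}{\ell_p} R_p^0$ and that $v_p(q([x,y])) \geq v_p(d_B N) - 3 v_p(\ell)$ for all $x, y \in R(\ell)_p^0$; the global conclusions then follow by intersecting local lattices. Since $R(\ell)$ coincides with $R$ away from the primes dividing $d_B N$, and the required bounds are trivial at such primes, the work is confined to $p \mid d_B N$.

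For such $p$ with $p \nmid \ell$, I would apply part~(i) of Lemma~\ref{lem:commutatorcong} to $x, y \in R_p^0 \subseteq R_p$, obtaining $[x,y] \in R_p^0$ and $q([x,y]) \in \mathfrak{n}\mathbb{Z}_p = p\mathbb{Z}_p$, matching $v_p(d_B N) = 1$. For primes $p \mid \ell$ (which forces $v_p(\ell) = v_p(d_B N) = 1$ by squarefreeness of $d_B N$), I would apply parts~(ii)--(iii) of the same lemma to $x, y \in (R_p^\vee)^0 \subseteq R_p^\vee$, obtaining $[x,y] \in \mathfrak{n}^{-1} R_p = p^{-1} R_p$ and $q([x,y]) \in \mathfrak{n}^{-2}\mathbb{Z}_p = p^{-2}\mathbb{Z}_p$, matching the required valuations $-1$ and $-2$ respectively. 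The first inclusion sharpens to $[x,y] \in p^{-1} R_p^0$ because commutators are traceless.

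In both cases the invocation of Lemma~\ref{lem:commutatorcong} rests on $R_p$ being $E$-adapted for the unramified quadratic extension $E/\mathbb{Q}_p$; this holds by standard local structure theory, since every Eichler order in the split case contains, up to conjugation, the integers of $\mathbb{Q}_p \oplus \mathbb{Q}_p$, and the unique maximal order in the ramified case contains those of the unramified quadratic field. I foresee no serious obstacle beyond these bookkeeping matters: the real content was already extracted in Section~\ref{sec:order-theor-prel}, and the present statement is essentially its global assembly, the one subtle point being the refinement from $p^{-1} R_p$ to $p^{-1} R_p^0$ using tracelessness of the commutator.
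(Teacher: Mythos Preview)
Your proposal is correct and follows essentially the same approach as the paper: the paper's proof is the one-liner ``This follows from the local computations in Lemma~\ref{lem:commutatorcong} together with the fact that the trace of a commutator is zero,'' and you have simply spelled out the prime-by-prime verification that this entails, including the $E$-adaptedness check handled in \S\ref{sec:order-theor-prel}.
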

\begin{proof} This follows from the local computations in Lemma \ref{lem:commutatorcong} together with the fact that the trace of a commutator is zero.
  % \pn{replace with reference to earlier section on orders} This may be checked locally.  At prime divisors of $N$, it follows from the fact that the commutator of any pair of upper-triangular matrices has vanishing determinant, applied to the matrix ring $\Mat_{2 \times 2}(\mathbb{Z}/N \mathbb{Z})$.  At prime divisors of $d_B$, [TODO or REF].
\end{proof}

\subsubsection{Archimedean preliminaries} Recall the notation
``$\Omega$'' from \S\ref{sec:archimedean-regions}
and ``$P$'' from \eqref{eq:pgamma-coloneqq-a2}.

% Choose an element $i \in B_{\infty}^0$ with $i^2 = - 1$.  This choice defines an embedding $\mathbb{C} \hookrightarrow B_{\infty}$.  (In applications, we choose this so that $\mathbb{C}^\times$ stabilizes the point of interest in $\mathbb{H}$ or (copies of) $S^2$.)

% We may find an element $j \in B_{\infty}$ with $j^2 = \pm 1$ ($+1$ if $B$ is indefinite, $-1$ if $B$ is indefinite) so that $B_{\infty} = \mathbb{C} \oplus \mathbb{C} j$.  Setting $k \coloneqq i j$, we deduce that the traceless subspace $B_{\infty}^0$ consists of elements $[a,b,c] \coloneqq a i + b j + c k$ on which the reduced norm $q$ is given by $a^2 \mp (b^2 + c^2)$.

% We define a positive-definite quadratic form $Q$ on $B_{\infty}^0$ (depending upon the choice of imaginary unit $i$) by the formula $Q([a,b,c]) \coloneqq a^2 + b^2 + c^2$, so that $|q| \leq Q$.

% We define a subset $\Omega(i,\delta)$ of $B_{\infty}^0$ for each $0 < \delta \leq 1$: In the endpoint case $\delta = 1$, we take simply
% \[\Omega(i,1) \coloneqq B_{\infty}^0.\]
% In the non-endpoint cases $0 < \delta < 1$, we set
% \[\Omega(i,\delta) \coloneqq \{ [a,b,c] : |b|, |c| \leq \delta^{1/2} |a| \}.\]

% Given any $0 < \delta \leq 1$ and $X > 0$, we define the subset
% \[\Omega(i,\delta,X) \coloneqq \{ x \in \Omega(i,\delta) : Q(x) \leq X^2
%   \}.\]

\begin{lemma}
  \label{lem:N2-3}
  For $0 < \delta \leq 1$ and $x, y \in \Omega(\delta,T)$, we have
  \[q([x,y]) \ll \delta T^4\]
  and
  \[P([x,y]) \ll \delta T^4.\]
\end{lemma}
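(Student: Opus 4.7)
The plan is to reduce to an explicit coordinate computation using the basis $\mathbf{i}, \mathbf{j}, \mathbf{k}$ of $B_\infty^0$ introduced in \S\ref{sec:Kinftynotation}. Write $x = [a_1,b_1,c_1] + d_1$ and $y = [a_2,b_2,c_2]+d_2$. Since scalars commute with everything in $B_\infty$, we have $[x,y] = [x^0,y^0]$ where $x^0 = [a_1,b_1,c_1]$ and $y^0 = [a_2,b_2,c_2]$, so the scalar parts $d_i$ play no role. In particular, the membership conditions for $\Omega(\delta,T)$ reduce to the individual bounds $a_i^2 \leq T^2$ and $b_i^2 + c_i^2 \leq \delta T^2$.

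Next, I would compute $[x^0, y^0]$ explicitly by expanding in the basis $\mathbf{i},\mathbf{j},\mathbf{k}$. Using $\mathbf{i}^2 = -1$, $\mathbf{j}^2 = \pm 1$ (sign according as $B$ is indefinite or definite), and the skew-commutativity $\mathbf{ij} = -\mathbf{ji} = \mathbf{k}$, one verifies that the commutator takes the form $[x^0,y^0] = [A,B,C]$ with
\begin{equation*}
A = \mp 2(b_1 c_2 - b_2 c_1), \quad B = -2(a_1 c_2 - a_2 c_1), \quad C = 2(a_1 b_2 - a_2 b_1).
\end{equation*}

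The main step is then to estimate $A,B,C$ using Cauchy--Schwarz and the $\Omega(\delta,T)$ bounds. For $A$: both factors $b_i, c_i$ are controlled by $\delta^{1/2}T$, so
\begin{equation*}
A^2 \leq 4(b_1^2 + c_1^2)(b_2^2 + c_2^2) \leq 4\delta^2 T^4 \leq 4\delta T^4.
\end{equation*}
For $B$ and $C$, one factor is of size $T$ while the other is of size $\delta^{1/2}T$, yielding for example
\begin{equation*}
B^2 \leq 4(a_1^2 + a_2^2)(c_1^2 + c_2^2) \ll \delta T^4,
\end{equation*}
and similarly for $C^2$. Combining these estimates gives $P([x,y]) = A^2 + B^2 + C^2 \ll \delta T^4$, and since $q([x,y]) = A^2 \mp(B^2+C^2)$ by \eqref{eqn:tr-det-coordinates}, the triangle inequality yields $|q([x,y])| \leq A^2 + B^2 + C^2 \ll \delta T^4$ as well.

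There is no real obstacle here: once the explicit formula for the commutator is in hand, both bounds are immediate applications of Cauchy--Schwarz. The only subtle point worth double-checking is the sign pattern in the multiplication table in the definite versus indefinite case, but since we only need absolute value bounds, both cases are handled uniformly.
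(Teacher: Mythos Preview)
Your proof is correct and follows essentially the same idea as the paper's: compute the commutator in coordinates and bound each entry. The paper instead embeds $B_\infty$ into $\Mat_{2\times 2}(\mathbb{C})$ and argues via matrix multiplication after separating off the case $\delta=1$, whereas your direct computation in the $\mathbf{i},\mathbf{j},\mathbf{k}$ basis handles all $\delta$ uniformly and avoids the auxiliary embedding; this is a mild simplification but not a genuinely different route.
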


\begin{proof} For $\delta = 1$, the required estimates reduce via homogeneity to the compactness of the unit ball and the continuity of multiplication.  We turn to the case $0 < \delta < 1$.  We embed $B_{\infty} \hookrightarrow \Mat_{2 \times 2}(\mathbb{C})$ by $\i \mapsto \sm i & 0 \\ 0 & -i \esm$, $\j \mapsto \sm 0 & 1 \\ \pm 1 & 0 \esm$.  Then, $P$ is asymptotic to the restriction of the squared Euclidean norm on the matrix entries, while $q$ is the restriction of the determinant.  We may assume that $x=[a_1,b_1,c_1]$ and $y=[a_2,b_2,c_2]$ are non-zero.  We may assume (by the known $\delta = 1$ case) that $\delta$ is sufficiently small.  The required conclusion then reduces via homogeneity to the following assertion: the commutator of any two matrices of the form
  \begin{equation*}
    \begin{pmatrix}
      a_1 i & O(\delta^{1/2}) \\ O(\delta^{1/2}) & - a_1 i
    \end{pmatrix}
    \quad
    \text{ and }
    \begin{pmatrix}
      a_2 i & O(\delta^{1/2}) \\ O(\delta^{1/2}) & - a_2 i
    \end{pmatrix}
  \end{equation*}
  is of the form $\sm O(\delta) & O(\delta^{1/2}) \\ O(\delta^{1/2}) & O(\delta) \esm$, and in particular has Euclidean norm $O(\delta^{1/2})$ and determinant $O(\delta)$.  Indeed, the product of any two matrices of the indicated form is readily computed to be of the form
  $\sm -a_1a_2 + O(\delta) & O(\delta^{1/2}) \\
  O(\delta^{1/2}) & -a_1a_2 + O(\delta) \esm$, hence the commutator of two such matrices, being a difference of such products, has the required form.
\end{proof}

\subsection{The non-split case}

We retain the above setting and further assume that $B$ is \emph{non-split} %(i.e., not isomorphic to the matrix algebra)
and that the level $N$ of the Eichler order $R$ is \emph{squarefree}.

The following estimate is, in some sense, the most intricate one in the paper.  It requires us to bound certain matrix counts in the the critical range (see Remark \ref{rmk:crucialrange} below) by essentially $\O(1)$, uniformly in the discriminant and level.  To achieve such uniformity seems to require the delicate argument involving commutators recorded below.

\begin{proposition}
  Let $n$ be a non-zero integer, let $0 < \delta \leq 1$, and let $T \geq \ell^{-\frac{1}{2}}$.  Then the set $S \coloneqq R(\ell)^0 \cap \Omega(\delta,T) \cap q^{-1}(\{n\})$ has cardinality
  \[|S| \ll_{\eps} (\ell T)^\varepsilon \tau(d_B N) \left( 1 + \frac{\ell^2}{d_B N} \min\left\{\delta^{\frac{1}{2}} T^2 , \frac{\delta T^4}{|n|} \right\} \right).\]
  \label{prop:typeII-omega}
\end{proposition}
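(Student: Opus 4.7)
The plan is to fix an element $x_0\in S$ (if $S$ is empty the claim is trivial) and to analyze the commutator map $\phi\colon S\to B_\infty^0$, $\phi(y):=[x_0,y]$. Direct computation, using the identity $uv+vu=-\langle u,v\rangle$ for $u,v\in B_\infty^0$, yields
\[
[x_0,y]=2x_0 y+\langle x_0,y\rangle,\qquad q([x_0,y])=4q(x_0)q(y)-\langle x_0,y\rangle^2=4n^2-\langle x_0,y\rangle^2.
\]
The first identity inverts to $y=(2n)^{-1}\bigl(\langle x_0,y\rangle x_0-x_0\phi(y)\bigr)$, which is legitimate because $B$ is a division algebra, so $q$ is anisotropic on $B_{\mathbb{Q}}^{0}$ and $q(x_0)=n\neq 0$. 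The second identity determines $\langle x_0,y\rangle$ up to sign once $\phi(y)$ is fixed, hence $\phi$ is at most $2$-to-$1$ on $S$.

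I will then invoke Lemmas~\ref{lem:N2-2} and~\ref{lem:N2-3} to place each $z:=\phi(y)$ into the rank-$2$ lattice $\Lambda:=V\cap\tfrac{1}{\ell}R^{0}$, where $V:=x_0^{\perp}\cap B_{\mathbb{R}}^{0}$, with $P(z)\ll\delta T^4$ and $q(z)\in\tfrac{d_BN}{\ell^3}\mathbb{Z}$. Setting $A:=\ell\langle x_0,y\rangle$, which is an integer by the elementary-divisor computation \eqref{eq:elem-div-order} (implying $\langle R(\ell)^{0},R(\ell)^{0}\rangle_q\subseteq\tfrac{1}{\ell}\mathbb{Z}$), the identity above rearranges to $A^2=4\ell^2 n^2-\ell^2 q(z)$, yielding the quadratic congruence
\[
A^2\equiv 4\ell^2 n^2\pmod{d_BN/\ell}.
\]

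The proof will conclude by counting admissible pairs $(A,z)$. Since $d_BN/\ell$ is squarefree, the Chinese remainder theorem bounds the number of residue classes of $A$ satisfying the congruence by $\ll 2^{\omega(d_BN/\ell)}\le\tau(d_BN)$. Within each such class, the number of integers $A$ in the admissible range will be
\[
\ll 1+\frac{\ell^2}{d_BN}\min\!\left\{\sqrt{\delta}\,T^2,\ \frac{\delta T^4}{|n|}\right\},
\]
the two regimes arising either from the crude bound $|A|\ll\ell\sqrt{\delta}\,T^2$ (applicable when $|n|\ll\sqrt{\delta}\,T^2$, so that $A^2\le 4\ell^2 n^2+O(\ell^2\delta T^4)\ll\ell^2\delta T^4$) or from the sharper bound $\bigl||A|-2\ell|n|\bigr|\ll\ell\delta T^4/|n|$ (obtained by expanding around $|A|=2\ell|n|$ when $|n|\gg\sqrt{\delta}\,T^2$). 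For each admissible $A$ the value $q(z)=4n^2-(A/\ell)^2$ is fixed, and Lemma~\ref{lem:N2-1} applied to the integer-valued binary form $2\ell^2 q|_V$ on $\Lambda$ with majorant $2\ell^2 P|_V$ (nondegenerate because $B$ is a division algebra) bounds the number of valid $z$ by $\ll_{\varepsilon}(\ell T)^{\varepsilon}$.

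The most delicate point will be verifying the integrality $A\in\mathbb{Z}$ and the precise congruence modulus $d_BN/\ell$ uniformly in the choice of $x_0$, which relies on the elementary-divisor computation together with the local order-theoretic analysis of Section~\ref{sec:order-theor-prel}. The anisotropy of $q$ on $B_{\mathbb{Q}}^{0}$, which underpins both the $2$-to-$1$ property of $\phi$ and the applicability of Lemma~\ref{lem:N2-1} to the rank-$2$ lattice $\Lambda$, is precisely the ingredient that fails in the split case and necessitates a separate treatment there.
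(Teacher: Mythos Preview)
Your proposal is correct and follows essentially the same approach as the paper: fix a base element, pass to the commutator, use Lemmas~\ref{lem:N2-2} and~\ref{lem:N2-3} to constrain both the size and the congruence class of $q([x_0,y])$, count the scalar part $A=-\ell\,\tr(x_0 y)$ via its residue modulo $d_BN/\ell$, and then invoke Lemma~\ref{lem:N2-1} on the binary lattice $x_0^\perp\cap\tfrac{1}{\ell}R^0$. The paper's notation differs only cosmetically (writing $a=\tr(\gamma_1\gamma_2)\in\tfrac{1}{\ell}\mathbb{Z}$ and $\beta=[\gamma_1,\gamma_2]$, with the recovery formula $\gamma_2=(a+\beta)/2\gamma_1$), and the argument is identical.
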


\begin{remark}
  \label{rmk:crucialrange}
  The critical range is when $n \asymp T^2 \asymp d_B N(1+k) / \ell^2$ and $\delta \asymp (1+k)^{-1}$; in that range, we obtain $|S| \ll_{\eps} (d_B N (1+k) )^\varepsilon$.
\end{remark}

\begin{proof}
  Suppose $S$ is not empty and let $\gamma_1, \gamma_2 \in S$.  Our strategy will be to bound for each $\gamma_1$ the number of possibilities for $\gamma_2$.

  Set $\beta \coloneqq [\gamma_1,\gamma_2] \in \frac{1}{\ell}R^0 $ and $a \coloneqq \tr(\gamma_1 \gamma_2) \in \frac{1}{\ell} \mathbb{Z}$.  Then, $2 \gamma_1 \gamma_2 = a + \beta$, $4 n^2 = a^2 + q(\beta)$.  In particular, $\gamma_2 = (a + \beta) / 2 \gamma_1$, so it suffices to bound the number of possibilities for $a$ and $\beta$.

  Lemmas \ref{lem:N2-2} and \ref{lem:N2-3} give $q(\beta) \ll \delta T^4$ and $q(\beta) \in \frac{d_B N}{\ell^3} \mathbb{Z}$, i.e.,
  \[a^2 = 4 n^2 + O(\delta T^4),\]
  \[a^2 \equiv 4 n^2 \pod{\tfrac{d_B N}{\ell^3}},\] thus
  \[a = \pm 2 n + O\left(\min\left\{\delta^{\frac{1}{2}} T^2 , \frac{\delta T^4}{|n|} \right\} \right),\]
  \[a \equiv a_0 \pod{\tfrac{d_B N}{\ell^2}}\] for some sign $\pm$ and some residue class $a_0$ modulo $d_B N/ \ell^2$ with $a_0^2 \equiv 4 n^2 \pod{\tfrac{d_B N}{\ell^2}}$.  Since $d_B N$ is squarefree, there are at most $\tau(d_B N)$ such classes.  For each $a_0$, the number of possibilities for $a$ is $O(1 + (\ell^2 / d_B N) \cdot \min\{\delta^{\frac{1}{2}} T^2, \delta T^4 /|n|\})$.

  We now bound for each $a$ the number of possibilities for $\beta$.  Let $M$ denote the orthogonal complement in $\frac{1}{\ell} R^0$ of $\gamma_1$, thus $M = \{ \gamma \in \frac{1}{\ell}R^0 : \trace(\gamma \gamma_1) = 0 \}$.  By restricting $q$ to $M$, we obtain an integral binary quadratic form.  Since $B$ is non-split, $M$ is anisotropic.  Since $\trace(\gamma_1 \gamma_2 \gamma_1) = \trace(\gamma_2 \gamma_1 \gamma_1)$, we have $\beta \in M$.  From Lemma \ref{lem:N2-3}, we obtain $P(\beta) \ll T^4$.  Thus, $\beta$ satisfies the system
  \[\beta \in M,
    \quad q(\beta) = 4n^2 - a^2, \quad P(\beta) \ll T^4.
  \]
  Since $q(M) \subseteq \frac{1}{\ell^2} \mathbb{Z}$ and $4 n ^2 - a^2 \ll \delta T^4$, we see by Lemma \ref{lem:N2-1} that the number of possibilities for $\beta$ is $\ll_{\eps} ( \ell T)^\eps$.

  By multiplying together the number of possibilities for $\pm, a_0, a$ and $\beta$, we achieve the required bound.
\end{proof}

\subsection{Extension to the split case} \label{sec:type-II-split-ext}
% We assume that $B$ is the matrix algebra $\Mat_{2 \times 2}(\mathbb{Q})$, since otherwise there are no isotropic elements in $R^0$.  We assume also that the order $R$ is contained in $\Mat_{2 \times 2}(\mathbb{Z})$, and write $\Gamma \coloneqq R \cap \SL_2(\mathbb{Z})$.

% Let $z = x + i y \in \mathbb{H}$ denote the point of interest.  Write $\Omega(z,\delta) \coloneqq \Omega(i,\delta)$, $\Omega(z,\delta,T) \coloneqq \Omega(i,\delta,T)$, where $i$ is chosen so that the corresponding embedding of $\mathbb{C}^\times$ stabilizes $z$.

Recall from \S\ref{sec:results-split-case}, that we may assume our Eichler order of level $N$ in the split quaternion algebra $B = \Mat_{2 \times 2}(\mathbb{Q})$ to be of the shape $g^{-1}Rg$, where $R=\sm \mathbb{Z} & \mathbb{Z}\\ N\mathbb{Z} & \mathbb{Z} \esm$ and $g \in G(\mathbb{R})$. Our aim is to bound the cardinality of the set
\[S \coloneqq g^{-1}R(\ell)^0g \cap \Omega(\delta,T) \cap \det{}^{-1}(\{n\}).\] In the non-split case, we had verified that
\[|S| \ll_{\eps} C\] where
\[C \coloneqq 1 + (\ell T)^\varepsilon \tau(d_B N) \left( 1 + \frac{ \ell^2}{d_B N} \min\left\{\delta^{\frac{1}{2}} T^2 , \frac{\delta T^4}{|n|} \right\}\right).\] We extend this to the split case as follows.

\begin{proposition} Let $n$ be an integer.  If $-n$ is not a square, then $|S| \ll_{\eps} C$.  If $-n$ is a square, then $|S| \ll_{\eps} C + \delta^{1/2} T \ell H(g)$, where $H$ denotes the normalized height function defined in \S\ref{sec:results-split-case}. %, \S\ref{sec:cusps-AL-operators}.
  \label{prop:typeII-split}
\end{proposition}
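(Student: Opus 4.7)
The plan is to mirror the proof of Proposition \ref{prop:typeII-omega}, isolating where the splitness of $B$ requires a new argument. Fixing $\gamma_1 \in S$, I would parametrize $\gamma_2 \in S$ by $a = \tr(\gamma_1\gamma_2) \in \tfrac{1}{\ell}\mathbb{Z}$ and $\beta = [\gamma_1,\gamma_2] \in \tfrac{1}{\ell}(g^{-1}R^0g)$, so that $2\gamma_1\gamma_2 = a+\beta$ and $4n^2 = a^2 + q(\beta)$. Lemmas \ref{lem:N2-2} and \ref{lem:N2-3} then produce the same $\O(C)$ choices of $a$ as in the non-split case. For each admissible $a$, the task reduces to counting $\beta$ in the rank-$2$ lattice $M \coloneqq \gamma_1^\perp \cap \tfrac{1}{\ell}(g^{-1}R^0g)$ with $q(\beta) = 4n^2-a^2$ and $P(\beta) \ll \delta T^4$.

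A short discriminant calculation shows that $(q,M)$ has discriminant $\equiv n$ up to squares, so it is isotropic over $\mathbb{Q}$ exactly when $-n$ is a rational square. Crucially, Lemma \ref{lem:N2-1} is valid without any anisotropy hypothesis -- its proof handles isotropic forms via the divisor bound on $\mathbb{Z} \oplus \mathbb{Z}$ -- provided the target value is non-zero. I would apply it whenever $q(\beta) \neq 0$ to obtain $\ll_\eps (\ell T)^\eps$ choices of $\beta$, yielding $|S| \ll_\eps C$. This fully handles the case where $-n$ is not a square (in which case $\beta=0$ is the only solution to $q(\beta)=0$), and handles all $\beta \neq 0$ contributions when $-n$ is a square. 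It remains to estimate the extra contribution from $q(\beta) = 0$ when $-n$ is a square.

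In that remaining case $a = \pm 2n$, so there are at most two values of $a$. Writing $-n = m^2$ with $m \in \mathbb{Z}_{\geq 1}$, the matrix $\gamma_1$ is $\mathbb{Q}$-diagonalizable with eigenvalues $\pm m$, and its two $\mathbb{Q}$-eigenspaces define a pair of cusps $\mathfrak{a}_\pm$ of $\Gamma_0(N)$. The isotropy of $\beta$ in $\gamma_1^\perp$ forces $\gamma_2$ to share an eigenvector with $\gamma_1$, so $\gamma_2 = \pm \gamma_1 + N$ for some rank-$1$ nilpotent $N$ aligned with one of the two eigenspaces; the problem reduces to counting points on each of $\O(1)$ affine lines inside $\Omega(\delta,T) \cap g^{-1}R(\ell)^0 g$. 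To evaluate this count, I would apply an Atkin--Lehner operator moving $\mathfrak{a}_+$ to the class of $\infty$, so that the representative $g\gamma_1 g^{-1} \in R(\ell)^0$ becomes upper-triangular $\sm m & v \\ 0 & -m \esm$, and place $g$ in Iwasawa form $\sm y^{1/2} & xy^{-1/2} \\ 0 & y^{-1/2} \esm$ with $y \asymp H(g)$. A direct computation of $g^{-1}\gamma_2^{(0)} g$ in the $[a,b,c]+d$ coordinates of \S\ref{sec:Kinftynotation}, noting that the $(1,2)$-entries of $R(\ell)^0$ lie in $\tfrac{1}{\ell}\mathbb{Z}$ and get scaled by $1/y$ under conjugation, shows that the constraint $u(\gamma_2) \leq \delta T^2$ cuts out an interval of length $\ll \delta^{1/2}T \cdot y$ in the parametrizing $(1,2)$-entry, containing $\ll 1 + \delta^{1/2}T \ell H(g)$ points of the lattice $\tfrac{1}{\ell}\mathbb{Z}$. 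Summing over the $\O(1)$ choices of eigenspace, sign, and value of $a$ delivers the extra term $\O(\delta^{1/2}T \ell H(g))$.

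The main obstacle is the explicit computation in the last paragraph: the Atkin--Lehner normalization of the cusp, the Iwasawa coordinates for $g$, and the behavior of $u$ on each affine isotropic line must combine cleanly to produce the factor $\ell H(g)$. The factor $y \asymp H(g)$ enters through the $(1,2)$-entry scaling under conjugation by the Iwasawa $g$, quantifying how proximity of $g \cdot i$ to a cusp shared by $\gamma_1$ inflates the count of matrices $\gamma_2$ that are triangular with respect to a common eigenbasis.
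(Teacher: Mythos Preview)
Your proposal is correct and follows essentially the same strategy as the paper: mirror the non-split argument for $q(\beta)\neq 0$ (Lemma~\ref{lem:N2-1} indeed handles the isotropic binary case via the divisor bound), then for $q(\beta)=0$, $\beta\neq 0$ upper-triangularize and count the $(1,2)$-entry on an interval of length $O(\delta^{1/2}Ty)$ against a lattice of spacing $1/\ell$.

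The one difference worth noting is your normalization step. You move the relevant eigenspace of $g\gamma_1 g^{-1}$ to $\infty$ via an Atkin--Lehner operator (which normalizes $R(\ell)^0$, so the $(1,2)$-spacing stays $1/\ell$), and then bound the resulting Iwasawa coordinate by $H(g)$. The paper instead conjugates by $\sigma_{\mathfrak a}\in\Gamma_0(N)\tau_t\subset\SL_2(\mathbb Z)$ sending $\infty$ to the cusp determined by the primitive direction $\beta_0$ of $\beta$; this transforms $R$ to $\sm \mathbb Z & w_{\mathfrak a}\mathbb Z\\ (N/w_{\mathfrak a})\mathbb Z & \mathbb Z\esm$, giving spacing $w_{\mathfrak a}/\ell$ and height $y_{\mathfrak a}$, and then invokes $y_{\mathfrak a}/w_{\mathfrak a}\leq H(g)$ from \eqref{eq:alt-height-description}. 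Both routes yield the same bound $O(1+\ell\delta^{1/2}TH(g))$; yours is arguably more direct, since $H$ is defined via Atkin--Lehner translates. One small correction: after applying the Atkin--Lehner operator determined by the eigenspace, you get $y=\Im(\tilde\tau^{-1}z)\leq H(g)$, not $y\asymp H(g)$ --- the cusp is dictated by $\gamma_1$, not chosen to maximize the height --- but the inequality is the direction you need.
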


% \begin{remark}
%   \label{rmk:crucialrangesplit}
%   The crucial range for the hybrid weight $k$, squarefree level $N$ aspect is when
%   \[\delta \asymp 1/k,\]
%   \[n \asymp T^2 \asymp N/\delta \asymp k N,\] in which case $C \approx 1$.  If $z$ is chosen as in the above example to lie in the usual fundamental domain for $A_0(N)$, then
%   \[\delta^{1/2} T H(z) \asymp N^{1/2} y.\]
%   Thus the proposition implies that $|S| \ll 1$ provided that $y \ll 1/N^{1/2}$.  Recall that the crucial range is when $1/N \ll y \ll 1/N^{1/2}$, since in the remaining range, $y \gg 1/N^{1/2}$, the Fourier expansion gives an adequate sup norm bound.

%   This last property can likely be seen directly from the definition, without referring to specific fundamental domains, since the definition tells us that $H(z)$ is "large" only if there is a cusp at which the Fourier expansion has a "small" number of dominant terms.
% \end{remark}

\begin{proof}%[Proof of the Proposition]
  We proceed as in the original argument, aiming to bound for fixed $\gamma_1 \in gSg^{-1}$ the number of possible $\gamma_2 \in gSg^{-1}$.  As before, we write
  \[\alpha = \trace(\gamma_1 \gamma_2) = \gamma_1 \gamma_2 + \gamma_2
    \gamma_1 \in \tfrac{1}{\ell}\mathbb{Z},\]
  \begin{equation}\label{eq:beta-=-gamma_1}
    \beta = [\gamma_1, \gamma_2] = \gamma_1 \gamma_2 - \gamma_2
    \gamma_1 \in \tfrac{1}{\ell} R^0,
  \end{equation}
  so that
  \[2 \gamma_1 \gamma_2 = \alpha + \beta,\]
  \[ 4n^2 = q(\gamma_1 \gamma_2) = \alpha^2 + q(\beta).\] Since $\gamma_1 = (\alpha + \beta)/2 \gamma_2$, it suffices to count the number of possible pairs $(\alpha,\beta)$.  The pairs with $q(\beta) \neq 0$ may be counted as before after noting that the restriction of $q$ to the orthogonal complement of $\gamma_1$ is nondegenerate. % but using the extension mentioned a few emails ago of the binary quadratic forms lemma to the isotropic case.
  There are at most two pairs with $\beta = 0$, since then $\alpha = \pm 2n$.  We thereby reduce to counting the number of pairs for which
  \[q(\beta) = 0, \quad \beta \neq 0.\]

Recall the notation of \S\ref{sec:cusps-AL-operators}. We note that for each cusp $\mathfrak{a}$, we may and shall choose $\sigma_{\mathfrak{a}} \in \Gamma_0(N) \tau_{t}$ for some $t | N$. With this choice, we have
  \begin{equation}
  	\sigma_{\mathfrak{a}}^{-1} R \sigma_{\mathfrak{a}} = \begin{pmatrix} \mathbb{Z} & w_{\mathfrak{a}}\mathbb{Z} \\  \frac{N}{w_{\mathfrak{a}}}\mathbb{Z} & \mathbb{Z} \end{pmatrix},
  	\label{eq:generalized-cuspwidth}
  \end{equation}
 where $w_{\mathfrak{a}}$ is the cusp width of the cusp $\mathfrak{a}$. This is easily verified locally. We further introduce the following notation: for $\mathfrak{a} \in \mathbb{P}^1(\mathbb{Z})$ and $\kappa \in B^0$, we set $\kappa^{\mathfrak{a}} \coloneqq \sigma_{\mathfrak{a}}^{-1} \kappa \sigma_{\mathfrak{a}}$.

  We observe, by \eqref{eq:beta-=-gamma_1}, that $\ell \beta$ is a non-zero element of $R^0 \subseteq \Mat_{2 \times 2}(\mathbb{Z})^0$.  There is thus a unique (up to sign) primitive element $\beta_0$ of $\Mat_{2 \times 2}(\mathbb{Z})^0$ that generates $\mathbb{Q} \ell \beta \cap \Mat_{2 \times 2}(\mathbb{Z})^0$.  We then have $\beta \in \frac{1}{\ell} \mathbb{Z} \beta_0$.

  % Choose a primitive element $\beta_0$ of $\mathbb{Q} \ell \beta \cap \Mat_{2 \times 2}(\mathbb{Z})^0$.

  % Choose a primitive element $\beta_0$ of the space $\Mat_{2 \times 2}(\mathbb{Z})^0$ of traceless integral $2 \times 2$ matrices so that $\beta \in \frac{1}{\ell} \mathbb{Z} \beta_0$.
  
  We may and shall choose $\mathfrak{a}$ so that $\beta_0^{\mathfrak{a}} = \sm 0 & \pm 1 \\ 0 & 0 \esm$.  Note that $\beta$ (equivalently, $\beta_0$) is orthogonal not only to $\gamma_1$ (as was used in the original argument), but also to $\gamma_2$.  From this, we deduce that
  \[\gamma_2^{\mathfrak{a}} = \begin{pmatrix} a & b \\ 0 & -a
    \end{pmatrix}\] for some $a \in \mathbb{Z}$ and $b \in \frac{1}{\ell} \mathbb{Z}$.  We have $n = q(\gamma_2) = -a^2$, which shows that $-n$ must be a square and also that there are at most two possibilities for $a$.  It remains to verify that the number of possibilities for $b$ is $O(1+\delta^{1/2} T \ell H(g))$.  We will show in fact that the number of $b$ is
  \[O(1 + \delta^{1/2} T \ell y_{\mathfrak{a}} / w_{\mathfrak{a}}),\]
  where $\sigma_{\mathfrak{a}}^{-1}z=z_{\mathfrak{a}}=x_{\mathfrak{a}}+iy_{\mathfrak{a}}$. To see this, observe first that the condition $\gamma_2 \in R(\ell)^0 \subseteq \frac{1}{\ell} R^0 \Leftrightarrow \gamma_2^{\mathfrak{a}} \in \sigma_{\mathfrak{a}}^{-1}R(\ell)^0\sigma_{\mathfrak{a}} \subseteq \frac{1}{\ell} \sigma_{\mathfrak{a}}^{-1}R^0\sigma_{\mathfrak{a}}$ yields the congruence $b \equiv 0 \pod{\tfrac{w_{\mathfrak{a}}}{\ell}}$, see equation \eqref{eq:generalized-cuspwidth}.  The condition $\gamma_2 \in g\Omega(\delta,T)g^{-1}$ may be restated as
  \[\gamma_2^{\mathfrak{a}} \in \sigma_{\mathfrak{a}}^{-1}g \Omega(\delta, T) (\sigma_{\mathfrak{a}}^{-1}g)^{-1}.\]
  Let $g'$ be an upper-triangular element of $G(\mathbb{R})$ for which $g' \cdot i = \sigma_{\mathfrak{a}}^{-1}g i=\sigma_{\mathfrak{a}}^{-1} z = z_{\mathfrak{a}}$. Then, by the $K_{\infty}$ invariance of $\Omega(\delta,T)$ on the left and right, we have $\gamma_2^{\mathfrak{a}} \in g' \Omega(\delta, T) (g')^{-1}$. We compute
  \[(g')^{-1} \gamma_2^{\mathfrak{a}} g' = \begin{pmatrix} a & y_{\mathfrak{a}}^{-1} (b + 2 a x_{\mathfrak{a}}) \\ 0 & -a \end{pmatrix} \in \Omega(\delta,T).\] This last condition forces $b$ to lie in an interval of length $O(\delta^{1/2} T y_{\mathfrak{a}})$.  We thereby obtain the required bound for the number of possible $b$'s.
\end{proof}

\subsection{Proof of Theorem \ref{thm:typeII}} %If $\delta \gg 1$ then Theorem \ref{thm:typeII} follows at once from Propositions \ref{prop:typeII-omega} and \ref{prop:typeII-split}. Thus, we may assume $\delta \ll 1$.
We may split the set $\Omega(\delta, T)$ into $\Omega(1/16, 4\delta^{\frac{1}{2}} T)$ and the dyadic sets which are comprised of the elements $[a,b,c] +d\in B_{\infty}$ for which
\[
  \tfrac{1}{2} \delta_j T^{2} \leq a^2 + b^2 + c^2 + d^2 \leq \delta_j T^2, \quad b^2 + c^2 \leq \delta T^2,
\]
for some $16 \delta \le \delta_j \leq 1$. We note that these are contained in $\Omega(\delta \delta_j^{-1} , \delta_j^{\frac{1}{2}} T)$. In order for the dyadic sets to contain an element of trace $0$ and norm $n$, one must have $|n| \asymp a^2 \asymp \delta_j T^2$. Hence, if we apply Propositions \ref{prop:typeII-omega} and \ref{prop:typeII-split}, we get
$$
|g^{-1} R(\ell)^0 g \cap \Omega(\delta \delta_j^{-1}, \delta_j^{\frac{1}{2}}T) \cap \det{}^{-1}(\{n\})| \prec 1+ \ell \delta^{\frac{1}{2}}H(g)T +\frac{\ell^2}{d_BN} \delta T^2,
$$
where we used $|n|\asymp \delta_jT^2$, and
$$
|g^{-1} R(\ell)^0 g \cap \Omega(\tfrac{1}{16}, 4 \delta^{\frac{1}{2}}T) \cap \det{}^{-1}(\{n\})| \prec 1+ \ell \delta^{\frac{1}{2}}H(g)T +\frac{\ell^2}{d_BN} \delta T^2.
$$
If $\delta$ happens to be very small, say, if $\delta \le (16 d_BNT^2)^{-1}$, then it suffices to consider only $\delta_j \ge (d_BNT^2)^{-1}$ and the final set $\Omega(\delta d_B N T^2,(d_BN)^{-\frac{1}{2}})$. This avoids a factor $\delta^{-o(1)}$ for a too small $\delta$. We conclude Theorem \ref{thm:typeII}.

\appendix
\section{The Theta Lift} \label{sec:appendix-theta}
In this section, we use the theta correspondence for the reductive dual pair ($\operatorname{O}_{\det}$, $\operatorname{SL}_2$) to derive the necessary properties of the theta kernels in use. The group $\operatorname{O}_{\det}$ is the affine algebraic group over $\mathbb{Q}$ representing the orthogonal group of $(B, \det)$. Recall that $G$ is the linear algebraic group defined over $\mathbb{Q}$ satisfying $G(L)=\lfaktor{L^\times}{(B\otimes L)^\times}$ for any $\mathbb{Q}$-algebra $L$.
Denote by $M$ the algebraic group representing the functor
\begin{equation*}
M(L)=\lfaktor{\Delta(L^\times)}{}{\left\{(g_1,g_2)\in (B\otimes L)^\times \times (B\otimes L)^\times \colon \det g_1=\det g_2 \right\}},
\end{equation*}
for any $\mathbb{Q}$-algebra $L$. 
Then, $M$ is defined over $\mathbb{Q}$ and it is isomorphic to the special orthogonal group $\operatorname{SO}_{\det}$ via the  action $(g_1,g_2).x=g_1 x g_2^{-1}$. We also define the algebraic group $G'$ over $\mathbb{Q}$ to be the simply-connected form of $G$, i.e., $G'(L)=\operatorname{SL}_1(B\otimes L)$ for any $\mathbb{Q}$-algebra $L$. The natural map $G'\times G' \to M$ is an isogeny. The left-hand side is the simply-connected form, i.e.\ the Spin group, and the right-hand side the adjoint one. 

The determinant map provides two exact sequences
\begin{equation*}
  1\to\prod_{v\leq\infty}\langle(I,-I)\rangle\to M(\mathbb{A})\xrightarrow{\iota^{(1)}} (G\times G)(\mathbb{A})\xrightarrow{\det\left(\frac{\bullet}{\bullet}\right)}\lfaktor{\mathbb{A}^{\times 2}}{\mathbb{A}^\times}\to 1,
\end{equation*}
\begin{equation*}
  1\to\prod_{v\leq\infty}\langle(-I,-I)\rangle\to \left(G'\times G'\right)(\mathbb{A})\xrightarrow{\iota'} M(\mathbb{A})\xrightarrow{\det}\lfaktor{\mathbb{A}^{\times 2}}{\mathbb{A}^\times}\to 1.
\end{equation*}

\subsection{Restriction of automorphic representations}\label{sec:automorphic-restriction}
We would like to understand the behavior of irreducible cuspidal representation under pull-back by $\iota^{(1)}$ and $\iota'$. We proceed to discuss some generalities that apply to these isogenies.
Let $H$ be a semisimple algebraic group defined over $\mathbb{Q}$, and fix a maximal compact open subgroup $K_f=\prod_{v<\infty} K_v<H(\mathbb{A}_f)$. Let $K_\infty<H(\mathbb{R})$ a maximal compact real subgroup and set $K=K_\infty K_f$. For an automorphic representation $\Pi$ of $H(\mathbb{A})$, we denote by $\Pi^\infty\subset \Pi$ the dense subset of $K$-finite vectors. That is, every $v\in\Pi^\infty$ is invariant under a finite-index subgroup of $K_f$ and its $K_\infty$-orbit spans a finite dimensional subspace. Then, $\Pi^\infty$ is an admissible $H(\mathbb{A})$ representation.

Assume $\jmath\colon H'\to H$ is a homomorphism of algebraic groups satisfying the following conditions:
\begin{enumerate}
\item $\ker \jmath$ is a finite central subgroup of $H'$,
\item $\Img \jmath$ is a normal subgroup of $H$, and $\operatorname{coker} \jmath$ is a finite abelian group.
\item $H(\mathbb{Q}_p) = K_p j(H'(\mathbb{Q}_p))$ for almost all primes $p$.
\end{enumerate}
In particular, the Lie algebras of $H$ and $H'$ are isomorphic, hence $H'$ is semisimple.  These assumptions are satisfied when $\jmath$ is an isogeny of semisimple algebraic groups and for the inclusion map $\operatorname{SO}_{\det}\to\operatorname{O}_{\det}$.  The group $H_{\mathrm{char}}=\dfaktor{H(\mathbb{Q})}{H(\mathbb{A})}{j(H'(\mathbb{A}))}$ is a compact abelian group, that is often infinite. In particular, $j(H'(\mathbb{A}))$-orbits on $[H]$ can have measure zero and the operation of restricting a function in $L^2([H])$ to an orbit of $j(H'(\mathbb{A}))$ is ill-defined. Nevertheless, we have the following.
\begin{lemma}\label{lem:pull-back-properties}
The pullback $\jmath^*\colon \Res^{H(\mathbb{A})}_{H'(\mathbb{A})} L^2([H])^\infty\to L^2([H'])^\infty$ is a well-defined operator that restricts to an intertwining operator of the cuspidal spectrum $\jmath^*\colon \Res^{H(\mathbb{A})}_{H'(\mathbb{A})} L^2_\mathrm{cusp}([H])^\infty\to L_\mathrm{cusp}^2([H'])^\infty$.  Moreover, $f\in L^2([H])^\infty$ is cuspidal if and only if for any class $[h]\in H_\mathrm{char}$, the vector $\jmath^*R_h f$ is cuspidal for some representative $h\in [h]\subset H(\mathbb{A})$.
\end{lemma}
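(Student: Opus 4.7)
The plan proceeds in three movements, exploiting smoothness of $K$-finite vectors and the parabolic correspondence under the isogeny $\jmath$.

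First I would address well-definedness and the intertwining property. By Harish-Chandra's smoothness theorem, every $f \in L^2([H])^\infty$ admits a unique smooth representative. Since $\jmath(H'(\mathbb{Q})) \subseteq H(\mathbb{Q})$, the pointwise formula $(\jmath^* f)(h') \coloneqq f(\jmath(h'))$ descends to a well-defined smooth function on $[H']$, and the intertwining relation $\jmath^* \circ R_{\jmath(h')} = R_{h'} \circ \jmath^*$ is immediate. The delicate point is $L^2$-membership, which I expect to be the main obstacle: the image of $[H']$ in $[H]$ typically has measure zero (as $H_\mathrm{char}$ may be infinite), so the naive restriction of $L^2$-classes is ill-defined. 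I plan to finesse this by disintegrating the Haar measure on $[H]$ along $\jmath(H'(\mathbb{A}))$-orbits: each orbit takes the form $H(\mathbb{Q})\, h\, \jmath(H'(\mathbb{A}))$ for $h \in H(\mathbb{A})$ and carries a natural measure obtained from Haar measure on $[H']$ after quotienting by the finite group $\ker\jmath(\mathbb{A})/\ker\jmath(\mathbb{Q})$. Condition (3) together with finiteness of $\ker \jmath$ ensure that only finitely many $K$-double cosets interfere, so the pushforward is uniformly bounded across the compact space $H_\mathrm{char}$. Fubini on the resulting decomposition
\begin{equation*}
  \int_{[H]} |f|^2\, dh \;=\; \int_{H_\mathrm{char}} \!\left( \int_{[H']} |\jmath^*(R_h f)|^2\, dh' \right) d[h]
\end{equation*}
then yields $L^2$-membership of $\jmath^*R_h f$ for almost every orbit, and smoothness upgrades this to every orbit.

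Next I would verify that cuspidality is preserved. The key observation is that $\jmath$ induces a bijection between parabolic $\mathbb{Q}$-subgroups of $H$ and those of $H'$: if $P \subset H$ is parabolic with unipotent radical $N$, then $P' \coloneqq \jmath^{-1}(P)^\circ$ is parabolic in $H'$ with unipotent radical $N' \coloneqq \jmath^{-1}(N)^\circ$, and the restriction $\jmath|_{N'} \colon N' \xrightarrow{\sim} N$ is an isomorphism of algebraic groups (its kernel lies in the finite central $\ker\jmath$, which intersects the unipotent $N'$ trivially). Unfolding and changing variables via this isomorphism produces
\begin{equation*}
  (\jmath^* f)_{N'}(h') = \int_{[N']} f(\jmath(n')\jmath(h'))\, dn' = f_N(\jmath(h')) = \jmath^*(f_N)(h'),
\end{equation*}
which shows that $f_N \equiv 0$ for every proper $N$ implies $(\jmath^* f)_{N'} \equiv 0$ for every proper $N'$, since every proper parabolic of $H'$ arises in the indicated way.

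Finally, for the ``if and only if'' characterization, the forward direction is immediate from the above combined with the stability of cuspidality under right translation. For the converse, suppose $(\jmath^*R_h f)_{N'} \equiv 0$ for some representative $h$ of every class $[h] \in H_\mathrm{char}$ and every proper parabolic $P'=M'N'$ of $H'$. Applying the constant-term identity to $R_h f$ in place of $f$, one finds $f_N(\jmath(h')h) = 0$ for every $h' \in H'(\mathbb{A})$, meaning that $f_N$ vanishes on the $\jmath(H'(\mathbb{A}))$-orbit through $h$. As these orbits exhaust $[H]$ by the very definition of $H_\mathrm{char}$, the smooth function $f_N$ vanishes identically on $[H]$, so $f$ is cuspidal. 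Independence of representative within a class follows from left $H(\mathbb{Q})$-invariance of $f$, the normality of $\jmath(H')$ in $H$, and the stability of cuspidality under right translation and automorphisms of $H'$, making the condition intrinsic to the class $[h]$.
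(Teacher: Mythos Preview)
Your proposal is correct and follows essentially the same approach as the paper: disintegration of Haar measure on $[H]$ along the factor map to $H_{\mathrm{char}}$, together with the bijection between parabolics of $H$ and $H'$ induced by $\jmath$ (using that the central kernel meets unipotent radicals trivially) and the resulting constant-term identity $c_{P}\jmath^*v_0 = \jmath^* c_{\tilde P}v_0$. The only cosmetic difference is that the paper obtains the $L^2$-bound more directly, using $K_0$-invariance of $v_0$ and finiteness of $H_{\mathrm{char}}/K_0$ to write $\|v_0\|_2^2$ as a \emph{finite} sum of the norms $\|\jmath^*(R_h v_0)\|_{2,[H']}^2$, which immediately bounds each term; your route via Fubini, a.e.\ finiteness, and a smoothness upgrade reaches the same conclusion but implicitly relies on the same finiteness (to pass from a.e.\ to everywhere via local constancy on $H_{\mathrm{char}}/K_0$).
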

\begin{remark*}
Notice that $\jmath^*$ does not preserve inner products. 
\end{remark*}
\begin{proof}
Restricting to a $j(H'(\mathbb{A}))$-orbit is a well-defined operation on $L^2([H])^\infty$, because every vector  $v_0\in L^2([H])^\infty$ is invariant under some compact-open subgoup $K_0<H(\mathbb{A}_f)$ and $\faktor{H_{\mathrm{char}}}{K_0}$ is finite. We now show that the $L^2$-norm of $\jmath^* v_0$ is finite.
The push-forward of the probability  Haar measure on $[H]$ to $H_\mathrm{char}$ is invariant under the action of $H(\mathbb{A})$, hence it is the probability Haar measure on $H_\mathrm{char}$. If we disintegrate the Haar measure on $[H]$ under the factor map $[H]\to H_\mathrm{char}$, then the atoms are exactly the $\jmath(H'(\mathbb{A}))$-orbits and the conditional measure on a.e.\@ atom is $\jmath(H'(\mathbb{A}))$-invariant, hence it is the push-forward of the probability Haar measure on $[H']$ to the atom. We can now deduce that
\begin{equation*}
\|v_0\|_2^2=|H_\mathrm{char}\slash K_0|^{-1} \sum_{h\in H_\mathrm{char}\slash K_0} \| \jmath^*(R_h v_0) \|_{2,[H']}^2.
\end{equation*}
Hence, $ \| \jmath^*(v_0) \|_{2,[H']}\leq \sqrt{|H_\mathrm{char}\slash K_0|} \|v_0\|_2<\infty$.

We show next that the image of a cuspidal vector is cuspidal. 
Fix $v_0\in  L^2([H])^\infty$. Let $P<H'$ be a parabolic subgroup defined over $\mathbb{Q}$ and let $N_P$ be its unipotent radical. The kernel $\ker\jmath$ is a central subgroup, hence it is diagonalizable and its intersection with $N_P$ is trivial. Then $\jmath\restriction_{N_P}$ is an isomorphism onto its image $N_{\tilde{P}}$, which is the unipotent radical of a parabolic $\tilde{P}<H$. Specifically, $\tilde{P}$ is the parabolic associated to the same root data as $P$. For every $g\in H'(\mathbb{A})$, we have, writing $c_P$ and $c_{\tilde{P}}$ for the maps assigning to a function its corresponding constant term,
\begin{equation*}
c_P \jmath^* v_0(g)=\int_{[N_P]} \jmath^*(v_0)( n g) \dif n=\int_{[N_{\tilde{P}} ]}v_0(n \jmath(g)) \dif n =c_{\tilde{P}} v_0(\jmath(g)).
\end{equation*}
Hence, the constant term of the push-forward of a cuspidal vector vanishes.
This formula also establishes the last claim.
\end{proof}

Next, we describe the transformation of the Haar measure. If $K<H(\mathbb{A})$ is a compact subgroup, we denote by $[H]_K$ the double quotient $\dfaktor{H(\mathbb{Q})}{H(\mathbb{A})}{K}$.
\begin{lemma}\label{lem:isogeny2isomorphism}
Fix compact open subgroups $K'_f<H'(\mathbb{A}_f)$ and $K_f<H(\mathbb{A}_f)$ satisfying $\jmath^{-1}(K_f)=K'_f$.
Assume the following conditions:
\begin{enumerate}
\item  $\faktor{H_\mathrm{char}}{K_f}=1$.
\item The preimage of $K_f \bmod \jmath(H'(\mathbb{A}))$ under the map $\coker \jmath (\mathbb{Q})\to\coker\jmath(\mathbb{A})$ is trivial.
\item $\ker \jmath\restriction_{H'(\mathbb{A})}< \operatorname{Z}_{H'}(\mathbb{Q}) \cdot K'_f$.
\end{enumerate}
Then, the induced map $\jmath\colon [H']_{K'_f}\to [H]_{K_f}$ is a homeomorphism and an isomorphism of Borel measure spaces, when each space is endowed with the respective probability Haar measure.
\end{lemma}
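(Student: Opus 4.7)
My plan is to show in turn that the natural map $\phi\colon [H']_{K'_f}\to [H]_{K_f}$ induced by $\jmath$ is well-defined, bijective, a homeomorphism, and measure-preserving.

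Well-definedness is immediate from $\jmath(H'(\mathbb{Q}))\subseteq H(\mathbb{Q})$ and $\jmath(K'_f)\subseteq K_f$. Surjectivity is an immediate unpacking of condition~(1), which reads as $H(\mathbb{A}) = H(\mathbb{Q}) \jmath(H'(\mathbb{A})) K_f$. For injectivity, I would suppose $\jmath(h'_1) = \gamma\,\jmath(h'_2)\,k$ with $\gamma\in H(\mathbb{Q})$ and $k\in K_f$; then $\gamma\in \jmath(H'(\mathbb{A}))K_f$, so the class of $\gamma$ in $\operatorname{coker}\jmath(\mathbb{A})$ lies in the image of $K_f$, whence condition~(2) forces $\gamma=\jmath(\gamma')$ for some $\gamma'\in H'(\mathbb{Q})$. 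Then $\jmath(\gamma'^{-1}h'_1 h_2'^{-1})=k\in K_f$ implies $\gamma'^{-1}h'_1 h_2'^{-1}\in\jmath^{-1}(K_f)\cdot\ker\jmath\restriction_{H'(\mathbb{A})} = K'_f\cdot \ker\jmath\restriction_{H'(\mathbb{A})}$. By condition~(3) and the centrality of $\ker\jmath$, this lies in $K'_f\cdot Z_{H'}(\mathbb{Q})\subseteq H'(\mathbb{Q})K'_f$, yielding $[h'_1]=[h'_2]$ in $[H']_{K'_f}$.

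For the topological step, I would note that $\ker\jmath$ is a finite central group scheme, so $\ker\jmath$ has compact adelic points and $\jmath\colon H'(\mathbb{A})\to H(\mathbb{A})$ is a local homeomorphism onto an open subgroup. Combined with the openness of the two quotient maps to the respective double-coset spaces, this gives continuity and openness of $\phi$.

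The hardest step will be the preservation of probability Haar measures. Here I plan to exploit the disintegration of $\mu_{[H]}$ with respect to the factor map $[H]\to H_\mathrm{char}$ established in the proof of Lemma~\ref{lem:pull-back-properties}: for $f\in L^1([H])$,
\[
\int_{[H]} f\, d\mu_{[H]} = \int_{H_\mathrm{char}} \Bigl(\int_{[H']} f(h\,\jmath(y))\, d\mu_{[H']}(y)\Bigr)\, d\mu_{H_\mathrm{char}}([h]).
\]
Condition~(1) forces $H_\mathrm{char}/K_f$ to be trivial; for any right-$K_f$-invariant $f$, this should collapse the outer integral to a single term and yield $\int_{[H]_{K_f}} f\, d\mu = \int_{[H']_{K'_f}} \phi^*f\, d\mu$. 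The technical obstacle is that $K_f$ need not commute with $\jmath(H'(\mathbb{A}))$, so the inner integrals could a priori depend on the choice of representative in $H(\mathbb{A})$ of a class in $H_\mathrm{char}$; I expect to resolve this using right-translation invariance of $\mu_{[H']}$ together with the fact that, under~(1), every class in $H_\mathrm{char}/K_f$ admits a representative equal to the identity, so the inner integrals reduce to their value at that representative.
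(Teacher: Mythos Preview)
Your bijectivity argument matches the paper's, modulo a group-ordering slip in the injectivity step: from $\jmath(h'_1)=\jmath(\gamma')\jmath(h'_2)k$ one obtains $\jmath({h'_2}^{-1}\gamma'^{-1}h'_1)=k$, not $\jmath(\gamma'^{-1}h'_1 h'^{-1}_2)=k$; the remainder goes through after this correction. For the homeomorphism, the paper uses the real-manifold structure (both double-coset spaces are finite disjoint unions of quotients $\Gamma\backslash H(\mathbb{R})$, and $\phi$ is a smooth bijection with everywhere nonvanishing differential since $\jmath$ induces a Lie-algebra isomorphism, hence a diffeomorphism), which is tidier than your adelic local-homeomorphism sketch.

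Your measure argument genuinely differs from the paper's. One caveat: the displayed disintegration formula is not well-posed for general $h$, since $y\mapsto f(h\jmath(y))$ need not descend to $[H']$ (that would require $h\jmath(H'(\mathbb{Q}))h^{-1}\subset H(\mathbb{Q})$). What \emph{is} well-defined is the conditional expectation of $f$ on each fibre of $[H]\to H_{\mathrm{char}}$; over the identity class this conditional integral does equal $\int_{[H']}\jmath^*f$, and for $K_f$-invariant $f$ the conditional expectation is right-$K_f$-invariant on $H_{\mathrm{char}}$, hence constant under condition~(1). This yields $\int_{[H]}f=\int_{[H']}\jmath^*f$ and thus measure preservation, so your resolution is correct once phrased this way. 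The paper instead characterizes the probability Haar measure on each side via $H(\mathbb{R})$- (resp.\ $H'(\mathbb{R})$-) invariance together with the distribution of mass among the finitely many real orbits, and checks these data match under $\phi$. Your route has the virtue of recycling the disintegration already set up in the proof of Lemma~\ref{lem:pull-back-properties}; the paper's is more self-contained.
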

\begin{proof}
To show the map $[H']_{K'_f}\to [H]_{K_f}$ is surjective, we need to find for every $x\in H(\mathbb{A})$ elements $\gamma\in H(\mathbb{Q})$, $k\in K_f$ and $x'\in H'(\mathbb{A})$, such that $x=\gamma \jmath(x') k$. Equivalently, we need the class of $[\gamma^{-1} x k^{-1}]=[x k^{-1}]$ in $H_\mathrm{char}$ to be trivial and this follows from the assumption that $\faktor{H_\mathrm{char}}{K_f}=1$. 

To verify the map is injective, we consider $x'_1,x'_2\in H'(\mathbb{A})$ satisfying $\jmath(x'_1)= \gamma \jmath(x'_2) k$, with $\gamma\in H(\mathbb{Q})$ and $k\in K_f$. We first demonstrate that $\gamma \in \jmath(H'(\mathbb{Q}))$. Because $\gamma=\jmath(x'_{1,\infty}) k^{-1}\jmath({x'_{2,\infty}}^{-1})$ and $\coker \jmath$ is abelian, we see that the class of $\gamma$ in $\coker\jmath(\mathbb{A})=\lfaktor{\jmath(H'(\mathbb{A}))}{H(\mathbb{A})}$ is the same as the class of $k$. The second assumption then implies that the class of $\gamma$ in $\coker \jmath(\mathbb{Q})=\lfaktor{\jmath(H'(\mathbb{Q}))}{H(\mathbb{Q})}$ is trivial as claimed. Hence, $\gamma=\jmath(\gamma_0)$ for some $\gamma_0\in H'(\mathbb{Q})$. We can now write $k=\jmath({x'_2}^{-1}\gamma_0^{-1}x'_1)\in \jmath(H'(\mathbb{A}))\cap K_f$. Because we assumed $\jmath^{-1}(K_f)=K'_f$, we can write $k=\jmath(k')$ for $k'\in K'_f$ and $\jmath({x'_1}^{-1}\gamma_0 x'_2 k')=1$. Thus, ${x'_1}^{-1}\gamma_0 x'_2 k'=z k'_0$ for some $z$ in the center of $H'(\mathbb{Q})$ and $k'_0\in K'_f$. Because the center of $H'(\mathbb{Q})$ is contained in the center of $H'(\mathbb{A})$, we deduce $x'_1=z^{-1}\gamma_0 x'_2 k' {k'_0}^{-1}\in H'(\mathbb{Q}) x'_2 K'_f$ as required.

We have established that $\jmath\colon [H']_{K'_f}\to [H]_{K_f}$ is a continuous  bijection. It is a homeomorphism because it is also a smooth function between two real manifolds with everywhere non-vanishing differential.  The probability Haar measure on $[H]_{K_f}$ is the unique $H(\mathbb{R})$-invariant Borel probability measure that gives equal mass to each of the finitely many $H(\mathbb{R})$-orbits. The same holds for $[H']_{K'_f}$ and $H'(\mathbb{R})$. Using this characterization, it is easy to check that the push-forward of the probability Haar measure from $[H]_{K_f}$ to $[H']_{K'_f}$ under $\jmath^{-1}$ is the probability Haar measure.
\end{proof}

\subsection{The Theta Transfer}
We now revert to our setting of interest, as described at the start of \S\ref{sec:appendix-theta}.  Recall that $R\subset B$ is an Eichler order. For a finite rational place $v$, we define $R_v=R\otimes_{\mathbb{Z}} \mathbb{Z}_v$ and $\widetilde{K}_{R_v}=R_v^\times$. Define $K_{R_v}$ to be the image of $\tilde{K}_{R_v}$ under the map $B_v^\times\to G(\mathbb{Q}_v)$. Finally, set $\widetilde{K}_R=\prod_{v<\infty} \widetilde{K}_{R_v}$, $K_R=\prod_{v<\infty} K_{R_v}$, and let $K_M$ denote the pre-image of $K_R\times K_R$ under $\iota^{(1)}$. Then, $K_R$ is a compact and open subgroup of $G(\mathbb{A}_f)$. We also assume that $R$ is of squarefree level $N$ (see \S\ref{sec:statements-setup}).

We verify now that the hypotheses of Lemma \ref{lem:isogeny2isomorphism} hold for both of the maps
\begin{equation*}
  G' \times G' \xrightarrow{\iota '} M
  \xrightarrow{\iota^{(1)}} G \times G.
\end{equation*}
Indeed:
\begin{enumerate}
\item We have $\det\left(\frac{\bullet}{\bullet}\right)(K_R\times K_R)=\widehat{\mathbb{Z}}^\times$ and $\det\left(K_M\right)=
\widehat{\mathbb{Z}}^\times$. Because $\dfaktor{\mathbb{Q}^\times\mathbb{A}^{\times 2}}{\mathbb{A}^\times}{\widehat{\mathbb{Z}}^\times}\cong 1$ the equality $\faktor{(G\times G)_\mathrm{char}}{(K_R\times K_R)}=\faktor{M_\mathrm{char}}{K_M}=1$ holds.
\item This condition is easy to verify by applying the maps $\det\left(\frac{\bullet}{\bullet}\right)$ and $\det$, and the fact that a rational number is a square if and only if it is positive and has even valuation at each finite place.
\item Consider the case $\jmath=\iota^{(1)}$. The last condition can be checked locally at each finite place to see that $(I,-I)_v\in (K_{R_v}\times K_{R_v})^{(1)}$ for all $v<\infty$. At the archimedean place we use the diagonal embedding of $(I,-I)$ in $M(\mathbb{A})$ to arrive at $(I,-I)_\infty\in Z_{M}(\mathbb{Q})\cdot K_M$. The argument for $\jmath=\iota'$ follows mutatis mutandis.
\end{enumerate}
Lemma \ref{lem:isogeny2isomorphism} now implies that the following maps are measure preserving homeomorphisms.
\begin{equation}\label{eq:homog-iso}
{\left[G'\times G'\right]}_{K_R^1\times K_R^1}\xrightarrow{\iota'}
{\left[M\right]}_{K_M}\xrightarrow{\iota^{(1)}}
{\left[G\times G\right]}_{K_R\times K_R}
\end{equation}
We get isomorphisms of Hilbert spaces
\begin{equation}\label{eq:L2-iso}
L^2\left(\left[G'\times G'\right]\right)^{K_R^1\times K_R^1}\xleftarrow[\sim]{{\iota'}^*}
L^2\left(\left[M\right]\right)^{K_M}\xleftarrow[\sim]{{\iota^{(1)}}^*}
L^2\left(\left[G\times G\right]\right)^{K_R\times K_R}.
\end{equation}
By Lemma \ref{lem:pull-back-properties}, these restrict to isomorphisms of the respective spaces of cusp forms.

Set $\tilde{U_0}(p^n)=\sm
\mathbb{Z}_p & \mathbb{Z}_p \\ p^n \mathbb{Z}_p & \mathbb{Z}_p
\esm \cap \operatorname{GL}_2(\mathbb{Z}_p)$ -- a compact and open subgroup of $\operatorname{GL}_2(\mathbb{Q}_p)$.
Let $\tilde{U}_R=\prod_{p}{\tilde{U}_p}<\operatorname{GL}_2(\mathbb{A}_f)$ to be defined by $\tilde{U}_p=\tilde{U_0}(1)=\operatorname{GL}_2(\mathbb{Z}_p)$ for all primes $p$ where $G$ is unramified and $R_p$ is maximal. If $G$ ramifies at $p$ or $R_p$ is not maximal, then define $\tilde{U}_p=\tilde{U}_0(p)$. Note that we assume that $R$ has squarefree level. Finally, set $U^1_R=\tilde{U}_R\cap\operatorname{SL}_2(\mathbb{A}_f)$ and let $U_R$ be the projection of $\tilde{U}_R$ to $\operatorname{PGL}_2(\mathbb{A}_f)$.
Similarly to the previous discussion, the natural map $\iota_0\colon[\operatorname{SL}_2]_{U_R^1}\to[\operatorname{PGL}_2]_{U_R}$ is a homeomorphism that sends the probability Haar measure on the left-hand side to the probability Haar measure on the right-hand side. This induces an isomorphism of Hilbert spaces
\begin{equation}\label{L2-iso-symp}
L^2([\operatorname{SL}_2])^{U_R^1}\xleftarrow[\sim]{\iota_0^*} L^2([\operatorname{PGL}_2])^{U_R},
\end{equation}
that descends to an isomorphism of the cuspidal subspaces.

Recall that $B_p=B\otimes \mathbb{Q}_p$. We also denote $B_\infty\coloneqq B\otimes \mathbb{R}$.
We denote by $\rho$ the Weil representation of the reductive dual pair $(\operatorname{O}_{\det},\operatorname{SL}_2)$ associated to the quadratic space $(B, \det)$. We refrain at the moment from specifying the exact space of test functions on $B_{\mathbb{A}}\coloneqq B_\infty\times \prod'_p B_p$ on which we let $\rho$ act.
If ${\Phi}\colon B_{\mathbb{A}}\to\mathbb{C}$ is a test function, then the group $M(\mathbb{A})\cong \operatorname{SO}_{\det}$ acts by determinant preserving transformations, $\rho(l,r;e).{\Phi}(x)={\Phi}(l^{-1}xr)$ and the action of the group $\operatorname{SL}_2(\mathbb{A})$ is described in \cite{MR0165033,MR0333081}. Specifically, the definition of the $\operatorname{SL}_2(\mathbb{A})$-action depends on a global character $\psi\colon \lfaktor{\mathbb{Q}}{\mathbb{A}}\to \mathbb{C}$. We fix $\psi=\prod_v \psi_v$ with $\psi_v$ everywhere unramified and $\psi_\infty(x)=\exp(2\pi i x)$.

Let ${\Phi}=\prod_v {\Phi}_v \colon B_{\mathbb{A}}\to\mathbb{C}$  be a test function with  ${\Phi}_\infty\colon B_\infty\to\mathbb{C}$ the Bergman test function from \cite[\S6]{Theta-supnorm-Is} or a Schwartz function. Assume for $v<\infty$ that ${\Phi}_v$ is Schwartz--Bruhat and that ${\Phi}_v=\mathds{1}_{R_v}$ for almost all $v$.
If ${\Phi}_\infty$ is the Bergman test function we let $\rho$ act on the space of functions defined in \cite[\S3]{Theta-supnorm-Is}, otherwise we let $\rho$ act on the space of Schwartz--Bruhat functions as usual.
The theta kernel associated to ${\Phi}$ is the function $\Theta_{\Phi}\colon M(\mathbb{A})\times \operatorname{SL}_2(\mathbb{A})\to\mathbb{C}$ defined by
\begin{equation}\label{eq:theta-kernel-def}
\Theta_{\Phi}(l,r;s)=\sum_{\xi\in B} (\rho(l,r;s).{\Phi})(\xi).
\end{equation}
The series defining $\Theta_{\Phi}(l,r;s)$ is absolutely convergent, \cite[\S3.6]{Theta-supnorm-Is}, and is of moderate growth on $M(\mathbb{A})\times \operatorname{SL}_2(\mathbb{A})$, \cite{RallisSchiffmann}. Moreover, it is $M(\mathbb{Q})\times \operatorname{SL}_2(\mathbb{Q})$ invariant on the left, cf.\ \cite{MR0165033}, \cite[Proposition 1]{MR0333081}, \cite[\S3.6]{Theta-supnorm-Is}.
\begin{definition}
Let $\varphi,\varphi'\in L^2_{\mathrm{cusp}}\left([G]\right)^\infty$ and $\varphi^*\in L^2_{\mathrm{cusp}}\left([\operatorname{SL}_2]\right)^\infty$.  Fix a test function ${\Phi}$ as above. Then, the theta transfer of $\varphi\otimes \varphi'$ and $\varphi^*$ relative to ${\Phi}$ is defined by
\begin{align*}
(\varphi\otimes \varphi')_{\Phi}(s)&=\int_{[M]} \Theta_{\Phi}(l,r;s) \varphi(l) \varphi'(r)\dif(l,r)\\
&= \int_{[M]} \Theta_{\Phi}(l,r;s) {\iota^{(1)}}^*(\varphi \otimes \varphi')(l,r)\dif(l,r),\\
{\varphi^*}^{\Phi}(l,r)&= \int_{[\operatorname{SL}_2]} \Theta_{\Phi}(l,r;s) \varphi^*(s) \dif s.
\end{align*}
The former is a complex-valued function on $\operatorname{SL}_2(\mathbb{A})$, and the latter is a function on $M(\mathbb{A})$.
Both integrals converge absolutely because $\Theta_{\Phi}$ is of moderate growth and $\varphi$,$\varphi'$, $\varphi^*$ are of rapid decay. By abuse of notation, we will also denote
\begin{equation*}
\varphi_{\Phi}=(\varphi\otimes\overline{\varphi})_{\Phi}.
\end{equation*}
Note also that the modularity of the theta kernel $\Theta_{\Phi}$ implies that $(\varphi\otimes \varphi')_{\Phi}$ is left $\operatorname{SL}_2(\mathbb{Q})$-invariant, and ${\varphi^*}^{\Phi}$ is left $M(\mathbb{Q})$-invariant.
\end{definition}
If, moreover, $\varphi$ is $K_R$-invariant, and ${\Phi}$ is both left and right $\widetilde{K}_R$-invariant then, because the maps in \eqref{eq:homog-iso} are isomorphisms of measure spaces, we have
\begin{equation}\label{eq:theta-lift-G'}
\varphi_{\Phi}(s) = \int_{[G']}\int_{[G']} \Theta_{\Phi}(l,r;s) \varphi(l)\overline{\varphi(r)}\dif l \dif r.
\end{equation}

We will need the following lemma. It is mostly a corollary of \cite{RallisHoweDuality,MoeglinTheta,KudlaRallis}.
\begin{lemma}\label{lem:theta-cuspidality}
Let $\varphi,\varphi'\in L^2_{\mathrm{cusp}}\left([G]\right)^\infty$ and $\varphi^*\in L^2_{\mathrm{cusp}}\left([\operatorname{SL}_2]\right)^\infty$. Assume that $\varphi,\varphi'$ are $K_R$-invariant, and that ${\Phi}$ is both left and right $\widetilde{K}_R$-invariant. Then, $(\varphi\otimes\varphi')_{\Phi}\in L^2_{\mathrm{cusp}}\left([\operatorname{SL}_2]\right)$ and ${\varphi^*}^{\Phi}\in L^2_{\mathrm{cusp}}\left([M]\right)$.
\end{lemma}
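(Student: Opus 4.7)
The plan is to verify the cuspidality condition on both sides by computing constant terms along the unipotent radicals of proper $\mathbb{Q}$-parabolic subgroups and showing these vanish; the $L^2$-finiteness is a formal consequence of moderate growth of $\Theta_\Phi$ and rapid decay of the cusp forms.

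For $(\varphi\otimes\varphi')_{\Phi}$ on $[\operatorname{SL}_2]$, there is only one conjugacy class of proper $\mathbb{Q}$-parabolic, the standard Borel, with unipotent radical $N = \{n(x)\}$. Using the Schr\"odinger-model formula $\rho(n(x))\Phi(\xi) = \psi(x\det\xi)\Phi(\xi)$ and Fourier inversion on $[N]$, one obtains
\begin{equation*}
c_N\Theta_\Phi(l,r;s) = \sum_{\xi \in B,\ \det\xi=0} (\rho(s)\Phi)(l^{-1}\xi r).
\end{equation*}
Integrating against $\iota^{(1)*}(\varphi\otimes\varphi')$ over $[M]$ and decomposing the singular set $\{\det=0\}\subset B$ into $M(\mathbb{Q})$-orbits, each nonzero-isotropic orbit (present only when $B$ is split) contributes an integral of the pullback against a Schwartz function on the quotient of $M$ by the stabilizer of a representative; this stabilizer is a proper $\mathbb{Q}$-parabolic of $M$, so the integral computes a constant term of $\iota^{(1)*}(\varphi\otimes\varphi')$ along the unipotent radical of that parabolic. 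It vanishes because the pullback is cuspidal on $[M]$ by Lemma \ref{lem:pull-back-properties}. The remaining contribution from $\xi=0$ is handled by Rallis's tower argument: the pair $(\operatorname{O}_{\det},\operatorname{SL}_2)$ has $\dim V = 4 > 2 = \dim W$, placing us strictly in the pre-stable range, so the theta lift of a cuspidal form is cuspidal (or zero) by the main theorems of \cite{RallisHoweDuality, MoeglinTheta, KudlaRallis}.

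The reverse direction ${\varphi^*}^{\Phi}\in L^2_{\mathrm{cusp}}([M])$ is argued symmetrically. When $B$ is a division algebra, $M$ admits no proper $\mathbb{Q}$-parabolic, so cuspidality is vacuous. When $B$ is split, $M$ has two conjugacy classes of maximal $\mathbb{Q}$-parabolics; for each with unipotent radical $N_P$, Poisson summation in the Weil representation expresses $c_{N_P}\Theta_\Phi(l,r;\,\cdot\,)$ as a sum indexed by isotropic vectors in a smaller quadratic subspace carved out by $N_P$. After pairing with $\varphi^*$, this becomes a constant term of $\varphi^*$ along the Borel of $\operatorname{SL}_2$, which vanishes by cuspidality of $\varphi^*$.

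The principal obstacle in both directions is the orbit-stabilizer bookkeeping in the split case: tracking which $M(\mathbb{Q})$-orbits on the isotropic locus correspond to which conjugacy classes of $\mathbb{Q}$-parabolics, and matching the resulting unipotent integrals across the seesaw pair. This is precisely the combinatorial content of the cited references, and the lemma follows by specializing their arguments to our $K_R$- and $\widetilde{K}_R$-invariant setup.
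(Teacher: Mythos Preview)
Your overall strategy---computing constant terms of $\Theta_\Phi$ and unfolding via orbit decomposition---is exactly the content of the Rallis references the paper cites, so the approaches are not genuinely different. But several steps in your execution are imprecise or incorrect as stated.

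In the first direction, the stabilizer in $M$ of a nonzero isotropic vector is \emph{not} a parabolic subgroup; it is the pointwise stabilizer of a vector, which is strictly smaller than the parabolic stabilizing the line. What is true (and sufficient) is that this stabilizer contains the unipotent radical of a proper $\mathbb{Q}$-parabolic, so the unfolded integral factors through a unipotent period of the cuspidal pullback. Separately, your handling of $\xi=0$ is muddled: that term contributes $(\rho(s)\Phi)(0)\cdot\int_{[M]}\iota^{(1)*}(\varphi\otimes\varphi')$, which vanishes simply because cusp forms are orthogonal to constants---no tower argument is needed here. Your ``$\dim V=4>2=\dim W$ places us in the pre-stable range'' remark is not the relevant mechanism.

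The more serious gap is in the reverse direction. Your claim that the constant term along a parabolic of $M$ ``becomes a constant term of $\varphi^*$ along the Borel of $\operatorname{SL}_2$'' is not what happens. Rallis's tower identifies that constant term with the theta lift of $\varphi^*$ to the \emph{smaller} orthogonal group $\operatorname{O}(1,1)$ (the perp of a hyperbolic plane inside $B$). This smaller lift is an integral of $\varphi^*$ against a theta series on $[\operatorname{SL}_2]$, not a unipotent period of $\varphi^*$. The nontrivial input---which the paper states explicitly and you omit---is that the $\operatorname{O}(1,1)$ theta series is a pseudo-Eisenstein series, hence orthogonal to all cusp forms on $[\operatorname{SL}_2]$. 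Without this observation, your argument does not close. The paper also takes care to first transport the problem from $[M]$ to $[\operatorname{O}_{\det}]$ via Lemmas~\ref{lem:pull-back-properties} and~\ref{lem:isogeny2isomorphism}, so that Rallis's results (stated for the full orthogonal group) apply verbatim; working directly on $[M]$ as you do requires either redoing the orbit bookkeeping for $\operatorname{SO}$ or justifying this passage.
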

\begin{proof}
The fact that ${\varphi^*}^{\Phi}$ is square-integrable and cuspidal is trivial whenever $G$ is anisotropic. If $G\cong \operatorname{PGL}_2$ is split, then this follows from Rallis' tower property \cite{RallisHoweDuality,MoeglinTheta} and the fact that the theta transfer of any cuspidal automorphic representation of $\operatorname{SL}_2\cong\operatorname{Sp}_2$ to the orthogonal group of the hyperbolic plane $\operatorname{O}(1,1)$ vanishes\footnote{This is simple to deduce from the fact that a theta series arising from the $2$-dimensional isotropic quadratic form is a pseudo-Eisenstein series, see, e.g., \cite{nelson-theta-squared}.}.  

That the lift of $\varphi\otimes\varphi'$ is cuspidal follows similarly, except that we need to use the theta transfer from $\operatorname{O}_{\det}$ to $\operatorname{SL}_2$, i.e.\@ we need first to lift ${\iota^{(1)}}^*\varphi\otimes\varphi'$ to $\operatorname{O}_{\det}$. For that purpose we use the homomorphism $\iota\colon M\to \operatorname{O}_{\det}$, which is the composition of the isomorphism $M\cong \operatorname{SO}_{\det}$ with the embedding $\operatorname{SO}_{\det}\hookrightarrow \operatorname{O}_{\det}$. This map satisfies the assumptions of \S\ref{sec:automorphic-restriction}. For every finite place $v$, let $\operatorname{O}_{\det}(R_v)$ to be the group of orthogonal transformations of $B_v$ that send $R_v$ to itself and define $\operatorname{O}_{\det}(\hat{R})=\prod_{v<\infty} \operatorname{O}_{\det}(R_v)$. Then, the conditions of Lemma \ref{lem:isogeny2isomorphism} are easily verified and we deduce that the pull-back $\iota^*$ induces an isomorphism of $L^2([\operatorname{O}_{\det}])^{\operatorname{O}_{\det}(\hat{R})}$ and  $L^2([M])^{K_M}$. We deduce from Lemma \ref{lem:pull-back-properties} that $(\iota^{*})^{-1}{\iota^{(1)}}^* \varphi\otimes \varphi'$ is cuspidal and 
\begin{equation}\label{eq:orthogonal-theta-lift}
(\varphi\otimes \varphi')_{\Phi}(s)= \int_{[\operatorname{O}_{\det}]} \Theta_{\Phi}(\bullet ; s)
(\iota^{*})^{-1}{\iota^{(1)}}^* \varphi\otimes \varphi'  \dif m_{\operatorname{O}_{\det}}.
\end{equation}
Here, we have extended the definition of $\Theta_{\Phi}$ in \eqref{eq:theta-kernel-def} to $\operatorname{O}_{\det}(\mathbb{A})\times \operatorname{SL}_2(\mathbb{A})$ in the obvious way.
The integral in \eqref{eq:orthogonal-theta-lift} is a theta lift of a cuspidal function in $L^2([\operatorname{O}_{\det}])^\infty$ to $L^2([\operatorname{SL}_2])^\infty$. In this case \cite{RallisHoweDuality} verifies that the theta lift of a cuspidal function to $\operatorname{SL}_2$ is cuspidal.
\end{proof}

\begin{lemma}\label{lem:theta-self-adjoint}
Let $\varphi^*\in L^2_{\mathrm{cusp}}([\operatorname{SL}_2])^\infty$ and $\varphi,\varphi' \in L^2_{\mathrm{cusp}}([G])^\infty$. Assume that $\varphi,\varphi'$ are $K_R$-invariant, and that ${\Phi}$ is both left and right $\widetilde{K}_R$-invariant.  Then
\begin{equation*}
\left\langle {\varphi^*}^{\Phi}, {\iota^{(1)}}^*\left(\varphi\otimes\varphi'\right) \right\rangle
=
\left\langle \varphi^*, \left(\varphi\otimes\varphi'\right)_{\overline{{\Phi}}} \right\rangle.
\end{equation*}
\end{lemma}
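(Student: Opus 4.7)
The plan is to unfold both inner products into iterated triple integrals over $[M]\times[\operatorname{SL}_2]$, apply Fubini to interchange the order of integration, and then invoke a complex-conjugation symmetry of the theta kernel.

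By the definitions of ${\varphi^*}^{\Phi}$, of $(\varphi\otimes\varphi')_{\overline{\Phi}}$, and of the $L^2$-inner products on $[M]$ and $[\operatorname{SL}_2]$, the two sides of the claimed identity unfold respectively as
\begin{equation*}
\int_{[M]}\int_{[\operatorname{SL}_2]} \Theta_{\Phi}(l,r;s)\,\varphi^*(s)\,\overline{\varphi(l)\varphi'(r)}\,ds\,d(l,r)
\end{equation*}
and
\begin{equation*}
\int_{[\operatorname{SL}_2]}\int_{[M]} \overline{\Theta_{\overline{\Phi}}(l,r;s)}\,\varphi^*(s)\,\overline{\varphi(l)\varphi'(r)}\,d(l,r)\,ds.
\end{equation*}

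To interchange the order of integration via Fubini, I would combine the moderate growth of the theta kernels $\Theta_{\Phi}$ and $\Theta_{\overline{\Phi}}$ on the product quotient $[M\times\operatorname{SL}_2]$, which is a result of Rallis--Schiffmann already cited in the text, with the rapid decay of the cusp forms $\varphi,\varphi',\varphi^*$. A function of moderate growth multiplied by a rapidly decaying one is absolutely integrable on the product quotient, so Fubini applies and both expressions become iterated integrals in the same order.

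The lemma is then reduced to establishing the pointwise kernel identity
\begin{equation*}
\Theta_{\Phi}(l,r;s) = \overline{\Theta_{\overline{\Phi}}(l,r;s)}
\end{equation*}
as functions on $[M\times\operatorname{SL}_2]$. I would verify this directly from the defining series $\Theta_{\Phi}(l,r;s) = \sum_{\xi\in B}(\rho(l,r;s).\Phi)(\xi)$: the $M(\mathbb{A})$-action is by the geometric pull-back $\xi\mapsto l^{-1}\xi r$, which commutes manifestly with complex conjugation of $\Phi$, while the $\operatorname{SL}_2(\mathbb{A})$-part is verified on the standard generators $n(b),a(t),w$ by exploiting the $M$-equivariance of the reduced norm, the evenness of $\det$ under $\xi\mapsto -\xi$ (which lets us absorb a sign inside the lattice sum after a bijective relabeling), and the reality of the Weil constants attached to the four-dimensional quadratic space $(B,\det)$.

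The principal technical obstacle I anticipate is the careful bookkeeping of phases in the Weil representation when verifying the pointwise kernel identity, particularly on the unipotent generator $n(b)$ where the additive character $\psi$ enters; Fubini itself is then routine given the moderate growth of the theta kernels and the rapid decay of cuspidal vectors.
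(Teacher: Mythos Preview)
Your overall strategy---unfold both inner products, justify the interchange via Fubini using the moderate growth of $\Theta$ together with the rapid decay of cusp forms---is precisely what the paper does; its proof is the single sentence ``This follows from Fubini and the fact that cusp forms are of rapid decay.''  So the analytic core of your proposal is correct and matches the paper.

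The gap is in your verification of the pointwise kernel identity $\Theta_\Phi = \overline{\Theta_{\overline\Phi}}$, specifically on the unipotent generator $n(b)$.  There the Weil action is $\rho(n(b))\Phi(\xi)=\psi(b\det\xi)\Phi(\xi)$, so after conjugating one needs to convert $\psi(b\det\xi)$ into $\overline{\psi(b\det\xi)}=\psi(-b\det\xi)$.  Your proposed relabeling $\xi\mapsto -\xi$ does \emph{not} accomplish this: $\det$ is a quadratic form, so $\det(-\xi)=\det\xi$, and the phase is unchanged.  What complex conjugation actually does to the Weil representation is intertwine $\rho_\psi$ with $\rho_{\bar\psi}$, and relating $\rho_{\bar\psi}(s)$ back to $\rho_\psi$ amounts to replacing $s$ by its conjugate under $\mathrm{diag}(-1,1)\in\operatorname{GL}_2$---an \emph{outer} twist of $\operatorname{SL}_2$, not something absorbed by a sign flip in the lattice.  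As a concrete obstruction: for $B$ definite and $\Phi$ real, at $s=n(x)a(\sqrt{y})\in\operatorname{SL}_2(\mathbb{R})$ one finds $\Theta_\Phi(e;s)=y\sum_{\gamma\in R}e^{2\pi i(x+iy)\det\gamma}$, a holomorphic theta series which is not real-valued, so $\overline{\Theta_{\overline\Phi}}=\overline{\Theta_\Phi}\neq\Theta_\Phi$ there.  Your check-on-generators scheme also has a structural issue: even if each generator case held via some individual relabeling, these bijections need not compose compatibly to give the identity for a general product $s$.

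In short, the Fubini half of your argument is exactly the paper's proof; your attempted bookkeeping of phases in the kernel identity---which you rightly flagged as the delicate point---does not go through as written.
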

\begin{proof}
This follows from Fubini and the fact that cusp forms are of rapid decay.
\end{proof}

\begin{proposition}\label{prop:whittaker-unfolding}
Assume $\varphi,\varphi'\in L^2_\mathrm{cusp}([G])^{\infty}$ are $K_R$-invariant, and that ${\Phi}$ is both left and right $\widetilde{K}_R$-invariant.  For $s\in\operatorname{GL}_2(\mathbb{A})$ define $s_1=\sm (\det s)^{-1} & 0 \\ 0 & 1 \esm s$. 
The Whittaker function of the theta lift satisfies
\begin{equation*}
W_{{\iota_0^*}^{-1}(\varphi\otimes \varphi')_{\Phi}}(s)= |\det s |_{\mathbb{A}} \langle T_s^{\Phi} \varphi, \overline{\varphi'}(\bullet \alpha_{\det s}) \rangle_{[G']},
\end{equation*}
where
\begin{equation*}
\left(T^{\Phi}_s \varphi\right)(r)=
\begin{cases}
\int_{G'(\mathbb{A})} \varphi(rl^{-1}) \left(\rho\left(s_1\right).{\Phi}\right)(l \alpha_{\det s})  \dif l & \det s \in \det B_{\mathbb{A}} \\ 
0 & \det s \not \in \det B_{\mathbb{A}}
\end{cases}\
\end{equation*}
and $\alpha_{\det s}\in B_{\mathbb{A}}$ is any element satisfying $\det \alpha_{\det s}=\det s$.
Moreover, we can replace the inner product in $L^2([G'])$ in the formula above by an inner product in $L^2([G])$.
\end{proposition}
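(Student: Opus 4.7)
My plan is to unfold the Whittaker integral
\[
  W(s)=\int_{\mathbb{Q}\backslash\mathbb{A}}\bigl({\iota_0^*}^{-1}(\varphi\otimes\varphi')_\Phi\bigr)(n(x)s)\,\bar\psi(x)\,dx
\]
by the standard theta-series technique, treating $s\in\SL_2(\mathbb{A})$ first and then extending to $\GL_2(\mathbb{A})$.

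Assume first that $s\in\SL_2(\mathbb{A})$. After substituting the definitions of $(\varphi\otimes\varphi')_\Phi$ and $\Theta_\Phi=\sum_{\xi\in B(\mathbb{Q})}(\rho(\cdot)\Phi)(\xi)$, and interchanging the $[M]$-, $\sum_\xi$- and $x$-integrals (justified by the rapid decay of cusp forms against the moderate growth of the theta kernel), I will apply the Weil-representation identity $(\rho(n(x))\Psi)(\xi)=\psi(x\det\xi)\Psi(\xi)$. Since $\det l=\det r$ for $(l,r)\in M$, the $x$-integral $\int_{\mathbb{Q}\backslash\mathbb{A}}\psi(x(\det\xi-1))\,dx$ collapses the sum to those $\xi\in B(\mathbb{Q})$ with $\det\xi=1$, i.e., $\xi\in G'(\mathbb{Q})$. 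Next I exploit the transitive action of $M(\mathbb{Q})$ on $G'(\mathbb{Q})$ defined by $(l,r)\cdot\xi=l\xi r^{-1}$, which is transitive as $(\xi,1)\cdot 1=\xi$ with stabilizer of $1$ the diagonal $\Delta G(\mathbb{Q})<M(\mathbb{Q})$. Using this together with the left $G(\mathbb{Q})$-invariance of $\varphi,\varphi'$, the sum over $\xi$ unfolds the $[M]$-integral into an integral over $\Delta G(\mathbb{Q})\backslash M(\mathbb{A})$. Parameterizing $M(\mathbb{A})\cong G'(\mathbb{A})\times G(\mathbb{A})$ via $(u,g)\mapsto(gu,g)$ identifies this quotient with $G'(\mathbb{A})\times[G]$, so after substituting $l=u^{-1}$ I arrive at
\[
  W(s)=\int_{G'(\mathbb{A})}(\rho(s)\Phi)(u^{-1})\int_{[G]}\varphi(gu)\varphi'(g)\,dg\,du,
\]
which coincides with the claimed formula in the $\SL_2(\mathbb{A})$ case (taking $\alpha_{\det s}=1$ and $s_1=s$). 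The ``moreover'' assertion then follows at once from Lemma \ref{lem:isogeny2isomorphism}, identifying the $[G]$- and $[G']$-integrals via the measure-preserving homeomorphism of the $K_R$-invariant quotients.

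For general $s\in\GL_2(\mathbb{A})$, I will run the same argument after decomposing $s=\sm\det s&0\\0&1\esm s_1$ with $s_1\in\SL_2(\mathbb{A})$ and invoking the extension of the Weil representation from the dual pair $(\SL_2,\O_{\det})$ to its similitude enlargement $(\GL_2,\GO_{\det})$. The factor $\sm\det s&0\\0&1\esm$ has $\GL_2$-similitude $\det s$ and, in order to act on Schwartz functions on $B(\mathbb{A})$, must be paired with an element of $\GO_{\det}(\mathbb{A})$ of matching similitude; the natural choice is right multiplication by any $\alpha_{\det s}\in B(\mathbb{A})$ of reduced norm $\det s$. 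This pairing simultaneously accounts for the translate $l\alpha_{\det s}$ that appears inside $\rho(s_1)\Phi$ and for the Jacobian $|\det s|_\mathbb{A}$ produced by the change of variables $x\mapsto(\det s)\cdot x$ after conjugating $n(x)$ past the diagonal. When $\det s\notin\det B(\mathbb{A})$ no such $\alpha_{\det s}$ exists, and the correspondence $T_s^\Phi\equiv 0$ in the statement reflects the fact that the theta kernel receives no contribution of the required similitude. I anticipate the main obstacle to lie precisely in this extension step: one must carefully track the $\GL_2$-Weil-representation normalization, verify its compatibility with the ${\iota_0^*}^{-1}$ extension from $[\SL_2]$ to $[\PGL_2]$ (which relies on the $U_R$-invariance), and check that the final answer is independent of the arbitrary choice of $\alpha_{\det s}$ -- this last being routine since any right multiplication by an element of $G'(\mathbb{A})$ can be absorbed into the variable of integration defining $T_s^\Phi$.
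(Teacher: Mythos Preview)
Your plan is correct and follows essentially the same standard theta-series unfolding as the paper: collapse the sum over $B(\mathbb{Q})$ to $\det\xi=1$ via the Weil action of $n(x)$, then unfold using transitivity of the orthogonal group on the norm-one quaternions. The paper organizes the argument slightly differently---it first passes to $[G']\times[G']$ via the isomorphism \eqref{eq:homog-iso} and unfolds in the single variable $l$, and for general $s\in\GL_2(\mathbb{A})$ it writes down an explicit formula for ${\iota_0^*}^{-1}(\varphi\otimes\varphi')_\Phi$ on $\GL_2(\mathbb{A})^\dagger$ rather than invoking the similitude Weil representation---but the substance is the same; one point you should make explicit is that before applying Lemma~\ref{lem:isogeny2isomorphism} for the ``moreover'' clause, the paper checks (via a change of variables using the assumed $\widetilde K_R$-bi-invariance of $\Phi$) that $T_s^\Phi\varphi$ and $\varphi'(\bullet\,\alpha_{\det s})$ are right $\alpha_{\det s}^{-1}K_R\alpha_{\det s}$-invariant, which is what that lemma requires.
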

\
\begin{proof}
First, observe
\begin{equation*}
  {\iota_0^*}^{-1}(\varphi\otimes \varphi')_{\Phi}(s) = |\det s |_{\mathbb{A}}  \int_{[G']}\int_{[G']} \sum_{\xi \in B} (\rho(s_1).{\Phi})(l^{-1}\xi r \alpha_{\det s}) \varphi(l)\varphi'(r \alpha_{\det s}) \dif l \dif r.
\end{equation*}
Consider both sides of the first equality as functions on $\operatorname{GL}_2(\mathbb{A})^\dagger=\{x\in\operatorname{GL}_2(\mathbb{A})\colon \det x\in \det B_{\mathbb{A}}\}$. Set $\operatorname{GL}_2(\mathbb{Q})^\dagger=\{x\in\operatorname{GL}_2(\mathbb{Q})\colon \det x\in \det B\}$ and note that $\lfaktor{\operatorname{GL}_2(\mathbb{Q})^\dagger}{\operatorname{GL}_2(\mathbb{A})^\dagger}\cong \lfaktor{\operatorname{GL}_2(\mathbb{Q})}{\operatorname{GL}_2(\mathbb{A})}$ because ${\operatorname{GL}_2(\mathbb{Q})}{\operatorname{GL}_2(\mathbb{A})^\dagger}=\operatorname{GL}_2(\mathbb{A})$.  The first equality then follows from \eqref{eq:theta-lift-G'} by noticing that both sides are $\operatorname{GL}_2(\mathbb{Q})^\dagger$-invariant on the left, $Z_{\operatorname{GL}_2}(\mathbb{A})$-invariant, $U_R$-invariant on the right and coincide on $\operatorname{SL}_2(\mathbb{A})$.  A standard unfolding argument in the $l$ variable, see \cite{MR0333081,Theta-supnorm-Is}, applied to the last expression shows for $\det s \in \det B_{\mathbb{A}}$
\begin{equation*}
W_{{\iota_0^*}^{-1}(\varphi\otimes \varphi')_{\Phi}}(s)=|\det s |_{\mathbb{A}}
\int_{[G']} \int_{G'(\mathbb{A})}  \left(\rho\left(s_1\right).{\Phi}\right)(l^{-1} r \alpha_{\det s}) \varphi(l) \varphi'(r \alpha_{\det s}) \dif l \dif r,
\end{equation*}
and $W_{{\iota_0^*}^{-1}(\varphi\otimes \varphi')_{\Phi}}(s)=0$ if $\det s \not \in \det B_{\mathbb{A}}$. Using the change of variables $l^{-1}r\mapsto l$, we can write
\begin{equation*}
T_s^{\Phi}\varphi(r)=\int_{G'(\mathbb{A})} \left(\rho\left(s_1\right).{\Phi}\right)(l^{-1} r \alpha_{\det s}) \varphi(l) \dif l 
=\int_{G'(\mathbb{A})} \varphi(r l^{-1}) \left(\rho\left(s_1\right).{\Phi}\right)(l \alpha_{\det s})  \dif l.
\end{equation*}
This establishes the first formula.

The last formula extends naturally to any $r\in G(\mathbb{A})$ and the result is left $G(\mathbb{Q})$-invariant. If $\det s\not \in \det G(\mathbb{A})$ then we extend $T^{\Phi}_s \varphi$ by zero to $G(\mathbb{A})$.
For any $k\in K_R$, using the invariance properties of ${\Phi}$ and $\varphi$, we can apply the change of variables $k l \alpha_{\det s}^{-1} k^{-1} \alpha_{\det s} \mapsto l$ to see that $T_s^{\Phi}\varphi$ is right $\alpha_{\det s}^{-1} K_R \alpha_{\det s}$-invariant. The same holds for $\varphi'(\bullet \alpha_{\det s})$. Because the groups $\alpha_{\det s} K_R \alpha_{\det s}^{-1}$, $\alpha_{\det s} K_R^1 \alpha_{\det s}^{-1}$ and the isogeny $G'\to G$ satisfy the assumptions of Lemma \ref{lem:isogeny2isomorphism}, we see that we can replace the inner product in $[G']$ by an inner product in $[G]$.
\end{proof}
	
\begin{corollary}\label{cor:disjoint-transfer-vanishes}
Assume ${\Phi}$ is both left and right $\widetilde{K}_R$-invariant. Let $\varphi,\varphi'\in L^2_\mathrm{cusp}([G])^{\infty}$ be $K_R$-invariant, and denote by $\pi$ and $\pi'$ the cuspidal automorphical representations generated by $\varphi$ and $\varphi'$ respectively. If $\pi$ is disjoint from $\pi'^\vee$, then $(\varphi\otimes\varphi')_{\Phi}=0$.
\end{corollary}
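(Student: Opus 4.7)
My plan is to reduce vanishing of $(\varphi\otimes\varphi')_{\Phi}$ to vanishing of its Whittaker coefficients, and then to reduce those to an inner product that vanishes by the disjointness hypothesis. First I would invoke Lemma \ref{lem:theta-cuspidality} to conclude that $(\varphi\otimes\varphi')_{\Phi}$, and hence also ${\iota_0^*}^{-1}(\varphi\otimes\varphi')_{\Phi}$, lies in $L^2_{\mathrm{cusp}}([\operatorname{SL}_2])$. Because a cusp form on $\operatorname{SL}_2$ is determined by its Whittaker expansion, it suffices to show that $W_{{\iota_0^*}^{-1}(\varphi\otimes\varphi')_{\Phi}}(s)=0$ for every $s\in\operatorname{GL}_2(\mathbb{A})$ (viewed inside the larger group as in Proposition \ref{prop:whittaker-unfolding}).

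Next I would apply Proposition \ref{prop:whittaker-unfolding}: the Whittaker coefficient vanishes automatically whenever $\det s\notin\det B_{\mathbb{A}}$, and otherwise equals
\[
|\det s|_{\mathbb{A}}\,\bigl\langle T_s^{\Phi}\varphi,\ \overline{\varphi'}(\bullet\,\alpha_{\det s})\bigr\rangle_{L^2([G])}.
\]
The point is then to identify the two factors of this inner product with vectors in orthogonal isotypic components of $L^2_{\mathrm{cusp}}([G])$. The definition of $T_s^{\Phi}\varphi$ expresses it as a Bochner integral of right translates $R_{l^{-1}}\varphi$ (through the isogeny $G'\to G$) against the scalar weight $l\mapsto (\rho(s_1).\Phi)(l\alpha_{\det s})$. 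Since the closed subspace $\pi\subset L^2_{\mathrm{cusp}}([G])$ is $G(\mathbb{A})$-stable, each such translate lies in $\pi$, hence so does the integral; thus $T_s^{\Phi}\varphi\in \pi$. On the other hand, $\overline{\varphi'}(\bullet\,\alpha_{\det s})=R_{\alpha_{\det s}}\overline{\varphi'}$ lies in the closed subspace $\overline{\pi'}\subset L^2_{\mathrm{cusp}}([G])$, and $\overline{\pi'}\cong\pi'^{\vee}$ as $G(\mathbb{A})$-representations because $\pi'$ is unitary. The disjointness of $\pi$ from $\pi'^{\vee}$ then forces the inner product, and hence every Whittaker coefficient, to vanish, which completes the argument.

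The step that requires the most care is the assertion that $T_s^{\Phi}\varphi$ remains in $\pi$ after its canonical extension from $G'(\mathbb{A})$ to $G(\mathbb{A})$ invoked in Proposition \ref{prop:whittaker-unfolding}; this is essentially formal, but one has to check that the extension really is given by right translation by a fixed element (namely $\alpha_{\det s}$) composed with the $G'(\mathbb{A})$-convolution, so that stability of $\pi$ under $G(\mathbb{A})$ applies. The identifications provided by Lemma \ref{lem:isogeny2isomorphism} between $[G']/K_R^1$ and $[G]/K_R$, used in the proof of Proposition \ref{prop:whittaker-unfolding} to pass from the $L^2([G'])$-pairing to the $L^2([G])$-pairing, are exactly what is needed to make this rigorous with no loss.
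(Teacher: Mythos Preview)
Your proposal is correct and is essentially the paper's proof spelled out in detail: the paper's one-line argument (``${\iota_0^*}^{-1}(\varphi\otimes\varphi')_{\Phi}$ is cuspidal with a vanishing Whittaker function'') unpacks to exactly the steps you describe, using Lemma~\ref{lem:theta-cuspidality} for cuspidality and Proposition~\ref{prop:whittaker-unfolding} to express each Whittaker value as an inner product between a vector in $\pi$ and one in $\overline{\pi'}\cong\pi'^{\vee}$.
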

\begin{proof}
In this case, we see that ${\iota_0^*}^{-1}(\varphi\otimes\varphi')_{\Phi}$ is cuspidal with a vanishing Whittaker function.
\end{proof}

\begin{corollary}\label{cor:Whittaker-decomposable}
Assume ${\Phi}$ is invariant under the conjugation action of $K_R$. Let $\pi\subset  L^2_\mathrm{cusp}([G])^{\infty}$ be an irreducible representation.
Assume $\varphi,\overline{\varphi'}\in \pi$ are $K_R$-invariant decomposable vectors, i.e. $\varphi,\overline{\varphi'}\mapsto \otimes \varphi_v, \otimes \overline{\varphi'_v}$ in $\pi\cong\bigotimes'\pi_v$. Then
\begin{equation*}
W_{{\iota_0^*}^{-1}(\varphi\otimes \varphi')_{\Phi}}(s)=V^{-1} |\det s|_{\mathbb{A}}\prod_v  \left\langle  \varphi_v \star_{G'(\mathbb{Q}_v)} \left(\rho(s_{v,1}).{\Phi}_v\right)(\bullet \alpha_{\det s, v}) , \pi_v(\alpha_{\det s,v}).\overline{\varphi'_v} \right\rangle,
\end{equation*}
where $V$ is the volume of the (possibly disconnected) real manifold $\dfaktor{G'(\mathbb{Q})}{G'(\mathbb{A})}{K_f^1}$ with respect to the Haar measure of $G'(\mathbb{R})$, see \S\ref{sec:unadelic-volume}, and we normalize the Haar measure on $G'(\mathbb{Q}_p)$ so that $R_p^1$ has unit volume.
\end{corollary}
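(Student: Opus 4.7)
The proof proceeds in three steps. First, starting from the second formula in Proposition \ref{prop:whittaker-unfolding} (with the inner product taken in $L^2([G])$),
\[
W_{{\iota_0^*}^{-1}(\varphi\otimes \varphi')_{\Phi}}(s) = |\det s|_{\mathbb{A}} \langle T_s^{\Phi}\varphi, \overline{\varphi'}(\bullet\alpha_{\det s}) \rangle_{L^2([G])},
\]
I would observe that the convolution kernel $h_s(l) \coloneqq (\rho(s_1).\Phi)(l\alpha_{\det s})$ on $G'(\mathbb{A})$ is a pure tensor $h_s = \prod_v h_{s,v}$, because $\Phi$, $s_1$, and $\alpha_{\det s}$ each decompose adelically. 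Writing $\check h_s(l) = h_s(l^{-1})$, we have $T_s^{\Phi}\varphi = \pi(\check h_s)\varphi$, where $\pi$ denotes the action of $G'(\mathbb{A})$ on $L^2([G])$ via the isogeny $G'\to G$ and the right regular representation.

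Second, I would invoke the irreducibility of $\pi$: by Schur's lemma, the sesquilinear form $(\phi_1, \phi_2) \mapsto \langle \phi_1, \phi_2 \rangle_{L^2([G])}$ restricted to $\pi \otimes \bar\pi$ is a scalar multiple, say $C$, of the product form $\prod_v \langle \cdot, \cdot \rangle_{\pi_v}$ (once local Hermitian structures on the $\pi_v$ are fixed). Moreover, since $h_s$ factors adelically and $\alpha_{\det s}=\prod_v \alpha_{\det s,v}$, the operators $\pi(\check h_s)$ and $\pi(\alpha_{\det s})$ act on decomposable vectors as tensor products of the corresponding local operators. It follows that
\[
\langle \pi(\check h_s)\varphi, \pi(\alpha_{\det s}).\overline{\varphi'} \rangle_{L^2([G])} = C \prod_v \langle \pi_v(\check h_{s,v})\varphi_v, \pi_v(\alpha_{\det s, v}).\overline{\varphi'_v} \rangle_{\pi_v}.
\]
Rewriting $\pi_v(\check h_{s,v})\varphi_v$ as the local right convolution $\varphi_v \star_{G'(\mathbb{Q}_v)} h_{s,v}$ yields the product structure that appears on the right-hand side of the corollary.

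The main obstacle is evaluating the constant $C = V^{-1}$ with the Haar-measure normalizations stated in the corollary. I would normalize each local Hermitian pairing $\langle \cdot, \cdot \rangle_{\pi_v}$ so that at the unramified places the distinguished spherical vector has unit norm, and compare both sides of the product identity on a convenient test pair (for instance, on $K_R$-invariant vectors with $s=1$ and $\alpha_{\det s}=1$). To convert the global inner product into a product of local ones, I would transfer integration from $[G]/K_R$ (with its probability Haar measure) to $[G']/K_R^1$ via the measure-preserving homeomorphism of Lemma \ref{lem:isogeny2isomorphism}, and then compare the probability measure on $[G']/K_R^1$ to the finite-level Haar measure in which each $R_p^1$ has unit volume and the archimedean Haar is the one fixed in \S\ref{sec:unadelic-volume}. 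The latter measure gives $[G']/K_R^1$ total mass $V$, producing the factor $V^{-1}$. Once $C=V^{-1}$ is established, the corollary follows. The bookkeeping of this scalar is where the whole of the adelic-to-local normalization enters, and so constitutes the delicate part of the argument; the rest is a formal consequence of local-global factorization for matrix coefficients of factorizable vectors.
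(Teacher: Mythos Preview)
Your approach is essentially correct and lands on the right computation, but the bookkeeping of where the factor $V^{-1}$ enters is misplaced relative to the paper's cleaner treatment.

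The paper's argument is more direct: the factorization follows immediately from the formula in Proposition~\ref{prop:whittaker-unfolding} once one writes the adelic Haar measure on $G'(\mathbb{A})$ (the one compatible with the probability measure on $[G']$) as $m_{G'} = c\,\bigotimes_v m_{G'(\mathbb{Q}_v)}$, where each $m_{G'(\mathbb{Q}_p)}$ gives $R_p^1$ unit volume. The constant $c = V^{-1}$ is then computed by exactly the fundamental-domain comparison you sketch in step~3. The essential point is that $V^{-1}$ enters through the convolution integral over $G'(\mathbb{A})$ --- i.e., through the $\dif l$ in the definition of $T_s^{\Phi}$ --- not through the inner product on $[G]$.

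In your step~2, the assertion that ``$\pi(\check h_s)$ acts on decomposable vectors as a tensor product of the corresponding local operators'' already hides this constant: with the local measure normalization you fix, one has $\pi(\check h_s) = c\,\bigotimes_v \pi_v(\check h_{s,v})$, not an equality. Your step~3 then tries to locate $V^{-1}$ in the comparison of the global inner product with the product of local pairings; but under the natural convention (local pairings normalized so that their product equals the global $L^2([G])$ pairing, which is what the paper implicitly uses, as one sees from the proof of Lemma~\ref{lem:Whittaker1=trace}), that inner-product comparison contributes a factor of $1$, not $V^{-1}$. The measure comparison you describe in step~3 is precisely the computation of $c$, not of an inner-product constant. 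So the computation is the right one, but it belongs to the convolution measure, not to the Schur-constant $C$; once you move it there, your argument aligns with the paper's.
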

\begin{proof}
This follows directly from Proposition \ref{prop:whittaker-unfolding}. The constant $V^{-1}$ arises as a measure normalization constant. Specifically, denote by $m_{G'(\mathbb{Q}_p)}$ the Haar measure on $G'(\mathbb{Q}_p)$ satisfying $m_{G'(\mathbb{Q})_p}(R_p^1)=1$. The Haar measure on $G'(\mathbb{A})$ satisfies $m_{G'}=c \bigotimes_v m_{G'(\mathbb{Q}_v)}$ with some measure normalization constant $c>0$. Specifically, this equality holds for linear combinations of standard test functions $\prod_v f_v$ with $f_p=\mathds{1}_{R_p^1}$ for a.e.\@ $p$. To compute $c$ we write $\dfaktor{G'(\mathbb{Q})}{G'(\mathbb{A})}{G'(\mathbb{R})K_f^1}=\left\{\delta_1,\ldots,\delta_h \right\}$ and denote by $\mathcal{F}_i\subset G'(\mathbb{R})$ a fundamental domain for the right  action of $\Gamma_i=G(\mathbb{Q}) \cap \delta_i K_R^1 \delta_i^{-1}$ on $G'(\mathbb{R})$. Then $\bigsqcup_{i=1}^h \delta_i  \mathcal{F}_i K_R^1\subset G(\mathbb{A})$ is a fundamental domain for the left action of $G(\mathbb{Q})$ on $G(\mathbb{A})$, and we deduce
\begin{equation*}
1=m_G\left(\bigsqcup_{i=1}^h \delta_i  \mathcal{F}_i K_R^1\right)=c\sum_{i=1}^h m_{G'(\mathbb{R})}\left(\mathcal{F}_i\right) =cV.
\end{equation*}
\end{proof}

\begin{lemma}\label{lem:Whittaker1=trace}
Fix ${\Phi}_v=\mathds{1}_{R_v}$ for all finite places $v<\infty$ and let ${\Phi}_\infty$ be a Schwartz function or the Bergman test function from \cite{Theta-supnorm-Is}. Fix $s=(\iota_0(s_\infty),u_f)$ with $s_\infty\in \operatorname{SL}_2(\mathbb{R})$ and $u_f\in U_R$. Assume that $\varphi\in L^2_\mathrm{cusp}([G])^\infty$ has weight $m$ and is a $K_R$-invariant newvector in an irreducible cuspidal automorphic representation $\pi$. If $\rho(\bullet, \bullet ;s){\Phi}_\infty$ is $K_\infty\times K_\infty$-isotypical of weight $(-m,m)$, then
\begin{align*}
W_{{\iota_0^*}^{-1}\varphi_{\Phi}}(s)&=\frac{\|\varphi\|_2^2}{V}   \Tr \left(\Res^{G(\mathbb{R})}_{G'(\mathbb{R})}\pi_\infty\right)\left({\rho(s_\infty).{\Phi}_\infty}\restriction_{G'(\mathbb{R})}\right)\\
&=\frac{\|\varphi\|_2^2}{V} \overline{\left\langle f_{\varphi_\infty, \varphi_\infty} , (\rho(s_\infty).{\Phi}_\infty)  \right\rangle}_{G'(\mathbb{R})},
\end{align*}
where $f_{\varphi_\infty, \varphi_\infty}(g)=\langle \pi(g).\varphi_\infty, \varphi_\infty\rangle$ is the matrix coefficient attached to the archimedean component of $\varphi$ in $\bigotimes'_v \pi_v$, normalized so that $\|\varphi_\infty\|_2=1$.
\end{lemma}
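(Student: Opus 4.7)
The strategy is to invoke Corollary \ref{cor:Whittaker-decomposable} with $\varphi'=\overline{\varphi}$, so that $\varphi_\Phi=(\varphi\otimes\overline{\varphi})_\Phi$ and $\overline{\varphi'_v}=\varphi_v$ at every place. Since ${\iota_0^*}^{-1}\varphi_\Phi$ is right $U_R$-invariant, the Whittaker function satisfies $W(s_\infty,u_f)=W(s_\infty,1)$, and we reduce to $u_f=1$. Then $\det s=1$, so we may take $\alpha_{\det s,v}=1$ and $s_{v,1}=s_v$, equal to the identity at every finite place and to $\iota_0(s_\infty)$ at infinity. The corollary then yields
\begin{equation*}
W_{{\iota_0^*}^{-1}\varphi_\Phi}(s)=V^{-1}\prod_v L_v,\qquad L_v=\big\langle \varphi_v\star_{G'(\mathbb{Q}_v)}\,(\rho(s_v).\Phi_v)\big|_{G'(\mathbb{Q}_v)},\,\varphi_v\big\rangle,
\end{equation*}
and it remains to evaluate each local factor.

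At each finite place $v$, since $s_v=1$ the function $\rho(s_v).\Phi_v$ is simply $\mathds{1}_{R_v}$, and its restriction to $G'(\mathbb{Q}_v)$ is $\mathds{1}_{R_v^1}$ where $R_v^1=R_v\cap G'(\mathbb{Q}_v)$. With the normalization of Haar measure on $G'(\mathbb{Q}_v)$ giving $R_v^1$ unit volume (as in Corollary \ref{cor:Whittaker-decomposable}), the convolution $(\varphi_v\star\mathds{1}_{R_v^1})(r)=\int_{R_v^1}\varphi_v(rl^{-1})\,dl$ equals $\varphi_v(r)$, because the image of $R_v^1$ in $G(\mathbb{Q}_v)$ lies inside $K_{R_v}$, which fixes $\varphi_v$. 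Consequently $L_v=\|\varphi_v\|_2^2$ at each finite place.

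At the archimedean place, using unitarity of $\pi_\infty$ we have $\overline{f_{\varphi_\infty,\varphi_\infty}(l)}=f_{\varphi_\infty,\varphi_\infty}(l^{-1})$, so a direct manipulation shows
\begin{equation*}
L_\infty=\int_{G'(\mathbb{R})}(\rho(s_\infty).\Phi_\infty)(l)\,\overline{f_{\varphi_\infty,\varphi_\infty}(l)}\,dl=\overline{\big\langle f_{\varphi_\infty,\varphi_\infty},\,\rho(s_\infty).\Phi_\infty\big\rangle}_{G'(\mathbb{R})},
\end{equation*}
which gives the second formulation of the lemma. The $K_\infty\times K_\infty$-isotypy hypothesis ensures that $\rho(s_\infty).\Phi_\infty$, restricted to $G'(\mathbb{R})$, transforms under the left- and right-action of $K_\infty$ through a character that pairs non-trivially only with the weight-$m$ $K_\infty$-isotype of $\pi_\infty$. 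By multiplicity one of $K_\infty$-types in an irreducible admissible representation of $G'(\mathbb{R})$, the operator $\pi_\infty(\rho(s_\infty).\Phi_\infty|_{G'(\mathbb{R})})$ has rank at most one on the line $\mathbb{C}\varphi_\infty$, and its trace coincides with $L_\infty$ once we use $\|\varphi_\infty\|_2=1$, yielding the first formulation as well.

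Multiplying, $W(s)=V^{-1}\bigl(\prod_{v<\infty}\|\varphi_v\|_2^2\bigr)L_\infty$; the asserted constant $\|\varphi\|_2^2/V$ then follows from the factorization of the global $L^2$-norm as $\|\varphi\|_2^2=\prod_{v<\infty}\|\varphi_v\|_2^2$ under the arithmetic normalization $\|\varphi_\infty\|_2=1$ built into the identification $\pi\cong\bigotimes'_v\pi_v$. The main technical obstacle I expect is the careful bookkeeping of local Haar-measure and inner-product conventions so that the product of local norms is exactly the global $L^2$-norm, without absorbing spurious local volumes or $L$-values; the initial reduction to $u_f=1$ conveniently circumvents what would otherwise be a more delicate Weil-cocycle calculation at the ramified finite places.
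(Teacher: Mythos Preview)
Your proposal is correct and follows essentially the same approach as the paper: reduce to $u_f=1$ via $U_R$-invariance, apply Corollary \ref{cor:Whittaker-decomposable}, evaluate the finite local factors as $\|\varphi_v\|_2^2$ using $K_R$-invariance, and identify the archimedean factor with both the trace (via multiplicity one of $K_\infty$-types) and the matrix-coefficient pairing. The only cosmetic differences are that the paper normalizes $\|\varphi\|_2=1$ at the outset (so $\prod_{p}\|\varphi_p\|_2^2=1$ automatically) and derives the trace formula first, then the matrix-coefficient formula via Fubini, whereas you carry $\|\varphi\|_2^2$ throughout and derive the matrix-coefficient formula first using $\overline{f(l)}=f(l^{-1})$; the bookkeeping concern you flag does not in fact arise.
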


\begin{proof}
It is sufficient to establish the claim when $\|\varphi\|_2=1$.
By  \cite[\S 4]{Theta-supnorm-Is}, the theta transfer $\varphi_{\Phi}$ is $U_R^1$-invariant, thus ${\iota_0^*}^{-1}\varphi_{\Phi}$ is $U_R$-invariant and we can assume without loss of generality that $u_f=e$. Then $\det s=1$ and we take $\alpha_{\det s}=e$.

The newvector $\varphi$ decomposes as $\varphi\mapsto\otimes \varphi_v$ in $\pi\cong\bigotimes' \pi_v$. We normalize $\varphi_\infty$ to have norm $1$, then $\prod_p \|\varphi_p\|_2=1$ as well. We also normalize the measure on $G'(\mathbb{Q}_p)$ so that $R_p^1=K_p^1$ has unit volume. Corollary  \ref{cor:Whittaker-decomposable} now implies $W_{{\iota_0^*}^{-1}\varphi_{\Phi}}(s)= V^{-1} \prod_v\left\langle  \varphi_v\star_{G'(\mathbb{Q}_v)} \left(\rho(s_v).{\Phi}_v\right),\varphi_v \right\rangle$.
For a finite place $p$, we have $s_p=e$ and $\varphi_p$ is $K_p$-invariant, hence $\varphi_p\star \mathds{1}_{K_p^1}=\varphi_p$. We conclude 
\begin{equation*}
W_{{\iota_0^*}^{-1}\varphi_{\Phi}}(s)=V^{-1} \langle \varphi_\infty \star (\rho(s_\infty).{\Phi}_\infty)\restriction_{G'(\mathbb{R})}, \varphi_\infty \rangle.
\end{equation*}
This expression coincides with the trace if the convolution operator $\star_{G'(\mathbb{R})}(\rho(s_\infty).{\Phi}_\infty)$ annihilates the orthogonal complement to $\varphi_\infty$ in $\pi_\infty$.  This follows from the facts that every $K_\infty$-isotypical component of the admissible unitary representation $\pi_\infty$ is at most $1$-dimensional and the assumption that $\rho(\bullet, \bullet ;s){\Phi}_\infty$ is $K_\infty\times K_\infty$-isotypical.

To show the formula in terms of a matrix coefficient we use Fubini to deduce
\begin{align*}
\langle \varphi_\infty \star (\rho(s_\infty).{\Phi}_\infty)\restriction_{G'(\mathbb{R})}, \varphi_\infty \rangle &=\int_{G'(\mathbb{R})} \int_{G(\mathbb{R})} \varphi_\infty(g h^{-1}) (\rho(s_\infty).{\Phi}_\infty)(h) \overline{\varphi_\infty (g)} \dif h \dif g\\
&=\int_{G'(\mathbb{R})} \int_{G(\mathbb{R})} \varphi_\infty(g) \overline{\varphi_\infty (gh)} (\rho(s_\infty).{\Phi}_\infty)(h)  \dif g \dif h\\
 &=\left\langle \overline{f_{\varphi_\infty,\varphi_\infty}} , \overline{\rho(s_\infty).{\Phi}_\infty} \right\rangle_{G'(\mathbb{R})}.
\end{align*}
The conditions of Fubini's theorem are satisfied because the test function ${\Phi}_\infty\restriction_{G'(\mathbb{R})}$  is in $L^q(G'(\mathbb{R}))$ for all $q\geq 1$ and  $f_{\varphi_\infty,\varphi_\infty}\in L^p(G'(\mathbb{R}))$ for some $p\geq 2$.
\end{proof}

\begin{proposition}\label{prop:theta-JL}
Fix ${\Phi}_v=\mathds{1}_{R_v}$ for all finite places $v<\infty$. 
Let $\pi\subset L^2_\mathrm{cusp}([G])^\infty$ be an irreducible cuspidal automorphic representation, and denote by $\pi^{\mathrm{JL}}$ its Jacquet--Langlands transfer to $L^2_{\mathrm{cusp}}([\operatorname{PGL}_2])^\infty$.  In case $G$ is split, we define $\pi^{\mathrm{JL}}=\pi$. 
Assume $\varphi\in\pi$, $\varphi'\in \pi^\vee$ are non-vanishing $K_R$-invariant vectors, then ${\iota_0^*}^{-1}(\varphi\otimes\varphi')_{\Phi}\in\pi^{\mathrm{JL}}$.

Moreover, if ${\Phi}_\infty$ is $\rho\left(K_\infty,K_\infty;\operatorname{SO}_2(\mathbb{R})\right)$-isotypical with weight $(-m,m,\kappa)$, $\pi$ has conductor $K_R$ and $\varphi$ is a newvector of weight $m$, then either $\varphi_{\Phi}$ vanishes or ${\iota_0^*}^{-1}\varphi_{\Phi}$ is a newvector of weight $\kappa$ in the Jacquet--Langlands transfer $\pi^{\mathrm{JL}}$.
\end{proposition}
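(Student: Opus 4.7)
The strategy is to verify membership in $\pi^{\mathrm{JL}}$ via orthogonality against other cuspidal pieces, then deduce the newvector statement from the standard theory. Set $F = (\iota_0^*)^{-1}(\varphi \otimes \varphi')_{\Phi}$. Lemma \ref{lem:theta-cuspidality} shows that $F$ is cuspidal in $L^2([\PGL_2])^{U_R}$. To conclude $F \in \pi^{\mathrm{JL}}$, it suffices to show that $\langle F, \varphi_0^* \rangle = 0$ for every $\varphi_0^*$ lying in an irreducible cuspidal subrepresentation $\sigma \subset L^2_{\mathrm{cusp}}([\PGL_2])^\infty$ with $\sigma \not\cong \pi^{\mathrm{JL}}$. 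Writing $\varphi^* = \iota_0^* \varphi_0^*$ and using that $\iota_0^*$ is an isometry by \eqref{L2-iso-symp}, combined with Lemma \ref{lem:theta-self-adjoint} applied with $\overline{\Phi}$ in place of $\Phi$, we obtain
\begin{equation*}
\langle F, \varphi_0^* \rangle_{[\PGL_2]} = \langle (\varphi \otimes \varphi')_{\Phi}, \varphi^* \rangle_{[\SL_2]} = \overline{\langle {\varphi^*}^{\overline{\Phi}}, \iota^{(1)*}(\varphi \otimes \varphi') \rangle_{[M]}}.
\end{equation*}

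By the isomorphism \eqref{eq:L2-iso}, the function $\iota^{(1)*}(\varphi \otimes \varphi')$ belongs to the $\iota^{(1)*}(\pi \otimes \pi^\vee)$-isotypical component of $L^2([M])^{K_M}$. On the other hand, ${\varphi^*}^{\overline{\Phi}}$ is cuspidal on $[M]$ by Lemma \ref{lem:theta-cuspidality}, and the Shimizu correspondence---realized as the global theta lift for the dual pair $(\SL_2, \O_{\det})$ combined with the inclusion $M \cong \SO_{\det} \hookrightarrow \O_{\det}$---matches $\sigma$ with the $\iota^{(1)*}(\tau \otimes \tau^\vee)$-isotypical component of $L^2([M])$, where $\tau$ is the Jacquet--Langlands lift of $\sigma$ to $G$; if no such $\tau$ exists, the theta lift already vanishes and there is nothing to show. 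Since Jacquet--Langlands is an injection on the relevant sets of irreducibles, the assumption $\sigma \not\cong \pi^{\mathrm{JL}}$ forces $\tau \not\cong \pi$, and hence the two isotypical components are mutually orthogonal. The desired vanishing follows.

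For the second assertion, the right $\widetilde{K}_R$-invariance of $\Phi$ combined with the modularity of $\Theta_{\Phi}$ in the symplectic variable implies that $F$ is right $U_R$-invariant, while the $\rho(\cdot, \cdot; \operatorname{SO}_2(\mathbb{R}))$-weight-$\kappa$ isotypy of $\Phi_{\infty}$ transfers via the Weil-representation action to the same weight equivariance of $F$. Combined with the first part, $F$ lies in the space of $U_R$-invariant, $\operatorname{SO}_2(\mathbb{R})$-isotypical vectors of weight $\kappa$ inside $\pi^{\mathrm{JL}}$. Since $\pi$ has conductor $K_R$, preservation of conductor under Jacquet--Langlands yields that $\pi^{\mathrm{JL}}$ has conductor $U_R$, and the newvector theory of \cite{MR0401654,MR337789} then guarantees that the above space is at most one-dimensional, spanned by the newvector of weight $\kappa$ when nonzero. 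Therefore $F$ is either zero (whence $\varphi_{\Phi} = 0$ by the isometry property of $\iota_0^*$) or a newvector of weight $\kappa$.

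The main obstacle will be making rigorous the identification of the global theta lift from $\SL_2$ to $M$ with the Shimizu/Jacquet--Langlands correspondence across the various isogenies $G' \times G' \to M \to G \times G$ and the embedding $\SO_{\det} \hookrightarrow \O_{\det}$; the measure-theoretic framework of Lemmas \ref{lem:pull-back-properties} and \ref{lem:isogeny2isomorphism} is designed precisely to handle this bookkeeping. As an alternative, one can bypass explicit appeal to Shimizu by using Corollary \ref{cor:disjoint-transfer-vanishes} together with local Howe duality to rule out matchings between non-corresponding irreducibles.
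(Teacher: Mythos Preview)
Your orthogonality approach is conceptually sound but takes a genuinely different route from the paper, and the step you flag as ``the main obstacle'' is in fact where the argument as written is circular. You want to show that the lift from $[M]$ to $[\SL_2]$ lands in $\pi^{\mathrm{JL}}$, and to do so you invoke that the lift in the \emph{reverse} direction, ${\varphi^*}^{\overline{\Phi}}$ for $\varphi^*\in\sigma$, lands in the $\tau\otimes\tau^\vee$-component with $\tau$ the Jacquet--Langlands transfer of $\sigma$ to $G$. But that reverse statement is of exactly the same nature as what you are proving; calling it ``the Shimizu correspondence'' does not discharge it unless you are prepared to import Shimizu's full global result as a black box, in which case the proposition itself is essentially a corollary and the proof content has been outsourced.

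The paper instead argues directly via Whittaker functions and strong multiplicity one. After reducing to factorizable $\varphi,\varphi'$, it uses Corollary~\ref{cor:Whittaker-decomposable} to write the Whittaker function of ${\iota_0^*}^{-1}(\varphi\otimes\varphi')_\Phi$ as a product of local factors, and then quotes Shimizu's \emph{local} unramified computation (from \cite{MR0333081}) to identify the factor at each $v\notin S$ with the spherical Whittaker function of $\pi_v^{\mathrm{JL}}$. Strong multiplicity one for $\PGL_2$ then pins down the global representation. This uses only the local input from Shimizu rather than the full global correspondence, so it is self-contained in a way your orthogonality argument is not. Your treatment of the second assertion (newvector of weight $\kappa$) is essentially the same as the paper's and is fine.
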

\begin{proof}
Any smooth vector in $\pi^{K_R}$ is a linear combination of $K_R$-invariant factorizable vectors in the representation $\pi\cong \bigotimes'_v \pi_v$. Thus, we assume without loss of generality that $\varphi$ and $\varphi'$ are factorizable in $\pi$ and $\pi^\vee$ respectively.

The function ${\iota_0^*}^{-1}(\varphi\otimes \varphi')_{\Phi}$ is cuspidal  by Lemma \ref{lem:theta-cuspidality}. Because $\varphi\mapsto \otimes_v \varphi_v$ and $\varphi'\mapsto \otimes_v \varphi'_v$ are factorizable, Corollary \ref{cor:Whittaker-decomposable} implies that the Whittaker function of ${\iota_0^*}^{-1}(\varphi\otimes \varphi')_{\Phi}$ decomposes into a product.

Assume $(\varphi\otimes\varphi')_{\Phi}$ does not vanish. Let $S$ be a finite set of rational places containing the archimedean place, all places where $G$ ramifies, all places where $\pi$ ramifies and all places where either $\varphi_v$ or $\varphi'_v$ is not spherical. Shimizu \cite{MR0333081} computes the local Whittaker function $|\det s|_v\langle   \varphi_v\star_{G'(\mathbb{Q}_v)} {\Phi}_v(\bullet \alpha_{\det s,v}), \pi_v(\alpha_{\det s ,v}).\overline{\varphi'_v} \rangle_{\pi_v}$ for every place $v\not\in S$ and it coincides with the Whittaker function of a spherical newvector in $\pi^{\mathrm{JL}}_v$. Hence, every irreducible component $\sigma\cong \bigotimes'_v \sigma_v$ of the representation generated by ${\iota_0^*}^{-1}(\varphi\otimes\varphi')_{\Phi}$ satisfies $\sigma_v\cong \pi^{\mathrm{JL}}_v$ for all $v\not\in S$. Using the strong multiplicity one property for $\operatorname{PGL}_2$, we deduce that $\sigma=\pi^{\mathrm{JL}}$ for every irreducible component $\sigma$ as above. Hence, the representation generated by ${\iota_0^*}^{-1}(\varphi\otimes \varphi')_{\Phi}$ is $\pi^{\mathrm{JL}}$.

Assume next that $\pi$ has conductor $K_R$ and that $\varphi$ is a newvector of weight $m$. Then, the assumption that ${\Phi}_\infty$ has weight $(-m,m,\kappa)$ implies that $\varphi_{\Phi}$ has  weight $\kappa$. By \cite[\S 4]{Theta-supnorm-Is} the theta transfer $\varphi_{\Phi}$ is $U_R^1$-invariant. Because the conductor of the Jacquet--Langlands transfer is exactly $U_R$ and ${\iota_0^*}^{-1}\varphi_{\Phi}={\iota_0^*}^{-1}(\varphi\otimes \overline{\varphi})_{\Phi}\in\pi^{\mathrm{JL}}$, we deduce that ${\iota_0^*}^{-1}\varphi_{\Phi}$ is a newvector as claimed.
\end{proof}

\begin{corollary}\label{cor:theta-new-orthogonal}
	Assume $\varphi,\varphi'\in L^2_\mathrm{cusp}([G])^\infty$ are $K_R$-invariant and generate disjoint automorphic cuspidal representations, then $\langle \varphi_{\Phi}, \varphi'_{\Phi} \rangle=0$.
\end{corollary}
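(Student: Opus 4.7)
The plan is to identify $\varphi_{\Phi}$ and $\varphi'_{\Phi}$ as vectors in distinct irreducible cuspidal subrepresentations of $L^2([\mathrm{SL}_2])$, whence orthogonality is automatic.

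First, I recall that $\varphi_{\Phi} = (\varphi \otimes \overline{\varphi})_{\Phi}$ and analogously for $\varphi'$. Since the representations are unitary, $\overline{\varphi}$ lies in the contragredient $\pi^{\vee}$ of $\pi$, and similarly $\overline{\varphi'} \in (\pi')^{\vee}$. By Lemma \ref{lem:theta-cuspidality}, both $\varphi_{\Phi}$ and $\varphi'_{\Phi}$ belong to $L^2_{\mathrm{cusp}}([\mathrm{SL}_2])^{U_R^1}$, which the isometry $\iota_0^*$ of \eqref{L2-iso-symp} identifies with the cuspidal subspace of $L^2([\mathrm{PGL}_2])^{U_R}$.

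Next, I would invoke Proposition \ref{prop:theta-JL} on each lift: it yields that ${\iota_0^*}^{-1}\varphi_{\Phi} \in \pi^{\mathrm{JL}}$ and ${\iota_0^*}^{-1}\varphi'_{\Phi} \in (\pi')^{\mathrm{JL}}$, where in the split case the Jacquet--Langlands transfer is the identity by convention. Since $\pi$ and $\pi'$ are disjoint by hypothesis, and the Jacquet--Langlands correspondence is a bijection onto its image (so injective on isomorphism classes), the transferred representations $\pi^{\mathrm{JL}}$ and $(\pi')^{\mathrm{JL}}$ are likewise disjoint as submodules of $L^2_{\mathrm{cusp}}([\mathrm{PGL}_2])$.

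Finally, by multiplicity one for $\mathrm{PGL}_2$, distinct irreducible cuspidal automorphic representations occupy orthogonal isotypic components of $L^2_{\mathrm{cusp}}([\mathrm{PGL}_2])$, so ${\iota_0^*}^{-1}\varphi_{\Phi}$ and ${\iota_0^*}^{-1}\varphi'_{\Phi}$ are orthogonal there; pulling back along the isometry $\iota_0^*$ gives $\langle \varphi_{\Phi}, \varphi'_{\Phi}\rangle = 0$. The proof is entirely formal once Lemma \ref{lem:theta-cuspidality} and Proposition \ref{prop:theta-JL} are available, so there is no substantive obstacle at this stage; the corollary is a clean packaging of what those earlier results already establish. (One could also argue more directly via Lemma \ref{lem:theta-self-adjoint} by moving one $\Phi$-lift to the other side to obtain an inner product of the shape $\langle \varphi', \overline{(\varphi \otimes \overline{\varphi})_{\overline{\Phi}}} \cdot \text{stuff}\rangle$, but the route through $\mathrm{PGL}_2$ and multiplicity one is cleaner.)
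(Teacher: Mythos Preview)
Your proof is correct and follows essentially the same route as the paper: apply Proposition \ref{prop:theta-JL} to place ${\iota_0^*}^{-1}\varphi_{\Phi}$ and ${\iota_0^*}^{-1}\varphi'_{\Phi}$ inside $\pi^{\mathrm{JL}}$ and $(\pi')^{\mathrm{JL}}$ respectively, observe that Jacquet--Langlands transfers of disjoint representations remain disjoint, and conclude orthogonality in $L^2_{\mathrm{cusp}}([\operatorname{PGL}_2])$. The paper's proof is just a two-sentence compression of exactly this argument.
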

\begin{proof}
	The Jacquet--Langlands transfers of disjoint automorphic representations are disjoint. Hence, Proposition \ref{prop:theta-JL} above implies that ${\iota_0^*}^{-1}\varphi_{\Phi}$, ${\iota_0^*}^{-1}\varphi'_{\Phi}$ generate mutually orthogonal subrepresentations of $L^2_\mathrm{cusp}([\operatorname{PGL}_2])^\infty$.
\end{proof}

\begin{proposition}\label{prop:theta-double-lift}
Fix ${\Phi}_v=\mathds{1}_{R_v}$ for all finite places $v<\infty$ and assume that ${\Phi}_\infty$ is $\rho\left(K_\infty,K_\infty;\mathbf{SO}_2(\mathbb{R})\right)$-isotypical with weight $(-m,m,\kappa)$. 
Let $\pi\subset L^2_\mathrm{cusp}([G])^\infty$ be an irreducible cuspidal automorphic representation with conductor $K_R$. Assume $0\neq\varphi\in\pi$ is a newvector of weight $m$. Then
\begin{equation}
\left(\varphi_{\Phi}\right)^{\overline{{\Phi}}}=\left(\frac{\|\varphi_{\Phi}\|_2}{\|\varphi\|_2^2}\right)^2{\iota^{(1)}}^* \left(\varphi \otimes \overline{\varphi}\right).
\end{equation}
\end{proposition}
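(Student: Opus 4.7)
The plan is to show that $(\varphi_{\Phi})^{\overline{\Phi}}$ is a scalar multiple of ${\iota^{(1)}}^*(\varphi \otimes \overline{\varphi})$ inside $L^2([M])$, then pin down the constant by a single inner-product computation via Lemma \ref{lem:theta-self-adjoint}. I would begin by locating the back-lift in the correct ambient space: by Lemma \ref{lem:theta-cuspidality}, $(\varphi_\Phi)^{\overline{\Phi}}$ is cuspidal and $K_M$-invariant, and tracking the $K_\infty \times K_\infty$-weight through the definitions (using that $\overline{\Phi}_\infty$ has weight $(m,-m,-\kappa)$) shows it has archimedean weight $(m,-m)$. This matches the weight of ${\iota^{(1)}}^*(\varphi \otimes \overline{\varphi})$, so both live in the same isotypical piece.

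Next, I would isolate the $(\pi, \pi^\vee)$-isotypic component via an orthogonality argument. For any $K_R$-invariant $\psi_1, \psi_2 \in L^2_\mathrm{cusp}([G])^\infty$ generating representations $\pi_1, \pi_2$ with $(\pi_1,\pi_2) \not\cong (\pi,\pi^\vee)$, Lemma \ref{lem:theta-self-adjoint} (applied with the roles of $\Phi$ and $\overline{\Phi}$ interchanged) gives
\[
\bigl\langle (\varphi_\Phi)^{\overline{\Phi}},\, {\iota^{(1)}}^*(\psi_1\otimes\psi_2) \bigr\rangle = \bigl\langle \varphi_\Phi,\, (\psi_1\otimes\psi_2)_\Phi \bigr\rangle.
\]
The RHS vanishes: if $\pi_1 \not\cong \pi_2^\vee$ then Corollary \ref{cor:disjoint-transfer-vanishes} kills $(\psi_1\otimes\psi_2)_\Phi$, while if $\pi_1 \cong \pi_2^\vee$ with $\pi_1 \neq \pi$, then Proposition \ref{prop:theta-JL} places $(\psi_1\otimes\psi_2)_\Phi \in \pi_1^{\mathrm{JL}}$ and $\varphi_\Phi \in \pi^{\mathrm{JL}}$, which are disjoint by injectivity of the Jacquet--Langlands correspondence. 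Consequently $(\varphi_\Phi)^{\overline{\Phi}}$ lies, via ${\iota^{(1)}}^*$, in the $(\pi,\pi^\vee)$-isotypic component of $L^2_\mathrm{cusp}([G\times G])$; the conductor hypothesis (newform) and the archimedean weight constraint cut this down to the one-dimensional line spanned by ${\iota^{(1)}}^*(\varphi\otimes\overline{\varphi})$. Hence $(\varphi_\Phi)^{\overline{\Phi}} = c\cdot {\iota^{(1)}}^*(\varphi\otimes\overline{\varphi})$ for some scalar $c$.

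Finally, to evaluate $c$, I would pair both sides with ${\iota^{(1)}}^*(\varphi \otimes \overline{\varphi})$. Since Lemma \ref{lem:isogeny2isomorphism} shows the map ${\iota^{(1)}}^*: L^2([G\times G])^{K_R\times K_R} \to L^2([M])^{K_M}$ is a unitary isomorphism with respect to probability Haar measures, the resulting norm is $\|\varphi\|_2^4$, giving the LHS value $c\,\|\varphi\|_2^4$. On the other hand, reapplying Lemma \ref{lem:theta-self-adjoint} yields
\[
\bigl\langle (\varphi_\Phi)^{\overline{\Phi}},\, {\iota^{(1)}}^*(\varphi\otimes\overline{\varphi}) \bigr\rangle = \bigl\langle \varphi_\Phi,\, (\varphi\otimes\overline{\varphi})_\Phi \bigr\rangle = \|\varphi_\Phi\|_2^2.
\]
Equating the two expressions gives $c = \|\varphi_\Phi\|_2^2/\|\varphi\|_2^4 = (\|\varphi_\Phi\|_2/\|\varphi\|_2^2)^2$, which is precisely the claimed identity. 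The step that will require the most care is the isotypic decomposition in paragraph two: I must verify that the constraints of cuspidality, $K_R \times K_R$-invariance, and archimedean weight $(m,-m)$ genuinely cut out a one-dimensional subspace of the $(\pi,\pi^\vee)$-isotypic component, which uses the newvector multiplicity-one property stated in Section \ref{sec:JL-lifts} together with the assumption that $\Phi_\infty$ is $K_\infty \times K_\infty$-isotypical of the precise weight dictated by the newform.
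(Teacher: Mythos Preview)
Your proposal is correct and follows essentially the same approach as the paper: establish the weight and level constraints, use Lemma \ref{lem:theta-self-adjoint} together with Corollary \ref{cor:disjoint-transfer-vanishes} and Proposition \ref{prop:theta-JL} to show orthogonality to all $\psi_1\otimes\psi_2$ outside the line $\mathbb{C}(\varphi\otimes\overline{\varphi})$, invoke newvector multiplicity one to conclude proportionality, and then compute the constant by pairing against ${\iota^{(1)}}^*(\varphi\otimes\overline{\varphi})$ and applying Lemma \ref{lem:theta-self-adjoint} once more. The paper organizes the orthogonality step by working in $V_m^0$, the orthogonal complement of $\varphi\otimes\overline{\varphi}$ inside the weight-$(m,-m)$, $K_R\times K_R$-fixed cuspidal subspace, but this is exactly the reduction you flag in your final paragraph.
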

\begin{proof}
Assume that
\begin{equation}\label{eq:phiMM-newvector}
\left(\varphi_{\Phi}\right)^{\overline{{\Phi}}}=\alpha {\iota^{(1)}}^* \left(\varphi \otimes \overline{\varphi}\right),
\end{equation} 
for some $\alpha\in\mathbb{C}$.
Then, $\alpha\|\varphi\|_2^4=\langle \left(\varphi_{\Phi}\right)^{\overline{{\Phi}}}, {\iota^{(1)}}^* \left(\varphi \otimes \overline{\varphi}\right) \rangle$ and Lemma \ref{lem:theta-self-adjoint} implies that $\alpha=\|\varphi_{\Phi}\|_2^2/\|\varphi\|_2^4$.

Because $\left(\varphi_{\Phi}\right)^{\overline{{\Phi}}}$ is continuous and cuspidal by Lemma \ref{lem:theta-cuspidality}, to establish \eqref{eq:phiMM-newvector} it is enough to show that ${\iota^{(1)}}^{*-1}\left(\varphi_{\Phi}\right)^{\overline{{\Phi}}}$ is orthogonal to the orthogonal complement of $\mathbb{C}\left(\varphi \otimes \overline{\varphi}\right)$ in $L^2_{\mathrm{cusp}}([G\times G])^{K_R\times K_R}$. Both $\left(\varphi \otimes \overline{\varphi}\right)$ and ${\iota^{(1)}}^{*-1}\left(\varphi_{\Phi}\right)^{\overline{{\Phi}}}$ transform with weight $(m,-m)$ under $K_\infty\times K_\infty$. Hence, it is enough to check orthogonality in the $(m,-m)$ isotypical subspace
\begin{equation*}
V_m= L^2_{\mathrm{cusp}}([G\times G])^{((K_\infty,m)\cdot K_R)\times ((K_\infty,-m)\cdot K_R)}.
\end{equation*}
Denote by $V_m^0$ the orthogonal compliment of $\mathbb{C}\left(\varphi \otimes \overline{\varphi}\right)$ in $V_m$.
We can choose an orthonormal basis for $V_m^0$ consisting of vectors $\psi\otimes\psi'$ with $\psi,\overline{\psi'}\in L^2_\mathrm{cusp}([G])^{(K_\infty,m)\cdot K_R}$ and $\psi$, $\psi'$ generate irreducible cuspidal automorphic representations of $G(\mathbb{A})$. Because $\pi$ has conductor $K_R$, 
either the representation generated by $\psi$ is disjoint from $\pi$ or the representation generated by $\psi'$ is disjoint from $\pi^\vee$. Fix $\psi$, $\psi'$ as above. We need to show $\left\langle {\iota^{(1)}}^{*-1}\left(\varphi_{\Phi}\right)^{\Phi}, \psi\otimes \psi'\right\rangle=0$. Denote by $\sigma$, $\sigma'$ the irreducible automorphic representations generated by $\psi$, $\psi'$ respectively. We apply Lemma \ref{lem:theta-self-adjoint} to deduce $\left\langle {\iota^{(1)}}^{*-1}\left(\varphi_{\Phi}\right)^{\overline{{\Phi}}}, \psi\otimes \psi'\right\rangle= \left\langle \varphi_{\Phi}, \left(\psi\otimes \psi'\right)_{\Phi}\right\rangle$. If $\sigma'\neq \sigma^\vee$, then $\left(\psi\otimes \psi'\right)_{\Phi}=0$ by Corollary \ref{cor:disjoint-transfer-vanishes}. If $\sigma'=\sigma^\vee$, then $\sigma$ is disjoint from $\pi$, and $(\iota_0^*)^{-1}\left(\psi\otimes \psi'\right)_{\Phi}\in \sigma^{\mathrm{JL}}$ by Proposition \ref{prop:theta-JL}.  The Jacquet--Langlands transfers of disjoint representations are disjoint. Hence, $\pi^{\mathrm{JL}}\perp \sigma^{\mathrm{JL}}$ and $\left\langle \varphi_{\Phi}, \left(\psi\otimes \psi'\right)_{\Phi}\right\rangle=0$ as claimed.
\end{proof}

\subsection{Explicit Theta Kernels}\label{sec:explicit-theta}
\begin{definition}\label{def:archimedean-test-functions}
We now define the archimedean test functions on $B_\infty$ that give rise to the theta series from \S\ref{sec:theta-functions-definitions}.
\begin{align*}
{\Phi}_\infty^{-,k}(g)&=
{X(g)}^k
e^{-2\pi P(g)},\\
{\Phi}_\infty^{-,\mathrm{hol}}(g)&=\frac{k-1}{4\pi}\begin{cases}
(\det g)^{k-1}\overline{X(g)}^{(-k)} e^{-2\pi \det g} & \det g > 0\\
0 & \det g \le 0
\end{cases},\\
{\Phi}_\infty^{+,m}(g)&=(2m+1)(\det g)^m P_m\left(\frac{|X(g)|^2-u(g)}{\det g}\right)e^{-2\pi \det g},\\
{\Phi}_\infty^{+,\mathrm{hol}}(g)&=(k+1) X(g)^k e^{-2\pi \det g}.
\end{align*}
The first two test functions are defined when $G(\mathbb{R})$ is split and the last two are defined when $G(\mathbb{R})$ is ramified.
\end{definition}

\begin{lemma}\label{lem:Minfty-weight}
Let ${\Phi}_\infty$ be one of the kernels in Definition \ref{def:archimedean-test-functions} above and set $\kappa=k,k,2m+2,k+2$ for the different kernels respectively. Then, $\rho(k_\theta).{\Phi}_\infty=e^{i\kappa \theta} {\Phi}_\infty$ for all $k_\theta\in\operatorname{SO}_2(\mathbb{R})$.
\end{lemma}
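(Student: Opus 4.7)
The plan is to verify the lemma by invoking the following standard principle from the theory of the Howe dual pair $(\operatorname{SL}_2, \operatorname{O}(V,q))$: if $P$ is a positive-definite majorant of a nondegenerate rational quadratic form $q$ of signature $(p,r)$ on $V_{\mathbb{R}}$, and $\mathcal{H}$ is a polynomial on $V_{\mathbb{R}}$ that is harmonic with respect to the Laplacian $\Delta_q$ of $q$, then $\mathcal{H}(x)\,e^{-2\pi P(x)}$ is an $\operatorname{SO}_2(\mathbb{R})$-eigenvector under the Weil representation, with weight $\deg(\mathcal{H}) + \tfrac{p-r}{2}$. This is a classical consequence of the decomposition of the oscillator representation into $\operatorname{SL}_2$-isotypes indexed by $\operatorname{O}(q)$-harmonic polynomials, together with the fact that the majorant Gaussian realizes the minimal $K$-type in each of these discrete-series-type pieces.

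Using the orthogonal decomposition $B_\infty = \mathbb{C} \oplus \mathbb{C}\j$ from \S\ref{sec:Kinftynotation}, with coordinates $X\in\mathbb{C}$ and $w\in \mathbb{C}\j$, we have $P = |X|^2+u$ and $\det = |X|^2 \mp u$ (with $-$ in the indefinite case, $+$ in the definite case), where $u=|w|^2$. The case analysis then proceeds as follows.

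\emph{For $\Phi^{-,k}_\infty$} (indefinite, signature $(2,2)$): A direct differentiation using $X = d+i a$ shows $\Delta_{\det}(X^k) = (\partial_a^2 - \partial_b^2 - \partial_c^2 + \partial_d^2)(d+ia)^k = 0$, so $X^k$ is $\Delta_{\det}$-harmonic of degree $k$, giving $\kappa = k + \tfrac{2-2}{2} = k$.

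\emph{For $\Phi^{+,\mathrm{hol}}_\infty$} (definite, signature $(4,0)$): Here $\Delta_{\det}$ is the standard Euclidean Laplacian on $\mathbb{R}^4$, and the same computation shows that $X^k$ is harmonic of degree $k$, yielding $\kappa = k + 2$.

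\emph{For $\Phi^{+,m}_\infty$} (definite): The polynomial $\mathcal{H}_m(g) \coloneqq (\det g)^m P_m\bigl(\tfrac{|X|^2-u}{\det g}\bigr)$ expands in terms of the Legendre polynomial as $\sum_{j=0}^m (-1)^j \binom{m}{j}^2 |X|^{2(m-j)} u^{j}$; on radial-in-both-factors functions the Laplacian acts as $4(\partial_s\, s\, \partial_s + \partial_t\, t\, \partial_t)$ in the variables $s = |X|^2$, $t = u$, and a direct term-by-term verification using the recursion $c_{j+1} = -c_j (m-j)^2 / (j+1)^2$ (equivalent to the Legendre differential equation) shows $\Delta_{\det}\mathcal{H}_m = 0$. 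Since $\deg \mathcal{H}_m = 2m$, the principle gives $\kappa = 2m + 2$.

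\emph{For $\Phi^{-,\mathrm{hol}}_\infty$} (indefinite): This Bergman-type test function is supported on $\{\det g>0\}$ and features the ``wrong-sign'' Gaussian $e^{-2\pi\det g}$ instead of $e^{-2\pi P(g)}$, so the Howe-duality principle does not apply directly. For this case I would invoke the weight computation from the prequel \cite[\S 6]{Theta-supnorm-Is}, where $\Phi^{-,\mathrm{hol}}_\infty$ is identified as the minimal weight vector within the holomorphic discrete series of weight $k$ embedded in the oscillator representation.

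The main obstacle lies in the last case, where the non-Schwartz nature of $\Phi^{-,\mathrm{hol}}_\infty$ forces one to depart from the Howe-duality framework and argue instead directly within the discrete-series realization; the other three cases reduce quickly to the standard harmonic-polynomial-times-majorant-Gaussian structure.
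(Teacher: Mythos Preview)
Your proposal is correct and follows essentially the same approach as the paper: both identify the three Schwartz test functions as a $\Delta_{\det}$-harmonic polynomial times the majorant Gaussian $e^{-2\pi P}$, verify harmonicity directly (the paper cites \cite{LPS-S2-distr-II} for the Legendre case where you give the explicit recursion), and defer the non-Schwartz Bergman kernel $\Phi_\infty^{-,\mathrm{hol}}$ to \cite[\S6]{Theta-supnorm-Is}. The only cosmetic difference is that the paper packages the weight computation via Vign\'eras' PDE $-\Delta\Phi + (2\pi)^2\det\cdot\Phi = 2\pi\kappa\Phi$ rather than invoking the Howe $K$-type principle directly, but the computational content is identical.
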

\begin{proof}
This is verified by Vign\'eras' method \cite{MR0480352}. 
In all cases under consideration except ${\Phi}_\infty^{-,\mathrm{hol}}$, the test function is Schwartz, hence it is enough to check that ${\Phi}_\infty$ satisfies the PDE in \cite[\S3.3]{Theta-supnorm-Is} and then use Lemma 3.4, \emph{op.\@ cit.\@}. In case ${\Phi}_\infty={\Phi}_\infty^{-,\mathrm{hol}}$, the test function is not Schwartz and a technical argument is required to circumvent this issue. This case is treated \cite[\S6]{Theta-supnorm-Is}. 
We proceed to verify the three other cases.

Recall the notation $x=[a,b,c]+d \in B_{\infty}$ from \S\ref{sec:Kinftynotation}. The Laplace operator with Fourier multiplier $-4\pi^2 \det(x)$ is then given by $\Delta = \frac{1}{4} ( \frac{\partial^2}{\partial a^2} \mp (\frac{\partial^2}{\partial b^2}+\frac{\partial^2}{\partial c^2})+\frac{\partial^2}{\partial d^2})$, where the sign is $-$ if $B$ is indefinite and $+$ otherwise. The differential equation in \S3.3 of \cite{Theta-supnorm-Is} for the test function ${\Phi}_{\infty}$ is equivalent to
\begin{equation}
-\Delta {\Phi}(x) + (2 \pi)^2 \det(x) {\Phi}(x) = 2\pi \kappa {\Phi}(x).
\label{eq:K-isotope-PDE}
\end{equation}
We note that for each of the remaining test functions, we may write ${\Phi}_{\infty}(x) = Q(x) e^{-2 \pi P(x)}$, where $P(x)=a^2+b^2+c^2+d^2$ and $Q$ a harmonic polynomial of homogeneous degree. For the first and last test function this may be seen by a well-known criteria (c.f.\@ \cite[Thm 9.1]{Iw97}) noting that $[i,0,0]+1 \in B_{\infty} \otimes \mathbb{C} $ is an isotropic vector. For the third test function this follows from \cite[p. 405]{LPS-S2-distr-II}. With this in mind, we have $\Delta Q = 0$ and $(a\frac{\partial}{\partial a}+b\frac{\partial}{\partial b}+c\frac{\partial}{\partial c}+d\frac{\partial}{\partial d})Q=\deg(Q)Q$, which allows one to easily verify that ${\Phi}_{\infty}$ satisfies \eqref{eq:K-isotope-PDE} in the remaining cases.

\end{proof}

\begin{proposition}\label{prop:theta-adelic2classic}
Let ${\Phi}={\Phi}_\infty\cdot\prod_{v<\infty} \mathds{1}_{R_v}$, where ${\Phi}_\infty$ is any one of the test functions in Definition \ref{def:archimedean-test-functions} above. Set $\kappa=k,k,2m+2,k+2$ for the different kernels respectively. Denote by $\theta_g$  the matching classical theta function from \S\ref{sec:theta-functions-definitions}. For $\tau\in \operatorname{SL}_2(\mathbb{Q})$, we  denote by $(\tau)_\infty$ the image of $\tau$ in the archimedean coordinate of $\operatorname{SL}_2(\mathbb{A})$.
Then, for every $l\mid d_B N$ and $g\in G(\mathbb{A})$
\begin{equation*}
\Theta_{\Phi}(g,g; (\tau_{\ell})_\infty s_\infty U_R^1)=\frac{\mu(\gcd(\ell,d_B))}{\ell} \theta_{g,\ell}(z) e^{i\kappa \theta},
\end{equation*}
where $\mu$ is the M\"obius function, $s_\infty=\sm y^{1/2} & xy^{-1/2} \\ 0 & y^{-1/2} \esm \sm \cos \theta &\sin \theta \\ -\sin \theta & \cos \theta\esm$ and $z=x+iy$, i.e.\@ $s_\infty.i =z$.  Moreover $\theta_{g,\ell}(z)$ is a $\Gamma_0(d_B N)$-invariant function on $\mathbb{H}$ of moderate growth at the cusps.
\end{proposition}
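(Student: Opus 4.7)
The plan is to unfold the adelic theta kernel using strong approximation and the explicit form of the Weil representation on each local factor. The key maneuver at the outset is to exploit the $\operatorname{SL}_2(\mathbb{Q})$-invariance on the left of $\Theta_{\Phi}$: since $\tau_{\ell}\in\operatorname{SL}_2(\mathbb{Z})$ embeds diagonally in $\operatorname{SL}_2(\mathbb{A})$, we have
\[
\Theta_{\Phi}(g,g;(\tau_{\ell})_\infty s_\infty U_R^1)
= \Theta_{\Phi}(g,g; (\tau_{\ell}^{-1})_f\, s_\infty U_R^1),
\]
where $(\tau_{\ell}^{-1})_f$ denotes the adelic element which is trivial at the archimedean place and equals $\tau_{\ell}^{-1}$ at every finite place. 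This converts the cusp twist into a local-at-finite-places computation of the Weil representation.

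Next I would carry out the archimedean computation. By Lemma \ref{lem:Minfty-weight}, $\Phi_\infty$ is a $\rho(K_\infty)$-eigenvector of weight $\kappa$, so $\rho(k_\theta).\Phi_\infty$ contributes exactly the factor $e^{i\kappa\theta}$. Writing $s_\infty = a(y^{1/2})n(x/y)k_\theta$ and applying the standard formulas for the Weil representation on a quadratic space of dimension $4$ (so the modulus factor is $|y^{1/2}|^{2}=y$), one finds
\[
\rho(s_\infty).\Phi_\infty(\xi)
=y\, e\!\left(\tfrac{x}{y}\det\xi\right) \Phi_\infty(y^{1/2}\xi)\, e^{i\kappa\theta}.
\]
The $(g,g)$-action in the $M$-variable conjugates by $g_\infty$ at infinity, so upon reindexing $\alpha := g_\infty^{-1}\xi g_\infty$ (using that both $\det$ and the $[a,b,c]+d$-coordinates of $\Phi_\infty$ transform compatibly under conjugation once one invokes the homogeneity in $y$), the archimedean contribution reduces exactly to the classical sum in the definition of each $\theta_{g,\ell}(z)$ in \S\ref{sec:theta-functions-definitions}.

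The non-archimedean computation is then the technical heart of the proof, and the step I expect to be the main obstacle. At primes $p\nmid \ell$ one has $\tau_{\ell}\equiv I \pmod{d_BN/\ell}$, so $\tau_{\ell}^{-1}$ lies in the kernel of reduction to the level of $R_p$, whence $\rho(\tau_{\ell}^{-1})\mathds{1}_{R_p}=\mathds{1}_{R_p}$. At primes $p\mid \ell$, $\tau_{\ell}^{-1}$ reduces modulo $p$ to (the inverse of) the Weyl element $w=\sm 0 & 1\\ -1 & 0\esm$, and the Weil action becomes a (partial) Fourier transform: using the bilinear form $\langle x,y\rangle=\tr(x\bar y)$ and the fixed unramified character $\psi_p$, one computes
\[
\rho(w).\mathds{1}_{R_p}(y) = \gamma_{\psi_p}(q_p)\operatorname{vol}(R_p)\,\mathds{1}_{R_p^{\vee}}(y),
\]
where $R_p^{\vee}=R(\ell)_p$ by the definition of $R(\ell)$. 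With the self-dual Haar measure, $\operatorname{vol}(R_p)=[R_p^{\vee}:R_p]^{-1/2}=p^{-1}$ for every $p\mid d_BN$, while the local Weil index $\gamma_{\psi_p}(q_p)$ equals $+1$ at split primes $p\nmid d_B$ and $-1$ at ramified primes $p\mid d_B$. Taking the product over $p\mid \ell$ yields exactly the scalar
\[
\prod_{p\mid\ell}\frac{\gamma_{\psi_p}(q_p)}{p}=\frac{\mu(\gcd(\ell,d_B))}{\ell}.
\]
Tracking the precise normalization of the self-dual measure and keeping the Weil indices consistent across split/ramified primes is the delicate point here; one must verify that the measure on $B_p$ used in the Fourier transform is the one implicit in the adelic Weil representation.

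To conclude, I would assemble the local pieces: the adelic support condition at the finite places restricts the theta sum to $\xi \in g_f(R(\ell)\otimes\widehat{\mathbb Z})g_f^{-1}\cap B$, which by the definitions of \S\ref{sec:prelim-proof-reduction} is $R'(\ell)$ for the appropriate Eichler order $R'$ in the genus of $R$; conjugating by $g_\infty$ converts this into a sum over $R(\ell;g)$, thereby producing $\frac{\mu(\gcd(\ell,d_B))}{\ell}\theta_{g,\ell}(z)e^{i\kappa\theta}$. The moderate growth of $\theta_{g,\ell}$ is inherited directly from the moderate growth of $\Theta_{\Phi}$ on $M(\mathbb{A})\times \operatorname{SL}_2(\mathbb{A})$. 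Finally, the $\Gamma_0(d_BN)$-invariance on $\mathbb H$ of $\theta_{g,\ell}$ follows by combining the left $\operatorname{SL}_2(\mathbb{Q})$-invariance of $\Theta_{\Phi}$ with the right $U_R^1$-invariance: the stabilizer in $\operatorname{SL}_2(\mathbb{Q})$ of the coset $(\tau_{\ell})_\infty s_\infty U_R^1$ modulo the central phase $e^{i\kappa\theta}$ projects to $\tau_{\ell}\Gamma_0(d_BN)\tau_{\ell}^{-1}$ on the archimedean side, but since the $\theta_{g,\ell}$ on the left-hand side is written in the $z$-coordinate after stripping the $\tau_{\ell}$-twist, this exactly realizes $\Gamma_0(d_BN)$-invariance with the standard weight action.
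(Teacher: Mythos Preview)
Your proposal is correct and follows essentially the same route as the paper's proof: shift $(\tau_\ell)_\infty$ to the finite places via left $\operatorname{SL}_2(\mathbb{Q})$-invariance, observe that $\tau_\ell\in U_p^1$ for $p\nmid\ell$ while $\tau_\ell\in wU_p^1$ for $p\mid\ell$, apply the formula $\rho(w).\mathds{1}_{R_p}=\gamma_p\,p^{-1}\mathds{1}_{R_p^\vee}$ with $\gamma_p=\pm1$ according to whether $B_p$ is split or ramified, and assemble. One small slip: in your archimedean display, applying $\rho(a(y^{1/2}))$ after $\rho(n(x/y))$ rescales $\det\xi$ by $y$, so the phase should read $e(x\det\xi)$ rather than $e\!\left(\tfrac{x}{y}\det\xi\right)$, which is exactly what matches the definitions of $\theta_{g,\ell}$ in \S\ref{sec:theta-functions-definitions}.
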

\begin{proof}
We already know $\Theta_{\Phi}$ has weight $\kappa$ in the archimedean symplectic variable $s_\infty$. Moreover, in \cite[\S 3.5]{Theta-supnorm-Is} it is shown that $\Theta_{\Phi}$ is $U_R^1$ invariant. Denote by $(\tau_{\ell})_f$ the diagonal image of $\tau_{\ell}$ in $\operatorname{SL}_2(\mathbb{A}_f)$. The left $\operatorname{SL}_2(\mathbb{Q})$-invariance of the theta kernel implies $\Theta_{\Phi}(g,g; (\tau_{\ell})_\infty s_\infty)=\Theta_{\Phi}(g,g; s_\infty(\tau_{\ell})_f^{-1})$. For every prime $p\nmid d_B N$, we have $\tau_{\ell}\in\operatorname{SL}_2(\mathbb{Z})\subset\operatorname{SL}(\mathbb{Z}_p)= U_p^1$. If $p\mid l$ then $\tau_{\ell} \equiv w \bmod p$, where $w=\sm
0 & 1 \\ -1 & 0
\esm$, hence $\tau_{\ell} \in w U_p^1$. If $p\mid \frac{d_B N}{l}$ then $\tau_{\ell}\equiv e \bmod p$, hence $\tau_{\ell}\in U_p^1$. Because $\rho(U_p^1).\mathds{1}_{R_p}=\mathds{1}_{R_p}$, we can write
\begin{equation*}
\rho\left(g,g;s_\infty(\tau_{\ell})_f^{-1}\right).{\Phi}=\rho(s_\infty).{\Phi}_\infty(g_\infty^{-1}\bullet g_\infty)\prod_{p\nmid l} \mathds{1}_{g_p R_p g_p^{-1}} \prod_{p\mid l} \rho(w).\mathds{1}_{g_p R_p g_p^{-1}}.
\end{equation*}
The Weil action of $w$ is by the Fourier transform for $p\nmid d_B$ and it is by the negative of the Fourier transform for $p \mid d_B$. Specifically, it is shown in \cite[Section \S4]{Theta-supnorm-Is} that for $p\mid d_B N$
\begin{equation*}
\rho(w).\mathds{1}_{R_p}=\gamma_p p^{-1}\mathds{1}_{R_p^\vee},
\end{equation*}
where $\gamma_p=1$ if $B_p$ is split and $\gamma_p=-1$ if $B_p$ is ramified. We conclude that $$\rho\left(g,g,s_\infty(\tau_{\ell})_f^{-1}\right).{\Phi}=\frac{\mu(\gcd(l, d_B))}{\ell}\rho(s_\infty).{\Phi}_\infty(g_\infty^{-1}\bullet g_\infty)\prod_{p\nmid \ell} \mathds{1}_{g_p R_p g_p^{-1}} \prod_{p\mid \ell} \mathds{1}_{g_p R_p^\vee g_p^{-1}}.$$
Because $\bigcap_{p\nmid \ell} g_p R_p g_p^{-1}\bigcap_{p\mid \ell} g_p R_p^\vee g_p^{-1}=R(\ell;g_f)$, we have for $\xi\in B$ that
\begin{equation*}
\left( \prod_{p\nmid \ell} \mathds{1}_{g_p R_p g_p^{-1}} \prod_{p\mid \ell} \mathds{1}_{g_p R_p^\vee g_p^{-1}} \right)(\xi)=\mathds{1}_{R(\ell;g_f)}(\xi),
\end{equation*}
 and we can write
\begin{align*}
\Theta_{\Phi}(g,g; (\tau_{\ell})_\infty s_\infty U_R^1)&=\frac{\mu(\gcd(l,d_B))}{\ell}\sum_{\xi\in R(\ell;g_f)} \left(\rho(s_\infty).{\Phi}_\infty\right)(g_\infty^{-1}\xi g_\infty)\\
&=\frac{\mu(\gcd(l,d_B))}{\ell}\sum_{x\in R(\ell;g)} \left(\rho(s_\infty).{\Phi}_\infty\right)(x).
\end{align*}
The last equality holds because $g_\infty^{-1} R(\ell;g_f) g_\infty=R(\ell;g)$.
The claim now follows from Lemma \ref{lem:Minfty-weight} above and the formul{\ae} for the Weil action of the diagonal and unipotent subgroups.
The moderate growth of $\theta_{g,\ell}$ now follows from the moderate growth of $\Theta_{\Phi}$ in the symplectic variable $s$. The $\Gamma_0(d_B N)$-modularity of $\theta_{g,\ell}$ follows from the left $\operatorname{SL}_2(\mathbb{Q})$-invariance  and right $U_R^1$-invariance of $\Theta_{\Phi}$ in the symplectic variable, and the fact that $\tau_{\ell}$ normalizes $\Gamma_0(d_B N)$.
\end{proof}

\begin{proposition} Let ${\Phi}={\Phi}_{\infty}\cdot\prod_{v<\infty} \mathds{1}_{R_v}$, with ${\Phi}_{\infty}$ given by any of test functions listed in Definition \ref{def:archimedean-test-functions}. Let $\mathcal{G}$ be any of the families of automorphic forms corresponding to $\Theta_{\Phi}$ according to table \ref{table:choice-theta}. Then, for any $\varphi \in \mathcal{G} \subseteq L^2([G])^{\infty}$, a $K_R$-invariant Hecke eigenform, we have $\varphi_{\Phi}=V^{-1} \varphi^\mathrm{JL}$, where $\varphi^{\mathrm{JL}}$ is the arithmetically normalized Jacquet--Langlands lift of $\varphi$, as defined in \S\ref{sec:JL-lifts}.
\label{prop:theta-lift-all-families}
\end{proposition}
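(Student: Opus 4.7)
The plan is to apply Proposition \ref{prop:theta-JL} to reduce the statement to the identification of a single scalar, and then to pin down that scalar by a Whittaker-coefficient comparison. The only nontrivial ingredient is an archimedean matrix-coefficient calculation carried out case by case.

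First, since $\varphi \in \mathcal{G}$ is a $K_R$-invariant Hecke eigenform lying in the newspace for $R$, it is a newvector of weight $m$ in an irreducible cuspidal representation $\pi$ of conductor $K_R$. The second half of Proposition \ref{prop:theta-JL} (whose hypotheses on $\Phi_\infty$ are met by Lemma \ref{lem:Minfty-weight}) then asserts that $\iota_0^{*-1}\varphi_{\Phi}$ is either zero or a newvector of weight $\kappa$ in the Jacquet--Langlands transfer $\pi^{\mathrm{JL}}$. Since the space of $U_R$-invariant vectors in $\pi^{\mathrm{JL}}$ that are $K_\infty$-isotypical of weight $\kappa$ is one-dimensional, we obtain a scalar $c = c_\varphi \in \mathbb{C}$ with
\[
  \varphi_{\Phi} = c \cdot \iota_0^*\varphi^{\mathrm{JL}},
\]
and it remains to show $c = V^{-1}$.

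Second, I would identify $c$ by computing Whittaker coefficients of both sides at $s = \iota_0(s_\infty)$ with $s_\infty = n(x)a(y) \in \SL_2(\mathbb{R})$. The right-hand side has Whittaker coefficient $c \cdot W_{\varphi^{\mathrm{JL}}}(s)$, whose value at $s$ is prescribed by the arithmetic normalization recorded in \S\ref{sec:JL-lifts}. For the left-hand side I invoke Lemma \ref{lem:Whittaker1=trace}, whose hypotheses hold by Lemma \ref{lem:Minfty-weight} and the choice $\Phi_v = \mathds{1}_{R_v}$ at finite places. Since every $\varphi \in \mathcal{G}$ is $L^2$-normalized against the probability Haar measure, $\|\varphi\|_2 = 1$, and the lemma gives
\[
  W_{\iota_0^{*-1}\varphi_{\Phi}}(s) \;=\; \frac{1}{V}\,\overline{\bigl\langle f_{\varphi_\infty,\varphi_\infty},\,\rho(s_\infty).\Phi_\infty\bigr\rangle}_{G'(\mathbb{R})}.
\]
Equating the two expressions reduces the desired equality $c = V^{-1}$ to the purely archimedean identity
\[
  \overline{\bigl\langle f_{\varphi_\infty,\varphi_\infty},\,\rho(s_\infty).\Phi_\infty\bigr\rangle}_{G'(\mathbb{R})} \;=\; W_{\varphi^{\mathrm{JL}}}(s),
\]
where $f_{\varphi_\infty,\varphi_\infty}$ is the diagonal matrix coefficient of $\pi_\infty$ associated to the $L^2$-normalized archimedean component of $\varphi$.

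Third, I would verify this archimedean identity one row at a time in Table \ref{table:choice-theta}. In each case the matrix coefficient $f_{\varphi_\infty,\varphi_\infty}$ is explicit: the Harish-Chandra spherical function for $\mathcal{F}^{-}_\lambda$, discrete series matrix coefficients of weight $\pm k$ for $\mathcal{F}^{-,\mathrm{hol}}$, Legendre polynomial zonal functions for $\mathcal{F}^{+}_m$, and similarly for $\mathcal{F}^{+,\mathrm{hol}}$. Applying the Weil action of $s_\infty = n(x) a(y)$ transforms $\Phi_\infty$ into a Gaussian in the traceless coordinates $[a,b,c]$, scaled by $y$, multiplied by $e(x \det(\cdot))$ and a polynomial of the prescribed weight. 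Pairing against the matrix coefficient over $G'(\mathbb{R})$ then reduces to a standard integral of $K$-Bessel or gamma-function type. The numerical prefactors $\tfrac{k-1}{4\pi}$, $(2m+1)$, $(k+1)$ appearing in Definition \ref{def:archimedean-test-functions} have been calibrated precisely so that the resulting integral reproduces the prescribed Whittaker values $2\sqrt{y}\,K_{it}(2\pi y)\,e(x)$ in the Maa{\ss} case and $y^{\kappa/2}e(x+iy)$ in the other cases, giving $c = V^{-1}$ in every family.

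The main obstacle is the archimedean computation itself: although each individual case is classical, several cases must be done separately, and the case of $\Phi_\infty^{-,\mathrm{hol}}$ is delicate because the test function is not Schwartz (so one must justify the applicability of Lemma \ref{lem:Whittaker1=trace} in that regime, as was already handled carefully in \cite[\S6]{Theta-supnorm-Is}). Once the archimedean identity is verified in all five cases, the proposition follows immediately from the Whittaker comparison.
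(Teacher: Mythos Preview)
Your approach is correct and follows the same strategy as the paper: reduce to a one-dimensional space via Proposition \ref{prop:theta-JL}, then pin down the scalar by comparing Whittaker functions through Lemma \ref{lem:Whittaker1=trace}.

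Two tactical simplifications the paper makes that you could adopt: (i) since both sides lie in a one-dimensional space, it suffices to compare Whittaker values at the single point $s_\infty=e$ rather than at a general $s_\infty=n(x)a(y)$, which cuts the archimedean integral down to a numerical constant; (ii) for three of the five cases ($\mathcal{F}^{-,\hol}$ with $\Phi_\infty^{-,\hol}$, $\mathcal{F}^+_m$, $\mathcal{F}^{+,\hol}$) the paper observes that the restriction of $\Phi_\infty$ to $G'(\mathbb{R})$ is exactly $e^{-2\pi}d_{\pi_\infty}\,f_{\varphi_\infty,\varphi_\infty}$, so the pairing $\langle f_{\varphi_\infty,\varphi_\infty},\Phi_\infty\rangle_{G'(\mathbb{R})}$ collapses by Schur orthogonality to $e^{-2\pi}\|\varphi_\infty\|_2^4=e^{-2\pi}$, avoiding any explicit integration. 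For the spherical case $\mathcal{F}^-$ the paper instead uses the trace form of Lemma \ref{lem:Whittaker1=trace} and computes the Abel--Satake transform of the Gaussian, which is quicker than pairing against the Harish-Chandra spherical function directly. Your sketched case-by-case integrals would also work, but these shortcuts make the verification almost immediate.
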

\begin{proof} Let $\kappa$ be the entry of table \ref{table:choice-theta} corresponding to $\mathcal{G}$ and $\Theta_{\Phi}$. Lemma \ref{lem:Minfty-weight} shows that $\varphi_{\Phi}$ is of weight $\kappa$ and Proposition \ref{prop:theta-JL} that ${\iota_0^*}^{-1}\varphi_{\Phi}$ is newvector (or zero) of level $U_R$ of the Jacquet--Langlands transfer $\pi^{\JL}$ of the representation $\pi$ generated by $\varphi$. The subspace of vectors in $\pi^{\JL}$ satisfying these two properties is one-dimensional. This implies that $\varphi_{\Phi}$ is proportional to $\varphi^{\JL}$. In order to find the constant of proportionality $\rho_1$, we compute and compare the Whittaker functions at the identity. The Whittaker function of $\varphi^{\JL}$ is recorded in \S\ref{sec:JL-lifts} and those of $\varphi_{\Phi}$ we shall compute with the aid of Lemma \ref{lem:Whittaker1=trace}.
\paragraph{The case $\mathcal{G}=\mathcal{F}^{-}$, ${\Phi}_{\infty}={\Phi}_{\infty}^{-,0}$, $\kappa=0$} Suppose that $\varphi \in \mathcal{F}^{-}_{\frac{1}{4}+t^2} \subseteq \mathcal{F}^{-}$. Then, the representation $\pi_\infty$ is a principal series representation obtained by normalized induction of the character $\sm
\lambda & \ast \\ 0 & \mu
\esm\mapsto \sgn(\lambda/\mu)^\alpha  |\lambda / \mu|^{it}$ for some $\alpha \in \{0,1\}$. The equality of Whittaker functions yields the following equation for the constant of proportionality $\rho_1$:
\begin{equation*}
2\rho_1 K_{it}(2\pi)=V^{-1} \Tr \left(\Res^{\operatorname{PGL}_2(\mathbb{R})}_{\operatorname{SL}_2(\mathbb{R})} \pi_\infty\right)({\Phi}_\infty^{-,0}\restriction_{G'(\mathbb{R})}).
\end{equation*}
Because ${\Phi}_\infty^{-,0}\restriction_{G'(\mathbb{R})}$ is bi-$K_\infty$-invariant, the trace is the Fourier transform of the Abel--Satake transform of ${\Phi}_\infty^{-,0}\restriction_{G'(\mathbb{R})}$.  Compute first the Abel--Satake transform
\begin{align*}
\mathcal{S}{\Phi}_\infty^{-,0}\restriction_{G'(\mathbb{R})} (\tau)&=e^{\tau/2}\int_{-\infty}^\infty {\Phi}_\infty^{-,0}\left(
\begin{pmatrix}
e^{\tau/2} & 0 \\ 0 & e^{-\tau/2}
\end{pmatrix}
\begin{pmatrix}
1 & n \\ 0 & 1
\end{pmatrix}
\right) \dif n\\
&=
e^{\tau/2}\int_{-\infty}^\infty e^{-\pi (2\cosh \tau+e^\tau n^2)} \dif n=e^{-2\pi \cosh \tau}.
\end{align*}
The trace is proportional to the Fourier transform of $\mathcal{S}{\Phi}_\infty^{-,0}\restriction_{G'(\mathbb{R})} $. Using our measure normalization this becomes
\begin{equation*}
\Tr \left(\Res^{\operatorname{PGL}_2(\mathbb{R})}_{\operatorname{SL}_2(\mathbb{R})} \pi_\infty\right)({\Phi}_\infty^{-,0}\restriction_{G'(\mathbb{R})})=\int_{-\infty}^\infty e^{-2\pi \cosh \tau} e^{i t \tau} \dif\tau=2K_{it}(2\pi).
\end{equation*}
Hence, $\rho_1=V^{-1}$.

\paragraph{The case $\mathcal{G}=\mathcal{F}^{-, \hol}$, ${\Phi}_{\infty}={\Phi}_{\infty}^{-,k}$, $\kappa=k$} The equality of Whittaker functions, yields the following equation for the constant of proportionality $\rho_1$:
\begin{equation*}
\rho_1e^{-2\pi}=V^{-1}\overline{ \left\langle f_{\varphi_\infty, \varphi_\infty} , {{\Phi}_\infty^{-,k}}  \right\rangle}_{G'(\mathbb{R})}.
\end{equation*}
The representation $\pi_\infty$ is the discrete series representation with parameter $k$, and $\varphi_\infty\in \pi_\infty$ is the $L^2$-normalized minimal weight vector. The matrix coefficient in this case is exactly\footnote{This can be computed succinctly using the model of the discrete series as a subrepresentation of $L^2(G(\mathbb{R}))$. } $ f_{\varphi_\infty, \varphi_\infty}(g)=\overline{X(g)}^{(-k)}$ and we compute
\begin{align*}
\rho_1e^{-2\pi}&=V^{-1}\int_{G'(\mathbb{R})} e^{-2\pi P(g)} \dif g =V^{-1}2\pi\int_0^\infty e^{-2\pi \cosh \tau} \sinh \tau \dif \tau\\
&=
V^{-1} 2\pi \int_1^\infty e^{-2\pi \xi} \dif \xi=V^{-1} e^{-2\pi}.
\end{align*}

\paragraph{The remaining cases} Here, we verify\footnote{For $\varphi\in \mathcal{F}^{+}_m$, this can be computed easily from the model of the representation spanned by spherical harmonics, using the identity $P_m(\langle \mathrm{v},\mathrm{v}'\rangle)=\frac{4\pi}{2m+1}\sum_{n=-m}^{m} Y_{mn}(\mathrm{v})\overline{Y_{mn}(\mathrm{v'})}$, and the orthogonality of spherical harmonics.\\
For $\varphi\in \mathcal{F}^{+,\hol}$, this can be computed easily from the model of the representation on the space of homogeneous binary complex polynomials of degree $k$.} that the matrix coefficient satisfies $f_{\varphi_\infty, \varphi_\infty}=e^{2\pi}d_{\pi_{\infty}}^{-1} {\Phi}_\infty$, where $\varphi_{\infty} \in \pi_{\infty}$ is the archimedean component of $\varphi$, $L^2$-normalized, and $d_{\pi_{\infty}}$ the (formal) degree of $\pi_{\infty}$. The equality of Whittaker functions, then yields the following equation for the constant of proportionality $\rho_1$:
\begin{equation*}\begin{aligned}
\rho_1e^{-2\pi}&=V^{-1} \overline{\left\langle f_{\varphi_\infty, \varphi_\infty} , {{\Phi}_\infty}  \right\rangle}_{G'(\mathbb{R})} = V^{-1}e^{-2\pi} d_{\pi_{\infty}} \overline{\left\langle f_{\varphi_\infty, \varphi_\infty}, f_{\varphi_\infty, \varphi_\infty}  \right\rangle}_{G'(\mathbb{R})} \\
&= V^{-1}e^{-2\pi} \|\varphi_\infty\|_2^4=V^{-1}e^{-2\pi},
\end{aligned}\end{equation*}
where we have used Schur--Weyl orthogonality for matrix coefficients.
\end{proof}

\bibliography{refs}{} \bibliographystyle{alpha}

\def\cprime{$'$} \def\cprime{$'$} \def\cprime{$'$} \def\cprime{$'$}
\begin{thebibliography}{BHMM21}

\bibitem[AL70]{MR0268123}
A.~O.~L. Atkin and J.~Lehner.
\newblock Hecke operators on {$\Gamma _{0}(m)$}.
\newblock {\em Math. Ann.}, 185:134--160, 1970.

\bibitem[Ass17]{AssingSup}
Edgar Assing.
\newblock {O}n sup-norm bounds part {I}: ramified {M}aa{\ss} newforms over
  number fields.
\newblock {\em {P}reprint}, 2017.
\newblock {\tt arXiv:1710.00362}.

\bibitem[Ass21]{AssingBeyondNew}
Edgar Assing.
\newblock The sup-norm problem beyond the newform.
\newblock {\em {P}reprint}, 2021.
\newblock {\tt arXiv:2111.01893}.

\bibitem[Ber31]{SergeBernstein1931}
Serge Bernstein.
\newblock Sur les polynomes orthogonaux relatifs {\`a} un segment fini (seconde
  partie).
\newblock {\em Journal de {M}ath{\'e}matiques Pures et {A}ppliqu{\'e}es},
  10:219--286, 1931.

\bibitem[B{\'e}r77]{Berard}
P.~H. B{\'e}rard.
\newblock On the wave equation on a compact {R}iemannian manifold without
  conjugate points.
\newblock {\em {M}ath. {Z}.}, 155(3):249--276, 1977.

\bibitem[BH10]{MR2587342}
Valentin Blomer and Roman Holowinsky.
\newblock Bounding sup-norms of cusp forms of large level.
\newblock {\em Invent. Math.}, 179(3):645--681, 2010.

\bibitem[BHM16]{MR3474814}
Valentin Blomer, Gergely Harcos, and Djordje Mili{\'c}evi{\'c}.
\newblock Bounds for eigenforms on arithmetic hyperbolic {$3$}-manifolds.
\newblock {\em Duke Math. J.}, 165(4):625--659, 2016.

\bibitem[BHMM20]{MR4046009}
Valentin Blomer, Gergely Harcos, P\'{e}ter Maga, and Djordje Mili\'{c}evi\'{c}.
\newblock The sup-norm problem for {$\rm GL(2)$} over number fields.
\newblock {\em J. Eur. Math. Soc. (JEMS)}, 22(1):1--53, 2020.

\bibitem[BHMM21]{BeyondSphericalSup}
Valentin Blomer, Gergely Harcos, P\'{e}ter Maga, and Djordje Mili\'{c}evi\'{c}.
\newblock Beyond the spherical sup-norm problem.
\newblock {\em {P}reprint}, 2021.
\newblock {\tt arXiv:2107.05973}.

\bibitem[BHW93]{Succminupper}
Ulrich Betke, Martin Henk, and J{\"o}rg~M. Wills.
\newblock Successive-minima-type inequalities.
\newblock {\em Discrete Comput. Geom.}, 9(2):165--175, 1993.

\bibitem[BK15]{MR3368079}
Jack Buttcane and Rizwanur Khan.
\newblock {$L^4$}-norms of {H}ecke newforms of large level.
\newblock {\em Math. Ann.}, 362(3-4):699--715, 2015.

\bibitem[Blo13]{MR3082245}
Valentin Blomer.
\newblock On the 4-norm of an automorphic form.
\newblock {\em J. Eur. Math. Soc. (JEMS)}, 15(5):1825--1852, 2013.

\bibitem[BM11]{MR2852302}
Valentin Blomer and Philippe Michel.
\newblock Sup-norms of eigenfunctions on arithmetic ellipsoids.
\newblock {\em Int. Math. Res. Not. IMRN}, (21):4934--4966, 2011.

\bibitem[BM13]{MR3103131}
Valentin Blomer and Philippe Michel.
\newblock Hybrid bounds for automorphic forms on ellipsoids over number fields.
\newblock {\em J. Inst. Math. Jussieu}, 12(4):727--758, 2013.

\bibitem[Bor63]{Borel-cl-finite}
Armand Borel.
\newblock Some finiteness properties of adele groups over number fields.
\newblock {\em Inst. Hautes \'{E}tudes Sci. Publ. Math.}, (16):5--30, 1963.

\bibitem[Bos20]{Bosch-Alg}
Siegfried Bosch.
\newblock {\em Algebra}.
\newblock Springer Spektrum, Berlin, ninth edition, [2020] \copyright 2020.

\bibitem[Bum97]{MR1431508}
Daniel Bump.
\newblock {\em Automorphic Forms and Representations}, volume~55 of {\em
  Cambridge Studies in Advanced Mathematics}.
\newblock Cambridge University Press, Cambridge, 1997.

\bibitem[Cas73]{MR337789}
William Casselman.
\newblock On some results of {A}tkin and {L}ehner.
\newblock {\em Math. Ann.}, 201:301--314, 1973.

\bibitem[Com21]{Comtatramified}
F\'{e}licien Comtat.
\newblock Optimal sup norm bounds for newforms on {$\rm GL_2$} with maximally
  ramified central character.
\newblock {\em Forum Math.}, 33(1):1--16, 2021.

\bibitem[GL87]{GrubLekkGeometryofNumbers}
P.~M. Gruber and C.~G. Lekkerkerker.
\newblock {\em Geometry of numbers}, volume~37 of {\em North-Holland
  Mathematical Library}.
\newblock North-Holland Publishing Co., Amsterdam, second edition, 1987.

\bibitem[Har03]{MR1990914}
Gergely Harcos.
\newblock An additive problem in the {F}ourier coefficients of cusp forms.
\newblock {\em Math. Ann.}, 326(2):347--365, 2003.

\bibitem[HC19]{MR3991392}
Fei Hou and Bin Chen.
\newblock Level aspect subconvexity for twisted {$L$}-functions.
\newblock {\em J. Number Theory}, 203:12--31, 2019.

\bibitem[HL94]{HL94}
Jeffrey Hoffstein and Paul Lockhart.
\newblock Coefficients of {M}aass forms and the {S}iegel zero.
\newblock {\em Ann. of Math. (2)}, 140(1):161--181, 1994.
\newblock With an appendix by Dorian Goldfeld, Hoffstein and Daniel Lieman.

\bibitem[HM06]{MR2207235}
Gergely Harcos and Philippe Michel.
\newblock The subconvexity problem for {R}ankin-{S}elberg {$L$}-functions and
  equidistribution of {H}eegner points. {II}.
\newblock {\em Invent. Math.}, 163(3):581--655, 2006.

\bibitem[HN18]{2018arXiv181011564Hs}
Yueke {Hu} and Paul~D. {Nelson}.
\newblock New test vector for {W}aldspurger's period integral, relative trace
  formula, and hybrid subconvexity bounds.
\newblock {\em {P}reprint}, 2018.
\newblock {\tt arXiv:1810.11564}.

\bibitem[HNS17]{HNSminimal}
Yueke Hu, Paul~{D}. Nelson, and Abhishek Saha.
\newblock Some analytic aspects of automorphic forms on {G}{L}(2) of minimal
  type.
\newblock to appear in Comm. Math. Helv., 2017.

\bibitem[HS20]{MR4190047}
Yueke Hu and Abhishek Saha.
\newblock Sup-norms of eigenfunctions in the level aspect for compact
  arithmetic surfaces, {II}: newforms and subconvexity.
\newblock {\em Compos. Math.}, 156(11):2368--2398, 2020.

\bibitem[HT12]{HT2}
Gergely Harcos and Nicolas Templier.
\newblock On the sup-norm of {M}aass cusp forms of large level: {II}.
\newblock {\em Int. Math. Res. Not. IMRN}, (20):4764--4774, 2012.

\bibitem[HT13]{MR3038127}
Gergely Harcos and Nicolas Templier.
\newblock On the sup-norm of {M}aass cusp forms of large level. {III}.
\newblock {\em Math. Ann.}, 356(1):209--216, 2013.

\bibitem[IS95]{IS95}
Henryk Iwaniec and Peter Sarnak.
\newblock {$L^\infty$} norms of eigenfunctions of arithmetic surfaces.
\newblock {\em Ann. of Math. (2)}, 141(2):301--320, 1995.

\bibitem[Iwa90]{MR1067982}
Henryk Iwaniec.
\newblock Small eigenvalues of {L}aplacian for {$\Gamma_0(N)$}.
\newblock {\em Acta Arith.}, 56(1):65--82, 1990.

\bibitem[Iwa97]{Iw97}
Henryk Iwaniec.
\newblock {\em Topics in classical automorphic forms}, volume~17 of {\em
  Graduate Studies in Mathematics}.
\newblock American Mathematical Society, Providence, RI, 1997.

\bibitem[JL70]{MR0401654}
Herv{\'e} Jacquet and R.~P. Langlands.
\newblock {\em Automorphic forms on {${\rm GL}(2)$}}.
\newblock Lecture Notes in Mathematics, Vol. 114. Springer-Verlag, Berlin,
  1970.

\bibitem[KR94]{KudlaRallis}
Stephen~S. Kudla and Stephen Rallis.
\newblock A regularized {S}iegel-{W}eil formula: the first term identity.
\newblock {\em Ann. of Math. (2)}, 140(1):1--80, 1994.

\bibitem[KS20]{Theta-supnorm-Is}
Ilya Khayutin and Raphael~S. Steiner.
\newblock Theta functions, fourth moments of eigenforms, and the sup-norm
  problem {I}.
\newblock {\em {P}reprint}, 2020.
\newblock {\tt arXiv:2009.07194}.

\bibitem[LPS87]{LPS-S2-distr-II}
A.~Lubotzky, R.~Phillips, and P.~Sarnak.
\newblock Hecke operators and distributing points on {$S^2$}. {II}.
\newblock {\em Comm. Pure Appl. Math.}, 40(4):401--420, 1987.

\bibitem[M{\oe}g97]{MoeglinTheta}
Colette M{\oe}glin.
\newblock Quelques propri\'{e}t\'{e}s de base des s\'{e}ries th\'{e}ta.
\newblock {\em J. Lie Theory}, 7(2):231--238, 1997.

\bibitem[Nel15]{MR3356036}
Paul~D. Nelson.
\newblock Evaluating modular forms on {S}himura curves.
\newblock {\em Math. Comp.}, 84(295):2471--2503, 2015.

\bibitem[Nel16a]{nelson-variance-73-2s}
Paul~{D}. Nelson.
\newblock Quantum variance on quaternion algebras, {I}.
\newblock {\em {P}reprint}, 2016.
\newblock {\tt arXiv:1601.02526}.

\bibitem[Nel16b]{nelson-theta-squared}
Paul~{D}. Nelson.
\newblock The spectral decomposition of {$|\theta|^2$}.
\newblock preprint, 2016.

\bibitem[Nel17]{nelson-variance-IIs}
Paul~{D}. Nelson.
\newblock Quantum variance on quaternion algebras, {II}.
\newblock {\em {P}reprint}, 2017.
\newblock {\tt arXiv:1702.02669}.

\bibitem[Nel19]{nelson-variance-3s}
Paul~{D}. Nelson.
\newblock Quantum variance on quaternion algebras, {III}.
\newblock {\em {P}reprint}, 2019.
\newblock {\tt arXiv:1903.08686}.

\bibitem[Nel20]{Nelson-TwistedSym2}
Paul~D. Nelson.
\newblock Bounds for twisted symmetric square {$L$}-functions via half-integral
  weight periods.
\newblock {\em Forum Math. Sigma}, 8:Paper No. e44, 21, 2020.

\bibitem[Nor21]{NordentoftEisSup}
Asbj{\o}rn~Christian Nordentoft.
\newblock Hybrid subconvexity for class group {$L$}-functions and uniform sup
  norm bounds of {E}isenstein series.
\newblock {\em Forum Math.}, 33(1):39--57, 2021.

\bibitem[PY19]{2019arXiv190810346P}
Ian {Petrow} and Matthew~P. {Young}.
\newblock {The fourth moment of Dirichlet $L$-functions along a coset and the
  Weyl bound}.
\newblock {\em arXiv e-prints}, page arXiv:1908.10346, Aug 2019.

\bibitem[Ral84]{RallisHoweDuality}
S.~Rallis.
\newblock On the {H}owe duality conjecture.
\newblock {\em Compositio Math.}, 51(3):333--399, 1984.

\bibitem[RS75]{RallisSchiffmann}
S.~Rallis and G.~Schiffmann.
\newblock Distributions invariantes par le groupe orthogonal.
\newblock In {\em Analyse harmonique sur les groupes de {L}ie ({S}\'{e}m.,
  {N}ancy-{S}trasbourg, 1973--75)}, pages 494--642. Lecture Notes in Math.,
  Vol. 497. 1975.

\bibitem[Sah17]{MR3713048}
Abhishek Saha.
\newblock On sup-norms of cusp forms of powerful level.
\newblock {\em J. Eur. Math. Soc. (JEMS)}, 19(11):3549--3573, 2017.

\bibitem[Sah20]{saha2019sup}
Abhishek Saha.
\newblock Sup-norms of eigenfunctions in the level aspect for compact
  arithmetic surfaces.
\newblock {\em Math. Ann.}, 376(1-2):609--644, 2020.

\bibitem[Saw21]{MR4307129}
Will Sawin.
\newblock A geometric approach to the sup-norm problem for automorphic forms:
  the case of newforms on {$GL_2(\Bbb F_q(T))$} with squarefree level.
\newblock {\em Proc. Lond. Math. Soc. (3)}, 123(1):1--56, 2021.

\bibitem[Shi72]{MR0333081}
Hideo Shimizu.
\newblock Theta series and automorphic forms on {${\rm GL}_{2}$}.
\newblock {\em J. Math. Soc. Japan}, 24:638--683, 1972.

\bibitem[Ste20]{MR4099641}
Raphael~S. Steiner.
\newblock Sup-norm of {H}ecke-{L}aplace eigenforms on {$S^3$}.
\newblock {\em Math. Ann.}, 377(1-2):543--553, 2020.

\bibitem[Ste21]{SteinerSmallDiameter}
Raphael~S. Steiner.
\newblock Small diameters and generators for arithmetic lattices in
  {$\mathrm{SL}_2(\mathbb{R})$} and certain {R}amanujan graphs.
\newblock {\em {P}reprint}, 2021.
\newblock {\tt CHANGE-ME}.

\bibitem[Tem10]{MR2734340}
Nicolas Templier.
\newblock On the sup-norm of {M}aass cusp forms of large level.
\newblock {\em Selecta Math. (N.S.)}, 16(3):501--531, 2010.

\bibitem[Tem15]{MR3372076}
Nicolas Templier.
\newblock Hybrid sup-norm bounds for {H}ecke-{M}aass cusp forms.
\newblock {\em J. Eur. Math. Soc. (JEMS)}, 17(8):2069--2082, 2015.

\bibitem[Tit76]{Tits-Classification}
J.~Tits.
\newblock Classification of buildings of spherical type and {M}oufang polygons:
  a survey.
\newblock In {\em Colloquio {I}nternazionale sulle {T}eorie {C}ombinatorie
  ({R}oma, 1973), {T}omo {I}}, pages 229--246. Atti dei Convegni Lincei, No.
  17. 1976.

\bibitem[Tom22]{TomaDivSup}
R.~Toma.
\newblock Hybrid bounds for the sup-norm of automorphic forms in higher rank.
\newblock {\em {P}reprint}, 2022.
\newblock {\tt arXiv:2111.09923v2}.

\bibitem[vdC36]{vanderCorput1936}
Johannes~G. van~der Corput.
\newblock Verallgemeinerung einer {M}ordellschen {B}eweismethode in der
  {G}eometrie der {Z}ahlen, {Z}weite {M}itteilung.
\newblock {\em Acta Arithmetica}, 2(1):145--146, 1936.

\bibitem[Vig77]{MR0480352}
Marie-France Vign\'{e}ras.
\newblock S\'{e}ries th\^{e}ta des formes quadratiques ind\'{e}finies.
\newblock In {\em S\'{e}minaire {D}elange-{P}isot-{P}oitou, 17e ann\'{e}e
  (1975/76), {T}h\'{e}orie des nombres: {F}asc. 1, {E}xp. {N}o. 20}, page~3.
  1977.

\bibitem[Voi18]{voightQA}
John Voight.
\newblock Quaternion algebras, 2018.
\newblock Available at https://math.dartmouth.edu/~jvoight/quat.html.

\bibitem[Wal85]{MR783511}
J.-L. Waldspurger.
\newblock Sur les valeurs de certaines fonctions {$L$} automorphes en leur
  centre de sym\'etrie.
\newblock {\em Compositio Math.}, 54(2):173--242, 1985.

\bibitem[Wat08]{watson-2008s}
Thomas~C. Watson.
\newblock Rankin triple products and quantum chaos.
\newblock {\em {P}reprint}, 2008.
\newblock {\tt arXiv:0810.0425}.

\bibitem[Wei64]{MR0165033}
Andr{\'e} Weil.
\newblock Sur certains groupes d'op\'erateurs unitaires.
\newblock {\em Acta Math.}, 111:143--211, 1964.

\bibitem[Xia07]{Xiasupnorm}
Honggang Xia.
\newblock On {$L^\infty$} norms of holomorphic cusp forms.
\newblock {\em J. Number Theory}, 124(2):325--327, 2007.

\bibitem[You17]{MR3635360}
Matthew~P. Young.
\newblock Weyl-type hybrid subconvexity bounds for twisted {$L$}-functions and
  {H}eegner points on shrinking sets.
\newblock {\em J. Eur. Math. Soc. (JEMS)}, 19(5):1545--1576, 2017.

\end{thebibliography}
% -----------------
\end{document}